\documentclass[11pt]{article}
\usepackage[margin=1in]{geometry}
\usepackage[T1]{fontenc}
\usepackage{lmodern}
\usepackage{microtype}
\usepackage{amsmath,amssymb,amsthm,mathtools}
\usepackage{enumitem}
\usepackage{graphicx}
\usepackage{booktabs}
\usepackage{float}
\usepackage{natbib}
\usepackage[hidelinks]{hyperref}
\newtheorem{theorem}{Theorem}[section]
\newtheorem{proposition}[theorem]{Proposition}
\newtheorem{lemma}[theorem]{Lemma}
\newtheorem{corollary}[theorem]{Corollary}
\theoremstyle{definition}
\newtheorem{definition}[theorem]{Definition}
\theoremstyle{remark}
\newtheorem{remark}[theorem]{Remark}
\newcommand{\R}{\mathbb R}
\newcommand{\N}{\mathbb N}
\newcommand{\1}{\mathbf 1}
\newcommand{\Phib}{\overline\Phi}
\newcommand{\FDR}{\operatorname{FDR}}
\newcommand{\FDP}{\operatorname{FDP}}
\newcommand{\BH}{\operatorname{BH}}
\newcommand{\diag}{\operatorname{diag}}
\newcommand{\sech}{\operatorname{sech}}
\newcommand{\dd}{\,\mathrm d}

\begin{document}

\title{How Much Can Gaussian Dependence Inflate the
Benjamini--Hochberg Procedure's FDR?}
\author{Lihua Lei\\
Graduate School of Business, Stanford University\\
\texttt{lihualei@stanford.edu}}
\date{\today}
\maketitle

\begin{abstract}
We study the worst-case false discovery rate (FDR) of the Benjamini--Hochberg
procedure for both one- and two-sided Gaussian tests when the correlation
matrix is otherwise unrestricted.  In each setting we construct
a \(q\)-indexed family of finite Gaussian models whose FDR divided by \(q\)
diverges as \(q\downarrow0\), disproving any universal multiplicative FDR
bound.  For
two-sided tests, the supremum over the number of hypotheses, mean vector, and
correlation matrix is at least an explicit \({\ell_{=}}(q)>q\) satisfying
\[
 \ell_{=}(q)=\frac{q\sqrt{\log(1/q)}}{2\sqrt{\pi}}+c_\ell q+o(q),
 \qquad c_\ell=0.6492828\ldots.
\]
For the one-sided hypotheses \(H_i:\theta_i\leq0\), a sign-reversed
one-common-factor construction gives the stronger explicit lower bound
\(\ell_{\le}(q)>q\), with
\[
 \ell_{\le}(q)=\frac{q\sqrt{\log(1/q)}}{\sqrt\pi}
 +\frac q2+o(q).
\]
Finally, we prove an \(O\{q\sqrt{\log(1/q)}\}\) upper bound for the
two-sided one-common-factor class and the matching upper bound
\(q\sqrt{\log(1/q)}/\sqrt\pi+O(q)\) for the one-sided one-common-factor
class.
\end{abstract}

\section{Introduction}

We observe a Gaussian vector \(T=(T_1,\ldots,T_{N})\) with mean
\(\theta\) and covariance matrix \(\Sigma\), where
\(\Sigma_{ii}=1\) for \(i=1,\ldots,N\).  We consider two testing problems.  For
two-sided tests, \(H_i:\theta_i=0\) and
\[
 P_i=p(|T_i|),\qquad p(x)=2\Phib(x)=2\{1-\Phi(x)\}.
\]
For one-sided tests, \(H_i:\theta_i\leq0\) and
\[
 P_i=p_+(T_i),\qquad p_+(x)=\Phib(x)=1-\Phi(x).
\]
Here \(\Phi\) and \(\phi=\Phi'\) are the standard normal CDF and density.
The normal hazard is
\begin{equation}
\lambda(x)=\frac{\phi(x)}{\Phib(x)}.
\end{equation}
We use the conventions \(p^{-1}(0)=p_+^{-1}(0)=\infty\).
The Benjamini--Hochberg procedure \citep{BenjaminiHochberg1995} at level \(q\)
orders the \(p\)-values as
\[
 P_{(1)}\leq\cdots\leq P_{(N)}
\]
and rejects the hypotheses corresponding to
\[
 P_{(1)},\ldots,P_{(R)},\quad
 R=\max\{{1\leq k\leq N}:P_{(k)}\leq qk/N\},
\]
with \(R=0\) if the set is empty.  If \(V\) is the number of rejected true
nulls, then the false discovery proportion (FDP) and false discovery rate
(FDR) are
\[
 \FDP=\frac{V}{R\vee1},
 \quad
 \FDR=\mathbb E(\FDP).
\]

A long-standing conjecture for correlated two-sided Gaussian
\(z\)-tests is that BH
controls the FDR at its nominal level \(q\), regardless of the correlation
matrix
\citep{ReinerBenaim2007,benjamini2010discovering,roux2018inference,Sarkar2023,SarkarZhang2025}.
\citet[p.~407]{benjamini2010discovering} summarized the evidence
as follows:
\begin{quote}
``The modification to general dependence is often not needed: convincing
simutheoretical evidence indicates that the same holds for two-sided
\(z\)-tests with any correlation structure.''
\end{quote}
Benjamini then noted that a complete theoretical proof was still missing.
These works stated or supported this conjecture.  However, recently,
\citet{Dobriban2026} found a counterexample where
\(\FDR>0.0104\) when \(q=0.01\).

In this paper, we ask a more refined question in each testing problem: how
large can the FDR be as a function of \(q\)?  For two-sided tests, we seek a
lower bound \({\ell_{=}}(q)\) such that
\begin{equation}
 \sup_{\substack{
   N\in\N,\ \theta\in\mathbb R^N,\
   \Sigma\in\mathbb S_+^N\\
   \Sigma_{ii}=1,\ 1\leq i\leq N}}
 \FDR_{\theta,\Sigma}(\BH_q)\geq \ell_{=}(q).
 \label{eq:worst-case-fdr}
\end{equation}
Here \(\mathbb S_+^N\) denotes the cone of \(N\times N\)
symmetric positive semidefinite matrices.
A relaxed folklore conjecture is that the worst-case FDR in
\eqref{eq:worst-case-fdr} is bounded above by \(Cq\) for some universal
constant \(C>0\).  However, Theorem~\ref{thm:main} and
Proposition~\ref{prop:ell-expansion} together show that the worst-case FDR is
at least a quantity satisfying, as \(q\downarrow0\),
\[
 \ell_{=}(q)
 =\frac{q\sqrt{\log(1/q)}}{2\sqrt{\pi}}+{c_\ell}q+o(q),
 \qquad c_\ell=0.6492828\ldots.
\]
In particular, the ratio of the supremum in
\eqref{eq:worst-case-fdr} to \(q\) diverges as \(q\downarrow0\).

Moreover, we derive an exact formula for \({\ell_{=}}(q)\).  We prove that
\({\ell_{=}}(q)>q\) for every \(q\in(0,1)\).  Instead of seeking a counterexample
for a given nominal level \(q\), Theorem~\ref{thm:main} and
Proposition~\ref{prop:ell-strict} together disprove the conjecture that BH
controls the FDR for correlated two-sided Gaussian tests.
Figure~\ref{fig:ellq} illustrates the resulting two-sided FDR
inflation.
\begin{figure}[H]
\centering
\includegraphics[width=\linewidth]{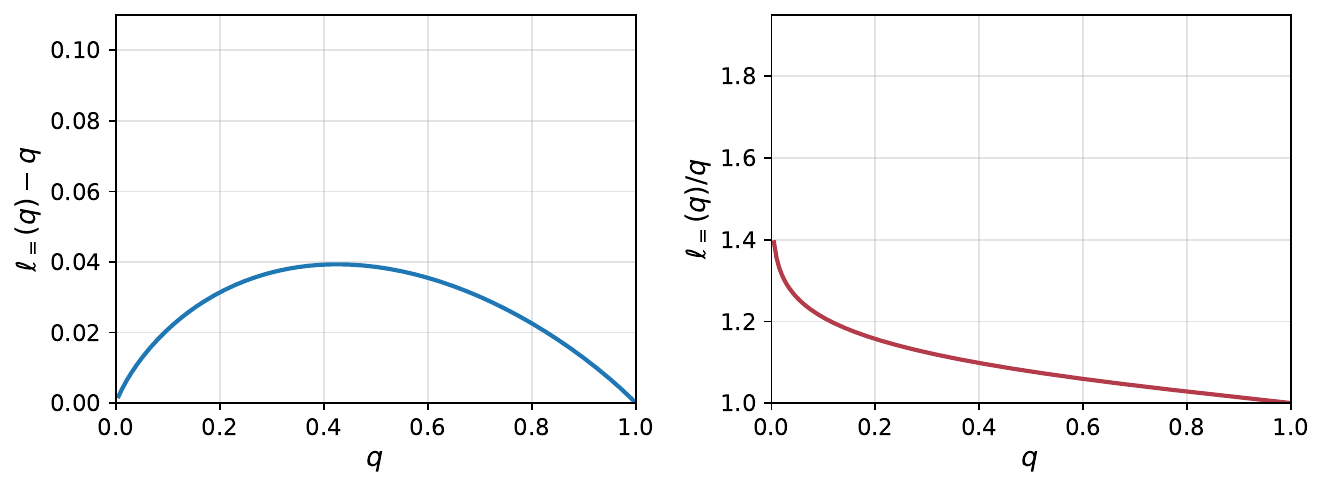}
\caption{Numerical evaluation of \({\ell_{=}}(q)-q\) and \({\ell_{=}}(q)/q\) over the grid \(q=0.005k\), \(k=1,\ldots,199\).}
\label{fig:ellq}
\end{figure}

\begin{table}[H]
\centering

\begin{tabular}{@{}ccccc@{}}
\toprule
\(q\)&0.01&0.05&0.10&0.20\\
\midrule
\({\ell_{=}}(q)-q\)&0.0036&0.0129&0.0209&0.0314\\
\({\ell_{=}}(q)/q\)&1.3553&1.2571&1.2094&1.1570\\
\bottomrule
\end{tabular}
\caption{The absolute inflation \({\ell_{=}}(q)-q\) and relative
inflation \({\ell_{=}}(q)/q\) at commonly used nominal levels.}
\label{tab:common-q}
\end{table}

For one-sided Gaussian tests, the classical negatively correlated examples
of \citet{hochberg1995extensions} and \citet{samuel1996simes}
show that the Simes inequality, and hence BH under the global
null, can be anticonservative.  \citet{chi2025multiple} prove useful upper bounds under
several notions of negative dependence, but the adversarial construction used
in their sharpness comparison, due to \citet{Su2018}, is not jointly
Gaussian.  Our one-sided construction instead retains positively
correlated nulls and reverses the loading of the non-nulls.
Theorem~\ref{thm:one-lower} and
Proposition~\ref{prop:one-strict} give, for every \(q\in(0,1)\),
\[
 \ell_{\le}(q)
 =\frac{q\sqrt{\log(1/q)}}{\sqrt\pi}+\frac q2
  +\Phib\!\left(\sqrt{2\log(1/q)}\right)>q,
\]
and Proposition~\ref{prop:one-expansion} gives
\[
 \ell_{\le}(q)
 =\frac{q\sqrt{\log(1/q)}}{\sqrt\pi}+\frac q2+o(q).
\]
This is a larger leading constant than in the two-sided construction and
again implies that the worst-case FDR ratio diverges.
Figure~\ref{fig:one-ellq} illustrates the resulting one-sided FDR
inflation.
\begin{figure}[H]
\centering
\includegraphics[width=\linewidth]{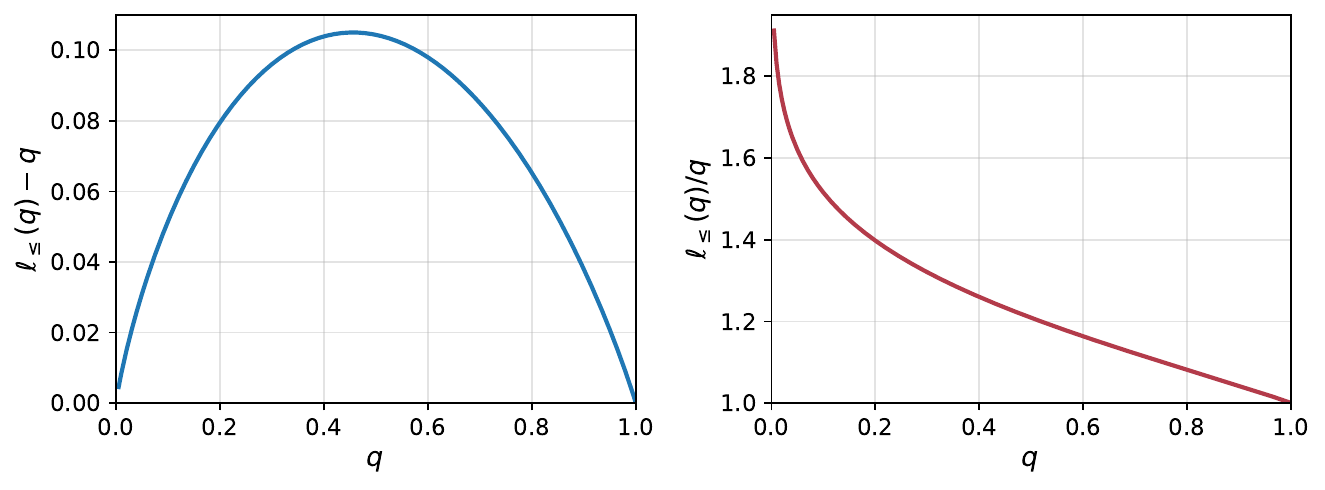}
\caption{Numerical evaluation of \(\ell_{\le}(q)-q\) and
\(\ell_{\le}(q)/q\) over the grid \(q=0.005k\), \(k=1,\ldots,199\).}
\label{fig:one-ellq}
\end{figure}

\begin{table}[H]
\centering
\caption{The absolute inflation \(\ell_{\le}(q)-q\) and relative
inflation \(\ell_{\le}(q)/q\) at commonly used nominal levels.}
\label{tab:one-ell}
\begin{tabular}{ccccc}
\toprule
\(q\) & \(0.01\) & \(0.05\) & \(0.10\) & \(0.20\)\\
\midrule
\(\ell_{\le}(q)-q\) & \(0.00831\) & \(0.03101\) & \(0.05155\) & \(0.07955\)\\
\(\ell_{\le}(q)/q\) & \(1.8311\) & \(1.6203\) & \(1.5155\) & \(1.3977\)\\
\bottomrule
\end{tabular}
\end{table}

The dependence mechanisms differ sharply between the two constructions.
Every covariance in the two-sided construction is positive, but the map
\(T_i\mapsto2\Phib(|T_i|)\) is not monotone and
need not make the full vector of two-sided \(p\)-values satisfy
positive regression dependence on a subset (PRDS) with respect to the true
nulls.  For
one-sided Gaussian \(p\)-values, by contrast, nonnegative correlations between
every true null and every coordinate imply that the full
\(p\)-value vector is PRDS on the subset of true nulls and hence FDR control
\citep{BenjaminiYekutieli2001}.  Negative dependence touching a true null is
therefore essential in the one-sided construction: null--null correlations
are positive, while null--non-null correlations are negative.  Thus,
{the full vector of one-sided \(p\)-values in our construction is
not PRDS; Section~\ref{sec:one-sided-lower} verifies this directly}.

The lower-bound rate \(q\sqrt{\log(1/q)}\) should be compared
with the FDR-linking theorem of \citet{Su2018}.
Positive regression dependence within nulls (PRDN) relaxes PRDS by imposing the relevant positive
dependence assumption only on the null \(p\)-values.  Under PRDN, the
FDR-linking argument gives an \(O(q\log(1/q))\) bound.
Section~\ref{sec:upper-bound} proves sharper Gaussian bounds.  For two-sided
one-common-factor models,
\[
 \FDR(\BH_q)=O(q\sqrt{\log(1/q)}),
\]
uniformly over the number of hypotheses, means, and loadings.  Thus, in this
one-common-factor Gaussian class, the
upper-bound rate improves on the general PRDN linking bound and
matches the asymptotic order of \({\ell_{=}}(q)\) as
\(q\downarrow0\).  Moreover, as explained in
Remark~\ref{rem:prdn-pvalues}, the two-sided null \(p\)-values in this class
satisfy the PRDN property by the
absolute-value Gaussian multivariate total positivity of order
two (MTP\(_2\)) criterion of \citet{KarlinRinott1981}.
For one-sided one-common-factor models,
Theorem~\ref{thm:one-upper} gives an upper bound with leading term
\(q\sqrt{\log(1/q)}/\sqrt\pi\), and Corollary~\ref{cor:one-sharp} shows that
this matches the leading constant in \(\ell_{\le}(q)\).  Both bounds are uniform
over the number of hypotheses, means, and scalar loadings.

\section{\texorpdfstring{Lower bound for two-sided tests}{Lower bound for two-sided tests}}

\subsection{Main result}

\begin{equation}
M(y)=\sup_{a\ge0}e^{-a^2/2}\cosh(ay).
\end{equation}
Appendix~\ref{app:envelope} identifies the likelihood-ratio envelope: \(M=1\) on \([0,1]\), and \(M\) is continuous, strictly increasing, and unbounded on \((1,\infty)\).  Hence there is a unique \(y_q>1\) such that
\begin{equation}
qM(y_q)=1.
\label{eq:yq}
\end{equation}
Set
\begin{equation}
\ell_{=}(q)=q\Phi(1)+q\int_1^{y_q}M(y)\phi(y)\dd y+\Phib(y_q).
\label{eq:ell}
\end{equation}

The next theorem shows that the lower bound can be approached
arbitrarily closely by finite-dimensional Gaussian models with positive
definite correlation matrices.
\begin{theorem}
\label{thm:main}
For \(T\sim N(\theta,\Sigma)\), let
\(\FDR_{\theta,\Sigma}(\BH_q)\) denote the FDR of \(\BH_q\) applied to the
two-sided \(p\)-values \(p(|T_1|),\ldots,p(|T_N|)\).
For every \(q\in(0,1)\) and \(\delta>0\), there are \(N\in\N\),
\(\theta\in\R^N\), and a positive definite correlation matrix
\(\Sigma\) such that
\begin{equation}
\FDR_{\theta,\Sigma}(\BH_q)>{\ell_{=}}(q)-\delta.
\label{eq:main}
\end{equation}
Consequently,
\begin{equation}
\sup_{\substack{N\in\N,\ \theta\in\R^N,\\
\Sigma\succ0,\ \diag(\Sigma)=\mathbf 1_N}}
\FDR_{\theta,\Sigma}(\BH_q)
\ge \ell_{=}(q).
\label{eq:sup}
\end{equation}
\end{theorem}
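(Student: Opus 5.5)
The plan is to build an explicit one-common-factor model, compute its FDR in a many-hypothesis limit, and recognize the limit as the right-hand side of \eqref{eq:ell}; the inequality \eqref{eq:sup} then follows because \(\delta>0\) is arbitrary. I have not pinned down the construction, so its exact form is the step most likely to need revision. The starting point is the identity
\[
e^{-a^2/2}\cosh(ay)=\frac{\phi(y-a)+\phi(y+a)}{2\phi(y)},
\]
which says that \(M(y)\) is the largest likelihood ratio, at the point \(|Y|=y\), of a symmetric two-point Gaussian location mixture \(N(\pm a,1)\) against \(N(0,1)\). This suggests the following design. A latent factor \(Y\sim N(0,1)\) is shared, the null statistics are strongly loaded on \(Y\), and the non-nulls are also loaded on \(Y\) with means \(\pm a\). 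Then, conditional on \(|Y|=y\), the BH rejection count is governed by how many non-nulls cross the threshold. That count is proportional to the mixture density at \(y\) relative to the null density. The hope is that the conditional FDP becomes \(q\) times a likelihood ratio of exactly this form, capped at \(1\).

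Concretely, I would take \(N_1\) non-nulls with a fine grid of mean magnitudes \(a\in\{a_1,\ldots,a_K\}\) and signs \(\pm\), in proportions still to be chosen. I would take \(N_0\) nulls, and let \(N_0,N_1\to\infty\) in a fixed ratio. By the law of large numbers, conditional on \(Y\), the empirical CDF of the \(p\)-values converges to a deterministic function \(G_y(t)\). The BH cutoff then converges to the largest fixed point of \(t=q\,G_y(t)\), and \(\FDP\) converges to the null share of \(G_y\) at that cutoff. The null contribution at level \(q\) is \(q\) times the ratio of null to total \(p\)-value mass near the cutoff. The non-nulls are needed only to raise \(R\), so that the null rejection region fits exactly into the \(q R/N\) budget.

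To make the limiting FDP as large as possible, I would choose the signal strength \(a=a(y)\) as the maximizer in the definition of \(M(y)\). In the finite construction this is approximated by the grid, using continuity of \(a\mapsto e^{-a^2/2}\cosh(ay)\) and monotone convergence. With that choice, the integrated FDR should split over \(y=|Y|\) into three regions.
\begin{itemize}[leftmargin=1.5em]
\item On \(y\le 1\), \(M=1\) and the contribution is \(q\Phi(1)\). Only the \(y\ge0\) half is accounted for here; the symmetric half is folded into the two-sided density.
\item On \(1<y<y_q\), the contribution is \(q\,M(y)\phi(y)\).
\item On \(y\ge y_q\), the cap \(\FDP\le1\) binds and the contribution is \(\Phib(y_q)\).
\end{itemize}
Here \(y_q\) is defined by \eqref{eq:yq} precisely as the point where \(qM(y)\) reaches \(1\). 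Summing these three pieces reproduces \eqref{eq:ell}. Passing from the \(N\to\infty\) limit to a finite \(N\) with FDR exceeding \(\ell_{=}(q)-\delta\) uses dominated convergence, since \(\FDP\le1\).

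Two further points are needed to make the limit rigorous.
\begin{enumerate}[leftmargin=1.5em]
\item A singular loading on \(Y\) would make \(\Sigma\) degenerate. To obtain a positive definite \(\Sigma\) with unit diagonal, I would perturb the loadings to \(1-\eta\) and add independent noise, letting \(\eta\downarrow0\). This needs continuity of the FDR in \(\Sigma\). That continuity should hold because BH is continuous in the \(p\)-values off a null set, and the Gaussian law converges weakly.
\item The BH fixed point must be stable, meaning the largest crossing is transversal. Otherwise the finite-\(N\) step-up index need not converge to the limiting one.
\end{enumerate}

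The main obstacle is the core construction: arranging the non-null mixture so that, conditional on \(|Y|=y\), the limiting FDP is exactly \(\min\{qM(y),1\}\) simultaneously for all \(y\), with a single choice of means and loadings. The optimizer \(a(y)\) varies with \(y\), but the model must be fixed before \(Y\) is drawn. I would first try a superposition of many non-null groups with different \(a\). For each realized \(y\), the group with \(a=a(y)\) should dominate the count of crossings near the cutoff, while the other groups contribute negligibly. This requires relative group sizes that blow up appropriately. If that fails, the fallback is to show that the pointwise supremum can be achieved up to \(\delta\) on a finite partition of \(y\)-values, by tailoring a separate group to each cell, and then to sum the cell contributions.
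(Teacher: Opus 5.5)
There is a genuine gap, and it is the step you flag yourself: the proposal contains no construction, and the mechanism you sketch for producing \(qM(y)\) points in the wrong direction. In the many-hypothesis limit, the conditional FDP at the BH cutoff \(c\) equals \(q\pi_0\,\mathbb P(|T_0|\ge c\mid Z=z)/p(c)\) for a null statistic \(T_0\). The non-nulls only determine where \(c\) lands and never enter this ratio. The factor \(e^{-a^2/2}\cosh(ay)\) must therefore come from the \emph{null} conditional tail ratio, and that requires \emph{weakly} loaded nulls. With null loading \(r\downarrow0\) and cutoff \(c=\alpha/r\), the ratio tends to \(L_\alpha(|z|)=e^{-\alpha^2/2}\cosh(\alpha z)\) (Lemma~\ref{lem:null-tail}). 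Your two-point-mixture identity is the right one, but the \(\pm\alpha\) are the two tails of the two-sided test, not non-null means: given a null rejection at \(c=\alpha/r\), the factor is approximately \(\tfrac12\{N(\alpha,1)+N(-\alpha,1)\}\). Strongly loaded nulls, as in your design, cannot reach \(\ell_{=}(q)\) for small \(q\). By \eqref{eq:integrated-tail-ratio-bound} in the proof of Theorem~\ref{thm:common-factor-upper}, a null with loading \(r\) contributes at most \(\Phi(u_q/\gamma_r)-\tfrac12+u_q\gamma_r/\sqrt{2\pi}\), where \(\gamma_r=\sqrt{(1-r)/(1+r)}\), and this tends to \(\tfrac12\) as \(r\uparrow1\). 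Designs with all null loadings near \(1\) thus have \(\FDR\) at most about \(3q/2\), whereas \(\ell_{=}(q)/q\to\infty\). That bound is largest as \(r\downarrow0\), which is exactly the regime the paper uses.

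The \(y\)-dependence of the maximizer, which you call the main obstacle, disappears once you note that the optimized variable is the rescaled cutoff \(rc\), and BH chooses \(c\) after \(Z\) is realized. The design only has to make the normalized BH curve cross \(1\) at \(c\approx a(y)/r\) when \(Z=-y\). The paper does this with non-nulls of loading \(1\) (deterministic given \(Z\)) and means \(a/r+y(a)\), \(a\in[b_r,a_q]\). The non-null indexed by \(a\) then sits exactly at \(a/r\) when \(Z=-y(a)\), and the total mass of indices at least \(a\) is set to \(W_r(a)=[1-qM\{y(a)\}]p(a/r)/q\). This makes the curve at \(c=a(y)/r+h\) converge to \(qM(y)+\{1-qM(y)\}e^{\kappa(y)h}\). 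A primary block of mass \(r\) at mean \(c_r+y(b_r)\) forces an early crossing with FDP \(q\) when \(Z>-1\). For \(Z<-y_q\) the nulls alone cross at \(rc=\alpha_q(y)\), giving FDP \(\to1\).

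The delicate step is ruling out earlier crossings (the three-case argument behind \eqref{eq:no-earlier}). Your superposition of groups with blowing-up sizes does not address it, because each group can trigger a premature crossing at other values of \(Z\). Finally, your three-region accounting is not derived from any construction. FDP \(\to q\) on \(\{Z>-1\}\), which has probability \(\Phi(1)\), not on \(\{|Y|\le1\}\), and inflation and capping occur only for \(Z<-1\). A design whose limiting FDP were \(\min\{qM(|Y|),1\}\), as you describe, would give \(2\ell_{=}(q)-q\) rather than \eqref{eq:ell}. It would also attain the leading constant \(1/\sqrt\pi\) of the two-sided upper bound, a question the paper leaves open. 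So the agreement with \(\ell_{=}(q)\) is reverse-engineered rather than computed.
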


The next proposition gives the small-\(q\) expansion of the lower bound.
\begin{proposition}
\label{prop:ell-expansion}
As \(q\downarrow0\),
\[
\ell_{=}(q)
=
\frac{q\sqrt{\log(1/q)}}{2\sqrt{\pi}}
+{c_\ell}q+o(q),
\]
where
\[
\begin{aligned}
c_\ell
&=
\Phi(1)-\frac{1}{2\sqrt{2\pi}}
+\int_1^\infty
\left\{
M(y)\phi(y)-\frac{1}{2\sqrt{2\pi}}
\right\}\dd y\\
&=0.6492828\ldots.
\end{aligned}
\]
\end{proposition}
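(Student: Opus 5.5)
The plan is to write \(\ell_{=}(q)\) from \eqref{eq:ell} as an explicit leading term plus a convergent integral. This needs three inputs: the large-\(y\) behaviour of the envelope \(M\), an asymptotic for \(y_q\), and a tail estimate for \(\Phib(y_q)\).

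\textbf{Step 1: two-sided bounds on \(M\).} Using \(\cosh(ay)=\tfrac12(e^{ay}+e^{-ay})\), the choice \(a=y\) gives the lower bound. For the upper bound, maximise each exponential separately over \(a\ge0\): \(\sup_a e^{ay-a^2/2}=e^{y^2/2}\) and \(\sup_{a\ge0}e^{-ay-a^2/2}=1\) for \(y\ge0\). Together,
\[
\tfrac12 e^{y^2/2}\le M(y)\le \tfrac12 e^{y^2/2}+\tfrac12,\qquad y\ge1 .
\]
Multiplying by \(\phi(y)=e^{-y^2/2}/\sqrt{2\pi}\) yields
\[
0\le M(y)\phi(y)-\frac{1}{2\sqrt{2\pi}}\le \tfrac12\phi(y).
\]
The integrand in the definition of \(c_\ell\) is therefore absolutely integrable on \([1,\infty)\), so \(c_\ell\) is well defined. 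Moreover \(\int_{y}^\infty\{M\phi-1/(2\sqrt{2\pi})\}\le\tfrac12\Phib(y)\to0\) as \(y\to\infty\).

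\textbf{Step 2: asymptotics of \(y_q\).} Since \(qM(y_q)=1\), the bounds of Step 1 give
\[
\frac 2q-1\le e^{y_q^2/2}\le \frac 2q .
\]
Hence \(y_q\to\infty\) and \(y_q^2=2\log(2/q)+O(q)\). It follows that
\[
y_q=\sqrt{2\log(1/q)+2\log 2}+o(1)=\sqrt{2\log(1/q)}+o(1),
\]
since \(\sqrt{u+c}-\sqrt u\to0\). In addition, the Mills-ratio bound \(\Phib(y)\le\phi(y)/y\) gives
\[
\Phib(y_q)\le \frac{e^{-y_q^2/2}}{\sqrt{2\pi}\,y_q}\le \frac{q}{(2-q)\sqrt{2\pi}\,y_q}=o(q).
\]

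\textbf{Step 3: assembly.} Split the integral as
\[
q\int_1^{y_q}M\phi=q\frac{y_q-1}{2\sqrt{2\pi}}+q\int_1^\infty\Bigl\{M\phi-\frac{1}{2\sqrt{2\pi}}\Bigr\}-q\int_{y_q}^\infty\Bigl\{M\phi-\frac{1}{2\sqrt{2\pi}}\Bigr\}.
\]
The last term is \(o(q)\) by Step 1, because \(y_q\to\infty\). By Step 2,
\[
\frac{q\,y_q}{2\sqrt{2\pi}}=\frac{q\sqrt{2\log(1/q)}}{2\sqrt{2\pi}}+o(q)=\frac{q\sqrt{\log(1/q)}}{2\sqrt\pi}+o(q).
\]
Adding \(q\Phi(1)\) and \(\Phib(y_q)=o(q)\) collects the \(O(q)\) terms into exactly
\[
q\Bigl[\Phi(1)-\frac{1}{2\sqrt{2\pi}}+\int_1^\infty\Bigl\{M\phi-\frac{1}{2\sqrt{2\pi}}\Bigr\}\Bigr]=c_\ell\, q,
\]
which is the claimed expansion.

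The only genuinely delicate point is the numerical value \(0.6492828\ldots\). It requires evaluating \(M\) on \((1,\infty)\) through its maximiser \(a^*(y)\), which solves \(a=y\tanh(ay)\) as in Appendix~\ref{app:envelope}. The integral is then computed by quadrature, with the tail beyond a cutoff \(Y\) controlled by the explicit bound \(\tfrac12\Phib(Y)\) from Step 1. I expect this certified numerical evaluation, rather than the asymptotics, to be the main obstacle, since the analytic part is elementary once the sandwich bound on \(M\) is in place.
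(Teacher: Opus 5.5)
Your proof is correct, and its skeleton matches the paper's. Both arguments write \(\int_1^{y_q}M\phi\) as \((y_q-1)/(2\sqrt{2\pi})\) plus the convergent integral defining \(c_\ell\) minus a tail, show \(y_q=\sqrt{2\log(1/q)}+o(1)\), and dispose of \(\Phib(y_q)\) with the Mills bound.

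The genuine difference is the key estimate on \(M\).
\begin{itemize}
\item The paper works through the maximizer \(a(y)\) of Lemma~\ref{lem:envelope}. It uses \(a(y)=y\tanh\{a(y)y\}\) to get \(0\le y-a(y)\le 2ye^{-2a(y)y}\). It then completes the squares to obtain the exact identity \(M(y)\phi(y)=\{e^{-(y-a(y))^2/2}+e^{-(y+a(y))^2/2}\}/(2\sqrt{2\pi})\). From this it deduces integrability of the integrand (using \(a(y)\ge y/2\) for large \(y\)) and the asymptotic \(M(y)=\tfrac12e^{y^2/2}\{1+o(1)\}\).
\item Your sandwich \(\tfrac12e^{y^2/2}\le M(y)\le\tfrac12e^{y^2/2}+\tfrac12\) comes from splitting the \(\cosh\) and maximizing each exponential separately. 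It bypasses the maximizer entirely, so this part of the proof needs nothing from Lemma~\ref{lem:envelope}.
\end{itemize}

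Your route also gives explicit by-products that the paper does not state: nonnegativity of the integrand in \(c_\ell\), and a clean truncation bound \(\tfrac12\Phib(Y)\) for the quadrature tail. It further gives the sharper \(y_q^2=2\log(2/q)+O(q)\), in place of the paper's \(2\log(1/q)+2\log 2+o(1)\).

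The paper's identity has its own advantages. It is exact and is the natural formula to evaluate numerically. The maximizer machinery it relies on is needed anyway for the construction of \(\nu_{q,r}\), so it costs the paper nothing extra there.

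On the numerical value \(0.6492828\ldots\), the paper also only reports numerical quadrature of the absolutely convergent integral. Nothing more rigorous than what you describe is expected, so this is not a real obstacle.
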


Proposition~\ref{prop:ell-strict} further shows that
\({\ell_{=}}(q)>q\) for every \(q\in(0,1)\).
Together with Theorem~\ref{thm:main}, this implies that BH fails
to control the FDR at every nontrivial nominal level in the correlated
two-sided Gaussian setting.
\begin{proposition}
\label{prop:ell-strict}
For every \(q\in(0,1)\),
\[
\ell_{=}(q)>q.
\]
\end{proposition}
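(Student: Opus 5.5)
The plan is to compare \(\ell_{=}(q)\) with \(q\) by writing \(q\) in the same form as \eqref{eq:ell}. Since \(\Phi(1)+\Phib(1)=1\), we have
\[
q=q\Phi(1)+q\int_1^{\infty}\phi(y)\dd y
=q\Phi(1)+q\int_1^{y_q}\phi(y)\dd y+q\Phib(y_q).
\]
Subtracting this from \eqref{eq:ell} gives the identity
\[
\ell_{=}(q)-q
=q\int_1^{y_q}\{M(y)-1\}\phi(y)\dd y+(1-q)\Phib(y_q).
\]
The proof then consists of showing that each term on the right is nonnegative and that at least one is strictly positive.

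For the first term, taking \(a=0\) in the definition of \(M\) gives \(M(y)\ge e^{0}\cosh(0)=1\) for every \(y\). So the integrand is nonnegative and the integral is \(\ge0\). In fact it is strictly positive, because Appendix~\ref{app:envelope} shows that \(M\) is strictly increasing on \((1,\infty)\) with \(M=1\) at \(y=1\). Hence \(M>1\) on \((1,y_q)\), and \(y_q>1\) by \eqref{eq:yq}.

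For the second term, \(y_q\) is a finite real number by \eqref{eq:yq}, so \(\Phib(y_q)>0\). Since \(q<1\), this gives \((1-q)\Phib(y_q)>0\). Either term alone therefore already yields \(\ell_{=}(q)>q\).

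There is no real obstacle here. The only point to check is that \(y_q\) is well defined and finite for every \(q\in(0,1)\). This uses the facts from Appendix~\ref{app:envelope} that \(M\) is continuous, strictly increasing and unbounded on \((1,\infty)\) with \(M(1)=1<1/q\), which give a unique finite solution of \(qM(y_q)=1\) in \((1,\infty)\). The main care needed is in the bookkeeping: recognizing that \(q\Phi(1)\) in \eqref{eq:ell} pairs with \(q\Phib(1)\) to reconstruct \(q\), and that the tail \(\Phib(y_q)\) carries weight \(1\) rather than \(q\). That mismatch of weights is exactly where the inflation over \(q\) comes from.
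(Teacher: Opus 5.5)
Your proof is correct and is the paper's argument: both derive the identity \(\ell_{=}(q)-q=q\int_1^{y_q}\{M(y)-1\}\phi(y)\dd y+(1-q)\Phib(y_q)\) and conclude from the nonnegative integral and the strictly positive tail term. Your added remark that the integral is also strictly positive, because \(M>1\) on \((1,y_q)\), is true but not needed.
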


Figure~\ref{fig:ellq} in the Introduction displays the numerical
behavior of the two-sided lower bound.
Table~\ref{tab:common-q} gives the absolute inflation
\({\ell_{=}}(q)-q\) and relative inflation \({\ell_{=}}(q)/q\) for commonly
used nominal levels.

\subsection{Roadmap}
\label{sec:roadmap}

Fix \(q\in(0,1)\) and a design law \(\nu_q\).  Let
\(\Omega_N=((\Theta_1,B_1),\ldots,(\Theta_N,B_N))\), where the pairs are
i.i.d. draws from \(\nu_q\), independently of
\(Z,\varepsilon_1,\ldots,\varepsilon_N
{\stackrel{\text{i.i.d.}}{\sim}}N(0,1)\),
and set
\[
T_i=\Theta_i+B_iZ+\sqrt{1-B_i^2}\,\varepsilon_i,
\quad i=1,\ldots,N.
\]
Conditional on a realization of \(\Omega_N\), this is a finite deterministic
Gaussian design.  Since BH is a step-up procedure, its rejection set is
\(\{i:|T_i|\ge\widehat C_N\}\).  We will show in
Subsection~\ref{sec:finite-transfer} that, conditional on \(Z=z\),
\(\widehat C_N\to C(z)\) and \(\FDP_N\to D(z)\).  Consequently,
\[
\mathbb E_{\Omega_N}\!\left[\FDR_{\Omega_N}(\BH_q)\right]
=
\mathbb E_{\Omega_N,Z,\varepsilon}[\FDP_N]
\longrightarrow
\mathbb E[D(Z)].
\]
Thus, for any \(\delta>0\) and all sufficiently large \(N\), at least one
deterministic realization \(\omega_N\) of \(\Omega_N\) satisfies
\[
\FDR_{\omega_N}(\BH_q)\ge\mathbb E[D(Z)]-\delta.
\]

We next construct a family of laws \(\nu_{q,r}\).  Write \(D_r\) for the
conditional FDP limit associated with \(\nu_{q,r}\).  For every
\(z\notin\{-1,-y_q\}\), we prove that \(D_r(z)\to D_*(z)\) as
\(r\downarrow0\), where
\[
D_*(z)=
\begin{cases}
q,&z\ge-1,\\
qM(-z),&-y_q<z<-1,\\
1,&z\le-y_q.
\end{cases}
\]
Because \(0\le D_r,D_*\le1\), dominated convergence then gives
\[
\mathbb E[D_r(Z)]\longrightarrow\mathbb E[D_*(Z)]=\ell_{=}(q).
\]
The argument therefore takes the iterated limit
\(\lim_{r\downarrow0}\lim_{N\to\infty}\): first \(N\to\infty\) for each
fixed \(r\), and then \(r\downarrow0\).  To obtain a single finite example,
we choose \(r\) sufficiently small and then \(N\) sufficiently large.  A
small independent Gaussian perturbation finally makes its covariance matrix
positive definite.

\subsection{Population quantities}
\label{sec:generic}

Put \(u_q=p^{-1}(q)\).  Let \(\nu_q\) be a probability measure on \(\R\times[-1,1]\).  Write
\begin{equation}
\pi_0=\nu_q(\{0\}\times[-1,1]),
\label{eq:pi0-generic}
\end{equation}
and assume \(\pi_0>0\).

For \((\theta,b)\in\R\times[-1,1]\), \(z\in\R\), and \(s\in(0,1]\), let \(u_s=p^{-1}(s)\) and define
\[
F_{\theta,b,z}(s)
=
\mathbb P\!\left\{
p(|T_i|)\le s
\,\middle|\,
\Theta_i=\theta,\ B_i=b,\ Z=z
\right\}.
\]
If \(|b|<1\), then
\begin{equation}
F_{\theta,b,z}(s)
=
\Phib\!\left(\frac{u_s-\theta-bz}{\sqrt{1-b^2}}\right)
+
\Phib\!\left(\frac{u_s+\theta+bz}{\sqrt{1-b^2}}\right).
\label{eq:Ftheta-formula}
\end{equation}
If \(|b|=1\), then
\begin{equation}
F_{\theta,b,z}(s)
=
\1\{p(|\theta+bz|)\le s\}.
\label{eq:Ftheta-degenerate}
\end{equation}
Set \(F_{\theta,b,z}(0)=0\).

Conditional on \(Z=z\), the CDFs of the mixture \(p\)-value
and the null \(p\)-value are
\begin{align}
R_z(s)
&=
\int F_{\theta,b,z}(s)\,\nu_q(\dd\theta,\dd b),
\label{eq:Rz-generic}\\
R_{0,z}(s)
&=
\frac1{\pi_0}
\int_{\{0\}\times[-1,1]}
F_{0,b,z}(s)\,\nu_q(\dd\theta,\dd b).
\end{align}
For \(c\ge u_q\), define the normalized quantities
\begin{align}
\bar R_z(c)
&=
\frac{qR_z\{p(c)\}}{p(c)},
\\
\bar R_{0,z}(c)
&=
\frac{q\pi_0R_{0,z}\{p(c)\}}{p(c)}.
\end{align}
Then the limiting cutoff can be written as
\begin{equation}
C(z)
=
\inf\{c\ge u_q:\bar R_z(c)\ge1\}.
\end{equation}
Here and below, \(\inf\varnothing=\infty\).  The limiting conditional
FDP is
defined by
\begin{equation}
D(z)=
\begin{cases}
\bar R_{0,z}\{C(z)\},&C(z)<\infty,\\
0,&C(z)=\infty.
\end{cases}
\label{eq:D-generic}
\end{equation}

We call a finite cutoff a regular first crossing when it satisfies the following requirements.
\begin{definition}
\label{def:regular}
A finite cutoff \(C(z)>u_q\) is regular if
\begin{align}
\sup_{u_q\le c\le C(z)-\rho}\bar R_z(c)&<1
&&\text{for every }0<\rho<C(z)-u_q,
\label{eq:regular-left}\\
\sup_{C(z)<c<C(z)+\rho}\bar R_z(c)&>1
&&\text{for every }\rho>0,
\label{eq:regular-right}
\end{align}
and \(c\mapsto\bar R_{0,z}(c)\) is continuous at \(C(z)\).
\end{definition}

The following proposition reduces regularity to a local monotonicity
check.
\begin{proposition}
\label{prop:local-regularity}
Let \(C(z)>u_q\) be finite.  Suppose there exist
\(u_q<a<C(z)<b\) such that the map
\(c\mapsto\bar R_z(c)\) is differentiable on \((a,b)\) and
satisfies
\begin{equation}
 \bar R_z'(c)>0,
 \qquad a<c<b.
 \label{eq:local-regularity-positive-derivative}
\end{equation}
Suppose also that \(c\mapsto\bar R_{0,z}(c)\) is continuous at \(C(z)\).
Then \(C(z)\) is regular.
\end{proposition}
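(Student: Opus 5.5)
We verify the two inequalities of Definition~\ref{def:regular} directly from the definition of \(C(z)\) as a first crossing and from the strict monotonicity of \(\bar R_z\) on \((a,b)\). Continuity of \(\bar R_{0,z}\) at \(C(z)\) is assumed, so it needs no work.

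\emph{Step 1: \(\bar R_z\{C(z)\}=1\).} Differentiability on \((a,b)\) makes \(\bar R_z\) continuous there, and \(C(z)\in(a,b)\).
\begin{itemize}[leftmargin=*]
\item By the definition of the infimum, there are points \(c_n\downarrow C(z)\), with \(c_n\ge C(z)\), such that \(\bar R_z(c_n)\ge1\). Continuity then gives \(\bar R_z\{C(z)\}\ge1\).
\item Every \(c\in(a,C(z))\) lies in \([u_q,C(z))\), so \(\bar R_z(c)<1\) by minimality of \(C(z)\). Letting \(c\uparrow C(z)\) gives \(\bar R_z\{C(z)\}\le1\).
\end{itemize}
Hence \(\bar R_z\{C(z)\}=1\).

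\emph{Step 2: the right condition \eqref{eq:regular-right}.} Fix \(\rho>0\) and pick \(c\in(C(z),\min\{b,C(z)+\rho\})\). By the mean value theorem and \eqref{eq:local-regularity-positive-derivative}, \(\bar R_z(c)>\bar R_z\{C(z)\}=1\). Therefore the supremum over \((C(z),C(z)+\rho)\) exceeds \(1\).

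\emph{Step 3: the left condition \eqref{eq:regular-left}.} This is the main obstacle. Minimality of \(C(z)\) gives \(\bar R_z(c)<1\) pointwise on \([u_q,C(z))\), but a supremum over the compact interval \([u_q,C(z)-\rho]\) could still equal \(1\) unless \(\bar R_z\) is controlled there. The local hypothesis gives no such control on \([u_q,a]\). I would split the interval at \(a\).
\begin{itemize}[leftmargin=*]
\item \emph{On \([\max(a,C(z)-\rho'),\,C(z)-\rho]\) inside \((a,C(z))\):} \(\bar R_z\) is strictly increasing, so the supremum over \([a',C(z)-\rho]\) (with \(a<a'\)) equals \(\bar R_z(C(z)-\rho)<1\).
\item \emph{On \([u_q,a']\):} here I need upper semicontinuity of \(\bar R_z\), so that its maximum over a compact set is attained and is therefore \(<1\). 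I would obtain this from the structure of \(\bar R_z(c)=qR_z\{p(c)\}/p(c)\). The map \(c\mapsto p(c)\) is continuous, decreasing and positive, so \(s=p(c)\) ranges over the compact set \([p(a'),q]\) with \(p(a')>0\). Also \(R_z\) is a CDF in \(s\), hence nondecreasing and right-continuous. Since \(p\) is decreasing, a sequence \(c_n\to c\) has \(s_n\to s\). Along any sequence, \(\limsup R_z(s_n)\le R_z(s)\) when \(s_n\downarrow s\) (right-continuity), and \(R_z(s_n)\le R_z(s)\) when \(s_n\uparrow s\) (monotonicity). So \(R_z\) is upper semicontinuous, and therefore so is \(\bar R_z\) on \([u_q,a']\). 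Its maximum on this compact set is attained at some point \(c\) with \(c<C(z)\), so the maximum is strictly below \(1\).
\end{itemize}
Combining the two pieces gives \eqref{eq:regular-left} for every \(\rho\in(0,C(z)-u_q)\). (If \(C(z)-\rho\le a\), only the second piece is needed.)

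Together with the assumed continuity of \(\bar R_{0,z}\) at \(C(z)\), Steps 2 and 3 show that \(C(z)\) is regular.
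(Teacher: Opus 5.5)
Your proof is correct and follows essentially the same route as the paper: you get \(\bar R_z\{C(z)\}=1\) from continuity on \((a,b)\) plus the first-crossing definition, the right condition from the mean value theorem, and the left condition from upper semicontinuity of \(c\mapsto qR_z\{p(c)\}/p(c)\) together with compactness. The only difference is that your split at \(a'\) is unnecessary: the upper-semicontinuity argument applies verbatim to the whole compact interval \([u_q,C(z)-\rho]\), and this is exactly how the paper handles it.
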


\begin{proof}
The map \(c\mapsto R_z\{p(c)\}\) is a survival function and hence is
upper semicontinuous.  Since \(p\) is positive and continuous,
\(c\mapsto\bar R_z(c)\) is also upper semicontinuous.  By the definition
of \(C(z)\), \(\bar R_z(c)<1\) for every \(c<C(z)\).  Therefore, for every
\(0<\rho<C(z)-u_q\), upper semicontinuity and compactness give
\[
 \sup_{u_q\le c\le C(z)-\rho}\bar R_z(c)<1,
\]
which proves \eqref{eq:regular-left}.

By the definition of \(C(z)\), there is a sequence
\(c_n\geq C(z)\) decreasing to \(C(z)\) such that
\(\bar R_z(c_n)\geq1\).  Continuity on \((a,b)\) gives
\(\bar R_z\{C(z)\}\geq1\).  On the other hand, for any sequence
\(d_n<C(z)\) increasing to \(C(z)\), the first-crossing definition gives
\(\bar R_z(d_n)<1\), so continuity gives
\(\bar R_z\{C(z)\}\leq1\).  Hence
\(\bar R_z\{C(z)\}=1\).  The mean value theorem and
\eqref{eq:local-regularity-positive-derivative} imply strict monotonicity
on \((a,b)\), which yields
\[
 \bar R_z(c)>1\quad\text{for }C(z)<c<b.
\]
This gives \eqref{eq:regular-right}.  Continuity of the null contribution at
\(C(z)\) completes Definition~\ref{def:regular}.
\end{proof}

\subsection{Limiting cutoff and FDR}
\label{sec:finite-transfer}

Let
\begin{equation}
\Omega_N=((\Theta_1,B_1),\ldots,(\Theta_N,B_N)),
\quad
(\Theta_i,B_i){\stackrel{\text{i.i.d.}}{\sim}}\nu_q.
\end{equation}
Since the distribution of \((T_1,\ldots,T_N)\) is fully determined by \(\Omega_N\), we write \(\FDR_{\theta,\Sigma}(\BH_q)\) as \(\FDR_{\Omega_N}(\BH_q)\).

Define \(P_i=p(|T_i|)\).

\begin{align}
\widehat R_N(s)
&=
\frac1N\sum_{i=1}^N\1\{P_i\le s\},
\\
\widehat R_{0,N}(s)
&=
\frac1N\sum_{i=1}^N\1\{\Theta_i=0,\ P_i\le s\}.
\end{align}

The following lemma gives a uniform conditional empirical-CDF bound.
For a function \(f\) on \([0,1]\), write
\(\|f\|_{\infty,[0,1]}=\sup_{0\le s\le1}|f(s)|\).
\begin{lemma}
\label{lem:DKW}
For every \(z\in\R\) and \(t>0\),
\begin{equation}
\mathbb P\!\left(
\max\left\{
\|\widehat R_N-R_z\|_{\infty,[0,1]},
\|\widehat R_{0,N}-\pi_0R_{0,z}\|_{\infty,[0,1]}
\right\}>t
\,\middle|\,Z=z
\right)
\le4e^{-2Nt^2}.
\end{equation}
\end{lemma}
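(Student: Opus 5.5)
The plan is to condition on \(Z=z\) and apply the Dvoretzky--Kiefer--Wolfowitz inequality with Massart's constant to each empirical CDF separately, then combine the two bounds by a union bound.

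\textbf{Step 1: conditional i.i.d.\ structure.} Conditional on \(Z=z\), the triples \((\Theta_i,B_i,\varepsilon_i)\), \(i=1,\ldots,N\), are i.i.d., since the pairs \((\Theta_i,B_i)\) are i.i.d.\ from \(\nu_q\) and independent of \(Z\) and \(\varepsilon\). Hence the \(p\)-values \(P_i=p(|\Theta_i+B_iz+\sqrt{1-B_i^2}\varepsilon_i|)\) are conditionally i.i.d. By \eqref{eq:Ftheta-formula}, \eqref{eq:Ftheta-degenerate} and Fubini, their common CDF is
\[
\mathbb P(P_i\le s\mid Z=z)=\int F_{\theta,b,z}(s)\,\nu_q(\dd\theta,\dd b)=R_z(s).
\]

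\textbf{Step 2: the full empirical CDF.} Since \(\widehat R_N\) is the empirical CDF of i.i.d.\ draws with CDF \(R_z\), the DKW inequality (Massart's constant) applies for an arbitrary, possibly discontinuous, CDF. It gives
\[
\mathbb P\bigl(\|\widehat R_N-R_z\|_\infty>t\mid Z=z\bigr)\le2e^{-2Nt^2}.
\]

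\textbf{Step 3: the null empirical CDF.} The function \(\widehat R_{0,N}\) is not an empirical CDF, so we reduce it to one. Define the auxiliary variables \(W_i=P_i\) if \(\Theta_i=0\), and \(W_i=2\) otherwise. The \(W_i\) are conditionally i.i.d., and for \(s\in[0,1]\) their CDF is
\[
G(s)=\int_{\{0\}\times[-1,1]}F_{0,b,z}(s)\,\nu_q(\dd\theta,\dd b)=\pi_0R_{0,z}(s).
\]
Moreover, for \(s\in[0,1]\), the empirical CDF of the \(W_i\) equals \(\frac1N\sum_i\1\{\Theta_i=0,P_i\le s\}=\widehat R_{0,N}(s)\). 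The supremum over \([0,1]\) is at most the supremum over \(\R\), so DKW again gives
\[
\mathbb P\bigl(\|\widehat R_{0,N}-\pi_0R_{0,z}\|_{\infty,[0,1]}>t\mid Z=z\bigr)\le2e^{-2Nt^2}.
\]

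\textbf{Step 4: combine.} A union bound over the two events yields the stated bound \(4e^{-2Nt^2}\).

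The only point requiring care is measure-theoretic. Conditioning on \(Z=z\) for a single value of \(z\) is justified because \(Z\) is independent of \((\Omega_N,\varepsilon)\): the conditional law is simply the law obtained by plugging in the fixed value \(z\). In addition, Massart's DKW holds without any continuity assumption, which matters because the atoms at \(|b|=1\) and the mass placed at \(W_i=2\) make the relevant CDFs discontinuous.
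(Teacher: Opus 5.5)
Your proposal is correct and follows essentially the same route as the paper. The paper also uses the conditional i.i.d.\ structure given \(Z=z\), the auxiliary variables \(W_i=P_i\1\{\Theta_i=0\}+2\,\1\{\Theta_i\ne0\}\) that turn \(\widehat R_{0,N}\) into an empirical CDF with population CDF \(\pi_0R_{0,z}\) on \([0,1]\), two applications of DKW with Massart's constant, and a union bound.
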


\begin{proof}
Conditional on \(Z=z\), the \(P_i\) are i.i.d. with CDF \(R_z\).  Also define
\[
W_i=P_i\1\{\Theta_i=0\}+2\1\{\Theta_i\ne0\}.
\]
For \(0\le s\le1\),
\[
\1\{W_i\le s\}
=
\1\{\Theta_i=0,\ P_i\le s\},
\quad
\mathbb P(W_i\le s\mid Z=z)=\pi_0R_{0,z}(s).
\]
Apply the Dvoretzky--Kiefer--Wolfowitz inequality twice and take a union bound.
\end{proof}

The next lemma gives an exact characterization of the BH cutoff and FDP through the empirical CDFs.
\begin{lemma}
\label{lem:exact-BH}
Let \(R_N\) be the number of BH rejections and put
\begin{equation}
\widehat\tau_N=\frac{qR_N}{N}.
\end{equation}
If \(R_N\ge1\), then
\begin{equation}
\widehat R_N(\widehat\tau_N)=\frac{R_N}{N},
\quad
\FDP_N
=
\frac{q\widehat R_{0,N}(\widehat\tau_N)}{\widehat\tau_N}.
\label{eq:exact-FDP}
\end{equation}
With \(\widehat C_N=p^{-1}(\widehat\tau_N)\),
\begin{equation}
\widehat C_N
=
\inf\left\{
 c\ge u_q:
 \frac{q\widehat R_N\{p(c)\}}{p(c)}\ge1
\right\}.
\label{eq:exact-C}
\end{equation}
Then the BH rejection set is \(\{i:|T_i|\ge\widehat C_N\}\).
If \(R_N=0\), the set in \eqref{eq:exact-C} is empty and \(\widehat C_N=\infty\).
\end{lemma}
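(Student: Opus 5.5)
We prove Lemma~\ref{lem:exact-BH} by translating the definition of BH into the language of the empirical CDF \(\widehat R_N\); the argument is deterministic and uses no Gaussian structure. First, assume \(R_N\ge1\). By definition, \(R_N\) is the largest \(k\) with \(P_{(k)}\le qk/N\). This gives \(P_{(R_N)}\le\widehat\tau_N\), so at least \(R_N\) of the \(p\)-values are \(\le\widehat\tau_N\), i.e.\ \(N\widehat R_N(\widehat\tau_N)\ge R_N\). Conversely, suppose \(m=N\widehat R_N(\widehat\tau_N)>R_N\). Then \(P_{(m)}\le\widehat\tau_N=qR_N/N<qm/N\), which contradicts maximality of \(R_N\). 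Hence \(\widehat R_N(\widehat\tau_N)=R_N/N\), and the rejected hypotheses are exactly those with \(P_i\le\widehat\tau_N\). Ties are harmless here, since the step-up rule rejects every \(P_i\le P_{(R_N)}\) and no \(P_i\) lies in \((P_{(R_N)},\widehat\tau_N]\) by the count just established. The number of false rejections is \(V=N\widehat R_{0,N}(\widehat\tau_N)\). Dividing by \(R_N=N\widehat\tau_N/q\) gives \(\FDP_N=q\widehat R_{0,N}(\widehat\tau_N)/\widehat\tau_N\), which is \eqref{eq:exact-FDP}.

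Second, we establish the cutoff representation \eqref{eq:exact-C}. The map \(p\) is a continuous strictly decreasing bijection from \([0,\infty]\) onto \([0,1]\), and \(c\ge u_q\) corresponds to \(s=p(c)\in[0,q]\) in reverse order. Hence the infimum over \(c\) equals \(p^{-1}\) of \(\sup\{s\in(0,q]:q\widehat R_N(s)\ge s\}\), provided this supremum is attained. We show it equals \(\widehat\tau_N\):

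\begin{enumerate}[label=(\roman*)]
\item \(\widehat\tau_N\le q\) belongs to the set, by the first step.
\item Take any \(s\in(0,q]\) with \(q\widehat R_N(s)\ge s\), and let \(k=N\widehat R_N(s)\ge Ns/q>0\). Then \(P_{(k)}\le s\le qk/N\), so \(k\le R_N\), and therefore \(s\le qk/N\le\widehat\tau_N\).
\end{enumerate}

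Thus the supremum is the maximum \(\widehat\tau_N\), and \(\widehat C_N=p^{-1}(\widehat\tau_N)\) is the infimum in \eqref{eq:exact-C}. The rejection set \(\{P_i\le\widehat\tau_N\}\) equals \(\{|T_i|\ge\widehat C_N\}\), because \(p\) is strictly decreasing.

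Finally, if \(R_N=0\), argument (ii) shows that any \(s\) in the set would give \(k\ge1\) with \(P_{(k)}\le qk/N\), contradicting \(R_N=0\). Hence the set in \eqref{eq:exact-C} is empty and \(\widehat C_N=\infty\), consistent with the convention \(p^{-1}(0)=\infty\).

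The only delicate step is converting the infimum over \(c\) into a supremum over \(s\) and checking that it is attained. This relies on \(p\) being a homeomorphism onto \([0,1]\), together with the observation that \(s\in(0,q]\), so that \(k\ge1\) in step (ii).
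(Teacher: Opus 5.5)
Your proposal is correct and follows the paper's proof. It uses the same maximality contradiction to get $N\widehat R_N(\widehat\tau_N)=R_N$ and hence the FDP identity. Your steps (i)--(ii) simply spell out, via $s=p(c)$, what the paper compresses into ``since $p$ is decreasing, the BH step-up definition is equivalent to \eqref{eq:exact-C}'', including the empty-set case $R_N=0$.
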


\begin{proof}
Let \(P_{(1)}\le\cdots\le P_{(N)}\).  If more than \(R_N\) values were at most \(qR_N/N\), then
\[
P_{(R_N+1)}\le qR_N/N<q(R_N+1)/N,
\]
contradicting the maximality of \(R_N\).  Hence \(N\widehat R_N(\widehat\tau_N)=R_N\), and \eqref{eq:exact-FDP} follows.  Since \(p\) is decreasing, the BH step-up definition is equivalent to \eqref{eq:exact-C}.
\end{proof}

The next theorem establishes the limiting cutoff conditional
on the common factor \(Z\).
\begin{theorem}
\label{thm:population-transfer}
Assume that \(C(z)\) is regular for almost every \(z\).  Then, for every
\(z\) at which \(C(z)\) is regular, conditional on \(Z=z\),
\begin{equation}
\widehat C_N\longrightarrow C(z),
\quad
\FDP_N\longrightarrow D(z)
=\bar R_{0,z}\{C(z)\}
=\frac{q\pi_0R_{0,z}[p\{C(z)\}]}{p\{C(z)\}}
\label{eq:conditional-transfer}
\end{equation}
in probability.  Consequently,
\begin{equation}
\lim_{N\to\infty}
\mathbb E_{\Omega_N}
\bigl[\FDR_{\Omega_N}(\BH_q)\bigr]
=
\mathbb E[D(Z)],
\label{eq:expected-transfer}
\end{equation}
where
\begin{equation}
\FDR_{\Omega_N}(\BH_q)
=
\mathbb E_{Z,\varepsilon}\!\left[
\FDP_N\mid\Omega_N
\right].
\label{eq:FDRomega}
\end{equation}
\end{theorem}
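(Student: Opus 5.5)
Fix \(z\) at which \(C(z)\) is regular and work conditionally on \(Z=z\).
The plan has two parts: convergence of the cutoff and the FDP given \(Z=z\), then passage to expectations.

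\textbf{Step 1: the event \(E_N\).}
By Lemma~\ref{lem:DKW} with \(t=t_N\downarrow0\) and \(Nt_N^2\to\infty\) (e.g.\ \(t_N=N^{-1/3}\)), the event
\[
E_N=\Bigl\{\max\bigl(\|\widehat R_N-R_z\|_{\infty,[0,1]},\ \|\widehat R_{0,N}-\pi_0R_{0,z}\|_{\infty,[0,1]}\bigr)\le t_N\Bigr\}
\]
has conditional probability tending to one.
For \(c\ge u_q\) we have \(p(c)\le q\), so \(q/p(c)\ge1\), and this factor is bounded on compact \(c\)-ranges.
Hence on \(E_N\),
\[
\Bigl|\tfrac{q\widehat R_N\{p(c)\}}{p(c)}-\bar R_z(c)\Bigr|\le \frac{q\,t_N}{p(C(z)+\rho)}\quad\text{uniformly for } u_q\le c\le C(z)+\rho.
\]
The same bound holds for the null versions.

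\textbf{Step 2: \(\widehat C_N\to C(z)\).}
Use the exact representation \eqref{eq:exact-C} of Lemma~\ref{lem:exact-BH}.
Fix small \(\rho>0\). By \eqref{eq:regular-left}, the population ratio is at most \(1-\eta\) on \([u_q,C(z)-\rho]\) for some \(\eta>0\). By \eqref{eq:regular-right}, some \(c^*\in(C(z),C(z)+\rho)\) has \(\bar R_z(c^*)\ge1+\eta'\).
For \(N\) large, on \(E_N\) the empirical ratio is then \(<1\) on \([u_q,C(z)-\rho]\) and \(\ge1\) at \(c^*\).
Therefore \(C(z)-\rho<\widehat C_N\le c^*<C(z)+\rho\). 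In particular \(\widehat C_N<\infty\), so \(R_N\ge1\).
The edge case \(C(z)=u_q\) is excluded, since regularity requires \(C(z)>u_q\).

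\textbf{Step 3: \(\FDP_N\to D(z)\).}
By \eqref{eq:exact-FDP}, \(\FDP_N=\bar{\widehat R}_{0,N}(\widehat C_N)\), the empirical null ratio at \(\widehat C_N\).
Decompose
\[
\bigl|\FDP_N-D(z)\bigr|\le\bigl|\bar{\widehat R}_{0,N}(\widehat C_N)-\bar R_{0,z}(\widehat C_N)\bigr|+\bigl|\bar R_{0,z}(\widehat C_N)-\bar R_{0,z}\{C(z)\}\bigr|.
\]
The first term is \(O(t_N)\) on \(E_N\) by the uniform bound of Step 1.
The second term vanishes by continuity of \(\bar R_{0,z}\) at \(C(z)\), which is part of regularity, combined with Step 2.

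\textbf{Step 4: expectations.}
The identity \(\mathbb E_{\Omega_N}[\FDR_{\Omega_N}]=\mathbb E[\FDP_N]\) follows from the tower property.
Since \(0\le\FDP_N\le1\), convergence in probability gives \(\mathbb E[\FDP_N\mid Z=z]\to D(z)\) for almost every \(z\). Conditional on \(Z=z\), the \((\Theta_i,B_i,\varepsilon_i)\) are still i.i.d., so the law of \(\FDP_N\) given \(Z=z\) is the one analyzed above.
Dominated convergence over \(Z\) then yields \eqref{eq:expected-transfer}; it also needs measurability of \(z\mapsto D(z)\).

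\textbf{Main obstacle.}
The delicate point is Step 2: the uniform-in-\(c\) comparison near the first crossing, where the right-side regularity condition must supply a strict overshoot \(\eta'>0\).
Condition \eqref{eq:regular-right} asserts only that the supremum over \((C(z),C(z)+\rho)\) exceeds \(1\). So I would pick a specific point \(c^*\) in that interval attaining a value above \(1\) by a fixed margin, which is exactly what that condition gives.
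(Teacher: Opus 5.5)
Your proposal is correct and follows the paper's proof essentially step for step: DKW control of the empirical curves on a compact cutoff range $[u_q,C(z)+\rho]$, the two regularity margins bracketing $\widehat C_N$ through the exact representation in Lemma~\ref{lem:exact-BH}, continuity of $\bar R_{0,z}$ at $C(z)$ for the FDP limit, and bounded plus dominated convergence for the expectation. You flagged but left open the measurability of $z\mapsto D(z)$; the paper handles it with the rational-cutoff version $C^\circ(z)=\inf\{c\in\mathbb Q:c\ge u_q,\ \bar R_z(c)>1\}$, which equals $C(z)$ at regular points, and it is in any case automatic because $D$ is the almost-everywhere limit of the Borel functions $z\mapsto\mathbb E[\FDP_N\mid Z=z]$.
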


\begin{proof}
Fix a regular \(z\) and \(\rho,\tau>0\).  Since \(D(z)=\bar R_{0,z}\{C(z)\}\) and \(c\mapsto\bar R_{0,z}(c)\) is continuous at \(C(z)\), choose
\[
0<\rho(z)<\min\{\rho,C(z)-u_q\}
\]
so that
\begin{equation}
\sup_{C(z)-\rho(z)\le c\le C(z)+\rho(z)}
|\bar R_{0,z}(c)-D(z)|\le\tau.
\label{eq:crossing-bracket}
\end{equation}
Set \(A(z)=C(z)-\rho(z)\).  Equation~\eqref{eq:regular-left} gives
\(\sup_{u_q\le c\le A(z)}\bar R_z(c)<1\), and
\eqref{eq:regular-right} permits a choice of
\(B(z)\in(C(z),C(z)+\rho(z))\) with \(\bar R_z\{B(z)\}>1\).
Set
\[
\gamma(z)
=
\frac12\min\left\{
1-\sup_{u_q\le c\le A(z)}\bar R_z(c),\quad
\bar R_z\{B(z)\}-1
\right\}>0.
\]
Put
\[
E_N(z)=
\max\left\{
\|\widehat R_N-R_z\|_{\infty,[0,1]},
\|\widehat R_{0,N}-\pi_0R_{0,z}\|_{\infty,[0,1]}
\right\}.
\]
Lemma~\ref{lem:DKW} gives \(E_N(z)\to0\) in probability conditional on
\(Z=z\).  For every \(u_q\le c\le B(z)\),
\begin{equation}
\begin{aligned}
\left|
\frac{q\widehat R_N\{p(c)\}}{p(c)}-\bar R_z(c)
\right|
&\le \frac{qE_N(z)}{p\{B(z)\}},\\
\left|
\frac{q\widehat R_{0,N}\{p(c)\}}{p(c)}-\bar R_{0,z}(c)
\right|
&\le \frac{qE_N(z)}{p\{B(z)\}}.
\end{aligned}
\label{eq:empirical-uniform-bound}
\end{equation}
By the empirical curve we mean
\[
c\longmapsto
\frac{q\widehat R_N\{p(c)\}}{p(c)}.
\]
On the event \(qE_N(z)/p\{B(z)\}<\gamma(z)\), the definition of
\(\gamma(z)\) and \eqref{eq:empirical-uniform-bound} give
\[
\begin{aligned}
\sup_{u_q\le c\le A(z)}
\frac{q\widehat R_N\{p(c)\}}{p(c)}
&\le
\sup_{u_q\le c\le A(z)}\bar R_z(c)
+\frac{qE_N(z)}{p\{B(z)\}}\\
&<1-2\gamma(z)+\gamma(z)<1,
\end{aligned}
\]
whereas
\[
\frac{q\widehat R_N\{p(B(z))\}}{p(B(z))}
\ge
\bar R_z\{B(z)\}
-\frac{qE_N(z)}{p\{B(z)\}}
>1+2\gamma(z)-\gamma(z)>1.
\]
Lemma~\ref{lem:exact-BH} and \eqref{eq:crossing-bracket} therefore give
\[
A(z)<\widehat C_N\le B(z),
\qquad
|\FDP_N-D(z)|
\le
\tau+\frac{qE_N(z)}{p\{B(z)\}}.
\]
Because \(\rho,\tau>0\) were arbitrary, this proves
\eqref{eq:conditional-transfer} at every regular \(z\).

For completeness, the quantities in the expectation can be taken to be
measurable without making a measurable selection of \(B(z)\).  The maps
\((z,c)\mapsto\bar R_z(c)\) and
\((z,c)\mapsto\bar R_{0,z}(c)\) are jointly Borel measurable, and the first
is left-continuous in \(c\).  Define
\[
C^\circ(z)
=
\inf\{c\in\mathbb Q:c\ge u_q,\ \bar R_z(c)>1\}.
\]
At every regular \(z\), left continuity and
\eqref{eq:regular-right} imply \(C^\circ(z)=C(z)\).  Thus \(C\) has a
measurable version.  Moreover, for \(c<C(z)\),
\(\bar R_{0,z}(c)\le\bar R_z(c)<1\); continuity of the null contribution at the
crossing gives \(0\le D(z)\le1\).  Hence
\[
D^\circ(z)
=
\begin{cases}
\min\bigl[1,\bar R_{0,z}\{C^\circ(z)\}\bigr],&C^\circ(z)<\infty,\\
0,&C^\circ(z)=\infty,
\end{cases}
\]
is a measurable \([0,1]\)-valued version of \(D(z)\) at almost every
\(z\).  Use these versions below.  Conditional convergence in probability,
together with boundedness, now yields
\[
\mathbb E\bigl[|\FDP_N-D(z)|\mid Z=z\bigr]\longrightarrow0
\quad\text{for almost every }z.
\]
Dominated convergence and the tower property give
\[
\mathbb E_{\Omega_N,Z,\varepsilon}
\bigl[|\FDP_N-D(Z)|\bigr]\longrightarrow0,
\]
and a second application of the tower property yields
\eqref{eq:expected-transfer}.
\end{proof}

Lastly, we show that \(\Sigma\) can be made positive definite
by an independent Gaussian perturbation.  For a fixed realization
\(\omega=((\theta_i,b_i))_{i=1}^N\), let \(\xi\sim N(0,I_N)\) be independent and define
\begin{equation}
T_i^{(\eta)}
=
\theta_i
+
\sqrt{1-\eta^2}
\left(
 b_iZ+\sqrt{1-b_i^2}\,\varepsilon_i
\right)
+
\eta\xi_i,
\quad 0<\eta<1.
\label{eq:eta-model}
\end{equation}
Then
\begin{equation}
\Sigma_{\omega,\eta}
=
(1-\eta^2)
\left[
bb^\top+\diag(1-b_1^2,\ldots,1-b_N^2)
\right]
+
\eta^2I_N.
\label{eq:eta-cov}
\end{equation}

The following lemma shows that the perturbation makes the
correlation matrix positive definite while preserving the FDR in the limit.
\begin{lemma}
\label{lem:eta}
For every fixed \(\omega\),
\begin{equation}
\diag(\Sigma_{\omega,\eta})=\mathbf 1_N,
\quad
\Sigma_{\omega,\eta}\succeq\eta^2I_N,
\label{eq:eta-PD}
\end{equation}
and
\begin{equation}
\FDR_{\omega}^{(\eta)}(\BH_q)
\longrightarrow
\FDR_{\omega}(\BH_q)
\quad(\eta\downarrow0).
\label{eq:eta-continuity}
\end{equation}
\end{lemma}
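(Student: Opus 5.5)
The plan has two parts: a direct covariance computation for \eqref{eq:eta-PD}, and a coupling argument for \eqref{eq:eta-continuity}.

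\textbf{Diagonal and positive definiteness.} The diagonal follows from \eqref{eq:eta-cov}. The matrix \(bb^\top+\diag(1-b_1^2,\ldots,1-b_N^2)\) has diagonal entries \(b_i^2+1-b_i^2=1\). Hence \(\Sigma_{\omega,\eta}\) has diagonal \((1-\eta^2)+\eta^2=1\). For the lower bound, note that \(bb^\top\succeq0\) and that \(\diag(1-b_i^2)\succeq0\) because \(|b_i|\le1\). So the bracket is positive semidefinite, and adding \(\eta^2I_N\) gives \(\Sigma_{\omega,\eta}\succeq\eta^2I_N\).

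\textbf{Continuity of the FDR.} We couple all models on one probability space using the same \(Z,\varepsilon,\xi\). Write \(T_i=\theta_i+b_iZ+\sqrt{1-b_i^2}\,\varepsilon_i\), so that
\[
T_i^{(\eta)}=\theta_i+\sqrt{1-\eta^2}\,(T_i-\theta_i)+\eta\xi_i .
\]
This gives \(T^{(\eta)}\to T\) almost surely as \(\eta\downarrow0\). The FDP of BH is a function of the \(p\)-value vector \((p(|T_1|),\ldots,p(|T_N|))\), and it is bounded by \(1\). It is also locally constant at every point where no \(P_i\) lies on a threshold \(qk/N\), for \(k=1,\ldots,N\). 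Indeed, \(R\) and the rejection set are determined by the finitely many comparisons \(P_i\le qk/N\). If all of these comparisons are strict, they persist under small perturbations, so the rejection set and hence \(V\) and \(R\) stay fixed. Since \(p\) is continuous, \(P^{(\eta)}\to P\) almost surely. Therefore \(\FDP^{(\eta)}\to\FDP\) almost surely, provided \(\mathbb P(P_i=qk/N)=0\) for all \(i,k\). Dominated convergence then yields \eqref{eq:eta-continuity}.

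\textbf{Main obstacle: degenerate coordinates.} The step needing care is the null event \(\{P_i=qk/N\}\). We need \(\mathbb P(|T_i|=p^{-1}(qk/N))=0\). For this, check that each \(T_i\) has a nondegenerate marginal law: \(T_i\sim N(\theta_i,1)\), since its variance is \(b_i^2+1-b_i^2=1\). This holds even when \(|b_i|=1\), because then \(T_i=\theta_i\pm Z\) is still continuous. So each \(|T_i|\) has no atoms, and a finite union over \(i,k\) of null events is null. This completes the plan.
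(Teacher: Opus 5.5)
Your proposal is correct and takes the same route as the paper. The diagonal identity and \(\Sigma_{\omega,\eta}\succeq\eta^2I_N\) are read off directly from \eqref{eq:eta-cov}. Continuity then follows from three facts under the common coupling: the BH rejection set is determined by the finitely many comparisons \(P_i\le qk/N\); these comparisons are almost surely strict because each \(T_i\sim N(\theta_i,1)\) is atomless; and bounded convergence applies since \(\FDP\le1\). Your explicit check that \(T_i\) stays nondegenerate when \(|b_i|=1\) fills in a detail the paper only asserts.
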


\begin{proof}
Equation \eqref{eq:eta-PD} is immediate from \eqref{eq:eta-cov}.  Under the coupling \eqref{eq:eta-model}, \(T^{(\eta)}\to T\) almost surely.  Define
\(P_i^{(\eta)}=p(|T_i^{(\eta)}|)\).  Then
\(P_i^{(\eta)}\to P_i=p(|T_i|)\) almost surely.  Each \(P_i\) has a continuous marginal law, so
\[
\mathbb P\left\{
P_i\in\left\{\frac{qk}{N}:1\le k\le N\right\}
\text{ for some }i
\right\}=0.
\]
Off this event, the truth values of all finitely many comparisons
\(P_i^{(\eta)}\le qk/N\) agree with those of
\(P_i\le qk/N\) for all sufficiently small \(\eta\).  Hence the BH rejection count and rejected-null set are eventually constant.  Bounded convergence proves \eqref{eq:eta-continuity}.
\end{proof}

{\subsection{Construction of
\texorpdfstring{\(\nu_{q,r}\)}{nu(q,r)}}}
\label{sec:continuum-design}

We construct a family of laws \(\nu_{q,r}\) indexed by \(r\downarrow0\).  For \(a,y\ge0\), put
{\[
L_a(y)=e^{-a^2/2}\cosh(ay).
\]}
For \(y>1\), let
{\[
a(y)=\operatorname*{arg\,max}_{\alpha\ge0} L_\alpha(y),
\quad
y(\alpha)=a^{-1}(\alpha),\quad \alpha>0.
\]}
Set
\begin{equation}
a_q=a(y_q).
\label{eq:aq-def}
\end{equation}
Appendix~\ref{app:envelope} establishes the properties of these functions that we use below.

The quantities \(b_r,\tau_r,\mu_r,\pi_{0,r}\), and \(c_r\) are defined below.  The law \(\nu_{q,r}\) has the three components displayed in Table~\ref{tab:two-construction}.

\begin{table}[ht]

\centering
\caption{Two-sided population construction defining \(\nu_{q,r}\).}
\label{tab:two-construction}
\begin{tabular}{lccc}
\toprule
Type & Mass & \(\Theta\) & \(B\)\\
\midrule
Nulls & \(\pi_{0,r}\) & \(0\) & \(r\)\\
Primary non-nulls & \(r\) & \(c_r+y(b_r)\) & \(1\)\\
Secondary non-nulls & \(\tau_r\mu_r(\dd a)\) & \(a/r+y(a)\) &
\(1\), \(a\in[b_r,a_q]\)\\
\bottomrule
\end{tabular}
\end{table}

Equivalently, for every bounded measurable \(f:\R\times[-1,1]\to\R\),
\begin{equation}
\int f\,\dd\nu_{q,r}
=\pi_{0,r}f(0,r)+r f(c_r+y(b_r),1)
+\tau_r\int_{b_r}^{a_q}f\!\left(\frac ar+y(a),1\right)\mu_r(\dd a).
\label{eq:nuqr}
\end{equation}

\noindent\textbf{Definition of secondary non-nulls.}
For small \(r\), set
{\[
b_r=r\,p^{-1}(r^2).
\]}
For the remaining definitions, take \(r\) small enough that \(0<b_r<a_q\).
For \(a\in[b_r,a_q]\), define
\begin{equation}
W_r(a)=\frac{\bigl[1-qM\{y(a)\}\bigr]p(a/r)}q.
\label{eq:Wr}
\end{equation}
By Lemma~\ref{lem:envelope} and \eqref{eq:deficit-properties}, the factor \(1-qM\{y(a)\}\) is nonnegative and decreasing on \([b_r,a_q]\); the factor \(p(a/r)\) is decreasing in \(a\).  Hence \(W_r\) is continuous and decreasing, with \(W_r(a_q)=0\).
Set
\begin{equation}
\tau_r
:=
W_r(b_r)
=
\frac{\bigl[1-qM\{y(b_r)\}\bigr]r^2}{q}
=O(r^2).
\label{eq:taur}
\end{equation}
We therefore call this the secondary non-null block; its total
mass is \(O(r^2)=o(r)\).
Let \(\mu_r\) be the probability measure on \([b_r,a_q]\) determined by
\begin{equation}
\mu_r([a,a_q])=\frac{W_r(a)}{\tau_r},
\quad b_r\le a\le a_q.
\label{eq:mur-survival}
\end{equation}
In particular, \(\mu_r([b_r,a_q])=W_r(b_r)/\tau_r=1\).
\noindent\textbf{Definition of nulls.}
Put
\[
\pi_{0,r}=1-r-\tau_r.
\]
Since \(\pi_{0,r}+r+\tau_r=1\), the masses in \eqref{eq:nuqr} define a probability measure.

\medskip
\noindent\textbf{Definition of primary non-nulls.}
For all sufficiently small \(r\), \(\pi_{0,r}>0\).  Define \(c_r\) by
\begin{equation}
p(c_r)
=
\frac{q(r+\tau_r)}{1-q\pi_{0,r}}.
\label{eq:cr}
\end{equation}
The numerator is positive.  Since \(\tau_r=O(r^2)\) and \(\pi_{0,r}=1-r-\tau_r\),
\[
1-q\pi_{0,r}=1-q+qr+q\tau_r\longrightarrow1-q>0,
\quad
q(r+\tau_r)\longrightarrow0.
\]
Thus the right side of \eqref{eq:cr} is positive and smaller than one for small \(r\), so \(c_r\) is well-defined.

\subsection{Population FDR analysis under \texorpdfstring{\(\nu_{q,r}\)}{nu(q,r)}}

\subsubsection{Preliminaries}

Apply Subsection~\ref{sec:generic} with \(\nu_q=\nu_{q,r}\).  Denote the resulting quantities by
\[
R_{r,z},
\quad
R_{0,r,z},
\quad
\bar R_{r,z},
\quad
\bar R_{0,r,z},
\quad
C_r(z).
\]
For \(c\ge u_q\),
\begin{equation}
\bar R_{r,z}(c)
=\bar R_{0,r,z}(c)+\bar R_{1,r,z}(c)+\bar R_{2,r,z}(c),
\label{eq:population-curve}
\end{equation}
where
\begin{align}
\bar R_{0,r,z}(c)
&=
q\pi_{0,r}
\frac{R_{0,r,z}\{p(c)\}}{p(c)},
\\
\bar R_{1,r,z}(c)
&=
\frac{qr}{p(c)}
\1\{|c_r+y(b_r)+z|\ge c\},
\notag\\
\bar R_{2,r,z}(c)
&=
\frac{q\tau_r}{p(c)}
\int_{b_r}^{a_q}
\1\!\left\{
\left|\frac ar+y(a)+z\right|\ge c
\right\}
\mu_r(\dd a).
\notag
\end{align}
For a measurable definition at every \(z\), put
\[
 C_r^\circ(z)
 =\inf\{c\in\mathbb Q:c\ge u_q,\ \bar R_{r,z}(c)>1\}
\]
and define
\[
 D_r(z)=
 \begin{cases}
 \min\bigl[1,\bar R_{0,r,z}\{C_r^\circ(z)\}\bigr],
     &C_r^\circ(z)<\infty,\\
 0,&C_r^\circ(z)=\infty.
 \end{cases}
\]
The joint Borel measurability of \((z,c)\mapsto\bar R_{r,z}(c)\) and
\((z,c)\mapsto\bar R_{0,r,z}(c)\) makes both \(C_r^\circ\) and \(D_r\)
Borel measurable.  At every regular crossing, the rational-cutoff argument
in the proof of Theorem~\ref{thm:population-transfer} gives
\(C_r^\circ(z)=C_r(z)\), and \(\bar R_{0,r,z}\{C_r(z)\}\le1\).  Hence
\begin{equation}
D_r(z)=\bar R_{0,r,z}\{C_r(z)\}.
\label{eq:Dr}
\end{equation}
Moreover, \(0\le D_r(z)\le1\) for every \(z\).

We next state two asymptotic lemmas that will be applied in the following
analysis.  The first gives the order of \(b_r\) and \(c_r\) as
\(r\downarrow0\).
\begin{lemma}
\label{lem:scale-properties}
As \(r\downarrow0\),
\begin{equation}
b_r\downarrow0,
\quad
\frac{b_r}{r}=p^{-1}(r^2)\to\infty,
\quad
p(b_r/r)=r^2,
\quad
y(b_r)\downarrow1.
\label{eq:br-properties}
\end{equation}
Moreover,
\begin{equation}
p(c_r)\sim\frac{qr}{1-q},
\quad
c_r\to\infty,
\quad
rc_r\to0,
\quad
\frac{b_r}{r}-c_r\to\infty.
\label{eq:cr-properties}
\end{equation}
\end{lemma}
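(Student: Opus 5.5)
The plan is to verify the four claims in \eqref{eq:br-properties} directly from the definition \(b_r=r\,p^{-1}(r^2)\), then obtain \eqref{eq:cr-properties} from \eqref{eq:cr} together with standard Gaussian tail asymptotics.

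\textbf{Properties of \(b_r\).} By definition, \(b_r/r=p^{-1}(r^2)\), so \(p(b_r/r)=r^2\). Since \(r^2\downarrow0\), we get \(p^{-1}(r^2)\to\infty\). The Mills-ratio bound \(2\Phib(x)\le 2\phi(x)/x\le e^{-x^2/2}\) for \(x\ge1\) gives \(p^{-1}(s)\le\sqrt{2\log(1/s)}\) for small \(s\). Hence
\[
b_r\le r\sqrt{4\log(1/r)}\to0.
\]
Monotonicity of \(b_r\) as \(r\downarrow0\), at least for small \(r\), follows because \(r\mapsto r\,p^{-1}(r^2)\) has derivative \(p^{-1}(r^2)+r\cdot 2r/|p'(p^{-1}(r^2))|>0\). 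For \(y(b_r)\downarrow1\), we use the Appendix~\ref{app:envelope} facts that \(y=a^{-1}\) is continuous and increasing on \((0,\infty)\) with \(y(\alpha)\to1\) as \(\alpha\downarrow0\). This is the small-\(a\) expansion of the maximizer of \(-a^2/2+\log\cosh(ay)\), which satisfies \(a=y\tanh(ay)\), so \(y(a)=a/\tanh a\to1\). Composing with \(b_r\downarrow0\) finishes this part.

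\textbf{Properties of \(c_r\).} From \eqref{eq:taur}, \(\tau_r=O(r^2)\) and \(\pi_{0,r}\to1\), so \eqref{eq:cr} gives
\[
p(c_r)=\frac{qr(1+O(r))}{1-q+O(r)}\sim\frac{qr}{1-q}.
\]
In particular \(p(c_r)\to0\), so \(c_r\to\infty\). The same tail bound gives \(c_r\le\sqrt{2\log\{1/p(c_r)\}}=O(\sqrt{\log(1/r)})\), and therefore \(rc_r\to0\).

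\textbf{The main step: \(b_r/r-c_r\to\infty\).} This is the only step needing a quantitative comparison of Gaussian quantiles. I would use the two-sided asymptotic \(p^{-1}(s)=\sqrt{2\log(1/s)}\,(1+o(1))\), which follows from the Mills-ratio bounds \(\phi(x)x/(1+x^2)\le\Phib(x)\le\phi(x)/x\). It gives
\[
\frac{b_r}{r}=p^{-1}(r^2)\sim 2\sqrt{\log(1/r)},\qquad c_r=p^{-1}\{p(c_r)\}\sim\sqrt{2\log(1/r)}.
\]
The difference is therefore \((2-\sqrt2)\sqrt{\log(1/r)}\,(1+o(1))\to\infty\). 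To avoid manipulating \(o(1)\) terms, one can argue directly with explicit inequalities. For small \(r\) we have \(p(c_r)\ge qr/(2(1-q))\), so \(c_r\le\sqrt{2\log(1/r)+K}\) for a constant \(K\). From the lower Mills bound, \(b_r/r\ge\sqrt{4\log(1/r)-3\log\log(1/r)}\) for small \(r\). The difference of these two bounds diverges.
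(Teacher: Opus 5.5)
This is essentially the paper's proof. Both obtain \(b_r/r=p^{-1}(r^2)\sim2\sqrt{\log(1/r)}\) and \(c_r\sim\sqrt{2\log(1/r)}\) from the Gaussian quantile asymptotic, and both get \(y(b_r)\to1\) from Lemma~\ref{lem:envelope}. For \(c_r\), the paper sandwiches it between \(p^{-1}(\{q/(1-q)\pm\delta\}r)\), while you compose \(p^{-1}(s)\sim\sqrt{2\log(1/s)}\) with \(\log\{1/p(c_r)\}=\log(1/r)+O(1)\); both are fine.

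Two side computations are wrong, although neither changes the conclusions.

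First, \(p^{-1}\) is decreasing, so the derivative is
\(\frac{\dd}{\dd r}\{r\,p^{-1}(r^2)\}=x-2r^2/|p'(x)|=x-2/\lambda(x)\) with \(x=p^{-1}(r^2)\). It is not a sum of two positive terms. It is still positive for small \(r\): \(\lambda(x)>x\) gives \(x-2/\lambda(x)>x-2/x>0\) once \(x>\sqrt2\). The paper's proof does not address monotonicity of \(b_r\) at all; it only establishes \(b_r\to0\), which is all that is used later.

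Second, \(y(a)=a/\tanh a\) is not the inverse of \(a(\cdot)\). The first-order condition \(a=y\tanh(ay)\) gives \(y=a/\tanh(ay)\), or parametrically \(a=\sqrt{t\tanh t}\) and \(y=\sqrt{t/\tanh t}\) with \(t=ay\). Letting \(t\downarrow0\) still yields \(y(a)\to1\) as \(a\downarrow0\), which is exactly what Lemma~\ref{lem:envelope} provides.
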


\begin{proof}
The identity \(p(b_r/r)=r^2\) follows directly from \(b_r=r\,p^{-1}(r^2)\).  Applying \eqref{eq:mills-quantile} in Lemma~\ref{lem:mills} gives
\[
\frac{b_r}{r}
=p^{-1}(r^2)
\sim
\sqrt{4\log(1/r)}
\to\infty,
\quad
b_r
\sim
2r\sqrt{\log(1/r)}
\to0.
\]
The last limit in \eqref{eq:br-properties} follows from Lemma~\ref{lem:envelope}: \(a(y)\) is continuous and strictly increasing from \((1,\infty)\) onto \((0,\infty)\), with \(\lim_{y\downarrow1}a(y)=0\).  Since \(y(\cdot)\) is its inverse and \(b_r\downarrow0\), we have \(y(b_r)\downarrow1\).

By \eqref{eq:cr} and \eqref{eq:taur},
\[
p(c_r)
=
\frac{q(r+\tau_r)}{1-q\pi_{0,r}}
\sim
\frac{qr}{1-q}.
\]
{For every fixed
\(\delta\in(0,q/(1-q))\), for all sufficiently small \(r\),
\[
 \left\{\frac{q}{1-q}-\delta\right\}r
 \le p(c_r)\le
 \left\{\frac{q}{1-q}+\delta\right\}r.
\]
Since \(p^{-1}\) is decreasing,
\[
 p^{-1}\!\left(\left\{\frac{q}{1-q}+\delta\right\}r\right)
 \le c_r
 \le p^{-1}\!\left(\left\{\frac{q}{1-q}-\delta\right\}r\right).
\]
Equation~\eqref{eq:mills-quantile} in Lemma~\ref{lem:mills}, applied to
the two fixed positive constants, shows that both endpoints are
asymptotic to \(\sqrt{2\log(1/r)}\).  Hence
\[
c_r\sim\sqrt{2\log(1/r)}.
\]}
Hence \(c_r\to\infty\) and \(rc_r\to0\).  Combining this with
\[
\frac{b_r}{r}
=p^{-1}(r^2)
\sim
\sqrt{4\log(1/r)}
\]
gives
\[
\frac{b_r}{r}-c_r
\sim
\left(2-\sqrt2\right)\sqrt{\log(1/r)}
\longrightarrow\infty.
\]
\end{proof}

The next lemma gives a uniform approximation to the normalized
conditional FDP contribution from nulls.
\begin{lemma}
\label{lem:null-tail}
For every \(A>0\) and \(T\ge0\), as \(r\downarrow0\),
\begin{equation}
\sup_{|z|\le T}
\sup_{0\le c\le A/r}
\left|
\frac{R_{0,r,z}\{p(c)\}}{p(c)}
-L_{rc}(|z|)
\right|
\longrightarrow0.
\label{eq:null-tail}
\end{equation}
\end{lemma}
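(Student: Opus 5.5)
Conditional on \(Z=z\), the nulls have \(\Theta=0\), \(B=r\), so \(T_i=rz+\sqrt{1-r^2}\,\varepsilon_i\) and, by \eqref{eq:Ftheta-formula} with \(u_{p(c)}=c\),
\[
R_{0,r,z}\{p(c)\}
=\Phib\!\left(\frac{c-rz}{\sqrt{1-r^2}}\right)+\Phib\!\left(\frac{c+rz}{\sqrt{1-r^2}}\right).
\]
We also have \(p(c)=2\Phib(c)\) and \(L_{rc}(|z|)=e^{-r^2c^2/2}\cosh(rcz)=\tfrac12\{e^{rcz-r^2c^2/2}+e^{-rcz-r^2c^2/2}\}\). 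It therefore suffices to show, uniformly in \(|z|\le T\) and \(0\le c\le A/r\) and with \(w\) ranging over \(\{z,-z\}\),
\[
\frac{\Phib\{(c-rw)/\sqrt{1-r^2}\}}{\Phib(c)}-e^{rcw-r^2c^2/2}\longrightarrow0 .
\]

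The plan is to write the Gaussian tail ratio exactly through the hazard and then control the deviation from the exponential tilt. Put \(x=c\) and \(x'=(c-rw)/\sqrt{1-r^2}\). We have \(\log\{\Phib(x')/\Phib(x)\}=-\int_x^{x'}\lambda(t)\dd t\). Expanding gives
\[
x'-x=-rw+c\{(1-r^2)^{-1/2}-1\}+O(r^3|w|)=-rw+\tfrac12 r^2c+O(r^3+r^4c),
\]
and over \(c\le A/r\) this is \(O(r)\) uniformly, since \(r^2c\le Ar\). This rules out the naive approximation \(\lambda(t)\approx t\), which fails for bounded \(c\). Instead I would use \(\lambda(t)=t+\epsilon(t)\), where \(\epsilon\) is bounded on \([0,\infty)\), \(\epsilon(t)\to0\) as \(t\to\infty\) (indeed \(\epsilon(t)\le 1/t\)), and \(\lambda\) is Lipschitz (\(0<\lambda'<1\)).

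With this split, \(\int_x^{x'}t\dd t=\tfrac12(x'^2-x^2)=x(x'-x)+\tfrac12(x'-x)^2\). Substituting the expansion, this equals \(-rcw+\tfrac12 r^2c^2+O(r^2)+O(r^3c)+\cdots\). The terms to watch are \(c\cdot O(r^3+r^4c)\le O(r^2+r^2A^2)\to0\) and \(\tfrac12(x'-x)^2=O(r^2)\). Together these reproduce the exponent \(rcw-r^2c^2/2\) up to \(o(1)\) uniformly.

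The remaining correction is \(\int_x^{x'}\epsilon(t)\dd t\), bounded by \(\sup|\epsilon|\cdot|x'-x|=O(r)\to0\). That step needs only boundedness of \(\epsilon\), not its decay. Hence the log-ratio equals \(rcw-r^2c^2/2+o(1)\) uniformly. The exponent itself is bounded, \(|rcw|\le AT\), so exponentiating preserves uniform additive convergence. Summing the \(w=z\) and \(w=-z\) terms, each divided by \(2\Phib(c)\), gives \eqref{eq:null-tail}.

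I expect the main obstacle to be bookkeeping uniformity across the whole range \(0\le c\le A/r\), where \(c\) is unbounded but \(rc\) stays bounded. The decisive point is that every error term carries at least a factor \(r\) beyond what \(rc\le A\) absorbs. The \(r^2c^2\) term must be kept exactly rather than discarded, since it is \(O(1)\). If the hazard decomposition gets messy, a fallback is to use Mills-ratio bounds \(\phi(x)x/(1+x^2)\le\Phib(x)\le\phi(x)/x\) for \(c\ge K\) and plain continuity on the compact region \(c\le K\), letting \(K\to\infty\) after \(r\to0\).
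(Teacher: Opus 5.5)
Your proof is correct, and it takes a genuinely different route from the paper's.

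\textbf{The paper's route.} The paper splits \([0,A/r]\) at a large constant \(C\).
On \([C,A/r]\) it applies the Mills local-ratio expansion \eqref{eq:mills-local-ratio} to write
\[
\frac{\Phib(x_\pm)}{\Phib(c)}=\frac{c}{x_\pm}\exp\left\{-\frac{x_\pm^2-c^2}{2}\right\}\{1+\rho\},
\]
with remainders \(\rho\) that vanish only as \(C\to\infty\); this is \eqref{eq:null-tail-expansion}--\eqref{eq:null-tail-remainder}.
On \([0,C]\) it shows that \(R_{0,r,z}\{p(c)\}/p(c)\) and \(L_{rc}(|z|)\) are both \(1+o(1)\), using the bounded derivative of \(\Phib\).
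It then lets \(C\to\infty\) after \(r\downarrow0\), which is essentially your stated fallback.

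\textbf{Your route.} You use the exact identity \(\log\{\Phib(x')/\Phib(c)\}=-\int_c^{x'}\lambda(t)\dd t\) on the whole range at once.
The linear part of \(\lambda\) reproduces the exponent \(rcw-r^2c^2/2\) up to \(O(r^2)\), since every error term carries a factor of \(r\) beyond what \(rc\le A\) absorbs.
The remainder \(\lambda(t)-t\) is controlled by the \(O(r)\) length of the integration interval, not by its decay.
This gives one regime, no iterated limit, and an explicit \(O(r)\) error in the log-ratio, uniform in \(|z|\le T\) and \(0\le c\le A/r\).
It also yields \(Q_\pm=e^{\pm rcz-(rc)^2/2}\{1+O(r)\}\) directly, which would cover the later places in Lemmas~\ref{lem:primary} and~\ref{lem:null-only} where the paper cites \eqref{eq:null-tail-expansion}.
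The paper's split, in exchange, reuses its general Mills machinery and produces the \(Q_\pm\) expansion as a by-product.

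\textbf{Two small corrections.}
For small \(c\), \(x'\) can be as low as \(-rT/\sqrt{1-r^2}<0\). So you need \(\lambda(t)-t\) bounded on a half-line such as \([-1,\infty)\), not just \([0,\infty)\). This holds by continuity of \(\lambda\) together with \(\lambda(t)-t<1/t\) from \eqref{eq:mills-hazard}.
The Lipschitz property \(0<\lambda'<1\) is never used, and the paper does not prove it, so you can drop it.
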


\begin{proof}
For \(0<r<1\), the null pair \((\Theta,B)=(0,r)\) gives
\[
R_{0,r,z}\{p(c)\}
=
\mathbb P\{|rz+\sqrt{1-r^2}\,\varepsilon|\ge c\}.
\]
Write
\[
x_\pm=\frac{c\mp rz}{\sqrt{1-r^2}}.
\]
Then
\[
R_{0,r,z}\{p(c)\}=\Phib(x_+)+\Phib(x_-).
\]
Fix \(C>0\).  Uniformly for \(|z|\le T\) and \(C\le c\le A/r\),
\[
\begin{aligned}
x_\pm-c
&=c\left\{\frac1{\sqrt{1-r^2}}-1\right\}
\mp\frac{rz}{\sqrt{1-r^2}}\\
&=\frac{cr^2}{\sqrt{1-r^2}\{1+\sqrt{1-r^2}\}}
\mp\frac{rz}{\sqrt{1-r^2}}.
\end{aligned}
\]
Therefore,
\[
|x_\pm-c|
\le
\frac{Ar}{\sqrt{1-r^2}\{1+\sqrt{1-r^2}\}}
+\frac{Tr}{\sqrt{1-r^2}}
=O_{A,T}(r).
\]
Consequently, for all sufficiently small \(r\), the shifts \(x_\pm-c\) are
uniformly bounded and \(x_\pm\ge C/2\).  The uniform expansion
\eqref{eq:mills-local-ratio} in Lemma~\ref{lem:mills} therefore gives
remainders \(\rho_{r,C,\pm}(c,z)\) such that
\begin{equation}
\begin{aligned}
\frac{\Phib(x_\pm)}{\Phib(c)}
&=
\frac{c}{x_\pm}
\exp\left\{-\frac{x_\pm^2-c^2}{2}\right\}
\{1+\rho_{r,C,\pm}(c,z)\},\\
-\frac{x_\pm^2-c^2}{2}
&=
-\frac{(rc)^2}{2}\pm rcz+O_{A,T}(r^2).
\end{aligned}
\label{eq:null-tail-expansion}
\end{equation}
and
\begin{equation}
\lim_{C\to\infty}\limsup_{r\downarrow0}
\sup_{|z|\le T}\sup_{C\le c\le A/r}
\max_{\pm}|\rho_{r,C,\pm}(c,z)|=0.
\label{eq:null-tail-remainder}
\end{equation}
For each fixed \(C>0\), \(c/x_\pm\to1\) {uniformly for \(|z|\le T\) and \(C\le c\le A/r\)} as
\(r\downarrow0\).  To identify the leading term, put \(t=rc\), so that
\(0\le t\le A\).  Then
\[
\frac12\left[
\exp\left\{-\frac{t^2}{2}+tz\right\}
+\exp\left\{-\frac{t^2}{2}-tz\right\}
\right]
=e^{-t^2/2}\cosh(tz)
=L_t(|z|).
\]
Because \(t\) and \(z\) range over compact sets, these exponential factors
are uniformly bounded.  Thus, after summing the two tails in
\eqref{eq:null-tail-expansion} and using \(p(c)=2\Phib(c)\), the errors from
\(c/x_\pm-1\) and from the \(O_{A,T}(r^2)\) term vanish uniformly for each
fixed \(C\).  Consequently, there is a constant \(K_{A,T}<\infty\), independent
of \(C\), such that, for
\[
\eta(C)
=
\limsup_{r\downarrow0}
\sup_{|z|\le T}
\sup_{C\le c\le A/r}
\left|
\frac{R_{0,r,z}\{p(c)\}}{p(c)}
-L_{rc}(|z|)
\right|,
\]
we have
\[
\eta(C)
\le
K_{A,T}
\limsup_{r\downarrow0}
\sup_{|z|\le T}\sup_{C\le c\le A/r}
\max_{\pm}|\rho_{r,C,\pm}(c,z)|.
\]
The right side tends to zero as \(C\to\infty\) by
\eqref{eq:null-tail-remainder}.  Hence \(\eta(C)\to0\).

It remains to control \(0\le c\le C\).  On this range, \(p(c)\ge p(C)>0\).  Also, uniformly over \(0\le c\le C\) and \(|z|\le T\),
\[
R_{0,r,z}\{p(c)\}
=
\Phib\!\left(\frac{c-rz}{\sqrt{1-r^2}}\right)
+
\Phib\!\left(\frac{c+rz}{\sqrt{1-r^2}}\right)
=p(c)+O_{C,T}(r),
\]
because the two arguments differ from \(c\) by \(O_{C,T}(r)\) and \(\Phib\) has bounded derivative.  Hence
\[
\sup_{|z|\le T}\sup_{0\le c\le C}
\left|
\frac{R_{0,r,z}\{p(c)\}}{p(c)}-1
\right|
\longrightarrow0.
\]
Moreover, \(L_{rc}(|z|)=e^{-(rc)^2/2}\cosh(rc|z|)\to1\) {uniformly for \(|z|\le T\) and \(0\le c\le C\)}.  Thus
\[
\limsup_{r\downarrow0}
\sup_{|z|\le T}
\sup_{0\le c\le A/r}
\left|
\frac{R_{0,r,z}\{p(c)\}}{p(c)}-L_{rc}(|z|)
\right|
\le \eta(C).
\]
Letting \(C\to\infty\) proves the claim.
\end{proof}

The following lemma provides a generic criterion for deriving the limit of
\(C_r(z)\).
\begin{lemma}
\label{lem:localized-crossing}
Let \(K\) be compact.  For \(z\in K\), let
\(m_r(z)>u_q\) and \(w_r(z)>0\).
Suppose that, for every fixed \(H>0\),
\begin{equation}
 \inf_{z\in K}\{m_r(z)-Hw_r(z)\}>u_q
 \label{eq:localized-crossing-domain}
\end{equation}
for all sufficiently small \(r\), and
\begin{equation}
 \sup_{z\in K}\sup_{|h|\le H}
 \left|
 \bar R_{r,z}\{m_r(z)+w_r(z)h\}
 -d(z)-\{1-d(z)\}e^{\xi(z)h}
 \right|\longrightarrow0,
 \label{eq:localized-crossing-full}
\end{equation}
where \(d:K\to[0,1)\) and \(\xi:K\to(0,\infty)\) are continuous.
Suppose also that, for every \(h_0>0\),
\begin{equation}
 \limsup_{r\downarrow0}\sup_{z\in K}
 \sup_{u_q\le c\le m_r(z)-w_r(z)h_0}\bar R_{r,z}(c)<1.
 \label{eq:localized-crossing-left}
\end{equation}
Then
\begin{equation}
 \sup_{z\in K}
 \left|\frac{C_r(z)-m_r(z)}{w_r(z)}\right|
 \longrightarrow0.
 \label{eq:localized-crossing-cutoff}
\end{equation}
\end{lemma}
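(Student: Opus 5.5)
Fix \(\epsilon>0\); the aim is to show that, for all sufficiently small \(r\), the inequality \(|C_r(z)-m_r(z)|\le\epsilon w_r(z)\) holds uniformly in \(z\in K\). Recall that \(C_r(z)=\inf\{c\ge u_q:\bar R_{r,z}(c)\ge1\}\). So it suffices to prove two things uniformly over \(K\). \emph{Lower bound}: \(\bar R_{r,z}(c)<1\) for every \(u_q\le c\le m_r(z)-\epsilon w_r(z)\). \emph{Upper bound}: there is a point \(c\le m_r(z)+\epsilon w_r(z)\) with \(\bar R_{r,z}(c)\ge1\). For the upper bound I will just use the single point \(c=m_r(z)+\epsilon w_r(z)\). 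This point is admissible (it is \(\ge u_q\)) by \eqref{eq:localized-crossing-domain}, and in fact already because \(m_r(z)>u_q\).

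\textbf{Upper bound.} Define the limit profile \(g_z(h)=d(z)+\{1-d(z)\}e^{\xi(z)h}\). It satisfies \(g_z(0)=1\). Since \(1-d(z)>0\) and \(\xi(z)>0\), it is strictly increasing in \(h\). At \(h=\epsilon\),
\[
g_z(\epsilon)-1=\{1-d(z)\}\{e^{\xi(z)\epsilon}-1\}.
\]
The maps \(d\) and \(\xi\) are continuous on the compact set \(K\), so \(1-d\) and \(\xi\) have positive minima there. Hence this gap is bounded below by some \(\kappa_\epsilon>0\) uniformly in \(z\in K\). Now apply \eqref{eq:localized-crossing-full} with \(H=\epsilon\). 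For small \(r\) it gives
\[
\bar R_{r,z}\{m_r(z)+\epsilon w_r(z)\}>1+\kappa_\epsilon/2>1
\]
for all \(z\in K\). Therefore \(C_r(z)\le m_r(z)+\epsilon w_r(z)\).

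\textbf{Lower bound.} Here the range \(c\le m_r(z)-\epsilon w_r(z)\) is split at a fixed \(H\ge\epsilon\), into a far-left part and a local part.

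The far-left part is \(u_q\le c\le m_r(z)-Hw_r(z)\). With \(h_0=H\), hypothesis \eqref{eq:localized-crossing-left} makes \(\bar R_{r,z}\) bounded away below \(1\) there for small \(r\). In fact any \(h_0>0\) works for this part. So the simplest route is to take \(h_0=\epsilon\) directly: \eqref{eq:localized-crossing-left} with \(h_0=\epsilon\) already gives
\[
\sup_{z\in K}\ \sup_{u_q\le c\le m_r(z)-\epsilon w_r(z)}\bar R_{r,z}(c)<1
\]
for all sufficiently small \(r\). This yields \(C_r(z)\ge m_r(z)-\epsilon w_r(z)\) at once, so the local expansion is not even needed on the left.

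If the hypothesis is read more cautiously, with the \(\limsup\) only strictly below \(1\) along \(r\downarrow0\), the conclusion is unchanged. A \(\limsup\) strictly less than \(1\) implies the supremum is \(<1\) for all small \(r\), by the definition of \(\limsup\). As a robustness check, on the local window \(h\in[-H,-\epsilon]\) the limit profile satisfies
\[
g_z(h)\le 1-\{1-d(z)\}\{1-e^{-\xi(z)\epsilon}\}.
\]
Combined with the uniform convergence \eqref{eq:localized-crossing-full}, this gives the same conclusion there as well.

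Combining the two bounds gives \(\sup_{z\in K}|C_r(z)-m_r(z)|/w_r(z)\le\epsilon\) for small \(r\). Since \(\epsilon>0\) was arbitrary, this is \eqref{eq:localized-crossing-cutoff}.

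\textbf{Main obstacle.} The only delicate point is making every estimate uniform over \(z\in K\). This rests on compactness of \(K\) together with continuity of \(d\) and \(\xi\), which bound the increments of \(g_z\) away from zero uniformly. The remaining steps are direct applications of the stated hypotheses.
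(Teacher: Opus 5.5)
Your proof is correct and follows essentially the same route as the paper's proof. The paper fixes $h_0>0$ and uses compactness to get the uniform gap $\inf_{z\in K}\{1-d(z)\}\{e^{\xi(z)h_0}-1\}>0$. The uniform local limit at $h=h_0$ then gives $C_r(z)\le m_r(z)+w_r(z)h_0$, the left-hand hypothesis with the same $h_0$ excludes any crossing at or before $m_r(z)-w_r(z)h_0$, and the paper lets $h_0\downarrow0$, exactly as you do with $\epsilon$.
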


\begin{proof}
Fix \(h_0>0\).  Compactness gives
\[
 \inf_{z\in K}
 \{1-d(z)\}\{e^{\xi(z)h_0}-1\}>0.
\]
Hence \eqref{eq:localized-crossing-full} gives
\(\bar R_{r,z}\{m_r(z)+w_r(z)h_0\}>1\) uniformly on \(K\), while
\eqref{eq:localized-crossing-left} excludes a crossing at or before
\(m_r(z)-w_r(z)h_0\).  Thus
\[
 m_r(z)-w_r(z)h_0
 <C_r(z)
 \le m_r(z)+w_r(z)h_0
\]
uniformly on \(K\).  Letting \(h_0\downarrow0\) proves
\eqref{eq:localized-crossing-cutoff}.
\end{proof}

The next lemma gives regularity and the limiting null
contribution once the localized cutoff has been identified.
\begin{lemma}
\label{lem:localized-crossing-regularity}
Under the assumptions and notation of
Lemma~\ref{lem:localized-crossing}, assume in addition that, for every fixed
\(H>0\),
\begin{equation}
 \sup_{z\in K}\sup_{|h|\le H}
 \left|\bar R_{0,r,z}\{m_r(z)+w_r(z)h\}-d(z)\right|
 \longrightarrow0.
 \label{eq:localized-crossing-null}
\end{equation}
If, for some \(H_*>0\) and all sufficiently small \(r\), the map
\[
 h\longmapsto \bar R_{r,z}\{m_r(z)+w_r(z)h\}
\]
is differentiable with positive derivative on \((-H_*,H_*)\), and
the map
\(h\mapsto\bar R_{0,r,z}\{m_r(z)+w_r(z)h\}\) is continuous there for
every \(z\in K\), then \(C_r(z)\) is regular and
\begin{equation}
 \sup_{z\in K}
 \left|D_r(z)-d(z)\right|
 \longrightarrow0.
 \label{eq:localized-crossing-fdp}
\end{equation}
\end{lemma}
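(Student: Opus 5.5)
The plan is to combine Lemma~\ref{lem:localized-crossing} with Proposition~\ref{prop:local-regularity}, and then read off the null contribution from \eqref{eq:localized-crossing-null}.

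\textbf{Step 1: localize the cutoff.} By Lemma~\ref{lem:localized-crossing}, the normalized cutoff \(h_r(z)=\{C_r(z)-m_r(z)\}/w_r(z)\) tends to \(0\) uniformly on \(K\). So for all sufficiently small \(r\) and all \(z\in K\) we have \(|h_r(z)|<H_*/2\). In particular, \(C_r(z)\) is finite.

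\textbf{Step 2: verify regularity.} Put \(a=m_r(z)-w_r(z)H_*\) and \(b=m_r(z)+w_r(z)H_*\). By \eqref{eq:localized-crossing-domain} with \(H=H_*\), we get \(a>u_q\) for small \(r\), and Step 1 gives \(a<C_r(z)<b\). Since \(w_r(z)>0\), the affine change of variables \(c=m_r(z)+w_r(z)h\) turns the assumed differentiability and positive derivative in \(h\) into differentiability of \(c\mapsto\bar R_{r,z}(c)\) on \((a,b)\), with
\[
\bar R_{r,z}'(c)=\frac{1}{w_r(z)}\,\frac{\dd}{\dd h}\bar R_{r,z}\{m_r(z)+w_r(z)h\}>0 .
\]
The same change of variables turns the assumed continuity of \(h\mapsto\bar R_{0,r,z}\{m_r(z)+w_r(z)h\}\) on \((-H_*,H_*)\) into continuity of \(\bar R_{0,r,z}\) at \(C_r(z)\). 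Proposition~\ref{prop:local-regularity} then shows that \(C_r(z)\) is regular for every \(z\in K\), once \(r\) is small.

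\textbf{Step 3: identify the limit of \(D_r\).} Regularity yields \eqref{eq:Dr}, namely
\[
D_r(z)=\bar R_{0,r,z}\{C_r(z)\}=\bar R_{0,r,z}\{m_r(z)+w_r(z)h_r(z)\}.
\]
Since \(|h_r(z)|\le H_*\), applying \eqref{eq:localized-crossing-null} with \(H=H_*\) gives
\[
\sup_{z\in K}|D_r(z)-d(z)|\le \sup_{z\in K}\sup_{|h|\le H_*}\bigl|\bar R_{0,r,z}\{m_r(z)+w_r(z)h\}-d(z)\bigr|\to0,
\]
which is \eqref{eq:localized-crossing-fdp}.

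\textbf{Main obstacle.} The only delicate point is the order of quantifiers in Step 2. The regularity conclusion must hold for every \(z\in K\) at a single threshold in \(r\). This is why the uniformity in Step 1 matters: it is what places \(C_r(z)\) strictly inside the window \((-H_*,H_*)\), in normalized units, simultaneously for all \(z\in K\), so that Proposition~\ref{prop:local-regularity} applies pointwise with interval endpoints that work for all \(z\) at once.
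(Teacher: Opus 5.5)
Your proof is correct and follows essentially the same route as the paper. You localize $C_r(z)$ via Lemma~\ref{lem:localized-crossing}, then convert the positive $h$-derivative into a positive $c$-derivative through the affine change of variables, using $w_r(z)>0$ and \eqref{eq:localized-crossing-domain}, so that Proposition~\ref{prop:local-regularity} applies; finally you use \eqref{eq:Dr} and evaluate \eqref{eq:localized-crossing-null} at $h=h_r(z)$.
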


\begin{proof}
{
For all sufficiently small \(r\), \eqref{eq:localized-crossing-cutoff}
gives
\[
 m_r(z)-w_r(z)H_*<C_r(z)<m_r(z)+w_r(z)H_*.
\]}
With
\(c=m_r(z)+w_r(z)h\), the chain rule gives
\[
 \frac{\dd}{\dd h}
 \bar R_{r,z}\{m_r(z)+w_r(z)h\}
 =w_r(z)\bar R_{r,z}'(c).
\]
Since \(w_r(z)>0\),
\[
 0<\frac{\dd}{\dd h}
 \bar R_{r,z}\{m_r(z)+w_r(z)h\}
 =w_r(z)\bar R_{r,z}'(c)
 \quad\Longrightarrow\quad
 \bar R_{r,z}'(c)>0.
\]
Thus
\begin{equation}
 \bar R_{r,z}'(c)>0,
 \qquad
 m_r(z)-w_r(z)H_*<c<m_r(z)+w_r(z)H_*.
 \label{eq:localized-positive-derivative}
\end{equation}
Equations~\eqref{eq:localized-crossing-cutoff} and
\eqref{eq:localized-crossing-domain}, together with
\eqref{eq:localized-positive-derivative}, verify the hypotheses of
Proposition~\ref{prop:local-regularity},
which gives regularity.
Finally, put
\[
 h_r(z)=\frac{C_r(z)-m_r(z)}{w_r(z)}.
\]
Equation~\eqref{eq:localized-crossing-cutoff} gives \(h_r(z)\to0\)
uniformly.  By \eqref{eq:Dr},
\[
 D_r(z)=\bar R_{0,r,z}\{C_r(z)\}
 =\bar R_{0,r,z}\{m_r(z)+w_r(z)h_r(z)\}.
\]
Evaluating \eqref{eq:localized-crossing-null} at \(h=h_r(z)\) therefore
proves \eqref{eq:localized-crossing-fdp}.
\end{proof}

The next lemma derives the derivatives of the null and secondary
non-null contributions.
\begin{lemma}
\label{lem:two-block-derivatives}
For \(c>0\), put
\[
 x_\pm=\frac{c\mp rz}{\sqrt{1-r^2}},
 \qquad
 Q_\pm(c,z)=\frac{\Phib(x_\pm)}{\Phib(c)}.
\]
The null contribution \(\bar R_{0,r,z}(c)\) satisfies
\begin{align}
 \bar R_{0,r,z}'(c)
 &=\frac{q\pi_{0,r}}2
 \sum_{\sigma\in\{-,+\}}Q_\sigma(c,z)
 \left\{\lambda(c)-\frac{\lambda(x_\sigma)}{\sqrt{1-r^2}}\right\},
 \label{eq:two-null-block-derivative}\\
 \frac{\dd}{\dd c}\log\bar R_{0,r,z}(c)
 &=\lambda(c)-
 \frac{Q_+(c,z)\lambda(x_+)+Q_-(c,z)\lambda(x_-)}
 {\sqrt{1-r^2}\{Q_+(c,z)+Q_-(c,z)\}}.
 \label{eq:two-null-block-log-derivative}
\end{align}

Define
\begin{equation}
 g_r(a)=a+r y(a).
 \label{eq:gr}
\end{equation}
{
For \(0<a<a_q\),
\[
 g_r'(a)=1+r y'(a)>1,
\]
so \(g_r\) is strictly increasing on \([b_r,a_q]\).}
Finally, fix \(y>1\) and \(c>0\), and suppose that
\begin{equation}
 g_r(b_r)>r(y-c).
\label{eq:two-secondary-lower-tail}
\end{equation}
If
\[
 g_r(b_r)<r(c+y)<g_r(a_q),
\]
let \(\beta_{r,y}(c)\in(b_r,a_q)\) be the unique solution of
\begin{equation}
 g_r\{\beta_{r,y}(c)\}=r(c+y).
\label{eq:two-secondary-threshold-equation}
\end{equation}
Conditional on \(Z=-y\), a secondary non-null indexed by
\(a\in[b_r,a_q]\) is then rejected at cutoff \(c\) if and only if
\begin{equation}
 a\in[\beta_{r,y}(c),a_q].
\label{eq:two-secondary-rejection-set}
\end{equation}
Whenever
\(1-qM\{y(\beta_{r,y}(c))\}>0\), the secondary non-null contribution
\(\bar R_{2,r,-y}(c)\) satisfies
\begin{align}
 \frac{\partial\beta_{r,y}(c)}{\partial c}
 &=\frac{r}{1+r y'\{\beta_{r,y}(c)\}},
 \label{eq:two-secondary-threshold-derivative}\\
 \frac{\dd}{\dd c}\log\bar R_{2,r,-y}(c)
 &=-\frac{qM'\{y(\beta_{r,y}(c))\}y'\{\beta_{r,y}(c)\}}
 {1-qM\{y(\beta_{r,y}(c))\}}
 \frac{r}{1+r y'\{\beta_{r,y}(c)\}}
 \notag\\
 &\quad
 -\frac{\lambda\{\beta_{r,y}(c)/r\}}
 {1+r y'\{\beta_{r,y}(c)\}}+\lambda(c),
 \label{eq:two-secondary-block-log-derivative}\\
 \bar R_{2,r,-y}'(c)
 &=\bar R_{2,r,-y}(c)
 \frac{\dd}{\dd c}\log\bar R_{2,r,-y}(c).
 \label{eq:two-secondary-block-derivative}
\end{align}
If \(r(c+y)\le g_r(b_r)\), the secondary non-null block is fully rejected.
Under the strict inequality \(r(c+y)<g_r(b_r)\),
\[
 \bar R_{2,r,-y}(c)=\frac{q\tau_r}{p(c)},
 \qquad
 \bar R_{2,r,-y}'(c)
 =-\frac{q\tau_r p'(c)}{p(c)^2}
 =\lambda(c)\bar R_{2,r,-y}(c).
\]
If \(r(c+y)\ge g_r(a_q)\), the secondary non-null contribution vanishes:
at equality the rejected set is \(\{a_q\}\), which has \(\mu_r\)-measure
zero by \eqref{eq:mur-survival} and \(W_r(a_q)=0\).  Under the strict
inequality \(r(c+y)>g_r(a_q)\),
\[
 \bar R_{2,r,-y}(c)=\bar R_{2,r,-y}'(c)=0.
\]
As \(r(c+y)\downarrow g_r(b_r)\),
\(\beta_{r,y}(c)\downarrow b_r\); as
\(r(c+y)\uparrow g_r(a_q)\),
\(\beta_{r,y}(c)\uparrow a_q\).  Since \(W_r\) is continuous,
\(W_r(b_r)=\tau_r\), and \(W_r(a_q)=0\), the interior expression
\[
 \bar R_{2,r,-y}(c)=\frac{qW_r\{\beta_{r,y}(c)\}}{p(c)}
\]
matches the endpoint values.  Thus the contribution is continuous at both
equalities.  Its one-sided derivatives need not agree there, so a two-sided
derivative is not asserted.
\end{lemma}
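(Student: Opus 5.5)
The plan is to verify the three groups of claims of Lemma~\ref{lem:two-block-derivatives} by direct differentiation, using only \(p(c)=2\Phib(c)\), \(p'(c)=-2\phi(c)\), and the definitions in \eqref{eq:nuqr}, \eqref{eq:Wr}, \eqref{eq:mur-survival}.

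\emph{Null block.} From the proof of Lemma~\ref{lem:null-tail},
\[
\bar R_{0,r,z}(c)=q\pi_{0,r}\frac{\Phib(x_+)+\Phib(x_-)}{2\Phib(c)}=\frac{q\pi_{0,r}}{2}\{Q_+(c,z)+Q_-(c,z)\}.
\]
Since \(\dd x_\pm/\dd c=1/\sqrt{1-r^2}\), differentiation gives
\[
Q_\pm'=-\frac{\phi(x_\pm)}{\sqrt{1-r^2}\,\Phib(c)}+\frac{\Phib(x_\pm)\phi(c)}{\Phib(c)^2}
=Q_\pm\left\{\lambda(c)-\frac{\lambda(x_\pm)}{\sqrt{1-r^2}}\right\}.
\]
Summing gives \eqref{eq:two-null-block-derivative}. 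Dividing by \(\bar R_{0,r,z}\) gives \eqref{eq:two-null-block-log-derivative}.

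\emph{Monotonicity of \(g_r\).} The claim \(g_r'>1\) reduces to \(y'(a)>0\) on \((0,a_q)\). This holds because \(y\) is the inverse of the strictly increasing continuous map \(a(\cdot)\) from Lemma~\ref{lem:envelope}; differentiability is taken from Appendix~\ref{app:envelope}.

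\emph{Rejection set at \(Z=-y\).} A secondary non-null with index \(a\) has \(B=1\), so \(T=a/r+y(a)-y=g_r(a)/r-y\). It is rejected iff \(|g_r(a)/r-y|\ge c\), that is, iff either \(g_r(a)\ge r(c+y)\) or \(g_r(a)\le r(y-c)\). The second alternative is excluded for every \(a\ge b_r\) by \eqref{eq:two-secondary-lower-tail} together with the monotonicity of \(g_r\). Strict monotonicity and continuity of \(g_r\) then give existence and uniqueness of \(\beta_{r,y}(c)\) by the intermediate value theorem, and yield \eqref{eq:two-secondary-rejection-set}. The boundary cases follow the same way: if \(r(c+y)\le g_r(b_r)\) every index is rejected, and if \(r(c+y)\ge g_r(a_q)\) at most \(\{a_q\}\) is rejected, which is \(\mu_r\)-null since \(W_r(a_q)=0\).

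\emph{Secondary block in the interior case.} Using \eqref{eq:mur-survival},
\[
\bar R_{2,r,-y}(c)=\frac{q\tau_r\,\mu_r([\beta,a_q])}{p(c)}=\frac{qW_r(\beta)}{p(c)},\qquad \beta=\beta_{r,y}(c).
\]
Implicit differentiation of \eqref{eq:two-secondary-threshold-equation} gives \eqref{eq:two-secondary-threshold-derivative}. Then
\[
\log W_r(a)=\log\bigl[1-qM\{y(a)\}\bigr]+\log p(a/r)-\log q,
\]
so
\[
\frac{\dd}{\dd a}\log W_r(a)=-\frac{qM'\{y(a)\}\,y'(a)}{1-qM\{y(a)\}}-\frac{\lambda(a/r)}{r},
\]
because \(\dd\log p(x)/\dd x=-\lambda(x)\). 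Differentiability of \(M\) at \(y(\beta)>1\) is taken from Appendix~\ref{app:envelope}. Applying the chain rule with \(\partial\beta/\partial c\), and adding \(-\dd\log p(c)/\dd c=\lambda(c)\), yields \eqref{eq:two-secondary-block-log-derivative}. The \(1/r\) in the second term cancels the factor \(r\) in \(\partial\beta/\partial c\). Formula \eqref{eq:two-secondary-block-derivative} is then immediate, since \(W_r(\beta)>0\) whenever the deficit \(1-qM\{y(\beta)\}\) is positive.

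\emph{Full-rejection case and continuity.} In the fully rejected case, \(\bar R_{2,r,-y}=q\tau_r/p(c)\), and its derivative is \(\lambda(c)\bar R_{2,r,-y}(c)\). Continuity at the two boundary equalities follows from continuity of \(\beta_{r,y}\) in \(c\) (again by strict monotonicity of \(g_r\)), continuity of \(W_r\), and the endpoint values \(W_r(b_r)=\tau_r\) and \(W_r(a_q)=0\).

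The only delicate point is the regularity of \(y(\cdot)\) and \(M\), namely that \(y'\) and \(M'\) exist and \(y'>0\), which must be imported from the envelope appendix. Everything else is routine calculus.
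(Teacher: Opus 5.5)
Your proposal is correct and follows the paper's proof essentially step for step. It obtains the null block from $\frac{\dd}{\dd c}\log Q_\pm=\lambda(c)-\lambda(x_\pm)/\sqrt{1-r^2}$, derives the rejection set from $r\{T(a)+y\}=g_r(a)$ with the lower-tail condition excluding $T(a)\le -c$, and differentiates the threshold equation implicitly. It gets continuity at the endpoints from continuity of $W_r$ together with $W_r(b_r)=\tau_r$ and $W_r(a_q)=0$. The one point to tighten is that $\bar R_{2,r,-y}'(c)=\lambda(c)\bar R_{2,r,-y}(c)$ in the fully rejected case, and $\bar R_{2,r,-y}'(c)=0$ in the vanishing case, should be asserted only under the strict inequalities, as the statement does, since only there does the regime hold on an open neighbourhood of $c$.
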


\begin{proof}
Since
\[
 \frac{\dd x_\pm}{\dd c}=\frac1{\sqrt{1-r^2}},
 \qquad
 \frac{\dd}{\dd t}\log\Phib(t)=-\lambda(t),
\]
we have
\[
 \frac{\dd}{\dd c}\log Q_\pm(c,z)
 =-\frac{\lambda(x_\pm)}{\sqrt{1-r^2}}+\lambda(c).
\]
Multiplying by \(Q_\pm(c,z)\) and summing gives
\begin{align*}
 \bar R_{0,r,z}'(c)
 &=\frac{q\pi_{0,r}}2\{Q_+'(c,z)+Q_-'(c,z)\}\\
 &=\frac{q\pi_{0,r}}2
 \sum_{\sigma\in\{-,+\}}Q_\sigma(c,z)
 \left\{\lambda(c)-
 \frac{\lambda(x_\sigma)}{\sqrt{1-r^2}}\right\},
\end{align*}
which is \eqref{eq:two-null-block-derivative}, and
\begin{align*}
 \frac{\bar R_{0,r,z}'(c)}{\bar R_{0,r,z}(c)}
 &=\frac{Q_+'(c,z)+Q_-'(c,z)}{Q_+(c,z)+Q_-(c,z)}\\
 &=\lambda(c)-
 \frac{Q_+(c,z)\lambda(x_+)+Q_-(c,z)\lambda(x_-)}
 {\sqrt{1-r^2}\{Q_+(c,z)+Q_-(c,z)\}},
\end{align*}
which is \eqref{eq:two-null-block-log-derivative}.

For a secondary non-null indexed by \(a\), conditional on \(Z=-y\),
\[
 T(a)=\frac ar+y(a)-y,
 \qquad
 r\{T(a)+y\}=g_r(a).
\]
By \eqref{eq:gr} and the positivity of \(y'\) in
Lemma~\ref{lem:envelope}, \(g_r'(a)=1+ry'(a)>1\), proving the asserted
strict monotonicity.  Equation~\eqref{eq:two-secondary-lower-tail} gives
\[
 g_r(a)\ge g_r(b_r)>r(y-c)
 \quad\Longrightarrow\quad T(a)>-c,
\]
so rejection can occur only through \(T(a)\ge c\).  In the interior case,
\eqref{eq:two-secondary-threshold-equation} and strict monotonicity give
\[
 T(a)\ge c
 \quad\Longleftrightarrow\quad
 g_r(a)\ge r(c+y)
 \quad\Longleftrightarrow\quad
 a\ge\beta_{r,y}(c).
\]
This proves \eqref{eq:two-secondary-rejection-set}.
Thus
\begin{align*}
 \bar R_{2,r,-y}(c)
 &=\frac{q\tau_r}{p(c)}
 \mu_r\bigl([\beta_{r,y}(c),a_q]\bigr)\\
 &=\frac{qW_r\{\beta_{r,y}(c)\}}{p(c)}\\
 &=\bigl[1-qM\{y(\beta_{r,y}(c))\}\bigr]
 \frac{p\{\beta_{r,y}(c)/r\}}{p(c)},
\end{align*}
by \eqref{eq:mur-survival} and \eqref{eq:Wr}.

Differentiating the threshold equation yields
\[
 \bigl[1+ry'\{\beta_{r,y}(c)\}\bigr]
 \frac{\partial\beta_{r,y}(c)}{\partial c}=r,
\]
which proves \eqref{eq:two-secondary-threshold-derivative}.  Since
\(p'(t)/p(t)=-\lambda(t)\),
\begin{align*}
 \frac{\dd}{\dd c}\log\bar R_{2,r,-y}(c)
 &=-\frac{qM'\{y(\beta_{r,y}(c))\}y'\{\beta_{r,y}(c)\}}
 {1-qM\{y(\beta_{r,y}(c))\}}
 \frac{\partial\beta_{r,y}(c)}{\partial c}\\
 &\quad
 -\lambda\{\beta_{r,y}(c)/r\}
 \frac1r\frac{\partial\beta_{r,y}(c)}{\partial c}
 +\lambda(c).
\end{align*}
Substitution of \eqref{eq:two-secondary-threshold-derivative} gives
\eqref{eq:two-secondary-block-log-derivative}, and
\[
 \bar R_{2,r,-y}'(c)
 =\bar R_{2,r,-y}(c)
 \frac{\dd}{\dd c}\log\bar R_{2,r,-y}(c)
\]
gives \eqref{eq:two-secondary-block-derivative}.
The same equivalence
\(T(a)\ge c\Longleftrightarrow g_r(a)\ge r(c+y)\) gives the endpoint cases:
\[
 \bar R_{2,r,-y}(c)
 =\begin{cases}
 q\tau_r/p(c),&r(c+y)\le g_r(b_r),\\
 0,&r(c+y)\ge g_r(a_q),
 \end{cases}
 \qquad
 \bar R_{2,r,-y}'(c)
 =\begin{cases}
 \lambda(c)\bar R_{2,r,-y}(c),&r(c+y)<g_r(b_r),\\
 0,&r(c+y)>g_r(a_q).
 \end{cases}
\]
Because \(W_r\) is continuous, \eqref{eq:mur-survival} implies that
\(\mu_r\) has no atoms.  Thus the displayed endpoint values agree with the
limits from the interior, so the contribution is continuous at both
thresholds.
\end{proof}

\subsubsection{\texorpdfstring{The regime \(z>-1\)}{The regime z > -1}}

\begin{lemma}
\label{lem:primary}
For every compact \(K\subset(-1,\infty)\) and every \(H>0\),
uniformly for \(z\in K\) and \(|h|\le H\),
\begin{equation}
\bar R_{r,z}\left(c_r+\frac h{c_r}\right)
\longrightarrow
q+(1-q)e^h.
\label{eq:primary-local}
\end{equation}
Moreover, for every \(h_0>0\),
\begin{equation}
\limsup_{r\downarrow0}
\sup_{z\in K}
\sup_{u_q\le c\le c_r-h_0/c_r}
\bar R_{r,z}(c)
\le
q+(1-q)e^{-h_0}<1.
\label{eq:primary-left-bound}
\end{equation}
Consequently,
\begin{equation}
C_r(z)=c_r+o(c_r^{-1})
\label{eq:primary-result}
\end{equation}
and
\begin{equation}
D_r(z)\longrightarrow q,
\label{eq:primary-fdp-result}
\end{equation}
uniformly on \(K\).  Moreover, for sufficiently small \(r\), \(C_r(z)\) is regular for every \(z\in K\).
\end{lemma}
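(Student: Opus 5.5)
The plan is to apply Lemma~\ref{lem:localized-crossing} and Lemma~\ref{lem:localized-crossing-regularity} with \(m_r(z)=c_r\), \(w_r(z)=1/c_r\), \(d(z)=q\), and \(\xi(z)=1\), using the decomposition \eqref{eq:population-curve} into null, primary, and secondary contributions. The domain condition \eqref{eq:localized-crossing-domain} holds because \(c_r\to\infty\) by Lemma~\ref{lem:scale-properties}.

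\textbf{Null contribution.} On \(c\le c_r+H/c_r\) we have \(rc\to0\), since \(rc_r\to0\). Lemma~\ref{lem:null-tail} with \(A=1\) and \(T=\sup_K|z|\) then gives \(R_{0,r,z}\{p(c)\}/p(c)\to L_0(|z|)=1\) uniformly. Hence \(\bar R_{0,r,z}(c)=q\pi_{0,r}\{1+o(1)\}\to q\), uniformly in \(z\in K\) and over all \(u_q\le c\le c_r+H/c_r\). This yields \eqref{eq:localized-crossing-null} with \(d=q\).

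\textbf{Primary contribution.} The primary non-null statistic is \(c_r+y(b_r)+z\). Since \(y(b_r)\downarrow1\) and \(z\ge\min K>-1\), this exceeds \(c_r+\epsilon\) for some \(\epsilon>0\) and small \(r\). It therefore exceeds \(c_r+H/c_r\) and is rejected at every such \(c\). Thus \(\bar R_{1,r,z}(c)=qr/p(c)\). The Mills ratio gives \(p(c_r+h/c_r)/p(c_r)\to e^{-h}\) uniformly for \(|h|\le H\), and \(qr/p(c_r)\to1-q\) by \eqref{eq:cr-properties}. Hence \(\bar R_{1,r,z}(c_r+h/c_r)\to(1-q)e^h\).

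\textbf{Secondary contribution.} This is bounded by \(q\tau_r/p(c)\le q\tau_r/p(c_r+H/c_r)=O(r^2/r)=O(r)\to0\). Combining the three pieces gives \eqref{eq:primary-local}.

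\textbf{Left bound.} For \(u_q\le c\le c_r-h_0/c_r\), the null part stays \(\le q+o(1)\) by the uniform null estimate above. The primary part is at most \(qr/p(c)\le qr/p(c_r-h_0/c_r)\to(1-q)e^{-h_0}\), using the monotonicity of \(p\) and the same Mills expansion. The secondary part is \(O(r)\). This gives \eqref{eq:primary-left-bound}. Lemma~\ref{lem:localized-crossing} then yields \eqref{eq:primary-result}, and Lemma~\ref{lem:localized-crossing-regularity} yields \eqref{eq:primary-fdp-result}, once its monotonicity hypothesis is checked.

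\textbf{Main obstacle: positive derivative near \(c_r\).} I must show \(h\mapsto\bar R_{r,z}(c_r+h/c_r)\) is differentiable with positive derivative on \((-H_*,H_*)\).

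The primary term equals \(qr/p(c)\), which is smooth there with derivative \(\lambda(c)\,qr/p(c)\). After multiplying by \(w_r=1/c_r\), this is \(\approx(1-q)e^h\), bounded below.

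For the null term, use \eqref{eq:two-null-block-log-derivative}. Since \(x_\pm-c=O(r)\) and \(\lambda\) is Lipschitz, \(|\lambda(c)-\lambda(x_\pm)/\sqrt{1-r^2}|=O(rc+r)\). After multiplying by \(1/c_r\), the null derivative is \(o(1)\).

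The secondary term is the delicate one, because it is only piecewise differentiable. Conditional on general \(z\), the secondary statistics are \(a/r+y(a)+z\ge b_r/r+y(b_r)+z\). By \eqref{eq:cr-properties}, \(b_r/r-c_r\to\infty\), so this is much larger than \(c_r+H/c_r\). Hence every secondary non-null is rejected, and \(\bar R_{2,r,z}(c)=q\tau_r/p(c)\) exactly on the window. This is smooth with derivative \(\lambda(c)\bar R_{2}\ge0\). So the secondary term causes no kink, and the total derivative is positive.

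Continuity of the null term in \(h\) is immediate from its formula. Regularity then follows from Lemma~\ref{lem:localized-crossing-regularity}.
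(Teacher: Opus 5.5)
Your proposal is correct and follows essentially the same route as the paper: you use the same instantiation of Lemma~\ref{lem:localized-crossing} with \(m_r=c_r\), \(w_r=c_r^{-1}\), \(d=q\), \(\xi=1\), the same observations that every non-null is rejected on the window (via \(y(b_r)+z>0\) and \(b_r/r-c_r\to\infty\)) and that the null block tends to \(q\) by Lemma~\ref{lem:null-tail}, and the same comparison of an order-\(c_r\) non-null derivative against a negligible null derivative. The differences are cosmetic: you treat the secondary block as a separate \(O(r)\) term rather than using the exact calibration \(q(r+\tau_r)/p(c_r)=1-q\pi_{0,r}\), you bound the null derivative through the Lipschitz property of \(\lambda\) rather than the hazard expansion, and you invoke Lemma~\ref{lem:localized-crossing-regularity} directly where the paper unpacks it via Proposition~\ref{prop:local-regularity}.
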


\begin{proof}
\textbf{Proof of \eqref{eq:primary-local}.}
Fix \(H>0\), take \(z\in K\), and take \(|h|\le H\).  The primary non-null has \(B=1\) and \(\Theta=c_r+y(b_r)\), so conditional on \(Z=z\) its value is \(c_r+y(b_r)+z\).  Since \(K\subset(-1,\infty)\) is compact,
\[
\eta_K:=\inf_{z\in K}(1+z)>0.
\]
By Lemma~\ref{lem:scale-properties}, \(c_r\to\infty\) and \(y(b_r)\downarrow1\).  For the fixed \(H\), this implies \(H/c_r\to0\).  Combining these limits, for sufficiently small \(r\),
\begin{equation}
|y(b_r)-1|\le\frac{\eta_K}{4},
\quad
\frac{H}{c_r}\le\frac{\eta_K}{4}.
\label{eq:primary-eta-bounds}
\end{equation}
For \(z\in K\) and \(|h|\le H\), the definition of \(\eta_K\) gives \(z\ge -1+\eta_K\), while \eqref{eq:primary-eta-bounds} gives \(y(b_r)\ge1-\eta_K/4\) and \(-h/c_r\ge-H/c_r\ge-\eta_K/4\).  Therefore
\[
y(b_r)+z-\frac h{c_r}
\ge
\left(1-\frac{\eta_K}{4}\right)
+\left(-1+\eta_K\right)
-\frac{\eta_K}{4}
=\frac{\eta_K}{2},
\quad z\in K,\ |h|\le H.
\]
Equivalently, \(y(b_r)+z\ge h/c_r+\eta_K/2\).  Hence the conditional value of the primary non-null satisfies
\begin{equation}
\begin{aligned}
|c_r+y(b_r)+z|
&=c_r+y(b_r)+z\\
&=c_r+\frac h{c_r}
+\left\{y(b_r)+z-\frac h{c_r}\right\}
\\
&\ge c_r+\frac h{c_r}+\frac{\eta_K}{2}
>c_r+\frac h{c_r},
\end{aligned}
\label{eq:primary-rejection-bound}
\end{equation}
so the primary non-null is rejected at cutoff \(c_r+h/c_r\), uniformly for \(z\in K\) and \(|h|\le H\).

Next consider a secondary non-null indexed by \(a\in[b_r,a_q]\).  Its conditional value at \(Z=z\) is \(a/r+y(a)+z\).  Since \(a\ge b_r\), we have \(a/r\ge b_r/r\).  Also, Lemma~\ref{lem:envelope} shows that \(y(\cdot)\), the inverse of the strictly increasing map \(a(\cdot)\), is increasing; hence \(y(a)\ge y(b_r)\).  Together with \(z\ge\inf_{u\in K}u\) and \(h\le H\), this gives
\begin{equation}
\frac ar+y(a)+z-\left(c_r+\frac h{c_r}\right)
\ge
\left(\frac{b_r}{r}-c_r\right)
+y(b_r)+\inf_{u\in K}u-\frac{H}{c_r}.
\label{eq:secondary-rejection-bound}
\end{equation}
Here \(\frac{b_r}{r}-c_r\to\infty\) by \eqref{eq:cr-properties}, while
\begin{equation}
y(b_r)+\inf_{u\in K}u-\frac{H}{c_r}
\longrightarrow
1+\inf_{u\in K}u
\label{eq:secondary-offset-limit}
\end{equation}
because \(y(b_r)\downarrow1\) and \(H/c_r\to0\).  Equations~\eqref{eq:cr-properties} and~\eqref{eq:secondary-offset-limit} show that the right side of \eqref{eq:secondary-rejection-bound} tends to \(+\infty\).  Hence it is positive for sufficiently small \(r\), uniformly in \(z\in K\), \(|h|\le H\), and \(a\in[b_r,a_q]\).  Thus every secondary non-null is rejected at cutoff \(c_r+h/c_r\).

Lastly, we analyze the nulls.  Since \(|h|\le H\) and \(rc_r\to0\) by \eqref{eq:cr-properties},
\begin{equation}
r\left(c_r+\frac h{c_r}\right)
=rc_r+\frac{rh}{c_r}
\longrightarrow0.
\label{eq:primary-rescaled-cutoff}
\end{equation}
Let \(T_K=\sup_{z\in K}|z|<\infty\).  Since
\(c_r\to\infty\) by \eqref{eq:cr-properties} in
Lemma~\ref{lem:scale-properties}, while
\eqref{eq:primary-rescaled-cutoff} gives a vanishing rescaled cutoff, we have
\[
u_q\le c_r+\frac h{c_r}\le\frac1r
\]
for all sufficiently small \(r\), uniformly for \(z\in K\) and
\(|h|\le H\).  Applying \eqref{eq:null-tail} in Lemma~\ref{lem:null-tail}
with \(A=1\) and \(T=T_K\) gives
\[
\frac{R_{0,r,z}\{p(c_r+h/c_r)\}}{p(c_r+h/c_r)}
=
L_{r(c_r+h/c_r)}(|z|)+o(1).
\]
Moreover,
\[
L_{r(c_r+h/c_r)}(|z|)
=
\exp\left\{-\frac{r^2(c_r+h/c_r)^2}{2}\right\}
\cosh\!\left(r(c_r+h/c_r)|z|\right)
=1+o(1),
\]
uniformly for \(z\in K\) and \(|h|\le H\).  Since \(\pi_{0,r}=1-r-\tau_r\to1\), it follows that
\[
\bar R_{0,r,z}(c_r+h/c_r)
=q+o(1).
\]
By \eqref{eq:cr},
\[
\frac{q(r+\tau_r)}{p(c_r+h/c_r)}
=
(1-q\pi_{0,r})
\frac{p(c_r)}{p(c_r+h/c_r)}
\longrightarrow
(1-q)e^h
\]
by \eqref{eq:h-over-x} in Lemma~\ref{lem:mills}.  This proves \eqref{eq:primary-local}.

\par\medskip\noindent
\textbf{Proof of \eqref{eq:primary-left-bound}.}
Fix \(h_0>0\).  Equation~\eqref{eq:primary-rejection-bound}, evaluated at \(h=h_0\), gives
\[
|c_r+y(b_r)+z|>c_r+\frac{h_0}{c_r},
\]
so the primary non-null is rejected at cutoff \(c_r+h_0/c_r\).  Equations~\eqref{eq:secondary-rejection-bound}, \eqref{eq:cr-properties}, and~\eqref{eq:secondary-offset-limit} give
\[
\frac ar+y(a)+z>c_r+\frac{h_0}{c_r},
\quad a\in[b_r,a_q],
\]
for sufficiently small \(r\), uniformly in \(z\in K\).  Thus every secondary non-null is also rejected at cutoff \(c_r+h_0/c_r\).  Since rejection at cutoff \(c\) means \(|T_i|\ge c\), all non-nulls are rejected for every \(u_q\le c\le c_r+h_0/c_r\).  Therefore, on this interval,
\[
\bar R_{r,z}(c)
=
\bar R_{0,r,z}(c)
+
\frac{q(r+\tau_r)}{p(c)}.
\]
For \(u_q\le c\le c_r-h_0/c_r\), Lemma~\ref{lem:null-tail} and \(r c_r\to0\) from \eqref{eq:cr-properties} give
\[
\sup_{z\in K}
\sup_{u_q\le c\le c_r-h_0/c_r}
\bar R_{0,r,z}(c)
\le q+o(1).
\]
Also \(p(c)\ge p(c_r-h_0/c_r)\), so \eqref{eq:cr} and \eqref{eq:h-over-x} in Lemma~\ref{lem:mills} give
\[
\sup_{u_q\le c\le c_r-h_0/c_r}
\frac{q(r+\tau_r)}{p(c)}
\le
\frac{q(r+\tau_r)}{p(c_r-h_0/c_r)}
=
(1-q\pi_{0,r})
\frac{p(c_r)}{p(c_r-h_0/c_r)}
\longrightarrow
(1-q)e^{-h_0}.
\]
Taking the supremum over \(z\in K\) and then the limsup in \(r\) proves \eqref{eq:primary-left-bound}.

\par\medskip\noindent
\textbf{Proof of \eqref{eq:primary-result}.}
Apply Lemma~\ref{lem:localized-crossing} with
\[
 m_r(z)=c_r,
 \qquad w_r(z)=c_r^{-1},
 \qquad d(z)=q,
 \qquad \xi(z)=1.
\]
The set \(K\) is compact by hypothesis.  Since \(c_r\to\infty\), for all
sufficiently small \(r\), \(m_r(z)=c_r>u_q\) and
\(w_r(z)=c_r^{-1}>0\).  Moreover, \(d\equiv q\) and \(\xi\equiv1\) are
continuous on \(K\), with \(d(z)\in[0,1)\) and \(\xi(z)>0\).  For every
fixed \(H>0\),
\[
 \inf_{z\in K}\{m_r(z)-Hw_r(z)\}
 =c_r-\frac H{c_r}>u_q
\]
for all sufficiently small \(r\), which verifies
\eqref{eq:localized-crossing-domain}.  Under these choices,
\eqref{eq:localized-crossing-full} becomes
\[
 \sup_{z\in K}\sup_{|h|\le H}
 \left|
 \bar R_{r,z}\left(c_r+\frac h{c_r}\right)
 -q-(1-q)e^h
 \right|\longrightarrow0,
\]
which is exactly \eqref{eq:primary-local}.  Its left-of-crossing assumption
\eqref{eq:localized-crossing-left} becomes, for every \(h_0>0\),
\[
 \limsup_{r\downarrow0}\sup_{z\in K}
 \sup_{u_q\le c\le c_r-h_0/c_r}\bar R_{r,z}(c)<1.
\]
This follows from \eqref{eq:primary-left-bound}, because
\(q+(1-q)e^{-h_0}<1\).  Hence all assumptions of
Lemma~\ref{lem:localized-crossing} hold, and
\eqref{eq:localized-crossing-cutoff} gives
\[
 \sup_{z\in K}
 \left|\frac{C_r(z)-c_r}{c_r^{-1}}\right|
 =\sup_{z\in K}c_r|C_r(z)-c_r|\longrightarrow0,
\]
which is \eqref{eq:primary-result} uniformly on \(K\).

\par\medskip\noindent
\textbf{Proof of \eqref{eq:primary-fdp-result}.}
Fix \(H>0\).
On \(|c-c_r|\le H/c_r\), all non-nulls are rejected, uniformly over
\(z\in K\), and hence
\begin{equation}
\bar R_{r,z}(c)
=\bar R_{0,r,z}(c)+\frac{q(r+\tau_r)}{p(c)}.
\label{eq:primary-smooth-curve}
\end{equation}
{
In the notation of Lemma~\ref{lem:two-block-derivatives}, uniformly for
\(|c-c_r|\le H/c_r\) and \(z\in K\), \eqref{eq:cr-properties} gives
\[
 c\longrightarrow\infty,\qquad rc\longrightarrow0.
\]
For all sufficiently small \(r\), this range lies in
\([C,1/r]\), with \(C\to\infty\).  Thus
\eqref{eq:null-tail-expansion}--\eqref{eq:null-tail-remainder} in
Lemma~\ref{lem:null-tail} give, for either sign,
\[
\begin{aligned}
 Q_\pm(c,z)
 &=
 \frac{c}{x_\pm}
 \exp\left\{-\frac{(rc)^2}{2}\pm rcz+O(r^2)\right\}
 \{1+o(1)\}\\
 &=1+o(1),
\end{aligned}
\]
because \(c/x_\pm\to1\), \(rc\to0\), and \(z\) ranges over the compact set
\(K\).  Also \(x_\pm/c_r\to1\), \(c/c_r\to1\), and
\(\sqrt{1-r^2}\to1\).
Thus \eqref{eq:mills-hazard} in Lemma~\ref{lem:mills}, applied directly
to \eqref{eq:two-null-block-derivative} in
Lemma~\ref{lem:two-block-derivatives}, gives
\[
\sup_{z\in K}\sup_{|c-c_r|\le H/c_r}
|\bar R_{0,r,z}'(c)|=o(c_r).
\]}
For \(|c-c_r|\le H/c_r\), write \(c=c_r+h/c_r\), where
\(|h|\le H\).  By
\eqref{eq:cr},
\[
\frac{q(r+\tau_r)}{p(c)}
=(1-q\pi_{0,r})\frac{p(c_r)}{p(c)},
\]
and \eqref{eq:h-over-x} in Lemma~\ref{lem:mills} gives
\[
\frac{p(c_r)}{p(c)}
=
\frac{p(c_r)}{p(c_r+h/c_r)}
\longrightarrow e^h,
\]
uniformly for \(|h|\le H\).  Since \(e^h\ge e^{-H}\) on this range and
\(1-q\pi_{0,r}\to1-q>0\), for all sufficiently small \(r\),
\[
\inf_{|c-c_r|\le H/c_r}
\frac{q(r+\tau_r)}{p(c)}
\ge \frac{1-q}{2}\cdot\frac{e^{-H}}{2}.
\]
Thus we may take \(m_H=(1-q)e^{-H}/4>0\).
Since all non-nulls are fully rejected for \(|c-c_r|\le H/c_r\), their combined
contribution is \(q(r+\tau_r)/p(c)\), whose derivative is
\(\lambda(c)q(r+\tau_r)/p(c)\).  Together with
\(\lambda(c)\sim c_r\) uniformly for \(|c-c_r|\le H/c_r\), this gives
\[
\inf_{|c-c_r|\le H/c_r}
\frac{\dd}{\dd c}\frac{q(r+\tau_r)}{p(c)}
\ge \frac{m_H}{2}c_r
\]
for all sufficiently small \(r\).  By contrast,
\(\bar R_{0,r,z}'(c)=o(c_r)\) uniformly over \(z\in K\) and
\(|c-c_r|\le H/c_r\).  Differentiating \eqref{eq:primary-smooth-curve}
therefore shows that
\begin{equation}
\inf_{z\in K}\inf_{|c-c_r|\le H/c_r}
\bar R_{r,z}'(c)>0
\label{eq:primary-strict-increase}
\end{equation}
for all sufficiently small \(r\).  By \eqref{eq:primary-result}, uniformly
for \(z\in K\),
\[
 c_r-\frac{H}{c_r}<C_r(z)<c_r+\frac{H}{c_r}
\]
for all sufficiently small \(r\); moreover,
\(c_r-H/c_r>u_q\).  Equation~\eqref{eq:primary-strict-increase} and the
smoothness of the null contribution on this interval therefore verify the
hypotheses of Proposition~\ref{prop:local-regularity}, so \(C_r(z)\) is
regular.

Equation~\eqref{eq:Dr} gives
\[
 D_r(z)=\bar R_{0,r,z}\{C_r(z)\}.
\]
Put \(h_r(z)=c_r\{C_r(z)-c_r\}\).  Equation~\eqref{eq:primary-result}
gives \(h_r(z)\to0\) uniformly on \(K\).  The null limit established in
the proof of \eqref{eq:primary-local}, evaluated at \(h=h_r(z)\), now gives
\[
 \sup_{z\in K}
 \left|D_r(z)-q\right|
 =\sup_{z\in K}
 \left|\bar R_{0,r,z}\{C_r(z)\}-q\right|
 \longrightarrow0,
\]
which is \eqref{eq:primary-fdp-result}.
\end{proof}

\subsubsection{\texorpdfstring{The regime \(z\in(-y_q,-1)\)}{The regime -yq < z < -1}}

Let \(y=-z\).
For \(1<y<y_q\), put
\begin{equation}
\kappa(y)=a(y)y'\{a(y)\}>0.
\label{eq:kappa}
\end{equation}
The positivity follows from Lemma~\ref{lem:envelope}, since \(a(y)>0\) and \(y'\{a(y)\}>0\) for \(1<y<y_q\).
For the remainder of the population analysis, define
\begin{equation}
 s_r(y)=c_r+y(b_r)-y.
 \label{eq:sr-def}
\end{equation}

The next lemma separates the primary non-null value from the
cutoffs on the \(r^{-1}\) scale.
\begin{lemma}
\label{lem:primary-scale-separation}
For every compact \(I\subset(1,\infty)\) and every \(H>0\), if
\(a_I=\inf_{y\in I}a(y)>0\), then
\begin{equation}
\begin{aligned}
 \sup_{y\in I}r|s_r(y)|
 &\le rc_r+r\sup_{y\in I}|y(b_r)-y|\longrightarrow0,\\
 \inf_{\substack{y\in I\\|h|\le H}}
 r\left\{\frac{a(y)}r+h\right\}
 &=a_I-rH\longrightarrow a_I>0.
\end{aligned}
\label{eq:rare-primary-scale-separation}
\end{equation}
Consequently, the primary non-null is not rejected at cutoff
\(a(y)/r+h\) for all sufficiently small \(r\), uniformly for \(y\in I\)
and \(|h|\le H\).
\end{lemma}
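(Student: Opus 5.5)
The argument is direct: the two displays in \eqref{eq:rare-primary-scale-separation} are elementary bounds, and the rejection claim follows by comparing the primary non-null's conditional value with the cutoff on the \(r^{-1}\) scale. Conditional on \(Z=-y\), the primary non-null has \(B=1\) and \(\Theta=c_r+y(b_r)\), so its value is \(c_r+y(b_r)-y=s_r(y)\).

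For the first display, the triangle inequality gives \(|s_r(y)|\le c_r+|y(b_r)-y|\), because \(c_r>0\) for small \(r\). Multiplying by \(r\) and taking the supremum over \(y\in I\) yields the stated bound. By Lemma~\ref{lem:scale-properties}, \(rc_r\to0\) and \(y(b_r)\downarrow1\). Since \(I\) is compact, \(\sup_{y\in I}|y(b_r)-y|\) is bounded for small \(r\), say by \(1+\sup_{y\in I}|y|\) once \(y(b_r)\le2\). Hence \(r\sup_{y\in I}|y(b_r)-y|\to0\).

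For the second display, \(r\{a(y)/r+h\}=a(y)+rh\). Its infimum over \(y\in I\) and \(|h|\le H\) is \(a_I-rH\), attained by taking \(h=-H\) and using the definition of \(a_I\) as an infimum. This tends to \(a_I\). Positivity of \(a_I\) is part of the hypothesis. It also follows from Lemma~\ref{lem:envelope}: \(a\) is continuous and strictly positive on \((1,\infty)\), so its infimum over the compact set \(I\) is positive.

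For the consequence, rejection at cutoff \(c\) means \(|s_r(y)|\ge c\). Choose \(r\) small enough that \(\sup_{y\in I}r|s_r(y)|<a_I/2\) and \(a_I-rH>a_I/2\). Then, uniformly in \(y\in I\) and \(|h|\le H\),
\[
r|s_r(y)|<\frac{a_I}2<r\left\{\frac{a(y)}r+h\right\}.
\]
Dividing by \(r>0\) gives \(|s_r(y)|<a(y)/r+h\), so the primary non-null is not rejected. There is no real obstacle. The only point needing care is the uniformity, which comes from compactness of \(I\) (bounded \(|y|\), positive \(a_I\)) together with the scale limits in Lemma~\ref{lem:scale-properties}.
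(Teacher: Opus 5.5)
Your proposal is correct and takes the same route as the paper: identify the primary non-null value as \(s_r(y)\), obtain both limits from Lemma~\ref{lem:scale-properties} and compactness of \(I\), and use the separation by \(a_I>0\) to rule out rejection. You simply fill in the elementary details the paper leaves implicit, namely the triangle inequality, the explicit infimum \(a_I-rH\), and the \(a_I/2\) margin.
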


\begin{proof}
Conditional on \(Z=-y\), the primary non-null value is \(s_r(y)\)
by \eqref{eq:sr-def}.  The two limits in
\eqref{eq:rare-primary-scale-separation} follow from
\eqref{eq:br-properties} and \eqref{eq:cr-properties} in
Lemma~\ref{lem:scale-properties}.  Their limiting bounds are separated by
\(a_I>0\), which proves the rejection claim.
\end{proof}

\begin{lemma}
\label{lem:primary-active-range}
For every compact \(I\subset(1,\infty)\),
\begin{equation}
 \sup_{y\in I}\sup_{u_q\le s\le s_r(y)}
 \bar R_{r,-y}(s)\le q+o(1)<1.
 \label{eq:primary-active-uniform}
\end{equation}
\end{lemma}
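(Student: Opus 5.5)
The plan is to bound each of the three blocks in the decomposition \eqref{eq:population-curve} separately. For the two non-null blocks I will use the crude bound that every non-null is rejected. This turns out to cost nothing, because on the range $s\le s_r(y)$ the cutoff sits a fixed distance to the left of $c_r$. There the normalized non-null mass $q(r+\tau_r)/p(s)$ is already negligible.

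\textbf{Step 1: localize the range of $s$.} Write $y_I=\inf I>1$ and set $\eta=(y_I-1)/2>0$. By \eqref{eq:br-properties}, $y(b_r)\downarrow1$, so for all sufficiently small $r$ we have $y-y(b_r)\ge y_I-y(b_r)\ge\eta$ uniformly for $y\in I$. Hence
\[
 s_r(y)\le c_r-\eta \qquad\text{for all } y\in I.
\]
If $s_r(y)<u_q$, the supremum is over an empty set and there is nothing to prove. Otherwise every $s$ in the range satisfies $u_q\le s\le c_r-\eta$. In particular $0\le rs\le rc_r\to0$ by \eqref{eq:cr-properties}, so $s\le1/r$ for small $r$.

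\textbf{Step 2: the null block.} Apply Lemma~\ref{lem:null-tail} with $A=1$ and $T=\sup_{y\in I}y$. Uniformly in $y\in I$ and $u_q\le s\le c_r-\eta$, this gives
\[
 \bar R_{0,r,-y}(s)\le q\,L_{rs}(y)+o(1)\le q\cosh\!\bigl(rc_r T\bigr)+o(1)=q+o(1).
\]
Here I used $\pi_{0,r}\le1$, $e^{-(rs)^2/2}\le1$, the monotonicity of $\cosh$ on $[0,\infty)$, and $rc_r\to0$.

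\textbf{Step 3: the non-null blocks.} Bounding the rejected fraction of both non-null blocks by their full masses gives
\[
 \bar R_{1,r,-y}(s)+\bar R_{2,r,-y}(s)\le\frac{q(r+\tau_r)}{p(s)}\le\frac{q(r+\tau_r)}{p(c_r-\eta)}
 =(1-q\pi_{0,r})\frac{p(c_r)}{p(c_r-\eta)}.
\]
The last equality is \eqref{eq:cr}. Since $c_r\to\infty$ and $\eta>0$ is fixed, the Gaussian tail ratio satisfies
\[
 \frac{p(c_r)}{p(c_r-\eta)}\sim\frac{c_r-\eta}{c_r}\exp\Bigl\{-\eta c_r+\frac{\eta^2}{2}\Bigr\}\longrightarrow0,
\]
either directly from the Mills-ratio expansion in Lemma~\ref{lem:mills} or by monotonicity combined with \eqref{eq:h-over-x} for arbitrarily large $h$. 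So this contribution is $o(1)$ uniformly in $y\in I$ and $s$.

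Adding Steps 2 and 3 gives \eqref{eq:primary-active-uniform}, and the strict inequality $q+o(1)<1$ follows from $q<1$. I expect no step to be a real obstacle. The only points needing care are the uniformity of the null-tail approximation in $y\in I$, which Lemma~\ref{lem:null-tail} provides once $rs\to0$, and using the strict gap $y_I>1$ to keep $s_r(y)$ a fixed distance below $c_r$. Without that gap the primary non-null would contribute an order-one amount.
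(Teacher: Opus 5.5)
Your proposal is correct and follows the paper's proof essentially step for step. It uses the same localization $s_r(y)\le c_r-\eta$ from $y(b_r)\downarrow1$, Lemma~\ref{lem:null-tail} for the null block, and the full-rejection bound $q(r+\tau_r)/p(s)$ combined with the calibration \eqref{eq:cr} and a fixed-shift Gaussian tail ratio for the non-null blocks. The only cosmetic difference is that you compare against the single point $c_r-\eta$, whereas the paper compares against $s_r(y)$ with a shift ranging over $[\delta_I^{\mathrm{shift}},\Delta_I]$; your choice makes the uniformity in $y$ marginally simpler.
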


\begin{proof}
Put \(y_I=\inf I>1\) and
\(\delta_I^{\mathrm{shift}}=(y_I-1)/2>0\).  Since \(y(b_r)\downarrow1\),
\begin{equation}
 \delta_I^{\mathrm{shift}}\le y-y(b_r)
 \le \sup_{u\in I}u-1=: \Delta_I<\infty
 \label{eq:primary-shift-bounds}
 \end{equation}
for all sufficiently small \(r\), uniformly for \(y\in I\).  Hence
\(s_r(y)\le c_r-\delta_I^{\mathrm{shift}}\).  From
\eqref{eq:population-curve} and the bound
by one on each non-null rejection probability,
\[
 \bar R_{r,-y}(s)
 \le \bar R_{0,r,-y}(s)+\frac{q(r+\tau_r)}{p(s)},
 \qquad u_q\le s\le s_r(y).
\]
Uniformly on this range, \(0\le rs\le rc_r\to0\).  Therefore
\eqref{eq:null-tail} in Lemma~\ref{lem:null-tail} gives
\[
 \bar R_{0,r,-y}(s)=q+o(1).
\]
Since \(p\) is decreasing, \eqref{eq:cr} yields
\[
\begin{aligned}
 \frac{q(r+\tau_r)}{p(s)}
 &\le \frac{q(r+\tau_r)}{p\{s_r(y)\}}\\
 &=(1-q\pi_{0,r})
 \frac{p(c_r)}{p\{c_r+y(b_r)-y\}}.
\end{aligned}
\]
Here \(s_r(y)\to\infty\) uniformly and the shift
\(y-y(b_r)\) lies in
\([\delta_I^{\mathrm{shift}},\Delta_I]\).  Thus
\eqref{eq:fixed-shift} in Lemma~\ref{lem:mills} gives

\[
 \sup_{y\in I}
 \frac{p(c_r)}{p\{c_r+y(b_r)-y\}}\longrightarrow0.
\]
Combining the bounds proves
\eqref{eq:primary-active-uniform}.
\end{proof}

We first prove a lemma that is useful for the key result in this regime.
\begin{lemma}
\label{lem:secondary-threshold}
For every compact \(I\subset(1,y_q)\) and every \(H>0\),
there is an interval \(U_I\) whose closure is contained in \((0,a_q)\) and contains \(\{a(y):y\in I\}\).
For all sufficiently small \(r\), with \(g_r\) as in
\eqref{eq:gr}, the equation
\begin{equation}
g_r\{\tilde{\alpha}_r(y,h)\}=g_r\{a(y)\}+rh
\label{eq:tilde-alpha-equation}
\end{equation}
has a unique solution \(\tilde{\alpha}_r(y,h)\in U_I\), uniformly for \(y\in I\) and \(|h|\le H\), and
\begin{equation}
\tilde{\alpha}_r(y,h)=a(y)+O(r).
\label{eq:alpha-close}
\end{equation}

Conditional on \(Z=-y\), a secondary non-null with \(a=\tilde a\) is rejected at cutoff \(a(y)/r+h\) {if and only if}
\[
\tilde a\in[\tilde{\alpha}_r(y,h),a_q].
\]
Finally,
\begin{equation}
\log\frac{p\{\tilde{\alpha}_r(y,h)/r\}}{p(a(y)/r+h)}
=
\kappa(y)h+o(1),
\label{eq:tilde-alpha-log-ratio}
\end{equation}
uniformly for \(y\in I\) and \(|h|\le H\).
\end{lemma}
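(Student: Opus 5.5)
We first fix the interval \(U_I\).  By Lemma~\ref{lem:envelope}, \(a(\cdot)\) is continuous and strictly increasing from \((1,\infty)\) onto \((0,\infty)\), with \(a(y_q)=a_q\).  Hence \(a(I)=[a(\min I),a(\max I)]\) is a compact subinterval of \((0,a_q)\).  Take \(U_I=(a(\min I)-\epsilon,\,a(\max I)+\epsilon)\) with \(\epsilon>0\) small enough that \(\overline{U_I}\subset(0,a_q)\).  On \(\overline{U_I}\) the function \(y'\) is continuous and positive, so it is bounded by some \(K_I<\infty\).  This gives the two-sided bound
\[
1\le g_r'(a)=1+ry'(a)\le 1+rK_I,\qquad a\in\overline{U_I}.
\]
Existence and uniqueness then follow from strict monotonicity of \(g_r\) and the intermediate value theorem.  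The target value \(g_r\{a(y)\}+rh\) differs from \(g_r\{a(y)\}\) by at most \(rH\).  For small \(r\), we have \(g_r\{a(y)\pm\epsilon\}\gtrless g_r\{a(y)\}\pm rH\), since \(g_r'\ge1\), so the target lies in \(g_r(U_I)\).  The mean value theorem with \(g_r'\ge1\) gives
\[
|\tilde\alpha_r(y,h)-a(y)|\le rH,
\]
which proves \eqref{eq:alpha-close} uniformly in \(y\) and \(h\).

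For the rejection characterization, condition on \(Z=-y\).  The secondary value is \(T(\tilde a)=\tilde a/r+y(\tilde a)-y\), so \(r\{T(\tilde a)+y\}=g_r(\tilde a)\).  The cutoff \(c=a(y)/r+h\) satisfies \(r(c+y)=a(y)+rh+ry=g_r\{a(y)\}+rh\), using \(y\{a(y)\}=y\).  This is exactly the setting of Lemma~\ref{lem:two-block-derivatives}, with \(\tilde\alpha_r(y,h)=\beta_{r,y}(c)\).  It remains to check the lower-tail condition \eqref{eq:two-secondary-lower-tail}, namely \(g_r(b_r)>r(y-c)\).  Here \(r(y-c)=ry-a(y)-rh\to-a(y)\le -a_I<0\), while \(g_r(b_r)>0\).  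The interior condition \(g_r(b_r)<r(c+y)<g_r(a_q)\) holds for small \(r\) because \(g_r(b_r)\to0\) and \(\tilde\alpha_r(y,h)\in U_I\subset(0,a_q)\).  The claimed rejection set \([\tilde\alpha_r(y,h),a_q]\) then follows from \eqref{eq:two-secondary-rejection-set}.

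For \eqref{eq:tilde-alpha-log-ratio}, which I expect to be the main computation, we expand the defining equation to first order in \(r\).  Since
\[
g_r(\tilde\alpha)-g_r\{a(y)\}=\{\tilde\alpha-a(y)\}+r\bigl[y(\tilde\alpha)-y\{a(y)\}\bigr]=rh,
\]
and \(\tilde\alpha-a(y)=O(r)\), the bracket is \(O(r)\).  Hence
\[
\frac{\tilde\alpha}{r}=\frac{a(y)}r+h-\{y(\tilde\alpha)-y\}=\frac{a(y)}r+h-\delta_r,
\qquad
\delta_r=y'\{a(y)\}(\tilde\alpha-a(y))+o(r).
\]
Together with \(\tilde\alpha-a(y)=rh+O(r^2)\), this gives \(\delta_r=O(r)\).

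The two arguments \(x_1=\tilde\alpha/r\) and \(x_2=a(y)/r+h\) are both of order \(1/r\) and differ by \(O(r)\).  We then use the Gaussian tail ratio \(\log\{\Phib(x_1)/\Phib(x_2)\}=-(x_1^2-x_2^2)/2+o(1)\) for nearby large arguments; the fixed-shift and \(h/x\) estimates of Lemma~\ref{lem:mills} handle this.  The key point is that \(x_1^2-x_2^2\approx 2x_2(x_1-x_2)=2\{a(y)/r\}(-\delta_r)\), which is \(O(1)\) rather than \(o(1)\), and this is where the claimed factor comes from.  Computing,
\[
\frac{a(y)}{r}\,\delta_r=\frac{a(y)}{r}\,y'\{a(y)\}\,rh+o(1)=\kappa(y)h+o(1).
\]
This yields \(\log[p(x_1)/p(x_2)]=\kappa(y)h+o(1)\).

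The main subtlety is controlling the \(o(r)\) remainder in \(\delta_r\), because it is multiplied by \(1/r\).  I would handle it with uniform continuity of \(y'\) on \(\overline{U_I}\): the mean value theorem gives \(\delta_r=y'(\xi)(\tilde\alpha-a(y))\) with \(|\xi-a(y)|=O(r)\), so \(\delta_r/r\to y'\{a(y)\}h\) uniformly for \(y\in I\) and \(|h|\le H\).  Uniformity in \(y\) and \(h\) then follows from compactness of \(I\) and the boundedness of \(a(\cdot)\) on \(I\).
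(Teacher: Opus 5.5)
Your proof is correct. The construction of \(U_I\), existence and uniqueness from \(g_r'\ge1\) with \(|\tilde\alpha_r(y,h)-a(y)|\le rH\), and the rejection characterization via Lemma~\ref{lem:two-block-derivatives} all coincide with the paper's argument; checking \eqref{eq:two-secondary-lower-tail} and the interior condition is exactly what the paper does.

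The two proofs differ only in how they establish \eqref{eq:tilde-alpha-log-ratio}.

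\textbf{The paper's route.} It differentiates the log-ratio in \(h\), using \(\partial_h\tilde\alpha_r=r/\{1+ry'(\tilde\alpha_r)\}\), the hazard estimate \(\lambda(x)=x+O(x^{-1})\) from \eqref{eq:mills-hazard}, and the identity \(\tilde\alpha_r/r=a(y)/r+h+y-y(\tilde\alpha_r)\). This shows that the derivative tends to \(\kappa(y)\) uniformly. It then integrates from \(h=0\), where the log-ratio vanishes.

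\textbf{Your route.} You expand directly. The two arguments differ by \(\delta_r=y(\tilde\alpha_r)-y=O(r)\), so the local tail-ratio estimate gives \(\log=x_2\delta_r+o(1)\). The relevant estimate is \eqref{eq:general-ratio}, or equivalently \eqref{eq:h-over-x} with a bounded varying \(h\); the fixed-shift limit \eqref{eq:fixed-shift} you mention plays no role. The mean value theorem and uniform continuity of \(y'\) on \(\overline{U_I}\) then give \(\delta_r/r\to y'\{a(y)\}h\) uniformly.

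\textbf{What each buys.} Your argument is more elementary: it needs neither differentiability of \(\tilde\alpha_r\) in \(h\) nor the hazard function. The paper's derivative computation, by contrast, is essentially the one it repeats in Lemma~\ref{lem:rare-local}. There it supplies the positive \(h\)-derivative needed for regularity through Lemma~\ref{lem:localized-crossing-regularity}.

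\textbf{A cosmetic point.} \(a(I)=[a(\min I),a(\max I)]\) holds only when \(I\) is an interval. In general \(a(I)\) is merely contained in that interval, which is all your construction of \(U_I\) requires.
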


\begin{proof}
Fix a compact \(I\subset(1,y_q)\) and \(H>0\) as in the statement.
Let
\[
A_I=\{a(y):y\in I\}.
\]
Because \(I\subset(1,y_q)\) is compact, \(A_I\) is a compact subset of \((0,a_q)\).  Choose an interval \(U_I\) whose closure is contained in \((0,a_q)\) and such that \(A_I\subset U_I\).
{
By \eqref{eq:br-properties} in Lemma~\ref{lem:scale-properties},
\(b_r\to0\), so \(U_I\subset(b_r,a_q)\) for all sufficiently small
\(r\).  Lemma~\ref{lem:two-block-derivatives} then shows that \(g_r\) is
strictly increasing on \(U_I\).}

Put \(d_I=\mathrm{dist}(A_I,U_I^c)>0\).  If \(rH<d_I/2\), then \(a(y)\pm rH\in U_I\) for all \(y\in I\).  Since \(g_r'\ge1\) on \(U_I\), the intermediate value theorem gives a unique solution to \eqref{eq:tilde-alpha-equation} in \(U_I\), and
\[
|\tilde{\alpha}_r(y,h)-a(y)|\le rH.
\]
Hence, \eqref{eq:alpha-close} is proved.

Put \(c=a(y)/r+h\) and \(\beta=\tilde{\alpha}_r(y,h)\).  If
\(a_I=\inf_{u\in I}a(u)>0\), then
\[
 r(y-c)
 =ry-a(y)-rh
 \le r\sup_{u\in I}u-a_I+rH<0<g_r(b_r)
\]
for all sufficiently small \(r\), uniformly for \(y\in I\) and
\(|h|\le H\).  Thus \eqref{eq:two-secondary-lower-tail} holds uniformly.
Moreover,
\eqref{eq:tilde-alpha-equation} and \(y\{a(y)\}=y\) give
\[
 g_r(\beta)
 =g_r\{a(y)\}+rh
 =r(c+y).
\]
This is \eqref{eq:two-secondary-threshold-equation}, so
\eqref{eq:two-secondary-rejection-set} shows that a secondary non-null is
rejected exactly when its index lies in
\([\tilde{\alpha}_r(y,h),a_q]\).

Write \(\beta=\tilde{\alpha}_r(y,h)\).  Differentiating
\eqref{eq:tilde-alpha-equation} with respect to \(h\) gives
\[
 \frac{\partial\beta}{\partial h}
 =\frac{r}{1+r y'(\beta)}.
\]
Since \(p'(x)/p(x)=-\lambda(x)\),
\begin{equation}
 \frac{\partial}{\partial h}
 \log\frac{p(\beta/r)}{p\{a(y)/r+h\}}
 =
 \lambda\!\left(\frac{a(y)}r+h\right)
 -\frac{\lambda(\beta/r)}{1+r y'(\beta)}.
 \label{eq:tilde-alpha-log-ratio-derivative}
\end{equation}
Moreover, \eqref{eq:tilde-alpha-equation} and \(y\{a(y)\}=y\) imply
\begin{equation}
 \frac{\beta}{r}
 =\frac{a(y)}r+h+y-y(\beta).
 \label{eq:tilde-alpha-threshold-identity}
\end{equation}
The function \(a(y)\) is bounded away from zero on \(I\), and
\eqref{eq:alpha-close} gives \(\beta=a(y)+O(r)\) uniformly.  Hence both
arguments of \(\lambda\) in
\eqref{eq:tilde-alpha-log-ratio-derivative} are of order \(r^{-1}\).
Using \(\lambda(x)=x+O(x^{-1})\) from
\eqref{eq:mills-hazard} in Lemma~\ref{lem:mills} and then
\eqref{eq:tilde-alpha-threshold-identity}, we obtain
\[
\begin{aligned}
 \lambda\!\left(\frac{a(y)}r+h\right)
 -\frac{\lambda(\beta/r)}{1+r y'(\beta)}
 &=
 \frac{a(y)}r+h
 -\frac{\beta/r}{1+r y'(\beta)}+O(r)\\
 &=
 \frac{\{a(y)+rh\}y'(\beta)+y(\beta)-y}
      {1+r y'(\beta)}+O(r)\\
 &\longrightarrow a(y)y'\{a(y)\}=\kappa(y)
\end{aligned}
\]
uniformly for \(y\in I\) and \(|h|\le H\).  At \(h=0\), uniqueness in
\eqref{eq:tilde-alpha-equation} gives \(\beta=a(y)\), so the logarithm in
\eqref{eq:tilde-alpha-log-ratio-derivative} is zero.  Integrating from
\(0\) to \(h\) proves \eqref{eq:tilde-alpha-log-ratio}.
\end{proof}

\begin{lemma}
\label{lem:rare-local}
For every compact \(I\subset(1,y_q)\) and every \(H>0\),
uniformly for \(y\in I\) and \(|h|\le H\),
\begin{equation}
\bar R_{r,-y}\left(\frac{a(y)}r+h\right)
\longrightarrow
qM(y)+\{1-qM(y)\}e^{\kappa(y)h}.
\label{eq:rare-local}
\end{equation}
Moreover, for every \(h_0>0\), there is \(\gamma>0\) such that, for all sufficiently small \(r\),
\begin{equation}
\sup_{y\in I}
\sup_{u_q\le c\le a(y)/r-h_0}
\bar R_{r,-y}(c)
\le1-\gamma.
\label{eq:no-earlier}
\end{equation}
Consequently, uniformly for \(y\in I\),
\begin{equation}
C_r(-y)-\frac{a(y)}r\longrightarrow0
\label{eq:rare-result}
\end{equation}
and
\begin{equation}
D_r(-y)\longrightarrow qM(y).
\label{eq:rare-fdp-result}
\end{equation}
Moreover, for sufficiently small \(r\), \(C_r(-y)\) is regular for every
\(y\in I\).
\end{lemma}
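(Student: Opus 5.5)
We mirror the proof of Lemma~\ref{lem:primary}, with the three blocks of \eqref{eq:population-curve} evaluated at \(z=-y\) and cutoff \(c=a(y)/r+h\).

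\textbf{Local limit \eqref{eq:rare-local}.} Fix a compact \(I\subset(1,y_q)\) and \(H>0\).
\begin{itemize}[leftmargin=*]
\item \emph{Primary block.} By Lemma~\ref{lem:primary-scale-separation}, the primary non-null is not rejected, so \(\bar R_{1,r,-y}=0\) on this range.
\item \emph{Null block.} Apply Lemma~\ref{lem:null-tail} with \(A=\sup_{I}a+1\) and \(T=\sup I\). Since \(rc=a(y)+rh\to a(y)\) uniformly, it gives
\[
\bar R_{0,r,-y}(c)=q\pi_{0,r}\bigl\{L_{a(y)+rh}(y)+o(1)\bigr\}\to qL_{a(y)}(y)=qM(y),
\]
using that \(a(y)\) is the maximizer in the envelope. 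By continuity of \(L\) in \((a,y)\) on compacts, this convergence is uniform in \(y\in I\) and \(|h|\le H\).
\item \emph{Secondary block.} Lemma~\ref{lem:secondary-threshold} identifies the rejected set \([\tilde\alpha_r(y,h),a_q]\). By Lemma~\ref{lem:two-block-derivatives} and \eqref{eq:Wr},
\[
\bar R_{2,r,-y}(c)=\bigl[1-qM\{y(\tilde\alpha_r)\}\bigr]\frac{p(\tilde\alpha_r/r)}{p(c)}.
\]
By \eqref{eq:alpha-close} and continuity of \(M\circ y\), the first factor tends to \(1-qM(y)\). By \eqref{eq:tilde-alpha-log-ratio}, the ratio tends to \(e^{\kappa(y)h}\).
\end{itemize}
Summing the three blocks gives \eqref{eq:rare-local}.

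\textbf{No earlier crossing \eqref{eq:no-earlier}.} This is the main obstacle, because it must hold over the whole range \(u_q\le c\le a(y)/r-h_0\), not just locally. I would split the range into three regions.
\begin{itemize}[leftmargin=*]
\item \emph{Region \(u_q\le c\le s_r(y)\).} Lemma~\ref{lem:primary-active-range} gives the bound \(q+o(1)\).
\item \emph{Region \(s_r(y)<c\le a(y)/r-h_0\).} Here the primary non-null is not rejected. Write \(c=t/r\) with \(t\le a(y)-rh_0\).
  \begin{itemize}
  \item The null block is \(qL_t(y)+o(1)\le qM(y)+o(1)\), uniformly by Lemma~\ref{lem:null-tail}. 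The \(L\) term stays below \(M\) because the sup over \(t\) is bounded by the envelope.
  \item The secondary block at cutoff \(c\) is rejected only for indices \(a\ge\beta\), where \(g_r(\beta)=r(c+y)\). Hence it is at most \(p(\beta/r)/p(c)\), or \(q\tau_r/p(c)\) when the block is fully rejected.
  \end{itemize}
\item \emph{Sub-split of the second region at \(t\le a(y)-\epsilon\).}
  \begin{itemize}
  \item For \(t\le a(y)-\epsilon\): by \eqref{eq:tilde-alpha-threshold-identity}-type reasoning, \(\beta/r=c+y-y(\beta)\), and \(y(\beta)<y\) whenever \(\beta<a(y)\). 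So \(p(\beta/r)/p(c)\le e^{-\kappa' (y-y(\beta))c}\to 0\) exponentially, since \(c\asymp 1/r\). If instead \(\beta\le b_r\), the bound \(q\tau_r/p(c)=O(r^2/p(c))\) still tends to zero, because \(c\le c_r\)-type sizes fail here; I would use \(p(b_r/r)=r^2\) and \(c\le b_r/r+y\) to compare. The total is then at most \(qM(y)+o(1)<1-\gamma\), since \(qM(y)<qM(y_q)=1\) uniformly on \(I\).
  \item For \(a(y)-\epsilon\le t\le a(y)-rh_0\): the local computation of Lemma~\ref{lem:secondary-threshold}, extended to \(h\in[-\epsilon/r,-h_0]\), gives the log-ratio derivative \(\approx\kappa>0\). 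The secondary block is therefore at most \(\{1-qM(y)\}e^{-\kappa h_0/2}\), and the total stays below \(qM+(1-qM)e^{-\kappa h_0/2}<1\).
  \end{itemize}
  The delicate part is making the \(\lambda(x)=x+O(1/x)\) estimate uniform over this larger \(h\)-range. I expect this to be routine with \(\epsilon\) small.
\end{itemize}

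\textbf{Conclusion.} Apply Lemma~\ref{lem:localized-crossing} with \(m_r=a(y)/r\), \(w_r=1\), \(d=qM\), and \(\xi=\kappa\); these are continuous, and \(d<1\) on \(I\). This yields \eqref{eq:rare-result}.

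For regularity and \eqref{eq:rare-fdp-result}, use Lemma~\ref{lem:localized-crossing-regularity}. The null limit \eqref{eq:localized-crossing-null} was shown above. For positive derivative in \(h\):
\begin{itemize}[leftmargin=*]
\item The secondary log-derivative \eqref{eq:two-secondary-block-log-derivative} tends to \(\kappa(y)\); its extra \(M'\) term is \(O(r)\). The block itself is bounded below by \((1-qM)e^{-\kappa H}/2\).
\item The null derivative \eqref{eq:two-null-block-derivative} is \(O(r)\), because \(\lambda(c)-\lambda(x_\pm)=O(r)\cdot O(1)\) after the \(x_\pm\) shift is \(\mp ry+O(r)\)… more precisely, it is bounded by \(\frac{\partial}{\partial c}L_{rc}(y)\cdot q=O(r)\).
\end{itemize}
Hence the total derivative is positive for small \(r\), and the null contribution is continuous. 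Lemma~\ref{lem:localized-crossing-regularity} then completes the proof.
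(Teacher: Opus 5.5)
\textbf{Verdict.} Your proof is correct. The local limit \eqref{eq:rare-local} and the closing step are essentially the paper's own argument. The closing step applies Lemma~\ref{lem:localized-crossing} with $m_r=a(y)/r$, $w_r=1$, $d=qM$, $\xi=\kappa$, and then Lemma~\ref{lem:localized-crossing-regularity}, with the secondary log-derivative tending to $\kappa(y)$ and the null log-derivative $O(r)$.

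\textbf{Where you differ: the proof of \eqref{eq:no-earlier}.} The paper argues by contradiction. It extracts sequences with $r_ns_n\to x\in[0,a(y)]$ and disposes of bounded $h_n$ with the local limit. For $h_n\to-\infty$ it proves $s_nd_n\to\infty$ separately in the cases $0<x<a(y)$, $x=0$ and $x=a(y)$; the last uses a mean-value computation giving $s_nd_n\approx a(y)y'\{a(y)\}(-h_n)$. Your fixed-$\epsilon$ split is a direct version of the same dichotomy.
\begin{itemize}
\item Your far piece $rc\le a(y)-\epsilon$ covers the paper's cases $x<a(y)$.
\item Your uniform lower bound $\inf_I\kappa/2$ on the tail-ratio log-derivative over $h\in[-\epsilon/r,0]$ replaces both the bounded-$h_n$ step and Case (c) with a single monotonicity estimate.
\end{itemize}
Your route is quantitative and avoids subsequences. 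The price is controlling the hazard expansion and $y,y'$ uniformly over a window of width $\epsilon/r$, and choosing $\epsilon$ small depending on $h_0$. The paper's route needs only sequential limits and the criterion \eqref{eq:variable-shift}.

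\textbf{Points to tighten.}
\begin{itemize}
\item \emph{Far piece, interior case.} $c$ is not of order $r^{-1}$ near the lower end $s_r(y)\sim\sqrt{2\log(1/r)}$. The decay comes instead from $c>s_r(y)\to\infty$ together with a uniform gap. If $\beta\ge a(y)$, then $y(\beta)\ge y$, and $\beta=rc+r\{y-y(\beta)\}$ forces $\beta\le rc<a(y)$, a contradiction. So $\beta<a(y)$, hence $\beta\le a(y)-\epsilon/2$ for small $r$, and so $y-y(\beta)\ge d_0(\epsilon)>0$ uniformly on $I$ when $\epsilon<\inf_I a$.
\item \emph{Far piece, fully rejected case.} The correct comparison comes from $g_r(b_r)\ge r(c+y)$ and reads $b_r/r-c\ge y-y(b_r)\ge\delta_I^{\mathrm{shift}}$. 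This gives $q\tau_r/p(c)\le p(b_r/r)/p(c)\to0$. Your inequality $c\le b_r/r+y$ points the wrong way and yields no gap.
\item \emph{Near piece.} When $\beta<a(y)$ you have $1-qM\{y(\beta)\}>1-qM(y)$, so the bound $\{1-qM(y)\}e^{-\kappa h_0/2}$ holds only up to an extra term. That term is $O(\epsilon)$, or $O(r|h|)e^{-\kappa|h|/2}=O(r)$ using $a(y)-\beta\le r|h|$ (from $g_r'\ge1$). Either version is absorbed into $\gamma$.
\item \emph{Null derivative.} Bounding the null derivative by $q\,\partial_cL_{rc}(y)$ is not justified, since derivatives of a uniform limit need not converge. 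Your first argument is the one to keep: the null log-derivative is $O(r)$ by the hazard expansion, and the level is bounded.
\end{itemize}
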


\begin{proof}
\textbf{Proof of \eqref{eq:rare-local}.}
Fix \(H>0\), take \(y\in I\), and take \(|h|\le H\).  By Lemma~\ref{lem:secondary-threshold}, a secondary non-null with \(a=\tilde a\) is rejected at cutoff \(a(y)/r+h\) {if and only if}
\[
\tilde a\in[\tilde{\alpha}_r(y,h),a_q].
\]
By \eqref{eq:mur-survival}, their mass under the secondary component is
\[
\tau_r\mu_r([\tilde{\alpha}_r(y,h),a_q])
=W_r\{\tilde{\alpha}_r(y,h)\}.
\]
The definition \eqref{eq:Wr} therefore gives
\[
\frac{qW_r\{\tilde{\alpha}_r(y,h)\}}{p(a(y)/r+h)}
=
\bigl[1-qM\{y(\tilde{\alpha}_r(y,h))\}\bigr]
\frac{p\{\tilde{\alpha}_r(y,h)/r\}}{p(a(y)/r+h)}.
\]
By \eqref{eq:alpha-close}, \(y(\tilde{\alpha}_r(y,h))\to y\)
uniformly for \(y\in I\) and \(|h|\le H\).  Equation~\eqref{eq:tilde-alpha-log-ratio} gives
\[
\frac{p\{\tilde{\alpha}_r(y,h)/r\}}{p(a(y)/r+h)}
\longrightarrow e^{\kappa(y)h},
\]
uniformly for \(y\in I\) and \(|h|\le H\).  Hence
\begin{equation}
\frac{qW_r\{\tilde{\alpha}_r(y,h)\}}{p(a(y)/r+h)}
=
\bigl[1-qM\{y(\tilde{\alpha}_r(y,h))\}\bigr]
\frac{p\{\tilde{\alpha}_r(y,h)/r\}}{p(a(y)/r+h)}
\longrightarrow
\{1-qM(y)\}e^{\kappa(y)h}
\label{eq:signal-local}
\end{equation}

Lemma~\ref{lem:primary-scale-separation}, through
\eqref{eq:rare-primary-scale-separation}, shows that the primary non-null is
not rejected at cutoff \(a(y)/r+h\), uniformly for \(y\in I\) and
\(|h|\le H\).

Since \(a(y)\) is bounded away from zero on \(I\) and
\(a(y)\le a_q\), for all sufficiently small \(r\),
\[
u_q\le \frac{a(y)}r+h\le \frac{a_q+1}{r}
\]
uniformly for \(y\in I\) and \(|h|\le H\).  Thus
\eqref{eq:null-tail} in Lemma~\ref{lem:null-tail}, applied with
\(A=a_q+1\) and \(T=\sup_{u\in I}u\), gives
\begin{equation}
\bar R_{0,r,-y}(a(y)/r+h)
\longrightarrow
qL_{a(y)}(y)=qM(y).
\label{eq:rare-null-local}
\end{equation}
Combining \eqref{eq:signal-local} and \eqref{eq:rare-null-local} proves \eqref{eq:rare-local}.

\par\medskip\noindent
\textbf{Proof of \eqref{eq:no-earlier}.}
Lemma~\ref{lem:primary-active-range}, through
\eqref{eq:primary-active-uniform}, excludes a crossing on
\(u_q\le s\le s_r(y)\), uniformly for \(y\in I\).
Fix \(h_0>0\).
The complementary range \(s_r(y)<s\le a(y)/r-h_0\) is nonempty uniformly
on \(I\), because
\[
\inf_{y\in I}
\left\{
\frac{a(y)}r-h_0-s_r(y)
\right\}
\ge
\frac{\inf_{y\in I}a(y)}r-c_r-O_I(1)
\longrightarrow\infty,
\]
where \(rc_r\to0\) by \eqref{eq:cr-properties} in
Lemma~\ref{lem:scale-properties}.
It remains to prove that there is \(\gamma>0\) such that, for all sufficiently
small \(r\),
\begin{equation}
\sup_{y\in I}
\sup_{s_r(y)<s\le a(y)/r-h_0}
\bar R_{r,-y}(s)
\le 1-\gamma
\label{eq:secondary-range-target}
\end{equation}

For \(s>s_r(y)\), the identity \(T_i=s_r(y)>0\) gives \(|T_i|=T_i<s\).  For sufficiently small \(r\), every secondary non-null value is positive uniformly over \(y\in I\) and \(\tilde a\in[b_r,a_q]\).
{For every \(y>1\) and \(s>s_r(y)\), define
\begin{equation}
\mathcal A_{r,y,s}:=\left\{\tilde a\in[b_r,a_q]:g_r(\tilde a)\ge r(s+y)\right\},
\qquad \beta_{r,y,s}:=\inf\bigl(\mathcal A_{r,y,s}\cup\{a_q\}\bigr).
\label{eq:beta-rys-def}
\end{equation}}
On the range \(s_r(y)<s\le a(y)/r-h_0\), the set is nonempty for
sufficiently small \(r\), since \(\tilde a=a_q\) satisfies the inequality
uniformly over \(y\in I\).  By \eqref{eq:gr} and
Lemma~\ref{lem:two-block-derivatives}, \(g_r\) is strictly increasing.
Thus a secondary non-null with \(a=\tilde a\) is rejected if and only if
\(\tilde a\in[\beta_{r,y,s},a_q]\).  Therefore, by
\eqref{eq:population-curve},
\begin{equation}
\bar R_{r,-y}(s)
=
\bar R_{0,r,-y}(s)
+
\frac{q\tau_r}{p(s)}
\mu_r([\beta_{r,y,s},a_q]).
\label{eq:secondary-only-curve}
\end{equation}

\noindent Suppose \eqref{eq:secondary-range-target} fails for this \(h_0\).  Then there are
\begin{equation}
r_n\downarrow0,
\quad y_n\in I,
\quad s_{r_n}(y_n)<s_n\le a(y_n)/r_n-h_0,
\quad \liminf_{n\to\infty}\bar R_{r_n,-y_n}(s_n)\ge1.
\label{eq:secondary-contradiction-sequence}
\end{equation}
After taking a subsequence,
\begin{equation}
y_n\to y\in I,
\quad a(y_n)\to a(y),
\quad x_n=r_ns_n\to x\in[0,a(y)].
\label{eq:secondary-contradiction-limits}
\end{equation}

Define
\[
h_n:=s_n-\frac{a(y_n)}{r_n}.
\]
Then \(h_n\le -h_0\) by \eqref{eq:secondary-contradiction-sequence}.  If \(h_n=O(1)\), then after passing to a further subsequence \(h_n\to h\le -h_0\).  Equation~\eqref{eq:rare-local} gives
\[
\bar R_{r_n,-y_n}(s_n)
\longrightarrow
qM(y)+\{1-qM(y)\}e^{\kappa(y)h}
\le
qM(y)+\{1-qM(y)\}e^{-\kappa(y)h_0}
<1,
\]
contradicting \(\liminf_{n\to\infty}\bar R_{r_n,-y_n}(s_n)\ge1\).  Hence, after passing to a further subsequence if necessary, it remains to consider
\begin{equation}
h_n\to-\infty.
\label{eq:hn-minus-infty}
\end{equation}

We next identify the rejected secondary non-nulls at cutoff \(s_n\).  A secondary non-null with index \(\tilde a\in[b_{r_n},a_q]\) has \((\Theta,B)=(\tilde a/r_n+y(\tilde a),1)\).  Conditional on \(Z=-y_n\), its value is
\[
T_{\tilde a,n}
=
\frac{\tilde a}{r_n}+y(\tilde a)-y_n.
\]
For all large \(n\), this value is positive uniformly in \(\tilde a\in[b_{r_n},a_q]\), because \(b_{r_n}/r_n\to\infty\) and \(y(\tilde a)-y_n\) is bounded below.  Hence \(|T_{\tilde a,n}|=T_{\tilde a,n}\), and the rejection rule \(|T_{\tilde a,n}|\ge s_n\) is equivalent to
\[
\frac{\tilde a}{r_n}+y(\tilde a)-y_n\ge s_n.
\]
The rejected set is nonempty because the endpoint \(\tilde a=a_q\) is rejected.  Indeed, compactness of \(I\subset(1,y_q)\) and continuity of \(a(\cdot)\) give
\[
\Delta_I:=a_q-\sup_{u\in I}a(u)>0.
\]
Since \(s_n\le a(y_n)/r_n-h_0\) by \eqref{eq:secondary-contradiction-sequence},
\[
\frac{a_q}{r_n}+y_q-y_n-s_n
\ge
\frac{a_q-a(y_n)}{r_n}+y_q-y_n+h_0>0
\]
for large \(n\), because \(a_q-a(y_n)\ge\Delta_I\) and \(y_n\le\sup I<y_q\).  Define
\[
G_n(\tilde a)
:=
g_{r_n}(\tilde a)-r_n(s_n+y_n),
\quad
\tilde a\in[b_{r_n},a_q],
\]
where \(g_r\) is defined in \eqref{eq:gr}.  Multiplying the rejection inequality by \(r_n\), a secondary non-null with index \(\tilde a\) is rejected if and only if
\[
G_n(\tilde a)\ge0.
\]
By \eqref{eq:gr} and Lemma~\ref{lem:two-block-derivatives}, \(G_n\) is
continuous and strictly increasing on \([b_{r_n},a_q]\) for all large
\(n\).  Therefore a secondary non-null with \(a = \tilde{a}\) is rejected
if and only if \(\tilde{a}\in[\beta_n,a_q]\), where
\begin{equation}
\beta_n
:=
\inf\left\{
\tilde a\in[b_{r_n},a_q]:
G_n(\tilde a)\ge0
\right\}.
\label{eq:beta-n-def}
\end{equation}
Put \(d_n:=y_n-y(\beta_n)\).
If \(\beta_n>b_{r_n}\), then \(G_n(\beta_n)=0\).  Thus, in this case,
\begin{equation}
\frac{\beta_n}{r_n}-s_n
=d_n.
\label{eq:rejection-threshold}
\end{equation}
If \(\beta_n=b_{r_n}\), then \(G_n(\beta_n)\ge0\), so \eqref{eq:rejection-threshold} is replaced by
\begin{equation}
\frac{\beta_n}{r_n}-s_n
\ge
d_n.
\label{eq:rejection-threshold-endpoint}
\end{equation}
{
Moreover, \(s_n\to\infty\), because
\(s_n>s_{r_n}(y_n)=c_{r_n}+y(b_{r_n})-y_n\),
\(c_{r_n}\to\infty\), \(y(b_{r_n})\to1\), and \(I\) is compact.
Also \(d_n>0\) eventually.  If \(\beta_n=b_{r_n}\), this follows from
the definition of \(d_n\) and
\eqref{eq:primary-shift-bounds} in
Lemma~\ref{lem:primary-active-range}, which give
\[
d_n=y_n-y(b_{r_n})\ge\delta_I^{\mathrm{shift}}>0.
\]
If \(\beta_n>b_{r_n}\), then
\[
g_{r_n}(\beta_n)
=r_n(s_n+y_n)
=g_{r_n}\{a(y_n)\}+r_nh_n
\le g_{r_n}\{a(y_n)\}-r_nh_0.
\]
The strict monotonicity of \(g_{r_n}\) therefore gives
\(\beta_n<a(y_n)\), and hence
\(d_n=y_n-y(\beta_n)>0\).
}
The null contribution term in \eqref{eq:secondary-only-curve} already has a limit along this subsequence:
\begin{equation}
q\pi_{0,r_n}
\frac{R_{0,r_n,-y_n}\{p(s_n)\}}{p(s_n)}
\longrightarrow
qL_x(y)
\le qM(y)<1.
\label{eq:null-term-secondary-contradiction}
\end{equation}
Indeed, \eqref{eq:null-tail} in Lemma~\ref{lem:null-tail} applies because \(r_ns_n\to x\), \(y_n\to y\), and \(\pi_{0,r_n}\to1\).  The inequality follows from \(M(y)=\sup_{\alpha\ge0}L_\alpha(y)\), the strict monotonicity of \(M\) on \((1,\infty)\), \(y\in I\subset(1,y_q)\), and \(qM(y_q)=1\).  Equations~\eqref{eq:secondary-only-curve} and~\eqref{eq:null-term-secondary-contradiction}, together with the threshold characterization in \eqref{eq:beta-n-def}, imply
\begin{equation}
\bar R_{r_n,-y_n}(s_n)
=
qL_x(y)+o(1)
+
\frac{q\tau_{r_n}}{p(s_n)}
\mu_{r_n}([\beta_n,a_q]).
\label{eq:secondary-sequence-curve}
\end{equation}
Moreover, \eqref{eq:mur-survival}, \eqref{eq:Wr}, \eqref{eq:deficit-properties} in Lemma~\ref{lem:envelope}, and the bound from \eqref{eq:rejection-threshold}--\eqref{eq:rejection-threshold-endpoint} give
\begin{equation}
\frac{q\tau_{r_n}}{p(s_n)}
\mu_{r_n}([\beta_n,a_q])
=
\frac{qW_{r_n}(\beta_n)}{p(s_n)}
=
\bigl[1-qM\{y(\beta_n)\}\bigr]
\frac{p(\beta_n/r_n)}{p(s_n)}
\le
\frac{p(s_n+d_n)}{p(s_n)}.
\label{eq:secondary-tail-bound}
\end{equation}
It remains only to prove that
\begin{equation}
s_nd_n\longrightarrow\infty.
\label{eq:sn-dn-diverges}
\end{equation}
Equations~\eqref{eq:secondary-tail-bound}, \eqref{eq:sn-dn-diverges}, and
\eqref{eq:variable-shift} in Lemma~\ref{lem:mills} imply
\[
\frac{q\tau_{r_n}}{p(s_n)}
\mu_{r_n}([\beta_n,a_q])
=o(1).
\]
Substituting this into \eqref{eq:secondary-sequence-curve} gives
\[
\bar R_{r_n,-y_n}(s_n)\longrightarrow qL_x(y)<1,
\]
which yields a contradiction to \eqref{eq:secondary-contradiction-sequence}.

\par\medskip\noindent
To prove \eqref{eq:sn-dn-diverges}, we now consider three cases under \eqref{eq:secondary-contradiction-sequence}, \eqref{eq:secondary-contradiction-limits}, and \eqref{eq:hn-minus-infty}.
\par\medskip\noindent

\begin{enumerate}[label=\textbf{Case (\alph*):},wide=0pt]
\item \(0<x<a(y)\).
Suppose \(x>0\).  Since \(b_{r_n}\to0\) by \eqref{eq:br-properties} in Lemma~\ref{lem:scale-properties} and \(r_ns_n\to x\), the endpoint case \(\beta_n=b_{r_n}\) cannot occur for large \(n\): multiplying \eqref{eq:rejection-threshold-endpoint} by \(r_n\) would give
\[
b_{r_n}-r_ns_n
\ge
r_n\{y_n-y(b_{r_n})\},
\]
whose left side tends to \(-x<0\), while the right side is nonnegative for large \(n\).  Hence \eqref{eq:rejection-threshold} holds eventually.  Multiplying it by \(r_n\) gives
\[
\beta_n-r_ns_n
=
r_nd_n.
\]
The factor \(d_n\) is bounded.  Indeed, \(I\) is compact, so \(|y_n|\le \sup_{u\in I}|u|\).  Also \(\beta_n\in[b_{r_n},a_q]\), and \(y(\cdot)\) is increasing by Lemma~\ref{lem:envelope}; hence
\begin{equation}
y(b_{r_n})\le y(\beta_n)\le y(a_q)=y_q.
\label{eq:beta-y-bounds}
\end{equation}
Since \(y(b_{r_n})\downarrow1\) by \eqref{eq:br-properties} in Lemma~\ref{lem:scale-properties}, the sequence \(y(\beta_n)\) is bounded.  Thus \(r_nd_n\to0\), so \(\beta_n-r_ns_n\to0\).  Since \(r_ns_n\to x\), we have \(\beta_n\to x\).  Since \(x<a(y)\) and \(y(\cdot)\) is strictly increasing by Lemma~\ref{lem:envelope}, \(y(x)<y\).  Together, \(y_n\to y\) from \eqref{eq:secondary-contradiction-limits} and \(\beta_n\to x\) imply, by continuity of \(y(\cdot)\), that \(d_n\to d:=y-y(x)>0\).  Also, \eqref{eq:rejection-threshold} gives \(\beta_n/r_n=s_n+d_n\).  Therefore, for all large \(n\),
\[
\frac{\beta_n}{r_n}=s_n+d_n,
\quad
0<\frac d2\le d_n\le 2d,
\quad
s_n\to\infty.
\]
As a result, \eqref{eq:sn-dn-diverges} is proved.

\item \(x=0\).
Then \(r_ns_n\to0\).  Since
\[
s_n>s_{r_n}(y_n)=c_{r_n}+y(b_{r_n})-y_n,
\]
\eqref{eq:cr-properties} gives \(c_{r_n}\to\infty\), while \eqref{eq:br-properties} gives \(y(b_{r_n})\downarrow1\) and compactness of \(I\) keeps \(y_n\) bounded.  Hence \(s_n\to\infty\).  If \(\beta_n=b_{r_n}\), then \(\beta_n\to0\) by \eqref{eq:br-properties}.  If \(\beta_n>b_{r_n}\), then \eqref{eq:rejection-threshold} holds; multiplying it by \(r_n\), using compactness of \(I\), and applying \eqref{eq:beta-y-bounds} gives
\[
\beta_n-r_ns_n=r_nd_n\to0.
\]
Since \(r_ns_n\to0\), this also gives \(\beta_n\to0\).  Thus \(\beta_n\to0\) in both cases.  By \eqref{eq:secondary-contradiction-limits}, \(y_n\to y\), and by the inverse relation together with \eqref{eq:envelope-limits} in Lemma~\ref{lem:envelope}, \(y(\beta_n)\to1\).  Hence
\[
d_n\to y-1>0.
\]
Moreover, \eqref{eq:rejection-threshold} in the interior case and \eqref{eq:rejection-threshold-endpoint} in the endpoint case give \(\beta_n/r_n\ge s_n+d_n\).  Since \(d_n\to d:=y-1>0\) and \(s_n\to\infty\), \eqref{eq:sn-dn-diverges} is proved.

\item \(x=a(y)\).  By \eqref{eq:hn-minus-infty}, this is the only remaining subcase.  Since \(r_ns_n\to a(y)\) and \(a(y_n)\to a(y)\), we have
\[
r_nh_n=r_ns_n-a(y_n)\to a(y)-a(y)=0.
\]
The threshold equation \(G_n(\tilde a)=0\) is
\[
g_{r_n}(\tilde a)
=
r_n(s_n+y_n)
=
a(y_n)+r_ny_n+r_nh_n
=
g_{r_n}\{a(y_n)\}+r_nh_n,
\]
where \(y\{a(y_n)\}=y_n\).  Since \(h_n<0\) and \(g_{r_n}\) is strictly
increasing by \eqref{eq:gr} and Lemma~\ref{lem:two-block-derivatives}, the
corresponding threshold lies below \(a(y_n)\).
To justify the existence and location of this threshold, choose
\(\eta>0\) such that
\[
[a(y_n)-\eta,a(y_n)]\subset(0,a_q)
\]
for all large \(n\); this is possible because \(a(y_n)\to a(y)\in(0,a_q)\).
Since \(-r_nh_n\to0\) and \(g_{r_n}'\ge1\), for all large \(n\),
\[
g_{r_n}\{a(y_n)-\eta\}
<g_{r_n}\{a(y_n)\}+r_nh_n
<g_{r_n}\{a(y_n)\}.
\]
Continuity and strict monotonicity of \(g_{r_n}\) therefore give a unique
\(\tilde\beta_n\in(a(y_n)-\eta,a(y_n))\) satisfying
\[
g_{r_n}(\tilde\beta_n)
=
g_{r_n}\{a(y_n)\}+r_nh_n.
\]
Moreover, the mean-value theorem and \(g_{r_n}'\ge1\) give
\[
0<a(y_n)-\tilde\beta_n\le-r_nh_n\longrightarrow0.
\]
Because \(a(y_n)\) remains bounded away from \(0\) and \(a_q\), and
\(b_{r_n}\to0\) by \eqref{eq:br-properties} in
Lemma~\ref{lem:scale-properties}, it follows that
\[
b_{r_n}<\tilde\beta_n<a_q
\]
eventually.  Thus \(\tilde\beta_n\) is the threshold \(\beta_n\) in
\eqref{eq:beta-n-def}, so \(G_n(\beta_n)=0\).  The mean-value theorem applied
to \(g_{r_n}\) now gives, for some \(\xi_n\) between \(\beta_n\) and
\(a(y_n)\),
\[
-r_nh_n
=
g_{r_n}\{a(y_n)\}-g_{r_n}(\beta_n)
=
(a(y_n)-\beta_n)\{1+r_ny'(\xi_n)\}.
\]
Therefore \(\beta_n<a(y_n)\) and
\begin{equation}
a(y_n)-\beta_n
=
\frac{-r_nh_n}{1+r_ny'(\xi_n)}.
\label{eq:case-c-beta-gap}
\end{equation}
For some \(\zeta_n\) between \(\beta_n\) and \(a(y_n)\),
\begin{equation}
\frac{\beta_n}{r_n}-s_n
=d_n
=y'(\zeta_n)\{a(y_n)-\beta_n\}.
\label{eq:case-c-dn-gap}
\end{equation}
Combining \eqref{eq:case-c-beta-gap} and~\eqref{eq:case-c-dn-gap} gives
\begin{equation}
s_nd_n
=
(r_ns_n)
\frac{y'(\zeta_n)}{1+r_ny'(\xi_n)}
(-h_n).
\label{eq:case-c-sn-dn-product}
\end{equation}
Here \(r_ns_n\to a(y)>0\).  Also \(\beta_n\to a(y)\) and \(a(y_n)\to a(y)\), so
\(\xi_n,\zeta_n\to a(y)\); hence \(y'(\zeta_n)\to y'\{a(y)\}>0\), while
\(1+r_ny'(\xi_n)\to1\).  Since \(h_n\to-\infty\) by \eqref{eq:hn-minus-infty}, \eqref{eq:case-c-sn-dn-product} yields
\[
s_nd_n\longrightarrow\infty.
\]
Thus \eqref{eq:sn-dn-diverges} is proved.
\end{enumerate}

This proves \eqref{eq:secondary-range-target}.  Combining \eqref{eq:primary-active-uniform} and \eqref{eq:secondary-range-target}, and reducing \(\gamma\) if necessary, proves \eqref{eq:no-earlier}.

\par\medskip\noindent
\textbf{Proof of \eqref{eq:rare-result}.}
{
Apply Lemma~\ref{lem:localized-crossing} to \(\bar R_{r,-y}\), with
\[
 m_r(y)=\frac{a(y)}r,
 \qquad w_r(y)=1,
 \qquad d(y)=qM(y),
 \qquad \xi(y)=\kappa(y).
\]
The set \(I\) is compact, and \(a_I=\inf_{y\in I}a(y)>0\); hence, for every
fixed \(H>0\),
\[
 \inf_{y\in I}\left\{\frac{a(y)}r-H\right\}
 \ge\frac{a_I}{r}-H>u_q
\]
for all sufficiently small \(r\).  Because \(I\subset(1,y_q)\) and \(M\)
is continuous and strictly increasing there,
\[
 0<d(y)=qM(y)<1,\qquad \xi(y)=\kappa(y)>0,
\]
and both functions are continuous on \(I\).  Equation~\eqref{eq:rare-local}
is exactly \eqref{eq:localized-crossing-full}, while
\eqref{eq:no-earlier} is \eqref{eq:localized-crossing-left}.  Thus all
hypotheses of Lemma~\ref{lem:localized-crossing} hold, and
\eqref{eq:localized-crossing-cutoff} gives \eqref{eq:rare-result} uniformly
on \(I\).}

\par\medskip\noindent
\textbf{Proof of \eqref{eq:rare-fdp-result}.}
{
Fix \(H=1\).}
For
\(y\in I\) and \(|h|\le H\), let \(\tilde{\alpha}_r(y,h)\) be the threshold
defined by \eqref{eq:tilde-alpha-equation}.  Write
\[
\bar R_{2,r,-y}\left(\frac{a(y)}r+h\right)
=\frac{qW_r\{\tilde{\alpha}_r(y,h)\}}{p(a(y)/r+h)}
=\bigl[1-qM\{y(\tilde{\alpha}_r(y,h))\}\bigr]
\frac{p\{\tilde{\alpha}_r(y,h)/r\}}{p(a(y)/r+h)}.
\]
Equation~\eqref{eq:rare-primary-scale-separation} shows that the
primary non-null is not rejected for \(y\in I\) and \(|h|\le H\).
Consequently, \eqref{eq:population-curve} gives
\[
\bar R_{r,-y}\left(\frac{a(y)}r+h\right)
=\bar R_{0,r,-y}\left(\frac{a(y)}r+h\right)
 +\bar R_{2,r,-y}\left(\frac{a(y)}r+h\right).
\]

We first differentiate the secondary non-null contribution
\(\bar R_{2,r,-y}\{a(y)/r+h\}\).  Equation
\eqref{eq:two-secondary-lower-tail} holds uniformly, since
\[
 g_r(b_r)>0>ry-a(y)-rh
 =r\left\{y-\left(\frac{a(y)}r+h\right)\right\}
\]
for all sufficiently small \(r\).
{
Apply \eqref{eq:two-secondary-block-log-derivative} in
Lemma~\ref{lem:two-block-derivatives} with
\(c=a(y)/r+h\) and \(\beta=\tilde{\alpha}_r(y,h)\).
The term in \eqref{eq:two-secondary-block-log-derivative},
\[
 -\frac{qM'\{y(\beta)\}y'(\beta)}
 {1-qM\{y(\beta)\}}
 \frac{r}{1+r y'(\beta)},
\]
is \(O(r)\) uniformly by \eqref{eq:alpha-close} and
\eqref{eq:deficit-properties} in Lemma~\ref{lem:envelope}.  Indeed,
\eqref{eq:alpha-close} places \(\beta\), uniformly over
\(y\in I\) and \(|h|\le H\), in a fixed compact subinterval of
\((0,a_q)\).  On this subinterval,
\[
 \frac{|M'\{y(\beta)\}|y'(\beta)}
 {1-qM\{y(\beta)\}}=O(1),
 \qquad
 \frac{r}{1+r y'(\beta)}=O(r),
\]
by \eqref{eq:deficit-properties} in Lemma~\ref{lem:envelope}; hence
their product is \(O(r)\)
uniformly.

The last two terms in \eqref{eq:two-secondary-block-log-derivative} are
\[
 -\frac{\lambda(\beta/r)}{1+r y'(\beta)}+\lambda(c).
\]
For these terms, \eqref{eq:two-secondary-threshold-equation} implies
\[
 \frac{\beta}{r}=c+y-y(\beta).
\]
Because \(a(y)\) is bounded away from zero on \(I\),
\eqref{eq:alpha-close} gives \(c\asymp\beta/r\asymp r^{-1}\)
uniformly.  Thus \eqref{eq:mills-hazard} in Lemma~\ref{lem:mills}
gives
\[
\begin{aligned}
 \lambda(c)-\frac{\lambda(\beta/r)}{1+r y'(\beta)}
 &=
 c-\frac{\beta/r}{1+r y'(\beta)}+O(r)\\
 &=
 \frac{rcy'(\beta)+y(\beta)-y}
 {1+r y'(\beta)}+O(r).
\end{aligned}
\]
Now \(rc=a(y)+rh\to a(y)\), while
\eqref{eq:alpha-close} and continuity of \(y\) and \(y'\) give
\[
 y(\beta)-y=o(1),
 \qquad
 y'(\beta)=y'\{a(y)\}+o(1)
\]
uniformly.  Consequently,
\[
 \lambda(c)-\frac{\lambda(\beta/r)}{1+r y'(\beta)}
 \longrightarrow a(y)y'\{a(y)\}=\kappa(y)
\]
uniformly, by \eqref{eq:kappa}.  Combining this with
\eqref{eq:signal-local} yields
\begin{equation}
\frac{\partial}{\partial h}
\bar R_{2,r,-y}\left(\frac{a(y)}r+h\right)
\longrightarrow
\kappa(y)\{1-qM(y)\}e^{\kappa(y)h}
\label{eq:rare-signal-derivative}
\end{equation}
uniformly for \(y\in I\) and \(|h|\le H\).}

{
It remains to differentiate the null contribution.  Use the notation of
Lemma~\ref{lem:two-block-derivatives} at \(c=a(y)/r+h\).  Since \(a(y)\)
is bounded away from zero on \(I\) and
\(a(y)\le a_q\), for all sufficiently small \(r\),
\[
u_q\le \frac{a(y)}r+h\le \frac{a_q+1}{r}
\]
uniformly for \(y\in I\) and \(|h|\le H\).  Applying
\eqref{eq:null-tail} in Lemma~\ref{lem:null-tail} with
\(A=a_q+1\) and \(T=\sup_{u\in I}u\) shows that
\begin{equation}
 \sup_{y\in I}\sup_{|h|\le H}
 \bar R_{0,r,-y}\left(\frac{a(y)}r+h\right)=O(1).
\label{eq:rare-null-level-bound}
\end{equation}
Equation~\eqref{eq:two-null-block-log-derivative} in
Lemma~\ref{lem:two-block-derivatives} can be written as
\[
 \frac{\partial}{\partial h}\log\bar R_{0,r,-y}(c)
 =\sum_{\sigma\in\{-,+\}}
 \frac{Q_\sigma}{Q_++Q_-}
 \left\{\lambda(c)-\frac{\lambda(x_\sigma)}{\sqrt{1-r^2}}\right\},
\]
because \(\partial c/\partial h=1\).  Moreover,
\eqref{eq:mills-hazard} in Lemma~\ref{lem:mills} gives, for either sign,
\[
\lambda(c)
-\frac{\lambda(x_\pm)}{\sqrt{1-r^2}}
=c-\frac{c\pm ry}{1-r^2}
+O(r)
=O(r)
\]
uniformly for \(y\in I\) and \(|h|\le H\).  Since the nonnegative weights
\(Q_\sigma/(Q_++Q_-)\) sum to one,
\begin{equation}
 \sup_{y\in I}\sup_{|h|\le H}
 \left|\frac{\partial}{\partial h}
 \log\bar R_{0,r,-y}\left(\frac{a(y)}r+h\right)\right|=O(r).
\label{eq:rare-null-log-derivative-bound}
\end{equation}
Multiplying \eqref{eq:rare-null-level-bound} and
\eqref{eq:rare-null-log-derivative-bound} gives
\begin{equation}
\sup_{y\in I}\sup_{|h|\le H}
\left|
\frac{\partial}{\partial h}
\bar R_{0,r,-y}\left(\frac{a(y)}r+h\right)
\right|
\longrightarrow0.
\label{eq:rare-null-derivative}
\end{equation}}
Since \(I\) is compact and
\(\kappa(y)\{1-qM(y)\}e^{\kappa(y)h}\) is strictly positive on
\(I\times[-H,H]\), \eqref{eq:rare-signal-derivative} and~\eqref{eq:rare-null-derivative} imply
\begin{equation}
\inf_{y\in I}\inf_{|h|\le H}
\frac{\partial}{\partial h}
\bar R_{r,-y}\left(\frac{a(y)}r+h\right)>0
\label{eq:rare-strict-increase}
\end{equation}
for all sufficiently small \(r\).
For every fixed \(H>0\), \eqref{eq:rare-null-local} verifies
\eqref{eq:localized-crossing-null} with \(d(y)=qM(y)\).  The null
contribution is smooth, and \eqref{eq:rare-strict-increase}, with \(H=1\),
gives the positive derivative required by
Lemma~\ref{lem:localized-crossing-regularity}.  That lemma
therefore shows that \(C_r(-y)\) is regular for every \(y\in I\) when \(r\)
is sufficiently small, and \eqref{eq:localized-crossing-fdp} gives
\[
 \sup_{y\in I}|D_r(-y)-qM(y)|\longrightarrow0,
\]
which is \eqref{eq:rare-fdp-result}.
\end{proof}

\subsubsection{\texorpdfstring{The regime \(z<-y_q\)}{The regime z < -yq}}

Let \(y=-z\).
For \(y>y_q\), let \(\alpha_q(y)\in(0,a_q)\) be the unique solution of
\begin{equation}
qL_{\alpha_q(y)}(y)=1.
\label{eq:alpha}
\end{equation}
For fixed \(y\), the map \(a\mapsto L_a(y)\) is continuous and
\begin{equation}
\frac{\partial}{\partial a}\log L_a(y)=-a+y\tanh(ay).
\label{eq:La-log-derivative}
\end{equation}
By Lemma~\ref{lem:envelope}, this derivative is positive on \((0,a(y))\).  Since \(y>y_q\), the same lemma gives \(a(y)>a_q\), so \(a\mapsto L_a(y)\) is strictly increasing on \([0,a_q]\).  Moreover, by the definition of \(a_q\) in \eqref{eq:aq-def},
\[
qL_0(y)=q<1,
\quad
qL_{a_q}(y)>qL_{a_q}(y_q)=qM(y_q)=1,
\]
where the strict inequality follows from \(a_q>0\), \(y>y_q\), and the monotonicity of \(\cosh(a_q y)\) in \(y>0\).  Thus continuity and strict monotonicity give exactly one solution to \eqref{eq:alpha}.

\begin{lemma}
\label{lem:null-only}
For every compact \(I\subset(y_q,\infty)\), uniformly for \(y\in I\),
\begin{equation}
rC_r(-y)\longrightarrow\alpha_q(y)
\label{eq:null-only-result}
\end{equation}
and
\begin{equation}
D_r(-y)\longrightarrow1.
\label{eq:null-only-fdp-result}
\end{equation}
\end{lemma}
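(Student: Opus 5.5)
In the regime \(y\in I\subset(y_q,\infty)\), all of the secondary non-nulls are rejected while the null curve alone crosses level one. The crossing therefore sits on the \(r^{-1}\) scale at \(c=\alpha/r\), where \(qL_\alpha(y)=1\). I would follow the template of Lemma~\ref{lem:rare-local}: apply Lemma~\ref{lem:localized-crossing} and Lemma~\ref{lem:localized-crossing-regularity} with \(m_r(y)=\alpha_q(y)/r\) and a local width \(w_r(y)=r^{-1}\epsilon\)-type scale, or more simply argue directly on the rescaled variable \(t=rc\). I would first check that \(\alpha_q\) is continuous on \(I\) by the implicit function theorem. This uses \(\partial_a\log L_a(y)>0\) at \(a=\alpha_q(y)<a_q<a(y)\), from \eqref{eq:La-log-derivative}.

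\textbf{Step 1: bound the non-null contributions.} For \(c\le (a_q-\eta)/r\) with fixed small \(\eta>0\), I would bound them as follows. The primary non-null has value \(s_r(y)\), and by Lemma~\ref{lem:primary-scale-separation}-type bounds it contributes at most \(qr/p(c)\). For \(c\ge s_r(y)\) this is \(0\), and for \(c<s_r(y)\) the sup is \(\le q+o(1)\) by Lemma~\ref{lem:primary-active-range}. That lemma holds for compact \(I\subset(1,\infty)\), so it excludes crossing on \([u_q,s_r(y)]\). The secondary block contributes \(qW_r(\beta)/p(c)\le \{1-qM(y(\beta))\}p(\beta/r)/p(c)\) with \(\beta/r=c+y-y(\beta)\). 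Since \(y(\beta)\le y_q<y\), the shift \(d=y-y(\beta)\ge y-y_q\) is bounded below. Then \(cd\to\infty\) whenever \(c\to\infty\), and \eqref{eq:variable-shift} in Lemma~\ref{lem:mills} makes this term \(o(1)\) uniformly over \(c\ge s_r(y)\). This step is easier than in the previous regime, because \(y>y_q\) keeps \(d_n\) away from zero for every index \(a\). Lemma~\ref{lem:two-block-derivatives} gives \(\beta\) explicitly, including the endpoint case \(\beta=b_r\), where \(d=y-y(b_r)\to y-1\). When \(r(c+y)\ge g_r(a_q)\), that is \(rc\ge a_q+r(y_q-y)\), the secondary contribution vanishes.

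\textbf{Step 2: the null curve and the crossing.} By Lemma~\ref{lem:null-tail} with \(A=a_q+1\), uniformly for \(y\in I\) and \(s_r(y)\le c\le A/r\), we have \(\bar R_{r,-y}(c)=qL_{rc}(y)+o(1)\). Since \(t\mapsto qL_t(y)\) is strictly increasing on \([0,a_q]\) and crosses \(1\) transversally at \(\alpha_q(y)\in(0,a_q)\), with \(qL_{\alpha_q-\eta}(y)<1-\gamma\) uniformly on \(I\) by compactness, no crossing occurs before \((\alpha_q(y)-\eta)/r\). Also \(\bar R_{r,-y}\{(\alpha_q(y)+\eta)/r\}>1\). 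Together these give \(rC_r(-y)\to\alpha_q(y)\) uniformly, which is \eqref{eq:null-only-result}. Throughout I would take \(\eta<a_q-\sup_I\alpha_q\), so that \(c<a_q/r\) stays in the range of Step 1.

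\textbf{Step 3: the FDP limit.} Here \(D_r(-y)=\bar R_{0,r,-y}\{C_r(-y)\}\). Since \(\bar R_{r,-y}=\bar R_{0,r,-y}+o(1)\) near the crossing, and \(\bar R_{r,-y}\{C_r\}=1\) at a regular crossing, I would conclude \(D_r\to1\). Equivalently, continuity of \(L\) gives \(qL_{\alpha_q(y)}(y)=1\). The main obstacle is regularity in the sense of Definition~\ref{def:regular}, which is needed for \eqref{eq:Dr}. I would verify it via Proposition~\ref{prop:local-regularity}. In \(h=rc\) near \(\alpha_q\), the null log-derivative in \(c\) from \eqref{eq:two-null-block-log-derivative} is \(r\{-t+y\tanh(ty)\}+o(r)\), which is positive of order \(r\). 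The secondary term's derivative must therefore be shown to be \(o(r)\) relative to this. Its value is \(p(s+d)/p(s)\approx e^{-sd}\), exponentially small in \(1/r\), and its derivative formula \eqref{eq:two-secondary-block-log-derivative} is at most polynomial in \(1/r\) times that value. This makes the secondary derivative negligible, so \(\bar R'_{r,-y}>0\) on a neighborhood of the crossing and regularity follows.
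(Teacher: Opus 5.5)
Your proposal is correct and follows essentially the same route as the paper. Both arguments bound the secondary block by $p(s+\delta_I)/p(s)$ using $y-y(\beta)\ge y-y_q$. Both exclude early crossings with Lemma~\ref{lem:primary-active-range} and the monotonicity of $t\mapsto qL_t(y)$ on $[0,a_q]$. Both obtain regularity via Proposition~\ref{prop:local-regularity}, comparing a null derivative $rq\,\partial_aL_a(y)|_{a=rs}+o(r)$ against an exponentially small secondary derivative.

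There are two small slips, neither affecting the argument. Your opening sentence is backwards: near $c=\alpha_q(y)/r$ only secondaries with index $a\ge\beta\approx\alpha_q(y)$ are rejected, which is what your Step~1 actually uses. Also, Lemmas~\ref{lem:localized-crossing}--\ref{lem:localized-crossing-regularity} cannot be invoked here, since they require $d<1$; the direct argument on $t=rc$ that you fall back on is the one needed.
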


\begin{proof}
\textbf{Proof of \eqref{eq:null-only-result}.}
Fix a compact \(I\subset(y_q,\infty)\), and put
\[
\delta_I=\inf_{y\in I}(y-y_q)>0.
\]
{
Use \(s_r(y)\) from \eqref{eq:sr-def} and
\((\mathcal A_{r,y,s},\beta_{r,y,s})\) from
\eqref{eq:beta-rys-def}.  By \eqref{eq:gr} and
Lemma~\ref{lem:two-block-derivatives},
\(g_r'(\tilde a)=1+r y'(\tilde a)>1\).
For all sufficiently small \(r\), the primary non-null is not rejected
when \(s>s_r(y)\).  To verify the corresponding sign assertion for the
secondary non-nulls, note that, conditional on \(Z=-y\), the value associated
with \(\tilde a\in[b_r,a_q]\) is
{
\begin{equation}
\frac{\tilde a}{r}+y(\tilde a)-y
\ge
\frac{b_r}{r}+y(b_r)-\sup_{u\in I}u.
\label{eq:null-only-secondary-positive-bound}
\end{equation}}
The monotonicity of \(y(\cdot)\) gives
\eqref{eq:null-only-secondary-positive-bound}.  Its right-hand side tends to
infinity as \(r\downarrow0\), by \eqref{eq:br-properties} in
Lemma~\ref{lem:scale-properties}.  Hence every
secondary non-null value is positive for all sufficiently small \(r\),
uniformly over \(y\in I\) and \(\tilde a\in[b_r,a_q]\).  Thus the rejected
secondary non-nulls are indexed by \(\mathcal A_{r,y,s}\).  If this
set is nonempty, continuity and strict monotonicity of \(g_r\) show that it is
the closed interval \([\beta_{r,y,s},a_q]\) and that
\[
g_r(\beta_{r,y,s})\ge r(s+y).
\]
Consequently, in the nonempty case,
\begin{equation}
\frac{\beta_{r,y,s}}r-s
\ge y-y(\beta_{r,y,s})
\ge y-y_q
\ge\delta_I.
\label{eq:null-only-threshold-gap}
\end{equation}
If \(\mathcal A_{r,y,s}=\varnothing\), its \(\mu_r\)-mass is zero.
Since \(\mu_r(\{a_q\})=0\) by \eqref{eq:mur-survival}, the convention in
\eqref{eq:beta-rys-def} gives in both cases
\[
\mu_r(\mathcal A_{r,y,s})
=\mu_r([\beta_{r,y,s},a_q]).
\]
Using \eqref{eq:mur-survival}, \eqref{eq:deficit-properties} in
Lemma~\ref{lem:envelope}, and \eqref{eq:null-only-threshold-gap} when the set
is nonempty, we obtain
\[
0\le
\frac{q\tau_r}{p(s)}\mu_r([\beta_{r,y,s},a_q])
\le
\frac{p(s+\delta_I)}{p(s)}.
\]
Moreover,
\[
\inf_{y\in I}s_r(y)
=c_r+y(b_r)-\sup_{y\in I}y
\longrightarrow\infty,
\]
because \(c_r\to\infty\), \(y(b_r)\to1\), and \(I\) is compact.
Therefore \eqref{eq:fixed-shift} in Lemma~\ref{lem:mills} gives
\begin{equation}
\sup_{y\in I}
\sup_{s>s_r(y)}
\frac{q\tau_r}{p(s)}
\mu_r([\beta_{r,y,s},a_q])
\longrightarrow0.
\label{eq:null-only-signal-small}
\end{equation}
}

The function \((a,y)\mapsto L_a(y)\) is \(C^\infty\).
Equation~\eqref{eq:La-log-derivative}, together with
\(\alpha_q(y)<a_q<a(y)\) and Lemma~\ref{lem:envelope}, shows that
\[
\frac{\partial}{\partial a}\log L_a(y)
\bigg|_{a=\alpha_q(y)}>0.
\]
The implicit function theorem therefore shows that
\(y\mapsto\alpha_q(y)\) is continuous on \((y_q,\infty)\).  Since \(I\) is
compact,
\begin{equation}
m_I
:=
\inf_{y\in I}
\min\{\alpha_q(y),a_q-\alpha_q(y)\}>0.
\label{eq:mI-def}
\end{equation}
Fix any \(0<\delta<m_I/2\).  For every \(y\in I\), continuity and strict
monotonicity in \(a\) give
\[
qL_{\alpha_q(y)-\delta}(y)
<qL_{\alpha_q(y)}(y)=1
<qL_{\alpha_q(y)+\delta}(y).
\]
Both strict gaps are continuous functions of \(y\).  Their minima over the
compact set \(I\) are positive, so there is \(\gamma>0\) such that, uniformly
on \(I\),
\begin{equation}
qL_{\alpha_q(y)-\delta}(y)\le1-2\gamma,
\quad
qL_{\alpha_q(y)+\delta}(y)\ge1+2\gamma
\label{eq:null-only-margins}
\end{equation}
The choice \(\delta<m_I/2\) also gives
\begin{equation}
\alpha_q(y)+\delta
\le \alpha_q(y)+\frac{m_I}{2}
{\le\frac{\alpha_q(y)+a_q}{2}<a_q<a(y)},
\label{eq:alphaq-delta-left}
\end{equation}
uniformly for \(y\in I\).

Write \(\psi_y(x)=\log L_x(y)\).  The proof of
Lemma~\ref{lem:envelope}, through \eqref{eq:gprime}, shows that
\(\psi_y'(x)>0\) for \(0<x<a(y)\).  By
\eqref{eq:alphaq-delta-left}, every
\(x\in(0,\alpha_q(y)+\delta)\) lies in \((0,a(y))\).  Hence
\(\partial_x L_x(y)={L_x(y)\psi_y'(x)}>0\) throughout
\((0,\alpha_q(y)+\delta)\), and therefore \(x\mapsto L_x(y)\) is
increasing on \([0,\alpha_q(y)+\delta]\).

On \(u_q\le s\le s_r(y)\),
\eqref{eq:primary-active-uniform} in
Lemma~\ref{lem:primary-active-range} gives
\[
 \sup_{y\in I}\sup_{u_q\le s\le s_r(y)}
 \bar R_{r,-y}(s)\le q+o(1).
\]

For \(s>s_r(y)\), the term in \eqref{eq:population-curve} that corresponds to the primary non-nulls is zero, and \eqref{eq:null-only-signal-small} and \eqref{eq:null-tail} in Lemma~\ref{lem:null-tail} give, uniformly for \(y\in I\) and \(s_r(y)<s\le(\alpha_q(y)+\delta)/r\),
\[
\bar R_{r,-y}(s)
=
\bar R_{0,r,-y}(s)
+
\frac{q\tau_r}{p(s)}
\mu_r([\beta_{r,y,s},a_q])
=
qL_{rs}(y)+o(1).
\]
Combining these two ranges gives
\begin{equation}
\sup_{u_q\le s\le(\alpha_q(y)-\delta)/r}
\bar R_{r,-y}(s)
\le
qL_{\alpha_q(y)-\delta}(y)+o(1)
\le1-\gamma.
\label{eq:null-only-left-margin}
\end{equation}
Since \(rs_r(y)\to0\) uniformly on \(I\) and
\(\alpha_q(y)+\delta\) is bounded away from zero, we have
\[
\frac{\alpha_q(y)+\delta}{r}>s_r(y)
\]
for all sufficiently small \(r\), uniformly on \(I\).  At the cutoff
\((\alpha_q(y)+\delta)/r\), the term in \eqref{eq:population-curve} that
corresponds to the primary non-nulls is zero, while the secondary term is
nonnegative.  Therefore \eqref{eq:null-tail} in Lemma~\ref{lem:null-tail}
gives, uniformly for \(y\in I\),
\begin{equation}
\bar R_{r,-y}\left(\frac{\alpha_q(y)+\delta}{r}\right)
\ge
\bar R_{0,r,-y}\left(\frac{\alpha_q(y)+\delta}{r}\right)
=
qL_{\alpha_q(y)+\delta}(y)+o(1).
\label{eq:null-only-right-margin}
\end{equation}
By \eqref{eq:null-only-margins}, the right side is at least \(1+\gamma\)
for all sufficiently small \(r\).  Hence
\[
\alpha_q(y)-\delta<rC_r(-y)\le\alpha_q(y)+\delta.
\]
Letting \(\delta\downarrow0\) gives
\[
rC_r(-y)\longrightarrow\alpha_q(y),
\]
uniformly for \(y\in I\).

\par\medskip\noindent
\textbf{Proof of \eqref{eq:null-only-fdp-result}.}
We now verify regularity for each fixed sufficiently small \(r\).  Fix one
\(0<\delta<m_I/2\) as in \eqref{eq:mI-def} and consider the interval
\[
\frac{\alpha_q(y)-\delta}{r}
\le s\le
\frac{\alpha_q(y)+\delta}{r}.
\]
The primary non-null has conditional value
\[
T_i
=
c_r+y(b_r)-y
=s_r(y).
\]
It is positive for sufficiently small \(r\), uniformly in \(y\in I\), and it
is rejected at cutoff \(s\) exactly when \(s\le s_r(y)\).  Since
\(\delta<m_I/2\) and \(m_I\le\alpha_q(y)\), we have
\(\alpha_q(y)-\delta\ge m_I/2\).  On the other hand,
\[
r s_r(y)
=
r c_r+r\{y(b_r)-y\}
\longrightarrow0
\]
uniformly for \(y\in I\), by \eqref{eq:cr-properties}, \(y(b_r)\to1\), and
compactness of \(I\).  Therefore, for all sufficiently small \(r\), for every
\(y\in I\) and every cutoff \(s\) satisfying
\[
\frac{\alpha_q(y)-\delta}{r}
\le s\le
\frac{\alpha_q(y)+\delta}{r},
\]
we have
\[
s_r(y)
<
\frac{\alpha_q(y)-\delta}{r}
\le s.
\]
{
Thus every cutoff in this interval lies strictly to the right of the primary
value \(s_r(y)\), so the primary non-null is not rejected there.  Moreover,
uniformly for these \(y\) and \(s\),
\[
\begin{aligned}
r(s+y)-g_r(b_r)
&\ge
\alpha_q(y)-\delta+r\{y-y(b_r)\}-b_r
\ge\frac{m_I}{2}-o(1)>0,\\
g_r(a_q)-r(s+y)
&\ge
a_q-\alpha_q(y)-\delta+r(y_q-y)
\ge\frac{m_I}{2}-o(1)>0.
\end{aligned}
\]
Strict monotonicity of \(g_r\) therefore implies that
\(\mathcal A_{r,y,s}\ne\varnothing\), that
\(\beta_{r,y,s}\in(b_r,a_q)\).
{
By \eqref{eq:beta-rys-def}, continuity and strict monotonicity of
\(g_r\) give
\[
 g_r(\beta_{r,y,s})=r(s+y).
\]
Using \(g_r(a)=a+ry(a)\) from \eqref{eq:gr}, this equality yields
\[
\beta_{r,y,s}
=rs+r\{y-y(\beta_{r,y,s})\}
=rs+O(r)
\]
uniformly.}  Hence, for all sufficiently small \(r\),
\[
\frac{m_I}{4}\le\beta_{r,y,s}\le a_q-\frac{m_I}{4},
\]
so the thresholds remain in a fixed compact subinterval of
\((0,a_q)\).
}

For \(y\in I\) and cutoffs \(s\) in the local crossing interval
\[
\frac{\alpha_q(y)-\delta}{r}
\le s\le
\frac{\alpha_q(y)+\delta}{r},
\]
write the secondary non-null contribution as
\[
\bar R_{2,r,-y}(s)
=\bigl[1-qM\{y(\beta_{r,y,s})\}\bigr]
\frac{p(\beta_{r,y,s}/r)}{p(s)}.
\]
Then
\[
 \bar R_{r,-y}(s)=\bar R_{0,r,-y}(s)+\bar R_{2,r,-y}(s).
\]
Abbreviate \(\beta=\beta_{r,y,s}\).
Equation~\eqref{eq:two-secondary-lower-tail} holds uniformly because
\(g_r(b_r)>0>r(y-s)\) for all sufficiently small \(r\).
{
The term in
\eqref{eq:two-secondary-block-log-derivative} of
Lemma~\ref{lem:two-block-derivatives}, evaluated at
\(c=s\) and \(\beta_{r,y}(s)=\beta_{r,y,s}=\beta\), is
\[
 -\frac{qM'\{y(\beta)\}y'(\beta)}
 {1-qM\{y(\beta)\}}
 \frac{r}{1+r y'(\beta)}.
\]
Because
\[
 \frac{m_I}{4}\le\beta\le a_q-\frac{m_I}{4},
\]
\eqref{eq:deficit-properties} in Lemma~\ref{lem:envelope} and continuity
give a constant \(C_I<\infty\) such that
\[
 \sup_{m_I/4\le a\le a_q-m_I/4}
 \left|
 \frac{qM'\{y(a)\}y'(a)}{1-qM\{y(a)\}}
 \right|\le C_I.
\]
Since \(y'(\beta)>0\),
\[
 0<\frac{r}{1+r y'(\beta)}\le r,
\]
and therefore
\[
 \left|
 \frac{qM'\{y(\beta)\}y'(\beta)}
 {1-qM\{y(\beta)\}}
 \frac{r}{1+r y'(\beta)}
 \right|\le C_Ir.
\]

The expressions in
\eqref{eq:two-secondary-block-log-derivative} are
\[
 -\frac{\lambda(\beta/r)}{1+r y'(\beta)}
 \qquad\text{and}\qquad
 \lambda(s).
\]
To bound their sum, write
\[
 \lambda(t)=t+\varepsilon(t),
 \qquad 0<\varepsilon(t)<t^{-1},
\]
by \eqref{eq:mills-hazard} in Lemma~\ref{lem:mills}.  On the cutoff range,
\[
 s\ge\frac{m_I}{2r},
 \qquad
 \frac\beta r\ge\frac{m_I}{4r},
\]
and hence \(\varepsilon(s)=O(r)\) and
\(\varepsilon(\beta/r)=O(r)\) uniformly.  Using
\(\beta/r=s+y-y(\beta)\), we obtain
\[
\begin{aligned}
 \lambda(s)-\frac{\lambda(\beta/r)}{1+r y'(\beta)}
 &=s-\frac{\beta/r}{1+r y'(\beta)}+O(r)\\
 &=\frac{rs y'(\beta)+y(\beta)-y}
 {1+r y'(\beta)}+O(r)
 =O(1),
\end{aligned}
\]
uniformly, because \(rs\), \(y\), \(y(\beta)\), and \(y'(\beta)\) are
uniformly bounded.  Hence
\begin{equation}
\sup_{y\in I}
\sup_{|rs-\alpha_q(y)|\le\delta}
\left|
\frac{\partial}{\partial s}\log\bar R_{2,r,-y}(s)
\right|
=O(1).
\label{eq:null-only-signal-log-bound}
\end{equation}}
Moreover, \(\beta_{r,y,s}/r-s\ge\delta_I\) by
\eqref{eq:null-only-threshold-gap}, so
\(\beta_{r,y,s}/r\ge s+\delta_I\).  Since \(p\) is decreasing on
\([0,\infty)\), and since \eqref{eq:deficit-properties} in
Lemma~\ref{lem:envelope} gives
\(0\le1-qM\{y(\beta_{r,y,s})\}\le1\), we have
\[
0\le\bar R_{2,r,-y}(s)
\le
\frac{p(s+\delta_I)}{p(s)}.
\]
Applying \eqref{eq:mills-shift-upper} in Lemma~\ref{lem:mills} with
\(x=s\) and \(\delta=\delta_I\) gives
\begin{equation}
0\le\bar R_{2,r,-y}(s)
\le
\frac{p(s+\delta_I)}{p(s)}
\le
(1+s^{-2})
\exp\left\{-s\delta_I-\frac{\delta_I^2}{2}\right\}
=o(r),
\label{eq:null-only-signal-size}
\end{equation}
uniformly on the interval.  The last \(o(r)\) follows because
\eqref{eq:mI-def} and
\(0<\delta<m_I/2\) imply
\[
s
\ge
\frac{\alpha_q(y)-\delta}{r}
\ge
\frac{m_I}{2r}
\]
whenever \(y\in I\) and \(|rs-\alpha_q(y)|\le\delta\), and therefore
\[
r^{-1}(1+s^{-2})
\exp\left\{-s\delta_I-\frac{\delta_I^2}{2}\right\}
\le
r^{-1}(1+o(1))
\exp\left\{-\frac{m_I\delta_I}{2r}-\frac{\delta_I^2}{2}\right\}
\longrightarrow0
\]
uniformly.  Since
\(\partial_s\bar R_{2,r,-y}(s)
=\bar R_{2,r,-y}(s)\partial_s\log\bar R_{2,r,-y}(s)\),
\eqref{eq:null-only-signal-log-bound} and~\eqref{eq:null-only-signal-size} imply
\begin{equation}
\sup_{y\in I}
\sup_{|rs-\alpha_q(y)|\le\delta}
\left|\frac{\partial}{\partial s}\bar R_{2,r,-y}(s)\right|
=o(r).
\label{eq:null-only-signal-derivative}
\end{equation}

{
For the null contribution term, put
\[
 x_+=\frac{s+ry}{\sqrt{1-r^2}},
 \qquad
 x_-=\frac{s-ry}{\sqrt{1-r^2}},
 \qquad
 Q_\pm=\frac{\Phib(x_\pm)}{\Phib(s)}.
\]
Because \(\alpha_q(y)-\delta\ge m_I/2\), the range in
\eqref{eq:null-only-signal-log-bound} implies
\[
 s\ge\frac{m_I}{2r}\longrightarrow\infty,
\]
while \(rs\) and \(y\) range over compact sets.  Therefore
\eqref{eq:null-tail-expansion} in Lemma~\ref{lem:null-tail}, with
\(z=-y\), gives uniformly
\[
 Q_+
 =e^{-(rs)^2/2-rsy}+o(1),
 \qquad
 Q_-
 =e^{-(rs)^2/2+rsy}+o(1).
\]
Indeed, \(s/x_\pm=1+o(1)\), the \(O(r^2)\) term in
\eqref{eq:null-tail-expansion} is uniform, and its remainder tends to zero
because \(s\to\infty\) uniformly.

Next, \eqref{eq:mills-hazard-expansion} in Lemma~\ref{lem:mills} gives
\begin{equation}
\begin{aligned}
 \frac1r\left\{\lambda(s)
 -\frac{\lambda(x_+)}{\sqrt{1-r^2}}\right\}
 &=-rs-y+o(1),\\
 \frac1r\left\{\lambda(s)
 -\frac{\lambda(x_-)}{\sqrt{1-r^2}}\right\}
 &=-rs+y+o(1).
\end{aligned}
\label{eq:null-only-scaled-hazards}
\end{equation}
More precisely, for either sign,
\[
 \lambda(s)-\frac{\lambda(x_\pm)}{\sqrt{1-r^2}}
 =s-\frac{s\pm ry}{1-r^2}
 +\frac1s-\frac1{s\pm ry}+O(r^3),
\]
where
\[
 \frac1s-\frac1{s\pm ry}=O(r^3),
 \qquad
 \frac1r\left\{s-\frac{s\pm ry}{1-r^2}\right\}
 =\frac{-rs\mp y}{1-r^2}.
\]
This proves \eqref{eq:null-only-scaled-hazards}.

Substitution of these four expansions into
\eqref{eq:two-null-block-derivative} in
Lemma~\ref{lem:two-block-derivatives}, together with
\(\pi_{0,r}\to1\), yields
\[
\begin{aligned}
 \frac1r\frac{\partial}{\partial s}\bar R_{0,r,-y}(s)
 &=\frac q2 e^{-(rs)^2/2}
 \left[e^{-rsy}(-rs-y)+e^{rsy}(-rs+y)\right]+o(1)\\
 &=q e^{-(rs)^2/2}
 \{-rs\cosh(rsy)+y\sinh(rsy)\}+o(1).
\end{aligned}
\]
Finally, differentiating
\(L_a(y)=e^{-a^2/2}\cosh(ay)\) gives
\[
 \frac{\partial}{\partial a}L_a(y)
 =e^{-a^2/2}\{-a\cosh(ay)+y\sinh(ay)\}.
\]
Evaluating at \(a=rs\) proves
\begin{equation}
\frac1r\frac{\partial}{\partial s}\bar R_{0,r,-y}(s)
=
q e^{-(rs)^2/2}
\{-rs\cosh(rsy)+y\sinh(rsy)\}+o(1)
=q\frac{\partial}{\partial a}L_a(y)\bigg|_{a=rs}+o(1).
\label{eq:null-only-null-derivative}
\end{equation}}
{
By \eqref{eq:mI-def} and \eqref{eq:alphaq-delta-left}, the pairs
\((rs,y)\) under consideration range
over a compact subset of
\(\{(a,y):y\in I,\ 0<a<a(y)\}\).  Continuity,
\eqref{eq:La-log-derivative}, and Lemma~\ref{lem:envelope} therefore show that
\(\partial_aL_a(y)|_{a=rs}\) is bounded away from zero uniformly.}
Combining
\eqref{eq:null-only-signal-derivative} and~\eqref{eq:null-only-null-derivative} yields
\begin{equation}
\inf_{y\in I}
\inf_{|rs-\alpha_q(y)|\le\delta}
\frac{\partial}{\partial s}\bar R_{r,-y}(s)>0
\label{eq:null-only-strict-increase}
\end{equation}
for all sufficiently small \(r\).  Equation~\eqref{eq:null-only-result}
gives, uniformly for \(y\in I\),
\[
 \frac{\alpha_q(y)-\delta}{r}
 <C_r(-y)<
 \frac{\alpha_q(y)+\delta}{r}
\]
for all sufficiently small \(r\).  Equation
\eqref{eq:null-only-strict-increase} and the smoothness of the null
contribution for
\(|rs-\alpha_q(y)|\le\delta\)
verify the hypotheses of Proposition~\ref{prop:local-regularity}; hence
\(C_r(-y)\) is regular.

Equation~\eqref{eq:Dr} gives
\[
 D_r(-y)=\bar R_{0,r,-y}\{C_r(-y)\}.
\]
Because \(rC_r(-y)\to\alpha_q(y)\) uniformly and
\(\alpha_q(y)\) stays in a compact subset of \((0,a_q)\),
\eqref{eq:null-tail} in Lemma~\ref{lem:null-tail} applies at
\(c=C_r(-y)\) and gives
\[
D_r(-y)
=qL_{rC_r(-y)}(y)+o(1)
\longrightarrow qL_{\alpha_q(y)}(y)=1,
\]
uniformly for \(y\in I\), where the equality follows from
\eqref{eq:alpha}.  This proves \eqref{eq:null-only-fdp-result}.
\end{proof}

\subsubsection{\texorpdfstring{Conditional limit of \(D_r\)}{Conditional limit of Dr}}

The following lemma establishes the limit of \(D_r(z)\) as \(r \downarrow 0\), with \(D_*\) defined in Section~\ref{sec:roadmap}.
\begin{lemma}
\label{lem:continuum-limit}
For every \(z\notin\{-1,-y_q\}\),
\begin{equation}
D_r(z)\longrightarrow D_*(z).
\label{eq:pointwise-D}
\end{equation}
Consequently,
\begin{equation}
\mathbb E[D_r(Z)]\longrightarrow\mathbb E[D_*(Z)]=\ell_{=}(q).
\label{eq:ED-limit}
\end{equation}
\end{lemma}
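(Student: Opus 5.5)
The pointwise statement \eqref{eq:pointwise-D} follows from the three regime lemmas already proved, applied with singleton compact sets. Fix \(z\notin\{-1,-y_q\}\); there are three cases.

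\emph{Case \(z>-1\).} Apply Lemma~\ref{lem:primary} with \(K=\{z\}\subset(-1,\infty)\), which gives \(D_r(z)\to q=D_*(z)\).

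\emph{Case \(-y_q<z<-1\).} Put \(y=-z\in(1,y_q)\) and apply Lemma~\ref{lem:rare-local} with \(I=\{y\}\). This gives \(D_r(z)\to qM(-z)=D_*(z)\).

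\emph{Case \(z<-y_q\).} Put \(y=-z\in(y_q,\infty)\) and apply Lemma~\ref{lem:null-only} with \(I=\{y\}\). This gives \(D_r(z)\to1=D_*(z)\).

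In each case the lemma is stated uniformly over a compact set, so the singleton version is immediate. The measurable version of \(D_r\) agrees with \(\bar R_{0,r,z}\{C_r(z)\}\) by \eqref{eq:Dr} at regular crossings. The first two lemmas assert regularity for small \(r\). Lemma~\ref{lem:null-only} establishes it in its proof, via Proposition~\ref{prop:local-regularity}.

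For \eqref{eq:ED-limit}, the exceptional set \(\{-1,-y_q\}\) is Lebesgue-null, so it has \(Z\)-probability zero. Hence \(D_r(Z)\to D_*(Z)\) almost surely. Since \(0\le D_r\le1\), dominated (bounded) convergence gives \(\mathbb E[D_r(Z)]\to\mathbb E[D_*(Z)]\).

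It remains to compute \(\mathbb E[D_*(Z)]\). Splitting the integral at the breakpoints gives
\[
 \mathbb E[D_*(Z)]
 =q\,\mathbb P(Z\ge-1)
 +q\int_{-y_q}^{-1}M(-z)\phi(z)\dd z
 +\mathbb P(Z\le-y_q).
\]
By symmetry of \(\phi\), \(\mathbb P(Z\ge-1)=\Phi(1)\) and \(\mathbb P(Z\le-y_q)=\Phib(y_q)\). The substitution \(y=-z\) turns the middle integral into \(\int_1^{y_q}M(y)\phi(y)\dd y\). The integrand is continuous on the compact interval \([1,y_q]\), so the integral is finite. Comparing with \eqref{eq:ell} gives \(\mathbb E[D_*(Z)]=\ell_{=}(q)\).

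There is no real obstacle here; the substantive work is in the three regime lemmas. The only point needing care is that \(D_r\) is measurable and bounded for every \(z\), which was arranged in the definition preceding \eqref{eq:Dr}.
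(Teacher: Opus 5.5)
Your proposal is correct and follows the paper's proof essentially step for step: you apply the three regime lemmas (Lemmas~\ref{lem:primary}, \ref{lem:rare-local}, \ref{lem:null-only}) pointwise, pass to the expectation by bounded convergence using \(0\le D_r\le1\), and evaluate \(\mathbb E[D_*(Z)]\) by splitting at \(-y_q\) and \(-1\) and substituting \(y=-z\). Your extra remarks (singletons are compact, the null set \(\{-1,-y_q\}\) is harmless, regularity in Lemma~\ref{lem:null-only} comes from its proof) are consistent with the paper and only make explicit what it leaves implicit.
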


\begin{proof}
If \(z>-1\), Lemma~\ref{lem:primary} gives \(D_r(z)\to q\).  If \(-y_q<z<-1\), put \(y=-z\) and apply Lemma~\ref{lem:rare-local}.  If \(z<-y_q\), apply Lemma~\ref{lem:null-only}.  This proves \eqref{eq:pointwise-D}.

At every regular crossing, \(0\le D_r\le1\); the value assigned at nonregular points also lies in \([0,1]\).  Dominated convergence gives the convergence in \eqref{eq:ED-limit}.  Finally,
\begin{align*}
\mathbb E[D_*(Z)]
&=
q\mathbb P(Z>-1)
+q\int_{-y_q}^{-1}M(-z)\phi(z)\dd z
+\mathbb P(Z<-y_q)\\
&=
q\Phi(1)
+q\int_1^{y_q}M(y)\phi(y)\dd y
+\Phib(y_q)\\
&=\ell_{=}(q).
\end{align*}
\end{proof}

\subsection{Proofs of Theorem~\ref{thm:main} and
Propositions~\ref{prop:ell-expansion} and~\ref{prop:ell-strict}}

\begin{proof}[Proof of Theorem~\ref{thm:main}]
Lemma~\ref{lem:continuum-limit} gives
\(\mathbb E\{D_*(Z)\}=\ell_{=}(q)\).  Fix \(\delta>0\).  Choose a compact set
\[
E\subset\R\setminus\{-1,-y_q\}
\]
such that
\[
\int_E D_*(z)\phi(z)\dd z>{\ell_{=}}(q)-\frac{\delta}{4}.
\]
Because \(E\) is compact and separated from \(-1\) and
\(-y_q\), it is the union of at most three compact subsets of the regimes
\(z>-1\), \(-y_q<z<-1\), and \(z<-y_q\).  Equations
\eqref{eq:primary-local}--\eqref{eq:primary-left-bound},
\eqref{eq:rare-local}--\eqref{eq:no-earlier}, and
\eqref{eq:null-only-left-margin}--\eqref{eq:null-only-right-margin},
together with the corresponding null limits
\eqref{eq:rare-null-local} and \eqref{eq:null-tail}, therefore give, for all
sufficiently small \(r\), brackets
\[
A_r(z)<B_r(z),\quad z\in E,
\]
and a constant \(\gamma>0\), such that
\[
\sup_{z\in E}\sup_{u_q\le c\le A_r(z)}\bar R_{r,z}(c)\le1-\gamma,
\quad
\inf_{z\in E}\bar R_{r,z}\{B_r(z)\}\ge1+\gamma,
\]
\[
\sup_{z\in E}\sup_{A_r(z)\le c\le B_r(z)}
|\bar R_{0,r,z}(c)-D_*(z)|
\le\frac{\delta}{8}.
\]
Fix such an \(r\) and put
\[
 C_*=\sup_{z\in E}B_r(z)<\infty.
\]
For \(z\in E\), define
\[
 \mathcal E_N(z)
 =
 \max\left\{
 \|\widehat R_N-R_{r,z}\|_{\infty,[0,1]},
 \|\widehat R_{0,N}-\pi_{0,r}R_{0,r,z}\|_{\infty,[0,1]}
 \right\}
\]
and
\[
 t_*=\frac{p(C_*)}{q}
 \min\left\{\frac{\gamma}{2},\frac{\delta}{16}\right\}>0.
\]
Following the argument in the proof of
Theorem~\ref{thm:population-transfer}, on the event
\(\mathcal E_N(z)<t_*\), for \(u_q\le c\le C_*\),
\[
\begin{aligned}
 \left|
 \frac{q\widehat R_N\{p(c)\}}{p(c)}-\bar R_{r,z}(c)
 \right|
 &\le\frac{q\mathcal E_N(z)}{p(C_*)},\\
 \left|
 \frac{q\widehat R_{0,N}\{p(c)\}}{p(c)}
 -\bar R_{0,r,z}(c)
 \right|
 &\le\frac{q\mathcal E_N(z)}{p(C_*)}.
\end{aligned}
\]
The first bound and the two crossing margins imply
\[
 A_r(z)<\widehat C_N\le B_r(z).
\]
The second bound, \eqref{eq:exact-FDP}, and the null bracket then give
\[
 \FDP_N
 \ge \bar R_{0,r,z}(\widehat C_N)-\frac{\delta}{16}
 \ge D_*(z)-\frac{3\delta}{16}.
\]
Lemma~\ref{lem:DKW} gives, uniformly for \(z\in E\),
\[
 \mathbb P\{\mathcal E_N(z)\ge t_*\mid Z=z\}
 \le \eta_N:=4e^{-2Nt_*^2}\longrightarrow0.
\]
Since \(\FDP_N\ge0\) and \(0\le D_*\le1\), conditional expectation and
integration over \(E\) yield
\[
\begin{aligned}
 \mathbb E_{\Omega_N,Z,\varepsilon}[\FDP_N]
 &\ge
 (1-\eta_N)
 \int_E\left(D_*(z)-\frac{3\delta}{16}\right)_+
 \phi(z)\dd z\\
 &\ge
 \int_E D_*(z)\phi(z)\dd z-\frac{3\delta}{16}-\eta_N.
\end{aligned}
\]
Choose \(N\) large enough that \(\eta_N<\delta/16\).  Using the choice of
\(E\) and \eqref{eq:FDRomega}, we obtain
\[
\mathbb E_{\Omega_N}\!\left[
\FDR_{\Omega_N}(\BH_q)
\right]
>
\ell_{=}(q)-\frac{\delta}{2}.
\]
For such an \(N\), some realization \(\omega^\star\) satisfies
\[
\FDR_{\omega^\star}(\BH_q)>{\ell_{=}}(q)-\frac{\delta}{2}.
\]
By Lemma~\ref{lem:eta}, choose \(\eta>0\) so small that
\begin{equation}
{
\FDR_{\omega^\star}^{(\eta)}(\BH_q)
>
\FDR_{\omega^\star}(\BH_q)-\frac{\delta}{2}
>
\ell_{=}(q)-\delta.}
\label{eq:perturbed-fdr-lower}
\end{equation}
{
The perturbed vector \(T^{(\eta)}\) has finite dimension \(N\), mean vector
\((\theta_i)_{i=1}^N\in\mathbb R^N\), and covariance matrix
\(\Sigma_{\omega^\star,\eta}\), which has unit diagonal and is strictly
positive definite by \eqref{eq:eta-PD} in Lemma~\ref{lem:eta}.  Thus the
bound \eqref{eq:perturbed-fdr-lower} establishes \eqref{eq:main}.  Since
\(\delta>0\) was
arbitrary, taking the supremum proves \eqref{eq:sup}.}
\end{proof}

\begin{proof}[Proof of Proposition~\ref{prop:ell-expansion}]
Recall from Lemma~\ref{lem:envelope} that, for \(y>1\), \(a(y)\) is the
unique positive maximizer of \(a\mapsto L_a(y)\), equivalently the unique
positive solution of
\[
a(y)=y\tanh\{a(y)y\}.
\]
By the monotonicity of \(a(\cdot)\) established in that lemma,
\(a(y)\ge a(2)>0\) for \(y\ge2\), so
\(a(y)y\to\infty\).
Moreover,
\[
0\le y-a(y)
=y\left[1-\tanh\{a(y)y\}\right]
\le2y e^{-2a(y)y}
\longrightarrow0.
\]
Because \(a(y)\) is the maximizer defining \(M(y)\), we have
\(M(y)=L_{a(y)}(y)\).  Substituting the definitions of \(L_{a(y)}(y)\)
and \(\phi(y)\), expanding
\(\cosh(t)=(e^t+e^{-t})/2\), and completing the two squares gives
{
\begin{equation}
\begin{aligned}
M(y)\phi(y)
&=
e^{-a(y)^2/2}\cosh\{a(y)y\}
\frac{e^{-y^2/2}}{\sqrt{2\pi}}\\
&=
\frac{1}{2\sqrt{2\pi}}
\left[
e^{-\{a(y)^2+y^2\}/2+a(y)y}
+e^{-\{a(y)^2+y^2\}/2-a(y)y}
\right]\\
&=
\frac{1}{2\sqrt{2\pi}}
\left[
e^{-\{y-a(y)\}^2/2}
+e^{-\{y+a(y)\}^2/2}
\right].
\end{aligned}
\label{eq:Mphi-completed-squares}
\end{equation}}
Consequently, \(M(y)\phi(y)\to 1/(2\sqrt{2\pi})\).  The difference
\(M(y)\phi(y)-1/(2\sqrt{2\pi})\) is integrable on \([1,\infty)\).  Indeed, for all
sufficiently large \(y\), \(a(y)\ge y/2\), so
\[
y-a(y)\le2y e^{-y^2},
\]
while
\[
\left|e^{-\{y-a(y)\}^2/2}-1\right|
\le\frac{\{y-a(y)\}^2}{2},
\quad
e^{-\{y+a(y)\}^2/2}\le e^{-y^2/2}.
\]
It follows from the definition of \(c_\ell\) that, as \(Y\to\infty\),
\begin{equation}
{
\int_1^Y M(y)\phi(y)\dd y
=\frac{Y}{2\sqrt{2\pi}}+c_\ell-\Phi(1)+o(1).}
\label{eq:integral-M-asymptotic}
\end{equation}

Next, \eqref{eq:Mphi-completed-squares} gives
\[
M(y)
=
\frac12e^{y^2/2}
\left[
e^{-\{y-a(y)\}^2/2}
+e^{-\{y+a(y)\}^2/2}
\right]
=\frac12e^{y^2/2}\{1+o(1)\}.
\]
{
Since \(M(y_q)=1/q\to\infty\), \eqref{eq:envelope-limits} in
Lemma~\ref{lem:envelope} implies \(y_q\to\infty\).  The identity
\(qM(y_q)=1\) therefore gives}
\begin{equation}
{
y_q^2
=2\log(1/q)+2\log2+o(1),
\quad
y_q=\sqrt{2\log(1/q)}+o(1).}
\label{eq:yq-small-q}
\end{equation}
Equations~\eqref{eq:mills-upper} in Lemma~\ref{lem:mills} and
\eqref{eq:yq-small-q} give
\begin{equation}
{
\Phib(y_q)
\le\frac{\phi(y_q)}{y_q}
=O\left(\frac{q}{y_q}\right)
=o(q).}
\label{eq:yq-tail-small-q}
\end{equation}
Substituting \eqref{eq:integral-M-asymptotic},
\eqref{eq:yq-small-q}, and \eqref{eq:yq-tail-small-q} into
\eqref{eq:ell} yields
\[
\begin{aligned}
\ell_{=}(q)
&=q\Phi(1)+q\left\{\frac{y_q}{2\sqrt{2\pi}}+c_\ell-\Phi(1)+o(1)\right\}+o(q)\\
&=\frac{q\sqrt{\log(1/q)}}{2\sqrt\pi}+{c_\ell}q+o(q).
\end{aligned}
\]
Numerical quadrature of the absolutely convergent integral defining
\(c_\ell\) gives \(c_\ell=0.6492828\ldots\).
\end{proof}

\begin{proof}[Proof of Proposition~\ref{prop:ell-strict}]
Since \(q\in(0,1)\), \eqref{eq:yq} gives \(M(y_q)=1/q>1\).  By Lemma~\ref{lem:envelope}, \(M=1\) on \([0,1]\) and \(M\) is strictly increasing on \((1,\infty)\), so \(y_q>1\) and \(M(y)\ge1\) for \(1\le y\le y_q\).  Using \eqref{eq:ell},
\[
\begin{aligned}
\ell_{=}(q)-q
&=
q\Phi(1)+q\int_1^{y_q}M(y)\phi(y)\dd y+\Phib(y_q)\\
&\quad
-q\left\{\Phi(1)+\int_1^{y_q}\phi(y)\dd y+\Phib(y_q)\right\}\\
&=
q\int_1^{y_q}\{M(y)-1\}\phi(y)\dd y
+(1-q)\Phib(y_q).
\end{aligned}
\]
The integral is nonnegative, and \((1-q)\Phib(y_q)>0\) because \(q<1\) and \(y_q<\infty\).  Hence \({\ell_{=}}(q)>q\).
\end{proof}

\begingroup

\section{Lower bound for one-sided tests}
\label{sec:one-sided-lower}

\subsection{Main result}

For \(0<q<1\), define
\begin{equation}
 y_q^+=\sqrt{2\log(1/q)}
 \label{eq:one-yq}
\end{equation}
and
\begin{equation}
 \ell_{\le}(q)
 =\frac{q\sqrt{\log(1/q)}}{\sqrt\pi}+\frac q2+\Phib(y_q^+).
 \label{eq:one-ell}
\end{equation}

The quantities in \eqref{eq:one-yq} and \eqref{eq:one-ell}
determine the finite-dimensional one-sided Gaussian lower bound.
\begin{theorem}
\label{thm:one-lower}
For every \(q\in(0,1)\) and \(\delta>0\), there are \(N<\infty\),
\(\theta\in\R^N\), and a positive definite correlation matrix \(\Sigma\)
such that BH at level \(q\), applied to the one-sided \(p\)-values
\(P_i=\Phib(T_i)\) from \(T\sim N(\theta,\Sigma)\), satisfies
\[
 \FDR_{\theta,\Sigma}(\BH_q)>\ell_{\le}(q)-\delta.
\]
Every true null in the construction has \(\theta_i=0\), and every
non-null
has \(\theta_i>0\).
\end{theorem}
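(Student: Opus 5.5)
The plan is to reuse the two-sided machinery of Section~2 nearly verbatim: the random-design reduction of Theorem~\ref{thm:population-transfer}, the localized-crossing Lemmas~\ref{lem:localized-crossing}--\ref{lem:localized-crossing-regularity}, and the perturbation Lemma~\ref{lem:eta}. The only change is to replace the two-sided envelope \(M\) by its one-sided analogue. For a null with loading \(r\) and one-sided \(p\)-value \(\Phib(T_i)\), the computation in Lemma~\ref{lem:null-tail} (now with a single tail) should give \(R_{0,r,z}\{\Phib(c)\}/\Phib(c)\to L^+_{rc}(z)\), where \(L^+_a(z)=e^{-a^2/2+az}\). The envelope is then
\[
 M^+(z)=\sup_{a\ge0}L^+_a(z)=e^{z_+^2/2},
\]
attained at \(a(z)=z\) for \(z>0\). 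The target conditional limit is
\[
 D^+_*(z)=q \text{ for } z\le0,\qquad D^+_*(z)=qe^{z^2/2} \text{ for } 0<z<y_q^+,\qquad D^+_*(z)=1 \text{ for } z\ge y_q^+,
\]
with \(qM^+(y_q^+)=1\) defining \(y_q^+\). A direct computation confirms that this is the right target:
\[
 \mathbb E[D^+_*(Z)]=\frac q2+\frac{q\,y_q^+}{\sqrt{2\pi}}+\Phib(y_q^+)=\ell_{\le}(q).
\]
So the theorem reduces to realizing \(D^+_*\) as an iterated limit \(\lim_{r\downarrow0}\lim_{N\to\infty}\).

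The construction mirrors Table~\ref{tab:two-construction}, with the non-null loadings reversed in sign. Nulls have mass \(\pi_{0,r}\), \(\Theta=0\) and \(B=r\); null--null correlations are then \(r^2>0\). Primary non-nulls have mass \(r\), \(B=-1\) and \(\Theta=c_r+1\), where \(c_r\) is defined by \(\Phib(c_r)=q(r+\tau_r)/(1-q\pi_{0,r})\). Their value \(c_r+1-Z\) exceeds \(c_r\) exactly when \(Z<1\); I will shift the offset to \(0^+\) so that the primary regime is \(z<0\). Secondary non-nulls have \(B=-1\) and \(\Theta=a/r+a\) for \(a\in[b_r,a_q^+]\), where \(a_q^+=y_q^+\). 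They carry weight \(\tau_r\mu_r\), with survival
\[
 W_r(a)=\bigl\{1-qe^{a^2/2}\bigr\}\Phib(a/r)/q,\qquad b_r=r\,\Phib^{-1}(r^2).
\]
Every non-null thus has \(\theta_i>0\), and the null--non-null correlations are \(-r<0\), as required.

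The regime analysis then follows Lemmas~\ref{lem:primary}, \ref{lem:rare-local} and \ref{lem:null-only}. For \(z<0\), all non-nulls are rejected near \(c_r\), and \(D_r\to q\). For \(0<z<y_q^+\), the crossing sits at \(c=z/r+h\). There the null contribution tends to \(qe^{z^2/2}\), and the secondary block contributes \(\{1-qe^{z^2/2}\}e^{\kappa h}\) with \(\kappa=a\,y'(a)=z\). For \(z>y_q^+\), the secondaries are negligible by the fixed-shift Mills bound, the crossing occurs at \(rC_r\to\alpha\) with \(qL^+_\alpha(z)=1\), and the FDP tends to \(1\).

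The one-sided case is simpler in three respects: there is only one tail, \(y(a)=a\) is explicit with \(y'=1\), and no \(|T_i|\) folding appears. In particular, the analogue of the lower-tail condition \eqref{eq:two-secondary-lower-tail} is not needed. Dominated convergence gives \(\mathbb E[D_r(Z)]\to\ell_{\le}(q)\). Next, choose \(r\) small, then \(N\) large via Lemma~\ref{lem:DKW} exactly as in the proof of Theorem~\ref{thm:main}, and finally perturb with \(\eta\). The one-sided version of Lemma~\ref{lem:eta} holds with the same proof, and the perturbation preserves the signs of the \(\theta_i\).

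I expect the main obstacle to be the no-earlier-crossing bound \eqref{eq:no-earlier} in the middle regime. One must show that for \(s\) between the primary value and \(z/r-h_0\), the secondaries that are already rejected do not push the curve above \(1\). I would repeat the three-case contradiction argument (\(x>0\), \(x=0\), \(x=z\)) with \(d_n=z_n-\beta_n\); the explicit \(y(a)=a\) should make the key quantity \(s_nd_n\to\infty\) easy to check. A second delicate point is the boundary near \(z=0\), where \(a(z)\to0\) and the secondary scale merges with the primary one. This is handled, as before, by restricting to compact subsets of each regime and using \(b_r/r-c_r\to\infty\), which holds since \(\Phib^{-1}(r^2)\sim2\sqrt{\log(1/r)}\) exceeds \(c_r\sim\sqrt{2\log(1/r)}\).
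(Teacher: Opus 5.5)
Your proposal is correct and follows essentially the same route as the paper. Both use the sign-reversed three-component construction with \(y(a)=a\) and envelope \(e^{a^2/2}\), the three regimes \(z<0\), \(0<z<y_q^+\), \(z>y_q^+\) with conditional FDP limits \(q\), \(qe^{z^2/2}\), \(1\), and then the DKW bracketing transfer followed by the \(\eta\)-perturbation; the paper's concrete version of your ``offset \(0^+\)'' is \(\Theta=c_r^++b_r^+\) for the primary non-nulls, the one-sided analogue of \(y(b_r)\).
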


The first proposition gives the small-\(q\) expansion of the one-sided lower
bound.
\begin{proposition}
\label{prop:one-expansion}
As \(q\downarrow0\),
\[
 \ell_{\le}(q)
 =\frac{q\sqrt{\log(1/q)}}{\sqrt\pi}+\frac q2+o(q).
\]
In particular, \(\ell_{\le}(q)/q\to\infty\).
\end{proposition}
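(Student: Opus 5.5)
The statement to prove is Proposition~\ref{prop:one-expansion}. Since \(\ell_{\le}(q)\) is given in closed form by \eqref{eq:one-ell}, the first two terms already match the claimed expansion exactly. The plan is therefore to show that the remaining term is negligible:
\[
 \Phib(y_q^+)=o(q),\qquad y_q^+=\sqrt{2\log(1/q)}.
\]

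To do this, I will apply the Mills-ratio upper bound \eqref{eq:mills-upper} in Lemma~\ref{lem:mills}, as in \eqref{eq:yq-tail-small-q}. For \(x>0\) it gives \(\Phib(x)\le\phi(x)/x\). The definition of \(y_q^+\) gives \((y_q^+)^2/2=\log(1/q)\), so
\[
 \phi(y_q^+)=\frac{e^{-\log(1/q)}}{\sqrt{2\pi}}=\frac{q}{\sqrt{2\pi}}.
\]
It follows that
\[
 0<\Phib(y_q^+)\le\frac{q}{\sqrt{2\pi}\,\sqrt{2\log(1/q)}}=\frac{q}{2\sqrt{\pi\log(1/q)}}.
\]
This is \(o(q)\) as \(q\downarrow0\) because \(\log(1/q)\to\infty\). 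Substituting into \eqref{eq:one-ell} yields the stated expansion.

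For the ``in particular'' claim, divide the expansion by \(q\):
\[
 \frac{\ell_{\le}(q)}{q}=\frac{\sqrt{\log(1/q)}}{\sqrt\pi}+\frac12+o(1),
\]
which diverges as \(q\downarrow0\).

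There is no real obstacle here. The only point requiring care is that the bound from Lemma~\ref{lem:mills} applies at every \(x>0\), which holds since \(y_q^+>0\) for \(q<1\). An asymptotic statement of the Mills ratio would also suffice, because \(y_q^+\to\infty\).
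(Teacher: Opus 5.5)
Your proposal is correct and matches the paper's proof: both note that the first two terms of \eqref{eq:one-ell} already agree with the claimed expansion, and both use $e^{-(y_q^+)^2/2}=q$ together with the Mills ratio to get $\Phib(y_q^+)=O\{q/y_q^+\}=o(q)$. The only difference is that you use the explicit upper bound \eqref{eq:mills-upper} where the paper uses the asymptotic equivalence \eqref{eq:mills-asymp}; either suffices.
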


The next proposition shows strict anticonservativeness at every nontrivial
nominal level.
\begin{proposition}
\label{prop:one-strict}
For every \(q\in(0,1)\), \(\ell_{\le}(q)>q\).
\end{proposition}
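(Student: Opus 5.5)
The plan is to rewrite \(\ell_{\le}(q)-q\) as a single function of \(y=y_q^+\) and study its sign by elementary calculus. No earlier results are needed beyond the definitions \eqref{eq:one-yq} and \eqref{eq:one-ell}.

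\textbf{Step 1: substitution.} Put \(y=y_q^+=\sqrt{2\log(1/q)}\). As \(q\) ranges over \((0,1)\), \(y\) ranges over \((0,\infty)\). The key identities are
\[
q=e^{-y^2/2}=\sqrt{2\pi}\,\phi(y)
\qquad\text{and}\qquad
\frac{q\sqrt{\log(1/q)}}{\sqrt\pi}=\frac{q\,y}{\sqrt{2\pi}}=y\phi(y).
\]
Substituting these into \eqref{eq:one-ell} gives
\[
\ell_{\le}(q)-q=\Phib(y)+y\phi(y)-\tfrac12\sqrt{2\pi}\,\phi(y)=:f(y),
\qquad f(y)=\Phib(y)+\phi(y)\bigl(y-c\bigr),\quad c=\sqrt{\pi/2}.
\]
It therefore suffices to show \(f(y)>0\) for every \(y>0\).

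\textbf{Step 2: values at the endpoints.} At \(y=0\),
\[
f(0)=\tfrac12-\frac{c}{\sqrt{2\pi}}=\tfrac12-\tfrac12=0 .
\]
As \(y\to\infty\), \(f(y)\to0\), since \(\Phib(y)\to0\) and \(y\phi(y)\to0\).

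\textbf{Step 3: derivative.} Using \(\Phib'=-\phi\) and \(\phi'(y)=-y\phi(y)\),
\[
f'(y)=-\phi(y)+\phi(y)-y\phi(y)(y-c)=-y\phi(y)(y-c).
\]
Hence \(f\) is strictly increasing on \((0,c)\) and strictly decreasing on \((c,\infty)\).

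\textbf{Step 4: conclusion.} On \((0,c]\), strict increase gives \(f(y)>f(0)=0\). On \((c,\infty)\), strict decrease gives \(f(y)>\lim_{t\to\infty}f(t)=0\). Thus \(f>0\) on all of \((0,\infty)\), which is exactly \(\ell_{\le}(q)>q\) for every \(q\in(0,1)\).

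The only potentially delicate point is noticing that \(f(0)\) vanishes exactly, so that the argument must rely on monotonicity away from both endpoints rather than on a crude bound. Once the substitution in Step 1 is made, the rest is routine.
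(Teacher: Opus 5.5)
Your proof is correct, and it takes a different route from the paper. The paper uses the representation \(\ell_{\le}(q)=\mathbb E\{D_+^*(Z)\}\) from the roadmap, which rests on \(q\int_0^{y_q^+}e^{z^2/2}\phi(z)\dd z=q\,y_q^+/\sqrt{2\pi}\). Since \(D_+^*(z)=q\) for \(z\le0\), and \(D_+^*(z)\) equals \(qe^{z^2/2}\) or \(1\) and so strictly exceeds \(q\) for every \(z>0\), the strict inequality is immediate. This mirrors the two-sided Proposition~\ref{prop:ell-strict}. Equivalently, the paper's argument writes
\(\ell_{\le}(q)-q=q\int_0^{y_q^+}(e^{z^2/2}-1)\phi(z)\dd z+(1-q)\Phib(y_q^+)\),
a nonnegative term plus a positive term; this is your \(f(y_q^+)\) rearranged. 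You instead work directly with the closed form. The substitution \(q=\sqrt{2\pi}\,\phi(y)\) turns the difference into \(f(y)=\Phib(y)+\phi(y)(y-\sqrt{\pi/2})\). The sign of \(f'(y)=-y\phi(y)(y-\sqrt{\pi/2})\), together with \(f(0)=0=\lim_{y\to\infty}f(y)\), then finishes the argument. The paper's argument is shorter and explains where the excess comes from: the factor values \(z>0\), on which the limiting conditional FDP exceeds \(q\). It does, however, depend on the expectation identity. Your argument is fully self-contained and gives extra quantitative information. It shows that \(\ell_{\le}(q)-q\) is unimodal in \(q\) and tends to \(0\) as \(q\downarrow0\) and as \(q\uparrow1\). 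It also shows the maximum is \(\Phib(\sqrt{\pi/2})\approx0.105\), attained at \(q=e^{-\pi/4}\approx0.456\).
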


Figure~\ref{fig:one-ellq} in the Introduction plots \(\ell_{\le}(q)-q\) and
\(\ell_{\le}(q)/q\) over the grid \(q=0.005k\), \(k=1,\ldots,199\).
Table~\ref{tab:one-ell} gives the absolute inflation
\(\ell_{\le}(q)-q\) and relative inflation \(\ell_{\le}(q)/q\) for commonly
used nominal levels.

\subsection{Roadmap}

Fix \(q\).  For a small \(r>0\), let
\((\Theta_i,B_i)_{i=1}^N\) be i.i.d. from a law \(\nu_{q,r}^+\), independently
of \(Z,\varepsilon_1,\ldots,\varepsilon_N\), which are independent standard
normal variables, and set
\begin{equation}
 T_i=\Theta_i+B_iZ+\sqrt{1-B_i^2}\,\varepsilon_i.
 \label{eq:one-population-model}
\end{equation}
In the model \eqref{eq:one-population-model}, the law
\(\nu_{q,r}^+\) has one null component and two non-null
components.  We
first analyze the population BH crossing conditional on \(Z=z\).  For
every \(z\notin\{0,y_q^+\}\), its conditional FDP converges, as
\(r\downarrow0\), to
\begin{equation}
{
 D_+^*(z)=
 \begin{cases}
 q,&z\leq0,\\
 qe^{z^2/2},&0<z<y_q^+,\\
 1,&z\geq y_q^+.
 \end{cases}
}
 \label{eq:one-Dstar}
\end{equation}
The definition~\eqref{eq:one-Dstar} specifies the values at
\(\{0,y_q^+\}\); these
values do not affect the expectation because the boundary points have zero
probability.  Therefore
\[
 \mathbb E\{D_+^*(Z)\}
 =\frac q2+q\int_0^{y_q^+}e^{z^2/2}\phi(z)\dd z+\Phib(y_q^+)
 =\ell_{\le}(q).
\]

We reuse the machinery and use similar notation in Subsections~\ref{sec:generic}
and~\ref{sec:finite-transfer}.  Throughout this section, put \(p_+(x)=\Phib(x)\) and
\(u_s^+=p_+^{-1}(s)\).  For \(x\geq0\) and
\(0<s\leq1/2\),
\begin{equation}
 p_+(x)=\frac{p(x)}2,
 \qquad
 p_+^{-1}(s)=p^{-1}(2s).
 \label{eq:one-two-tail-bridge}
\end{equation}
Consequently, every upper-tail ratio, high-quantile expansion, and hazard
bound for \(p\) in Lemma~\ref{lem:mills} applies to \(p_+\), with the factor
\(1/2\) canceling from tail ratios.  Use the symbols
\[
 \pi_0^+,\ R_z^+,\ R_{0,z}^+,\ \bar R_z^+,\ \bar R_{0,z}^+,
 \ C^+(z),\ D^+(z)
\]
as the one-sided analogues of the quantities in
\eqref{eq:pi0-generic} and \eqref{eq:Rz-generic}--\eqref{eq:D-generic}.
In particular, the one-sided population cutoff is
\[
 C^+(z)=\inf\{c\geq u_q^+:\bar R_z^+(c)\geq1\},
\]
and every one-sided use of Definition~\ref{def:regular} has lower endpoint
\(u_q^+\) in place of \(u_q\).
The only change to \eqref{eq:Ftheta-formula}--\eqref{eq:Ftheta-degenerate}
is
\begin{equation}
 F^+_{\theta,b,z}(s)
 =\Phib\!\left(\frac{u_s^+-\theta-bz}{\sqrt{1-b^2}}\right),
 \qquad |b|<1,
 \label{eq:one-conditional-cdf}
\end{equation}
while
\(F^+_{\theta,b,z}(s)=\1\{\theta+bz\geq u_s^+\}\) for \(|b|=1\).
We retain the empty-crossing convention \(C^+(z)=\infty\), \(D^+(z)=0\),
and the regularity definition in Definition~\ref{def:regular}.

The proofs of Lemmas~\ref{lem:DKW} and~\ref{lem:exact-BH} and
Theorem~\ref{thm:population-transfer} are p-value agnostic and apply after
\eqref{eq:one-conditional-cdf}; the one-sided rejection set in
Lemma~\ref{lem:exact-BH} is \(\{i:T_i\geq\widehat C_N\}\).  We also use the
localized DKW bracketing in the proof of
Theorem~\ref{thm:population-transfer}.  Finally, the perturbation
\eqref{eq:eta-model}--\eqref{eq:eta-cov} and Lemma~\ref{lem:eta} apply to the
one-sided p-values as well, because their continuity argument does not use
the two-sided form of \(p\).

{\subsection{Construction of
\texorpdfstring{\(\nu_{q,r}^+\)}{nu+(q,r)}}}

This is the one-sided, sign-reversed specialization of the three-component
construction in Subsection~\ref{sec:continuum-design}: the signal offset
\(y(a)\) is replaced by \(a\), the envelope term \(M\{y(a)\}\) by
\(e^{a^2/2}\), and the non-null loading \(+1\) by \(-1\).

Take \(r>0\), and set
\begin{equation}
 b_r^+=r\,p_+^{-1}(r^2).
 \label{eq:one-br}
\end{equation}
We henceforth assume \(r\) is sufficiently small that \(r^2<1/2\) and
\(b_r^+<y_q^+\).  For \(b_r^+\leq a\leq y_q^+\), define
\begin{equation}
 W_r^+(a)=\frac{\{1-qe^{a^2/2}\}p_+(a/r)}{q}.
 \label{eq:one-Wr}
\end{equation}
Both factors in the numerator are nonnegative and decreasing in \(a\), and the first
vanishes at \(a=y_q^+\).  Hence \(W_r^+\) is continuous and decreasing.  Put
\begin{equation}
 \tau_r^+=W_r^+(b_r^+),
 \qquad
 \mu_r^+([a,y_q^+])=\frac{W_r^+(a)}{\tau_r^+},
 \quad b_r^+\leq a\leq y_q^+.
 \label{eq:one-tau-mu}
\end{equation}
Thus \(\mu_r^+\) is a probability measure on \([b_r^+,y_q^+]\).  Since
\(p_+(b_r^+/r)=r^2\), \eqref{eq:one-two-tail-bridge} and the
quantile expansion
\eqref{eq:mills-quantile} in Lemma~\ref{lem:mills} gives \(b_r^+\to0\), and hence
\begin{equation}
 \tau_r^+\sim\frac{1-q}{q}r^2.
 \label{eq:one-tau-asymptotic}
\end{equation}
Finally, set
\begin{equation}
 \pi_{0,r}^+=1-r-\tau_r^+,
 \qquad
 p_+(c_r^+)=\frac{q(r+\tau_r^+)}{1-q\pi_{0,r}^+}.
 \label{eq:one-pi-cr}
\end{equation}
For all sufficiently small \(r\), these quantities are well-defined and
\(\pi_{0,r}^+>0\).

The law \(\nu_{q,r}^+\) is displayed in Table~\ref{tab:one-construction}.
Equivalently, for bounded measurable \(f\),
\begin{equation}
 \int f\,\dd\nu_{q,r}^+
 =\pi_{0,r}^+f(0,r)
 +rf(c_r^++b_r^+,-1)
 +\tau_r^+\int_{b_r^+}^{y_q^+}f(a/r+a,-1)\,\mu_r^+(\dd a).
 \label{eq:one-nu}
\end{equation}

\begin{table}[ht]

\centering
\caption{One-sided population construction.  The opposite loading signs are essential.}
\label{tab:one-construction}
\begin{tabular}{lccc}
\toprule
Type & Mass & \(\Theta\) & \(B\)\\
\midrule
Nulls & \(\pi_{0,r}^+\) & \(0\) & \(r\)\\
Primary non-nulls & \(r\) & \(c_r^++b_r^+\) & \(-1\)\\
Secondary non-nulls & \(\tau_r^+\mu_r^+(\dd a)\) & \(a/r+a\) &
\(-1\), \(a\in[b_r^+,y_q^+]\)\\
\bottomrule
\end{tabular}
\end{table}

Under \eqref{eq:one-nu}, the nulls become more significant
when \(Z\) is positive.  Every
non-null
statistic is deterministic conditional on \(Z\) and decreases with \(Z\).
The construction is not PRDS.  To see this directly, pair a null \(i\) with
a non-null \(j\), and fix \(v\in(0,1)\).  Conditional on
\(P_i=s\), the event
\(\{P_j\geq v\}\), which is increasing in the p-values, has probability
\[
 \Phi\!\left(
 \frac{p_+^{-1}(v)-\theta_j+r p_+^{-1}(s)}{\sqrt{1-r^2}}
 \right),
\]
strictly decreasing in \(s\).  This also exhibits the negative
null-to-non-null covariance, opposite to the nonnegative
covariance assumption
that guarantees Gaussian one-sided PRDS
\citep{BenjaminiYekutieli2001,chi2025multiple}.  On the other hand, all
null--null correlations equal \(r^2\), so the null p-values are positively
regression dependent and the full vector is PRDN in the terminology of
\citet{Su2018}.

\subsection{Population analysis}

\subsubsection{Preliminaries}

The next lemma gives the one-sided counterparts of
\eqref{eq:br-properties}--\eqref{eq:cr-properties}.
\begin{lemma}
\label{lem:one-scale-properties}
As \(r\downarrow0\),
\begin{equation}
 b_r^+\to0,\qquad b_r^+/r\to\infty,\qquad c_r^+\to\infty,
 \qquad rc_r^+\to0,\qquad b_r^+/r-c_r^+\to\infty.
 \label{eq:one-scale-properties}
\end{equation}
\end{lemma}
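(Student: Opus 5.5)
The plan is to mirror the proof of Lemma~\ref{lem:scale-properties} and translate each step into the one-sided scale through the bridge \eqref{eq:one-two-tail-bridge}.

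\textbf{Step 1: the quantities \(b_r^+\).} Since \(r^2<1/2\), we have
\[
b_r^+/r=p_+^{-1}(r^2)=p^{-1}(2r^2).
\]
The quantile expansion \eqref{eq:mills-quantile} in Lemma~\ref{lem:mills} then gives
\[
b_r^+/r\sim\sqrt{2\log\{1/(2r^2)\}}\sim\sqrt{4\log(1/r)}\to\infty.
\]
Consequently \(b_r^+\sim 2r\sqrt{\log(1/r)}\to0\). This also justifies \eqref{eq:one-tau-asymptotic}: we have \(\tau_r^+=\{1-qe^{(b_r^+)^2/2}\}r^2/q\), and \(e^{(b_r^+)^2/2}\to1\). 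Hence \(\tau_r^+=O(r^2)\).

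\textbf{Step 2: the quantity \(c_r^+\).} From \eqref{eq:one-pi-cr},
\[
p_+(c_r^+)=\frac{q(r+\tau_r^+)}{1-q+qr+q\tau_r^+}\sim\frac{qr}{1-q}.
\]
This tends to \(0\), so for small \(r\) it lies in \((0,1/2)\) and \(c_r^+=p^{-1}(2p_+(c_r^+))\).

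Exactly as in Lemma~\ref{lem:scale-properties}, fix \(\delta\in(0,q/(1-q))\). For all small \(r\),
\[
\{q/(1-q)-\delta\}r\le p_+(c_r^+)\le\{q/(1-q)+\delta\}r.
\]
Using the monotonicity of \(p_+^{-1}\), together with \eqref{eq:mills-quantile} applied at the two fixed constants (doubled via the bridge), both bracketing quantiles are asymptotic to \(\sqrt{2\log(1/r)}\). Therefore
\[
c_r^+\sim\sqrt{2\log(1/r)},
\]
which gives \(c_r^+\to\infty\) and \(rc_r^+\to0\).

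\textbf{Step 3: the gap.} Combining the two asymptotics,
\[
b_r^+/r-c_r^+\sim(2-\sqrt2)\sqrt{\log(1/r)}\to\infty.
\]

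\textbf{Where care is needed.} The only delicate point is that \eqref{eq:mills-quantile} is stated for \(p\), so every one-sided quantile must first be rewritten as \(p^{-1}(2s)\) with \(s\le1/2\). The constant factor \(2\) changes \(\log(1/s)\) only by \(O(1)\), which does not affect the leading asymptotics. No other obstacle is expected, since the argument is a direct transcription of the two-sided proof.
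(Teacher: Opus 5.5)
Your proposal is correct and follows the paper's proof essentially step for step. The steps match: the quantile expansion for \(b_r^+/r=p_+^{-1}(r^2)\), then \(\tau_r^+=O(r^2)\) giving \(p_+(c_r^+)\sim qr/(1-q)\), then a second application of \eqref{eq:mills-quantile} giving \(c_r^+\sim\sqrt{2\log(1/r)}\), and finally the difference of leading constants for the gap. The bridge \(p_+^{-1}(s)=p^{-1}(2s)\) and the bracketing argument you add only make explicit details the paper leaves implicit.
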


\begin{proof}
By \eqref{eq:one-br} and the quantile expansion
\eqref{eq:mills-quantile} in Lemma~\ref{lem:mills},
\[
 \frac{b_r^+}{r}=p_+^{-1}(r^2)
 \sim2\sqrt{\log(1/r)}\to\infty,
 \qquad
 b_r^+\sim2r\sqrt{\log(1/r)}\to0.
\]
Moreover, \eqref{eq:one-tau-asymptotic} gives
\(\tau_r^+=O(r^2)\) and hence \(\pi_{0,r}^+\to1\).  Therefore
\eqref{eq:one-pi-cr} yields
\[
 p_+(c_r^+)\sim\frac{qr}{1-q}.
\]
Another application of \eqref{eq:mills-quantile} in
Lemma~\ref{lem:mills} gives
\[
 c_r^+\sim\sqrt{2\log(1/r)}.
\]
Thus \(c_r^+\to\infty\) and \(rc_r^+\to0\), while
\[
 \frac{b_r^+}{r}-c_r^+
 \sim(2-\sqrt2)\sqrt{\log(1/r)}
 \longrightarrow\infty.
\]
\end{proof}

Apply the reused population notation to \(\nu_{q,r}^+\), and write the
resulting quantities as
\(R_{r,z}^+\), \(R_{0,r,z}^+\), \(\bar R_{r,z}^+\),
\(\bar R_{0,r,z}^+\), \(\bar R_{1,r,z}^+\),
\(\bar R_{2,r,z}^+\), \(C_r^+(z)\), and \(D_r^+(z)\).
Then
\begin{align}
 \bar R_{r,z}^+(c)
 &=\bar R_{0,r,z}^+(c)
 +\bar R_{1,r,z}^+(c)
 +\bar R_{2,r,z}^+(c),
 \label{eq:one-population-curve}\\
 \bar R_{0,r,z}^+(c)
 &=q\pi_{0,r}^+
 \frac{p_+\{(c-rz)/\sqrt{1-r^2}\}}{p_+(c)}.
 \label{eq:one-null-curve}\\
 \bar R_{1,r,z}^+(c)
 &{=\frac{qr}{p_+(c)}
 \1\{c\leq c_r^++b_r^+-z\},}
 \label{eq:one-primary-curve}\\
 \bar R_{2,r,z}^+(c)
 &=\frac{q\tau_r^+}{p_+(c)}
 \mu_r^+\!\left(\left[\frac{r(c+z)}{1+r},y_q^+\right]\right).
 \label{eq:one-secondary-curve}
\end{align}

The next lemma derives the derivatives of the null and secondary non-null
contributions.
\begin{lemma}
\label{lem:one-block-derivatives}
For \(c>0\), put
\[
 u=\frac{c-rz}{\sqrt{1-r^2}}.
\]
The null contribution satisfies
\begin{align}
 \frac{\dd}{\dd c}\log\bar R_{0,r,z}^+(c)
 &=\lambda(c)-\frac{\lambda(u)}{\sqrt{1-r^2}},
 \label{eq:one-null-block-log-derivative}\\
 \frac{\dd}{\dd c}\bar R_{0,r,z}^+(c)
 &=\bar R_{0,r,z}^+(c)
 \left\{\lambda(c)-\frac{\lambda(u)}{\sqrt{1-r^2}}\right\}.
 \label{eq:one-null-block-derivative}
\end{align}
Define
\[
 \beta_{r,z}(c)=\frac{r(c+z)}{1+r}.
\]
If \(\beta_{r,z}(c)\in(b_r^+,y_q^+)\), then
\begin{equation}
 \bar R_{2,r,z}^+(c)
 =\left[1-qe^{\{\beta_{r,z}(c)\}^2/2}\right]
 \frac{p_+\{\beta_{r,z}(c)/r\}}{p_+(c)}
 \label{eq:one-secondary-block-formula}
\end{equation}
and
\begin{equation}
 \frac{\dd}{\dd c}\log\bar R_{2,r,z}^+(c)
 =-\frac{qr\beta_{r,z}(c)e^{\{\beta_{r,z}(c)\}^2/2}}
 {(1+r)\left[1-qe^{\{\beta_{r,z}(c)\}^2/2}\right]}
 +\lambda(c)-\frac{\lambda\{\beta_{r,z}(c)/r\}}{1+r}.
 \label{eq:one-secondary-block-log-derivative}
\end{equation}
\end{lemma}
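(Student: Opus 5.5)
This is a direct calculus computation that mirrors the proof of Lemma~\ref{lem:two-block-derivatives}. The one-sided structure is simpler: there is a single Gaussian tail for the nulls, and the threshold map is linear.

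\textbf{Null contribution.} By \eqref{eq:one-null-curve}, and since \(p_+=\Phib\),
\[
 \log\bar R^+_{0,r,z}(c)=\log(q\pi^+_{0,r})+\log\Phib(u)-\log\Phib(c).
\]
Two facts suffice here: \(\dd u/\dd c=1/\sqrt{1-r^2}\), and \(\frac{\dd}{\dd t}\log\Phib(t)=-\lambda(t)\). Differentiating the display then gives \eqref{eq:one-null-block-log-derivative}. Multiplying by \(\bar R^+_{0,r,z}(c)>0\), which is smooth in \(c\), gives \eqref{eq:one-null-block-derivative}.

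\textbf{Secondary contribution.} First I justify the form \eqref{eq:one-secondary-curve}. Conditional on \(Z=z\), the secondary non-null indexed by \(a\) has the deterministic value \(T=a/r+a-z\), because \(B=-1\). The one-sided rejection rule \(T\ge c\) is then equivalent to \(a(1+r)/r\ge c+z\), that is, to \(a\ge\beta_{r,z}(c)\). This yields \eqref{eq:one-secondary-curve}.

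Now suppose \(\beta=\beta_{r,z}(c)\in(b_r^+,y_q^+)\). Then \eqref{eq:one-tau-mu} and \eqref{eq:one-Wr} give
\[
\tau_r^+\mu_r^+([\beta,y_q^+])=W_r^+(\beta)=\{1-qe^{\beta^2/2}\}p_+(\beta/r)/q.
\]
Multiplying by \(q/p_+(c)\) gives \eqref{eq:one-secondary-block-formula}.

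For \eqref{eq:one-secondary-block-log-derivative}, I take logarithms of \eqref{eq:one-secondary-block-formula}. The first factor is strictly positive, because \(\beta<y_q^+\) and \(qe^{(y_q^+)^2/2}=1\). Using \(\dd\beta/\dd c=r/(1+r)\), the three pieces differentiate as follows:
\begin{itemize}
\item \(\log\{1-qe^{\beta^2/2}\}\) gives \(-q\beta e^{\beta^2/2}\cdot r/[(1+r)\{1-qe^{\beta^2/2}\}]\);
\item \(\log p_+(\beta/r)\) gives \(-\lambda(\beta/r)\cdot\frac1r\cdot\frac{r}{1+r}=-\lambda(\beta/r)/(1+r)\);
\item \(-\log p_+(c)\) gives \(+\lambda(c)\).
\end{itemize}
Summing these three terms yields \eqref{eq:one-secondary-block-log-derivative}.

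\textbf{Main obstacle.} There is no real obstacle. The only points needing care are the sign bookkeeping for the loading \(B=-1\), which produces \(c+z\) rather than \(c-z\), and keeping the log well defined. The latter is exactly why the open-interval hypothesis on \(\beta\) is imposed: it keeps both factors strictly positive and places \(\beta\) in the interior, where \(W_r^+\) is differentiable.
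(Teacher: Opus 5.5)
Your proposal is correct and follows essentially the same route as the paper's proof: you take the logarithm of the null ratio, use \(\dd u/\dd c=1/\sqrt{1-r^2}\) and \(\frac{\dd}{\dd t}\log p_+(t)=-\lambda(t)\), then read off \(\tau_r^+\mu_r^+([\beta,y_q^+])=W_r^+(\beta)\) and differentiate with \(\partial\beta/\partial c=r/(1+r)\). The only addition is your derivation of the rejection set \(a\ge\beta_{r,z}(c)\) from \(T=a/r+a-z\), which the paper takes as given in \eqref{eq:one-secondary-curve}; it is a harmless and correct supplement.
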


\begin{proof}
Since
\[
 \frac{\dd u}{\dd c}=\frac1{\sqrt{1-r^2}},
 \qquad
 \frac{\dd}{\dd t}\log p_+(t)
 =\frac{p_+'(t)}{p_+(t)}=-\lambda(t),
\]
Equation~\eqref{eq:one-null-curve} gives
\begin{align*}
 \frac{\dd}{\dd c}\log\bar R_{0,r,z}^+(c)
 &=\frac{\dd}{\dd c}
 \left[
   \log(q\pi_{0,r}^+)+\log p_+(u)-\log p_+(c)
 \right]\\
 &=-\frac{\lambda(u)}{\sqrt{1-r^2}}+\lambda(c),
\end{align*}
which is \eqref{eq:one-null-block-log-derivative}.  Therefore
\begin{align*}
 \frac{\dd}{\dd c}\bar R_{0,r,z}^+(c)
 &=\bar R_{0,r,z}^+(c)
 \frac{\dd}{\dd c}\log\bar R_{0,r,z}^+(c)\\
 &=\bar R_{0,r,z}^+(c)
 \left\{\lambda(c)-\frac{\lambda(u)}{\sqrt{1-r^2}}\right\},
\end{align*}
which proves \eqref{eq:one-null-block-derivative}.

Now suppose that
\(\beta_{r,z}(c)\in(b_r^+,y_q^+)\).  By
\eqref{eq:one-secondary-curve}, \eqref{eq:one-tau-mu}, and
\eqref{eq:one-Wr},
\begin{align*}
 \bar R_{2,r,z}^+(c)
 &=\frac{q\tau_r^+}{p_+(c)}
 \mu_r^+([\beta_{r,z}(c),y_q^+])\\
 &=\frac{qW_r^+\{\beta_{r,z}(c)\}}{p_+(c)}\\
 &=\left[1-qe^{\{\beta_{r,z}(c)\}^2/2}\right]
 \frac{p_+\{\beta_{r,z}(c)/r\}}{p_+(c)}.
\end{align*}
This proves \eqref{eq:one-secondary-block-formula}.  Moreover,
\[
 \frac{\partial\beta_{r,z}(c)}{\partial c}=\frac{r}{1+r}.
\]
Since \(\beta_{r,z}(c)<y_q^+\), we have
\(1-qe^{\{\beta_{r,z}(c)\}^2/2}>0\), and hence
\begin{align*}
 \frac{\dd}{\dd c}\log\bar R_{2,r,z}^+(c)
 &=-\frac{q\beta_{r,z}(c)e^{\{\beta_{r,z}(c)\}^2/2}}
 {1-qe^{\{\beta_{r,z}(c)\}^2/2}}
 \frac{\partial\beta_{r,z}(c)}{\partial c}\\
 &\quad
 -\lambda\{\beta_{r,z}(c)/r\}\frac1r
 \frac{\partial\beta_{r,z}(c)}{\partial c}
 +\lambda(c)\\
 &=-\frac{qr\beta_{r,z}(c)e^{\{\beta_{r,z}(c)\}^2/2}}
 {(1+r)[1-qe^{\{\beta_{r,z}(c)\}^2/2}]}
 +\lambda(c)-\frac{\lambda\{\beta_{r,z}(c)/r\}}{1+r}.
\end{align*}
This proves \eqref{eq:one-secondary-block-log-derivative}.
\end{proof}

The next lemma gives the limit of the null contribution.
\begin{lemma}
\label{lem:one-null-sequential}
For every compact
\(X\subset[0,\infty)\) and \(K\subset\mathbb R\), as \(r\downarrow0\)
and \(c\to\infty\), uniformly over \(z\in K\) subject to \(rc\in X\),
\begin{equation}
 \bar R_{0,r,z}^+(c)
 =q\exp\left\{(rc)z-\frac{(rc)^2}{2}\right\}+o(1).
 \label{eq:one-null-sequential}
\end{equation}
In particular, if \(c=c(r)\to\infty\) and \(rc\to x\geq0\), then the
left-hand side converges to
\(q\exp(xz-x^2/2)\), uniformly for \(z\) in compact sets.
\end{lemma}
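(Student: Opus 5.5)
The plan is to follow the proof of Lemma~\ref{lem:null-tail}, which becomes simpler here because there is only one tail. Fix compact \(X\subset[0,\infty)\) and \(K\), and put \(A=\sup X\) and \(T=\sup_{z\in K}|z|\). By \eqref{eq:one-null-curve},
\[
 \bar R_{0,r,z}^+(c)=q\pi_{0,r}^+\frac{\Phib(u)}{\Phib(c)},\qquad u=\frac{c-rz}{\sqrt{1-r^2}}.
\]
Since \(\pi_{0,r}^+\to1\) by Lemma~\ref{lem:one-scale-properties}, it suffices to show that \(\Phib(u)/\Phib(c)=\exp\{(rc)z-(rc)^2/2\}+o(1)\) uniformly over the stated range.

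\textbf{Step 1: the shift \(u-c\) is uniformly \(O(r)\).} Using the same algebra as in Lemma~\ref{lem:null-tail},
\[
 u-c=\frac{cr^2}{\sqrt{1-r^2}\{1+\sqrt{1-r^2}\}}-\frac{rz}{\sqrt{1-r^2}}.
\]
Because \(rc\le A\) and \(|z|\le T\), this gives \(|u-c|=O_{A,T}(r)\). In particular \(u\to\infty\) together with \(c\), and \(c/u\to1\) uniformly.

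\textbf{Step 2: expand the exponent.} We have
\[
 u^2-c^2=\frac{c^2r^2-2rcz+r^2z^2}{1-r^2},
\]
so \(-(u^2-c^2)/2=(rc)z-(rc)^2/2+O_{A,T}(r^2)\) uniformly.

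\textbf{Step 3: apply the local Mills ratio bound.} Apply the uniform expansion \eqref{eq:mills-local-ratio} of Lemma~\ref{lem:mills}, which is valid for bounded shifts as the base point tends to infinity. It gives
\[
 \frac{\Phib(u)}{\Phib(c)}=\frac cu\,e^{-(u^2-c^2)/2}\{1+o(1)\},
\]
uniformly. The factor \(e^{(rc)z-(rc)^2/2}\) is bounded, because \((rc,z)\) ranges over the compact set \(X\times K\). Hence the multiplicative errors \(c/u-1\), \(e^{O(r^2)}-1\) and \(o(1)\) turn into an additive \(o(1)\). This proves \eqref{eq:one-null-sequential}.

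The "in particular" statement then follows from the continuity of \((x,z)\mapsto q\exp(xz-x^2/2)\): apply the uniform statement with \(X\) a compact neighbourhood of \(x\) and note that \(rc\to x\).

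The only delicate point is the uniformity of the Mills-ratio remainder in Step 3. In Lemma~\ref{lem:null-tail} this was handled with a \(\lim_{C\to\infty}\limsup_{r\downarrow0}\) argument, and the same device applies here. Since the statement assumes \(c\to\infty\), we restrict to \(c\ge C\), let \(r\downarrow0\) first, and then let \(C\to\infty\). No separate bounded-\(c\) range needs to be treated.
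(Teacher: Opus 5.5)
Your proposal is correct and follows essentially the same route as the paper. The paper also writes the null curve as the tail ratio \(p_+(c+\Delta_r)/p_+(c)\) with a uniformly \(O(r)\) shift \(\Delta_r\) and shows that its logarithm equals \((rc)z-(rc)^2/2+o(1)\); it evaluates the log-ratio as \(-\int_c^{c+\Delta_r}\lambda(t)\dd t=-c\Delta_r+o(1)\) via \eqref{eq:mills-hazard-expansion}, whereas you use the equivalent local Mills expansion \eqref{eq:mills-local-ratio}. Your closing \(C\to\infty\) device is harmless but unnecessary: the remainder in \eqref{eq:mills-local-ratio} is already uniform over bounded shifts once \(c\to\infty\) is part of the limiting regime.
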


\begin{proof}

Set
\[
 \Delta_r=\frac{c-rz}{\sqrt{1-r^2}}-c.
\]
Because \(rc\) and \(z\) range over compact sets while \(c\to\infty\),
uniformly over the stated range,
\(\Delta_r=O(r)=o(1)\) and
\begin{align*}
 c\Delta_r
 &=c^2\left\{\frac{1}{\sqrt{1-r^2}}-1\right\}
   -\frac{rzc}{\sqrt{1-r^2}}\\
 &=\frac12(rc)^2-z(rc)+o(1),
\end{align*}
where the remainder is uniform.  The hazard expansion
\eqref{eq:mills-hazard-expansion} in Lemma~\ref{lem:mills}, together with
\(c\to\infty\) and \(\Delta_r=o(1)\), gives
\[
 \log\frac{p_+(c+\Delta_r)}{p_+(c)}
 =-\int_c^{c+\Delta_r}\lambda(t)\,\dd t
 =-c\Delta_r+o(1)
\]
uniformly on the same range.  Exponentiating, using
\((c-rz)/\sqrt{1-r^2}=c+\Delta_r\), and then applying
\eqref{eq:one-null-curve} together with \(\pi_{0,r}^+\to1\) proves
\eqref{eq:one-null-sequential}.
\end{proof}

The following lemma is the one-sided analogue of
Lemma~\ref{lem:localized-crossing}.
\begin{lemma}
\label{lem:one-localized-crossing}
Let \(K\) be compact.  For \(z\in K\), let
\(m_r(z)>u_q^+\) and \(w_r(z)>0\).
Suppose that, for every fixed \(H>0\),
\begin{equation}
 \inf_{z\in K}\{m_r(z)-Hw_r(z)\}>u_q^+
 \label{eq:one-localized-crossing-domain}
\end{equation}
for all sufficiently small \(r\), and
\begin{equation}
 \sup_{z\in K}\sup_{|h|\le H}
 \left|
 \bar R_{r,z}^+\{m_r(z)+w_r(z)h\}
 -d(z)-\{1-d(z)\}e^{\xi(z)h}
 \right|\longrightarrow0,
 \label{eq:one-localized-crossing-full}
\end{equation}
where \(d:K\to[0,1)\) and \(\xi:K\to(0,\infty)\) are continuous.
Suppose also that, for every \(h_0>0\),
\begin{equation}
 \limsup_{r\downarrow0}\sup_{z\in K}
 \sup_{u_q^+\le c\le m_r(z)-w_r(z)h_0}
 \bar R_{r,z}^+(c)<1.
 \label{eq:one-localized-crossing-left}
\end{equation}
Then
\begin{equation}
 \sup_{z\in K}
 \left|\frac{C_r^+(z)-m_r(z)}{w_r(z)}\right|
 \longrightarrow0.
 \label{eq:one-localized-crossing-cutoff}
\end{equation}
\end{lemma}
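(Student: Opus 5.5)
The argument is the one used for Lemma~\ref{lem:localized-crossing}, with \(u_q^+\) in place of \(u_q\) and \(\bar R_{r,z}^+\) in place of \(\bar R_{r,z}\). It never uses the two-sided form of the \(p\)-value. It needs only the first-crossing definition \(C_r^+(z)=\inf\{c\ge u_q^+:\bar R_{r,z}^+(c)\ge1\}\) together with the two hypotheses \eqref{eq:one-localized-crossing-full} and \eqref{eq:one-localized-crossing-left}. Fix \(h_0>0\); it suffices to show that, for all sufficiently small \(r\),
\[
 m_r(z)-w_r(z)h_0<C_r^+(z)\le m_r(z)+w_r(z)h_0
 \qquad\text{for every }z\in K .
\]

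\textbf{Upper bracket.} Since \(d\) and \(\xi\) are continuous on the compact set \(K\), with \(d<1\) and \(\xi>0\), the quantity
\[
 \epsilon:=\inf_{z\in K}\{1-d(z)\}\{e^{\xi(z)h_0}-1\}
\]
is strictly positive. Apply \eqref{eq:one-localized-crossing-full} with \(H=h_0\) and evaluate at \(h=h_0\). For small \(r\) this gives
\[
 \bar R_{r,z}^+\{m_r(z)+w_r(z)h_0\}\ge1+\epsilon/2
 \qquad\text{for all }z\in K .
\]
The point \(m_r(z)+w_r(z)h_0\) is at least \(u_q^+\), because \(m_r(z)>u_q^+\) and \(w_r(z)h_0>0\). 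So it belongs to the set defining \(C_r^+(z)\), and therefore \(C_r^+(z)\le m_r(z)+w_r(z)h_0\).

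\textbf{Lower bracket.} Apply \eqref{eq:one-localized-crossing-left}. Its limsup is some number \(1-2\gamma<1\). For small \(r\), \(\bar R_{r,z}^+(c)\le1-\gamma\) for every \(c\in[u_q^+,m_r(z)-w_r(z)h_0]\) and every \(z\in K\). By \eqref{eq:one-localized-crossing-domain} this interval is nonempty. No point of it belongs to the crossing set, so \(C_r^+(z)\ge m_r(z)-w_r(z)h_0\).

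To make this inequality strict, I will use that the same argument works with \(h_0/2\) in place of \(h_0\). That version gives \(C_r^+(z)\ge m_r(z)-w_r(z)h_0/2\), which is strictly larger than \(m_r(z)-w_r(z)h_0\) because \(w_r(z)>0\).

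\textbf{Conclusion.} Together the two brackets give
\[
 \sup_{z\in K}\left|\frac{C_r^+(z)-m_r(z)}{w_r(z)}\right|\le h_0
\]
for all sufficiently small \(r\). Since \(h_0>0\) is arbitrary, this is exactly \eqref{eq:one-localized-crossing-cutoff}.

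There is no real obstacle. The one point requiring care is uniformity in \(z\): compactness of \(K\) and continuity of \(d\) and \(\xi\) must give a single margin \(\epsilon\) that is valid simultaneously for all \(z\in K\).
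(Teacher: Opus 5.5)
Your proof is correct and matches the paper's argument, which simply transfers the proof of Lemma~\ref{lem:localized-crossing}. In both, the upper bracket comes from evaluating \eqref{eq:one-localized-crossing-full} at \(h=h_0\) with a compactness margin, the lower bracket comes from \eqref{eq:one-localized-crossing-left}, and then \(h_0\downarrow0\). Your \(h_0/2\) device makes rigorous the paper's quick claim that no crossing occurs ``at or before'' \(m_r(z)-w_r(z)h_0\), although the conclusion does not need the strict inequality.
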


\begin{proof}
The proof of Lemma~\ref{lem:localized-crossing} applies with
\(u_q\), \(\bar R_{r,z}\), and \(C_r(z)\) replaced by
\(u_q^+\), \(\bar R_{r,z}^+\), and \(C_r^+(z)\), respectively.
\end{proof}

The following lemma is the one-sided analogue of
Lemma~\ref{lem:localized-crossing-regularity}.
\begin{lemma}
\label{lem:one-localized-crossing-regularity}
Under the assumptions and notation of
Lemma~\ref{lem:one-localized-crossing}, assume in addition that, for every
fixed \(H>0\),
\begin{equation}
 \sup_{z\in K}\sup_{|h|\le H}
 \left|
 \bar R_{0,r,z}^+\{m_r(z)+w_r(z)h\}-d(z)
 \right|\longrightarrow0.
 \label{eq:one-localized-crossing-null}
\end{equation}
If, for some \(H_*>0\) and all sufficiently small \(r\), the map
\[
 h\longmapsto\bar R_{r,z}^+\{m_r(z)+w_r(z)h\}
\]
is differentiable with positive derivative on \((-H_*,H_*)\), and the map
\[
 h\longmapsto\bar R_{0,r,z}^+\{m_r(z)+w_r(z)h\}
\]
is continuous there for every \(z\in K\), then \(C_r^+(z)\) is regular and
\begin{equation}
 \sup_{z\in K}|D_r^+(z)-d(z)|\longrightarrow0.
 \label{eq:one-localized-crossing-fdp}
\end{equation}
\end{lemma}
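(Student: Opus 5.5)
The argument transfers the proof of Lemma~\ref{lem:localized-crossing-regularity} verbatim to the one-sided objects. We substitute \(u_q^+\), \(\bar R_{r,z}^+\), \(\bar R_{0,r,z}^+\), \(C_r^+(z)\) and \(D_r^+(z)\) for their two-sided counterparts, and use Proposition~\ref{prop:local-regularity} with lower endpoint \(u_q^+\). That proposition is \(p\)-value agnostic: its proof only uses that \(c\mapsto R_z^+\{p_+(c)\}\) is a survival function, hence upper semicontinuous, and that \(p_+\) is positive and continuous.

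\textbf{Step 1: localize the cutoff.} Lemma~\ref{lem:one-localized-crossing} gives \(h_r(z):=\{C_r^+(z)-m_r(z)\}/w_r(z)\to0\) uniformly on \(K\). Hence, for all sufficiently small \(r\) and every \(z\in K\),
\[
m_r(z)-w_r(z)H_*<C_r^+(z)<m_r(z)+w_r(z)H_*.
\]
By \eqref{eq:one-localized-crossing-domain} with \(H=H_*\), the left endpoint exceeds \(u_q^+\). In particular \(C_r^+(z)>u_q^+\) is finite.

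\textbf{Step 2: transfer positivity and continuity to the \(c\)-scale.} Put \(c=m_r(z)+w_r(z)h\). Since \(w_r(z)>0\), the chain rule turns the positive \(h\)-derivative on \((-H_*,H_*)\) into \(\bar R_{r,z}^{+\prime}(c)>0\) on the interval
\[
\bigl(m_r(z)-w_r(z)H_*,\;m_r(z)+w_r(z)H_*\bigr),
\]
which contains \(C_r^+(z)\). The same affine reparametrization carries continuity of the null map in \(h\) to continuity of \(c\mapsto\bar R_{0,r,z}^+(c)\) at \(C_r^+(z)\). The one-sided version of Proposition~\ref{prop:local-regularity}, applied with \(a=m_r(z)-w_r(z)H_*\) and \(b=m_r(z)+w_r(z)H_*\), then yields regularity of \(C_r^+(z)\).

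\textbf{Step 3: identify the FDP.} At a regular crossing, the one-sided analogue of \eqref{eq:Dr} gives \(D_r^+(z)=\bar R_{0,r,z}^+\{C_r^+(z)\}\); this is the rational-cutoff measurable version, which coincides with \(C_r^+\) at regular crossings. Therefore
\[
D_r^+(z)=\bar R_{0,r,z}^+\{m_r(z)+w_r(z)h_r(z)\}.
\]
Fix \(H=1\). For small \(r\) we have \(|h_r(z)|\le1\) uniformly on \(K\). Evaluating \eqref{eq:one-localized-crossing-null} at \(h=h_r(z)\) gives \(\sup_{z\in K}|D_r^+(z)-d(z)|\to0\), which is \eqref{eq:one-localized-crossing-fdp}.

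The only step needing care is confirming that the one-sided analogue of \eqref{eq:Dr} and Proposition~\ref{prop:local-regularity} hold with \(u_q^+\) in place of \(u_q\). Both arguments never used the two-sided form of \(p\), so no real obstacle is expected.
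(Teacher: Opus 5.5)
Your proposal is correct and is the paper's proof. The paper says only that the proof of Lemma~\ref{lem:localized-crossing-regularity} carries over with $u_q^+$, $\bar R_{r,z}^+$, $\bar R_{0,r,z}^+$, $C_r^+(z)$ and $D_r^+(z)$ substituted. Your Steps 1--3 write that argument out: localize the cutoff via Lemma~\ref{lem:one-localized-crossing}, turn the positive $h$-derivative into a positive $c$-derivative to invoke Proposition~\ref{prop:local-regularity}, then evaluate the null limit at $h=h_r(z)$, adding the details the paper leaves implicit (the domain condition at $H=H_*$ giving $a>u_q^+$, and fixing $H=1$ at the end).
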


\begin{proof}
The proof of Lemma~\ref{lem:localized-crossing-regularity} applies with
\(u_q\), \(\bar R_{r,z}\), \(\bar R_{0,r,z}\), \(C_r(z)\), and \(D_r(z)\)
replaced by \(u_q^+\), \(\bar R_{r,z}^+\), \(\bar R_{0,r,z}^+\),
\(C_r^+(z)\), and \(D_r^+(z)\), respectively.
\end{proof}

The next lemma shows that the primary non-null contribution is negligible uniformly
away from \(z=0\).
\begin{lemma}
\label{lem:one-primary-bound}
For every compact \(K\subset(0,\infty)\), uniformly for \(z\in K\), the
primary non-null contribution in \eqref{eq:one-population-curve}, whenever nonzero,
is bounded by
\begin{equation}
 \frac{qr}{p_+(c_r^++b_r^+-z)}=o(1).
 \label{eq:one-primary-bound}
\end{equation}
\end{lemma}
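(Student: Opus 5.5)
Read off the primary contribution from \eqref{eq:one-primary-curve}. It equals \(qr/p_+(c)\) when \(c\le c_r^++b_r^+-z\) and vanishes otherwise. Since \(p_+\) is decreasing, on the nonzero range we have \(p_+(c)\ge p_+(c_r^++b_r^+-z)\). This gives the displayed bound in \eqref{eq:one-primary-bound} at once, and it holds for every \(z\). The real content of the lemma is that this bound is \(o(1)\) uniformly for \(z\) in a compact \(K\subset(0,\infty)\).

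For the \(o(1)\) claim, put \(z_K=\inf K>0\) and \(Z_K=\sup K<\infty\).

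1. Use \(p_+(c_r^+)=q(r+\tau_r^+)/(1-q\pi_{0,r}^+)\) from \eqref{eq:one-pi-cr}. Since \(\tau_r^+=O(r^2)\) and \(\pi^+_{0,r}\to1\), this gives
\[
 qr\le\{1-q\pi_{0,r}^+\}p_+(c_r^+)\le p_+(c_r^+).
\]
2. Hence
\[
 \frac{qr}{p_+(c_r^++b_r^+-z)}\le\frac{p_+(c_r^+)}{p_+(c_r^++b_r^+-z)}.
\]
3. Because \(b_r^+\to0\) (Lemma~\ref{lem:one-scale-properties}), for small \(r\) we have \(b_r^+\le z_K/2\). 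The shift \(\delta:=z-b_r^+\) then lies in \([z_K/2,Z_K]\), uniformly in \(z\in K\).
4. Monotonicity of \(p_+\) gives
\[
 p_+(c_r^++b_r^+-z)=p_+(c_r^+-\delta)\ge p_+(c_r^+-z_K/2).
\]
So the ratio in step 2 is at most \(p_+(c_r^+)/p_+(c_r^+-z_K/2)\), which no longer depends on \(z\).
5. Since \(c_r^+\to\infty\), the fixed-shift estimate \eqref{eq:fixed-shift} in Lemma~\ref{lem:mills}, transferred to \(p_+\) via \eqref{eq:one-two-tail-bridge}, shows this ratio tends to \(0\). Heuristically it behaves like \(e^{-c_r^+z_K/2}\). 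The argument is the same as the one used for \(p(c_r)/p\{c_r+y(b_r)-y\}\) in Lemma~\ref{lem:primary-active-range}.

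The only point needing care is the form in which \eqref{eq:fixed-shift} is stated, namely whether the shifted argument is larger or smaller than \(c_r^+\). If it is stated with the argument shifted up, I will write \(x=c_r^+-z_K/2\), which tends to infinity, and bound \(p_+(x+z_K/2)/p_+(x)\). This is the same quantity, so no new work is needed.
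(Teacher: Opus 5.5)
Your proof is correct and follows the paper's argument: you bound \(qr/p_+(c)\) by monotonicity, use the calibration \eqref{eq:one-pi-cr} to get \(qr\le p_+(c_r^+)\), and conclude with the fixed-shift estimate \eqref{eq:fixed-shift} applied at \(x\to\infty\). The only cosmetic difference is in handling uniformity over \(z\in K\): you use monotonicity of \(p_+\) to replace the \(z\)-dependent shift \(z-b_r^+\) by the single shift \(z_K/2\), whereas the paper applies the uniform-in-\(\delta\) form of \eqref{eq:fixed-shift} directly with \(\delta=z-b_r^+\).
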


\begin{proof}
Whenever the primary non-null contribution is nonzero, the indicator in
\eqref{eq:one-primary-curve} equals one, so
\(\bar R_{1,r,z}^+(c)=qr/p_+(c)\).  Equation~\eqref{eq:one-primary-curve}
also gives
\(c\leq c_r^++b_r^+-z\).  Since \(p_+\) is decreasing,
\[
 \frac{qr}{p_+(c)}
 \leq \frac{qr}{p_+(c_r^++b_r^+-z)}.
\]
The right-hand side is the expression in \eqref{eq:one-primary-bound}.
By \eqref{eq:one-pi-cr}, it can be written as
\begin{equation*}
 \frac{qr}{p_+(c_r^++b_r^+-z)}
 =\frac{r(1-q\pi_{0,r}^+)}{r+\tau_r^+}
 \frac{p_+(c_r^+)}{p_+(c_r^++b_r^+-z)}.
\end{equation*}
Fix a compact \(K\subset(0,\infty)\), and put
\(\delta_{r,z}=z-b_r^+\).  By \eqref{eq:one-scale-properties} in
Lemma~\ref{lem:one-scale-properties}, \(\delta_{r,z}\) remains in a fixed
compact subset of \((0,\infty)\), uniformly for \(z\in K\), and
\(c_r^+-\delta_{r,z}\to\infty\) uniformly.  Therefore
\eqref{eq:fixed-shift} in Lemma~\ref{lem:mills}, applied with
\(x=c_r^+-\delta_{r,z}\), gives
\[
 \frac{p_+(c_r^+)}{p_+(c_r^++b_r^+-z)}
 =\frac{p_+(x+\delta_{r,z})}{p_+(x)}\longrightarrow0
\]
uniformly for \(z\in K\).  Since
\(0<r(1-q\pi_{0,r}^+)/(r+\tau_r^+)\leq1\), the expression in
\eqref{eq:one-primary-bound} is \(o(1)\) uniformly.
\end{proof}

The next lemma bounds the secondary non-null contribution in the two positive-
\(z\) regimes.
\begin{lemma}
\label{lem:one-secondary-bound}
For all \(c,z\in\mathbb R\),
\begin{equation}
 0\leq \frac{q\tau_r^+}{p_+(c)}
 \mu_r^+\!\left(\left[\frac{r(c+z)}{1+r},y_q^+\right]\right)
 \leq\frac{p_+\{(c+z)/(1+r)\}}{p_+(c)}.
 \label{eq:one-secondary-bound}
\end{equation}
\end{lemma}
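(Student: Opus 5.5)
The plan is to split on where the threshold \(\beta=r(c+z)/(1+r)\) sits relative to the support \([b_r^+,y_q^+]\) of \(\mu_r^+\), and in every case compare the survival mass with a single tail ratio. Nonnegativity of the middle expression is immediate, since it is a nonnegative multiple of a measure.

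\textbf{Case \(\beta\le b_r^+\).} Here \(\mu_r^+([\beta,y_q^+])=1\), and the middle expression equals \(q\tau_r^+/p_+(c)\). Using \eqref{eq:one-tau-mu} and \eqref{eq:one-Wr},
\[
 q\tau_r^+=qW_r^+(b_r^+)=\{1-qe^{(b_r^+)^2/2}\}p_+(b_r^+/r)\le p_+(b_r^+/r).
\]
The inequality \(\beta\le b_r^+\) means \((c+z)/(1+r)\le b_r^+/r\). Because \(p_+\) is decreasing, \(p_+(b_r^+/r)\le p_+\{(c+z)/(1+r)\}\), which gives \eqref{eq:one-secondary-bound}.

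\textbf{Case \(\beta\ge y_q^+\).} The interval has \(\mu_r^+\)-mass at most \(\mu_r^+(\{y_q^+\})\). This mass is zero, because \(W_r^+\) is continuous and vanishes at \(y_q^+\). So the middle expression is \(0\) and the bound is trivial.

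\textbf{Case \(b_r^+<\beta<y_q^+\).} By \eqref{eq:one-tau-mu} and \eqref{eq:one-Wr}, exactly as in \eqref{eq:one-secondary-block-formula},
\[
 q\tau_r^+\mu_r^+([\beta,y_q^+])=qW_r^+(\beta)=\{1-qe^{\beta^2/2}\}p_+(\beta/r)\le p_+(\beta/r).
\]
Since \(\beta/r=(c+z)/(1+r)\), dividing by \(p_+(c)\) gives \eqref{eq:one-secondary-bound}.

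The only delicate point is the boundary convention in the second case: one must check that the atomless property of \(\mu_r^+\) handles equality at \(y_q^+\), and that the first factor \(1-qe^{a^2/2}\in[0,1]\) on the support. Both follow directly from the definition of \(y_q^+\) in \eqref{eq:one-yq}. No asymptotics are needed, so the bound holds for all \(c,z\) and every admissible \(r\).
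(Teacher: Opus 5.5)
Your proposal is correct and follows the paper's proof essentially step for step. You use the same three-way split on whether \(r(c+z)/(1+r)\) lies at or below \(b_r^+\), strictly inside \((b_r^+,y_q^+)\), or at or above \(y_q^+\); in the lower case you combine \(q\tau_r^+\le p_+(b_r^+/r)\) with monotonicity of \(p_+\), and in the interior case you use \(q\tau_r^+\mu_r^+([\beta,y_q^+])=qW_r^+(\beta)\) with the deficit factor at most one. (At the endpoint you do not need continuity: \(\mu_r^+(\{y_q^+\})=W_r^+(y_q^+)/\tau_r^+=0\) follows directly from the survival formula defining \(\mu_r^+\).)
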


\begin{proof}
If \(r(c+z)/(1+r)\leq b_r^+\), then
\[
 q\tau_r^+\leq p_+(b_r^+/r)
 \leq p_+\{(c+z)/(1+r)\}.
\]
If \(b_r^+<r(c+z)/(1+r)<y_q^+\), then
\begin{align*}
 q\tau_r^+\mu_r^+\!\left(\left[\frac{r(c+z)}{1+r},y_q^+\right]\right)
 &=\left[1-q\exp\left\{\frac{r^2(c+z)^2}{2(1+r)^2}\right\}\right]
 p_+\!\left(\frac{c+z}{1+r}\right)\\
 &\leq p_+\!\left(\frac{c+z}{1+r}\right).
\end{align*}
If \(r(c+z)/(1+r)\geq y_q^+\), the contribution is zero.  Dividing these
bounds by \(p_+(c)\) proves \eqref{eq:one-secondary-bound}.
\end{proof}

\subsubsection{\texorpdfstring{The regime \(z<0\)}{The regime z < 0}}

\begin{lemma}
\label{lem:one-negative-regime}
For every compact \(K\subset(-\infty,0)\), uniformly for \(z\in K\),
{\begin{equation}
 C_r^+(z)=c_r^++o\{(c_r^+)^{-1}\}
 \label{eq:one-negative-cutoff}
\end{equation}
and
\begin{equation}
 D_r^+(z)\longrightarrow q.
 \label{eq:one-negative-fdp}
\end{equation}}
Moreover, for all sufficiently small \(r\), \(C_r^+(z)\) is regular for
every \(z\in K\).
\end{lemma}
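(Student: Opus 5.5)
This is the one-sided analogue of Lemma~\ref{lem:primary}. The plan is to apply Lemmas~\ref{lem:one-localized-crossing} and~\ref{lem:one-localized-crossing-regularity} with
\[
m_r(z)=c_r^+,\qquad w_r(z)=(c_r^+)^{-1},\qquad d(z)=q,\qquad \xi(z)=1 .
\]
Fix a compact \(K\subset(-\infty,0)\) and put \(\eta_K=\inf_{z\in K}(-z)>0\).

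\textbf{Step 1: every non-null is rejected near \(c_r^+\).} The primary non-null has conditional value \(c_r^++b_r^+-z\ge c_r^++\eta_K\). This exceeds \(c_r^++H/c_r^+\) for small \(r\), uniformly over \(z\in K\) and \(|h|\le H\). For a secondary non-null indexed by \(a\), the value \(a/r+a-z\) is at least \(b_r^+/r-\sup_K z\). By Lemma~\ref{lem:one-scale-properties}, \(b_r^+/r-c_r^+\to\infty\), so these are rejected as well. Equivalently, \(\beta_{r,z}(c)=r(c+z)/(1+r)\le b_r^+\) in \eqref{eq:one-secondary-curve}. Hence, for \(u_q^+\le c\le c_r^++H/c_r^+\),
\[
\bar R^+_{r,z}(c)=\bar R^+_{0,r,z}(c)+\frac{q(r+\tau_r^+)}{p_+(c)}.
\]
By \eqref{eq:one-pi-cr}, the last term equals \((1-q\pi_{0,r}^+)\,p_+(c_r^+)/p_+(c)\).

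\textbf{Step 2: local limit and left bound.} Take \(c=c_r^++h/c_r^+\). Since \(rc\to0\), Lemma~\ref{lem:one-null-sequential} gives \(\bar R^+_{0,r,z}(c)\to q\) uniformly. The tail-ratio fact \eqref{eq:h-over-x} of Lemma~\ref{lem:mills}, transferred to \(p_+\) through \eqref{eq:one-two-tail-bridge}, gives \(p_+(c_r^+)/p_+(c)\to e^h\). Together these yield \eqref{eq:one-localized-crossing-full} with limit \(q+(1-q)e^h\).

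For the left condition, take \(u_q^+\le c\le c_r^+-h_0/c_r^+\). Bound the signal term by its value at the right endpoint, which tends to \((1-q)e^{-h_0}\). For the null term, note \(z<0\) implies \(rz<0\), so \((c-rz)/\sqrt{1-r^2}\ge c\). Hence \(\bar R^+_{0,r,z}(c)\le q\pi_{0,r}^+\le q\) exactly, for every \(c\). This makes the one-sided left bound easier than the two-sided one. The domain condition \eqref{eq:one-localized-crossing-domain} is trivial because \(c_r^+\to\infty\). Lemma~\ref{lem:one-localized-crossing} then gives \eqref{eq:one-negative-cutoff}.

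\textbf{Step 3: regularity and the FDP limit.} On \(|c-c_r^+|\le H/c_r^+\), differentiate the two-term expression from Step 1. The signal part has derivative \(\lambda(c)\) times a quantity bounded below by \((1-q)e^{-H}/2\), so it is of order \(c_r^+\). For the null part, use \eqref{eq:one-null-block-derivative}: \(\bar R^+_{0,r,z}\le q\), and the bracket \(\lambda(c)-\lambda(u)/\sqrt{1-r^2}\) is \(O(r c+r|z|+ r^2c)=o(1)\) by \eqref{eq:mills-hazard}, since \(u-c=O(r)\) and \(\lambda'\) is bounded. So the null derivative is \(o(c_r^+)\) and the total derivative is positive. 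The null contribution is smooth in \(c\), and its limit \(d=q\) was established in Step 2, which gives \eqref{eq:one-localized-crossing-null}. Lemma~\ref{lem:one-localized-crossing-regularity} then yields regularity and \eqref{eq:one-negative-fdp}.

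\textbf{Main obstacle.} The step needing the most care is the uniform hazard comparison in Step 3, which shows the null derivative is negligible relative to \(c_r^+\). It mirrors the corresponding two-sided step in Lemma~\ref{lem:primary} and should go through routinely.
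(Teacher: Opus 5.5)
Your proposal is correct and follows the paper's proof essentially step for step. It uses the same localization $m_r=c_r^+$, $w_r=(c_r^+)^{-1}$, $d=q$, $\xi=1$ in Lemmas~\ref{lem:one-localized-crossing} and~\ref{lem:one-localized-crossing-regularity}, the same full-rejection argument via $b_r^+/r-c_r^+\to\infty$, the same calibration \eqref{eq:one-pi-cr} with \eqref{eq:h-over-x}, the same bound $\bar R^+_{0,r,z}(c)\le q\pi^+_{0,r}$ from $z<0$ for the left margin, and the same hazard comparison showing that the null derivative is negligible against the order-$c_r^+$ signal derivative. The only differences are minor: you bound the null hazard bracket by $o(1)$ using the Lipschitz property of $\lambda$, where the paper settles for $o(c_r^+)$ via \eqref{eq:mills-hazard}, and your ``for every $c$'' in the null bound should read ``for $c\ge u_q^+$ and small $r$'' to cover $u_q^+\le0$.
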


\begin{proof}
\textbf{Proof of \eqref{eq:one-negative-cutoff}.}
Fix a compact \(K\subset(-\infty,0)\).  Uniformly for \(z\in K\) and bounded
\(h\), all primary and secondary non-nulls are rejected at
the cutoff \(c_r^++h/c_r^+\).  Indeed, the primary statistic equals
\(c_r^++b_r^+-z\), so its distance above the cutoff is
\(b_r^+-z-h/c_r^+\).  This converges to \(-z>0\) because
\(b_r^+\to0\), \(c_r^+\to\infty\), and \(h\) is bounded.

Among the
secondary non-nulls, the smallest statistic is attained at
\(a=b_r^+\).
Its distance above the cutoff is
\[
 \frac{b_r^+}{r}+b_r^+-z-c_r^+-\frac{h}{c_r^+}
 =\left(\frac{b_r^+}{r}-c_r^+\right)
  +b_r^+-z-\frac{h}{c_r^+},
\]
which tends to infinity by \eqref{eq:one-scale-properties} in
Lemma~\ref{lem:one-scale-properties}.  Thus both gaps are positive for all
sufficiently small \(r\), uniformly when \(z\) ranges over a compact subset
of \((-\infty,0)\) and \(h\) remains bounded.

Since every non-null is rejected,
\[
 \bar R_{r,z}^+\!\left(c_r^++\frac h{c_r^+}\right)
 =\bar R_{0,r,z}^+\!\left(c_r^++\frac h{c_r^+}\right)
  +\frac{q(r+\tau_r^+)}{p_+(c_r^++h/c_r^+)}.
\]
For every fixed \(H>0\),
\eqref{eq:one-scale-properties} in
Lemma~\ref{lem:one-scale-properties} gives, uniformly for \(z\in K\) and
\(|h|\leq H\),
\[
 c_r^++\frac h{c_r^+}\longrightarrow\infty,
 \qquad
 r\left(c_r^++\frac h{c_r^+}\right)\longrightarrow0.
\]
Therefore \eqref{eq:one-null-sequential} in
Lemma~\ref{lem:one-null-sequential} directly yields
\[
 \bar R_{0,r,z}^+\!\left(c_r^++\frac h{c_r^+}\right)\longrightarrow q
\]
uniformly for \(z\in K\) and \(|h|\leq H\).
For the combined non-null contribution, the
calibration \eqref{eq:one-pi-cr} gives the exact identity
\begin{equation}
 \frac{q(r+\tau_r^+)}{p_+(c_r^++h/c_r^+)}
 =(1-q\pi_{0,r}^+)
 \frac{p_+(c_r^+)}{p_+(c_r^++h/c_r^+)}.
 \label{eq:one-negative-nonnull-calibration}
\end{equation}
The tail ratio in
\eqref{eq:one-negative-nonnull-calibration} tends to \(e^h\) by
\eqref{eq:h-over-x} in
Lemma~\ref{lem:mills}, whereas \(1-q\pi_{0,r}^+\to1-q\).  Combining the
two contributions gives
\begin{equation}
 \bar R_{r,z}^+(c_r^++h/c_r^+)
 \longrightarrow q+(1-q)e^h,
 \qquad
 \bar R_{0,r,z}^+(c_r^++h/c_r^+)\longrightarrow q.
 \label{eq:one-negative-local}
\end{equation}
The first limit is below one for \(h<0\), equals one at zero, and is above
one for \(h>0\).

We next exclude earlier crossings.  By
\eqref{eq:one-scale-properties} in
Lemma~\ref{lem:one-scale-properties}, uniformly for \(z\in K\), all
non-nulls are rejected when \(u_q^+\leq c\leq c_r^+\), and
\((c-rz)/\sqrt{1-r^2}>c\) for small \(r\).  Hence
\begin{equation}
 \bar R_{r,z}^+(c)
 <q\pi_{0,r}^++\frac{q(r+\tau_r^+)}{p_+(c)}
 \leq q\pi_{0,r}^++\frac{q(r+\tau_r^+)}{p_+(c_r^+)}=1.
 \label{eq:one-negative-left-pointwise}
\end{equation}
For every fixed \(h_0>0\),
\eqref{eq:one-negative-left-pointwise} and the
calibration \eqref{eq:one-pi-cr} give, uniformly for \(z\in K\),
\begin{equation}
\begin{aligned}
 \sup_{u_q^+\leq c\leq c_r^+-h_0/c_r^+}\bar R_{r,z}^+(c)
 &\leq q\pi_{0,r}^+
 +(1-q\pi_{0,r}^+)
 \frac{p_+(c_r^+)}{p_+(c_r^+-h_0/c_r^+)}\\
 &\longrightarrow q+(1-q)e^{-h_0}<1.
\end{aligned}
 \label{eq:one-negative-left}
\end{equation}
Thus Lemma~\ref{lem:one-localized-crossing} applies
with
\[
 m_r(z)=c_r^+,
 \qquad w_r(z)=\frac1{c_r^+},
 \qquad d(z)=q,
 \qquad \xi(z)=1.
\]
Equations~\eqref{eq:one-negative-local} and
\eqref{eq:one-negative-left} verify
\eqref{eq:one-localized-crossing-full} and
\eqref{eq:one-localized-crossing-left}, respectively.  Equation
\eqref{eq:one-localized-crossing-cutoff} therefore gives
\eqref{eq:one-negative-cutoff}.

\par\medskip\noindent
\textbf{Proof of \eqref{eq:one-negative-fdp}.}
To apply
Lemma~\ref{lem:one-localized-crossing-regularity}, it remains to
prove that the derivative of
\(\bar R_{r,z}^+(c_r^++h/c_r^+)\) with respect to \(h\) is positive
there.
{
Fix \(H<\infty\).  Uniformly for \(z\in K\) and \(|h|\le H\),
{
put
\[
 c=c_r^++\frac{h}{c_r^+},
 \qquad
 u=\frac{c-rz}{\sqrt{1-r^2}}.
\]
Equation~\eqref{eq:one-scale-properties} in
Lemma~\ref{lem:one-scale-properties} gives
\(c/c_r^+\to1\) and \(u/c_r^+\to1\) uniformly.  Hence
\eqref{eq:mills-hazard} in Lemma~\ref{lem:mills} gives
\[
\begin{aligned}
 \lambda(c)-\frac{\lambda(u)}{\sqrt{1-r^2}}
 &=c-\frac{u}{\sqrt{1-r^2}}+O\{(c_r^+)^{-1}\}\\
 &=\frac{rz-r^2c}{1-r^2}+O\{(c_r^+)^{-1}\}\\
 &=O_{K,H}\!\left(r+r^2c_r^++\frac1{c_r^+}\right)
 =o(c_r^+).
\end{aligned}
\]
Thus the hazard difference in
\eqref{eq:one-null-block-derivative} is \(o(c_r^+)\) uniformly.}
Applying
\eqref{eq:one-null-block-derivative} in
Lemma~\ref{lem:one-block-derivatives}, differentiating the fully rejected
non-null contribution directly using
\(p_+'(c)=-\lambda(c)p_+(c)\), and then using
\eqref{eq:one-negative-local}, the calibration \eqref{eq:one-pi-cr},
and \eqref{eq:h-over-x} in Lemma~\ref{lem:mills}, gives
\[
\begin{aligned}
 \frac{\partial}{\partial h}
 \bar R_{0,r,z}^+\!\left(c_r^++\frac h{c_r^+}\right)
 &\longrightarrow0,
 &
 \frac{\partial}{\partial h}
 \frac{q(r+\tau_r^+)}{p_+(c_r^++h/c_r^+)}
 &\longrightarrow(1-q)e^h,\\
 \frac{\partial}{\partial h}
 \bar R_{r,z}^+\!\left(c_r^++\frac h{c_r^+}\right)
 &\longrightarrow(1-q)e^h.
\end{aligned}
\]}
Since \(\inf_{|h|\le H}(1-q)e^h=(1-q)e^{-H}>0\), for all sufficiently
small \(r\),
\begin{equation}
 \inf_{z\in K}\inf_{|h|\le H}
 \frac{\partial}{\partial h}
 \bar R_{r,z}^+\!\left(c_r^++\frac h{c_r^+}\right)>0.
 \label{eq:one-negative-positive-derivative}
\end{equation}
The second limit in \eqref{eq:one-negative-local} verifies
\eqref{eq:one-localized-crossing-null}.  The derivative
bound \eqref{eq:one-negative-positive-derivative} and
continuity of the null contribution verify the remaining hypotheses of
Lemma~\ref{lem:one-localized-crossing-regularity}.  That lemma
therefore gives regularity, and
\eqref{eq:one-localized-crossing-fdp} gives
\eqref{eq:one-negative-fdp}, uniformly on \(K\).
\end{proof}

\subsubsection{\texorpdfstring{The regime \(z\in(0,y_q^+)\)}{The regime 0 < z < yq+}}

\begin{lemma}
\label{lem:one-middle-regime}
For every compact \(K\subset(0,y_q^+)\), uniformly for \(z\in K\),
{\begin{equation}
 C_r^+(z)=\frac zr+o(1)
 \label{eq:one-middle-cutoff}
\end{equation}
and
\begin{equation}
 D_r^+(z)\longrightarrow qe^{z^2/2}.
 \label{eq:one-middle-fdp}
\end{equation}}
Moreover, for all sufficiently small \(r\), \(C_r^+(z)\) is regular for
every \(z\in K\).
\end{lemma}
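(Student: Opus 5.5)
We mirror the proof of Lemma~\ref{lem:rare-local}, applying Lemma~\ref{lem:one-localized-crossing} and Lemma~\ref{lem:one-localized-crossing-regularity} with
\[
 m_r(z)=\frac zr,\qquad w_r(z)=1,\qquad d(z)=qe^{z^2/2},\qquad \xi(z)=z.
\]
Here \(d\) is continuous with values in \([q,1)\) on \(K\subset(0,y_q^+)\), and \(\xi>0\) is continuous. The domain condition \eqref{eq:one-localized-crossing-domain} is immediate, because \(\inf K/r-H\to\infty\).

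\textbf{Local limit.} Fix \(H\) and put \(c=z/r+h\) with \(|h|\le H\).

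For the nulls, \(rc\to z\) uniformly and \(c\to\infty\). Lemma~\ref{lem:one-null-sequential} therefore gives
\[
 \bar R_{0,r,z}^+(c)\to qe^{z^2-z^2/2}=qe^{z^2/2},
\]
which also verifies \eqref{eq:one-localized-crossing-null}.

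For the primary non-null, the cutoff exceeds \(c_r^++b_r^+-z\) because \(rc_r^+\to0\). Its indicator in \eqref{eq:one-primary-curve} therefore vanishes.

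For the secondaries, \(\beta=r(c+z)/(1+r)=z+r(h-z)+O(r^2)\), which lies strictly inside \((b_r^+,y_q^+)\). Formula \eqref{eq:one-secondary-block-formula} applies, and \(1-qe^{\beta^2/2}\to1-qe^{z^2/2}\). The remaining task is the tail ratio \(p_+(\beta/r)/p_+(c)\). We compute
\[
 \frac{\beta}{r}-c=\frac{c+z}{1+r}-c=\frac{z-rc}{1+r}=\frac{-rh}{1+r}=O(r).
\]
Since \(c\asymp r^{-1}\), we have \(c(\beta/r-c)\to-zh\), so \(\log\{p_+(\beta/r)/p_+(c)\}=-c(\beta/r-c)+o(1)\to zh\). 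This gives \(\bar R_{2,r,z}^+(c)\to(1-qe^{z^2/2})e^{zh}\), establishing \eqref{eq:one-localized-crossing-full}. The same conclusion also follows by integrating the hazard difference in \eqref{eq:one-secondary-block-log-derivative}.

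\textbf{No earlier crossing (main obstacle).} We must bound \(\bar R_{r,z}^+(c)\) for \(u_q^+\le c\le z/r-h_0\). We split at \(c=c_r^++b_r^+-z\).

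On \(c\le c_r^++b_r^+-z\), Lemma~\ref{lem:one-primary-bound} makes the primary term \(o(1)\). Lemma~\ref{lem:one-secondary-bound} makes the secondary term at most \(p_+\{(c+z)/(1+r)\}/p_+(c)\). Since \(c=O(\sqrt{\log(1/r)})\) there, \((c+z)/(1+r)-c\ge z/2\), and because \(c\ge u_q^+\) is bounded below, this ratio is bounded by some \(\rho_K<1\). That alone is not enough, so we refine with the null term. Here \(rc\to0\), so Lemma~\ref{lem:one-null-sequential} gives null term \(\le q+o(1)\) wherever \(c\to\infty\). For bounded \(c\), we use directly that \(\tau_r^+\) and \(r\) are \(O(r)\) and \(p_+(c)\) is bounded below, so the non-null terms vanish and the null term is \(\le q\pi_{0,r}^+p_+(c-O(r))/p_+(c)\to q\). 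For large \(c\), the secondary bound \(p_+(c+z/2)/p_+(c)\to0\) by \eqref{eq:fixed-shift}. A compactness/subsequence argument like the one in Lemma~\ref{lem:rare-local} then handles intermediate \(c\).

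On \(c_r^++b_r^+-z<c\le z/r-h_0\), we argue by contradiction along sequences with \(rc_n\to x\in[0,z]\) and \(h_n=c_n-z/r_n\le-h_0\).
\begin{itemize}
\item If \(h_n\) is bounded, the local limit gives \(q e^{z^2/2}+(1-qe^{z^2/2})e^{zh}<1\).
\item If \(h_n\to-\infty\), the null term tends to \(qe^{xz-x^2/2}\le qe^{z^2/2}<1\). The secondary bound is \(p_+(\beta/r)/p_+(c)\) with \(\beta/r-c=(z-rc)/(1+r)\). The product \(c(\beta/r-c)\) tends to infinity: when \(x<z\) this holds because the gap is positive and of order one while \(c\to\infty\), and when \(x=z\) it is \(\approx z(-h_n)\to\infty\). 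Hence the secondary term vanishes by \eqref{eq:variable-shift}, and the endpoint \(\beta\le b_r^+\) is treated via the first bound of Lemma~\ref{lem:one-secondary-bound}.
\end{itemize}
Lemma~\ref{lem:one-localized-crossing} then gives \eqref{eq:one-middle-cutoff}.

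\textbf{Regularity and the FDP limit.} We show the \(h\)-derivative is positive. By \eqref{eq:one-null-block-derivative}, the null hazard difference is \(O(r)\), since
\[
 c-\frac{c-rz}{1-r^2}=\frac{rz-r^2c}{1-r^2}=O(r),
\]
so the null derivative tends to \(0\). By \eqref{eq:one-secondary-block-log-derivative}, the secondary log-derivative tends to \(z\): the first term is \(O(r)\), and \(\lambda(c)-\lambda(\beta/r)/(1+r)=c-(c+z)/(1+r)^2\cdot(1+r)+O(r)\to z\) after simplification. The total derivative therefore tends to \(z(1-qe^{z^2/2})e^{zh}>0\). Lemma~\ref{lem:one-localized-crossing-regularity} then yields regularity and \eqref{eq:one-middle-fdp}.
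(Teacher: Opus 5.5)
Your proposal is correct and follows essentially the same route as the paper: the same choice \(m_r=z/r\), \(w_r=1\), \(d=qe^{z^2/2}\), \(\xi=z\) in Lemmas~\ref{lem:one-localized-crossing} and~\ref{lem:one-localized-crossing-regularity}, the same local limits for the null, primary and secondary blocks, a subsequence contradiction argument to exclude earlier crossings (bounded \(c_n\); \(c_n\to\infty\) with \(x<z_0\); \(x=z_0\) with \(h_n\) bounded or \(h_n\to-\infty\)), and the same derivative computation for regularity. Three slips need fixing, none of which affects validity: \(\beta=z+rh/(1+r)\) exactly, not \(z+r(h-z)+O(r^2)\); the hazard difference should read \(c-(c+z)/(1+r)^2+O(r)\to z\); and in the contradiction step \(z\) should vary as \(z_n\to z_0\) so that the bound is uniform over \(K\).
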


\begin{proof}
\textbf{Proof of \eqref{eq:one-middle-cutoff}.}
Fix a compact \(K\subset(0,y_q^+)\) and \(H>0\), and put \(c=z/r+h\).
On \(z\in K\) and \(|h|\leq H\), the primary non-null contribution in
\eqref{eq:one-population-curve} is nonzero only if
\[
 c_r^++b_r^+-z\geq\frac zr+h.
\]
Let \(\delta_K=\inf_{z\in K}z>0\).  Uniformly for \(z\in K\) and
\(|h|\leq H\),
\[
\begin{aligned}
 r\left\{\frac zr+h-(c_r^++b_r^+-z)\right\}
 &=z+rh-rc_r^+-rb_r^++rz\\
 &\geq\delta_K-rH-rc_r^+-rb_r^++r\delta_K
 \longrightarrow\delta_K>0
\end{aligned}
\]
by \eqref{eq:one-scale-properties} in
Lemma~\ref{lem:one-scale-properties}.  Hence
\begin{equation}
 c_r^++b_r^+-z<\frac zr+h
 \label{eq:one-middle-primary-absent}
\end{equation}
{
for all sufficiently small \(r\), uniformly for \(z\in K\) and
\(|h|\le H\), so the primary non-null contribution is absent.}

{A secondary non-null is rejected if and only if
\[
 a\geq\beta_{r,z}(c)=z+\frac{rh}{1+r}.
\]
Compactness of \(K\subset(0,y_q^+)\) implies that
\(\beta_{r,z}(c)\in(b_r^+,y_q^+)\) for all sufficiently small \(r\),
uniformly for \(z\in K\) and \(|h|\le H\).  In the formula for
\(\bar R_{2,r,z}^+(c)\) in Lemma~\ref{lem:one-block-derivatives},
\[
 \beta_{r,z}(c)\longrightarrow z,
 \qquad
 \frac{\beta_{r,z}(c)}r=\frac zr+\frac{h}{1+r},
\]
uniformly on \(K\times[-H,H]\), so its first factor converges to
\(1-qe^{z^2/2}\).  With \(x=z/r\),
\eqref{eq:mills-local-ratio} in Lemma~\ref{lem:mills} and the identity
\(p_+(x+u)/p_+(x+v)=p(x+u)/p(x+v)\) give
\[
\begin{aligned}
 \log
 \frac{p_+\{x+h/(1+r)\}}{p_+(x+h)}
 &=
 \log\frac{x+h}{x+h/(1+r)}
 -x\left\{\frac{h}{1+r}-h\right\}
 -\frac12\left\{\frac{h^2}{(1+r)^2}-h^2\right\}
 +o(1)\\
 &=
 \log\left\{1+\frac{hr}{(1+r)x+h}\right\}
 +\frac{zh}{1+r}
 +\frac{h^2r(2+r)}{2(1+r)^2}
 +o(1)\\
 &=zh+o(1)
 \qquad\text{uniformly on }K\times[-H,H].
\end{aligned}
\]
Consequently,
\begin{equation}
 \bar R_{2,r,z}^+(z/r+h)
 \longrightarrow
 \{1-qe^{z^2/2}\}e^{zh}.
 \label{eq:one-middle-secondary-local}
\end{equation}}
{For \(c=z/r+h\), uniformly for \(z\in K\) and \(|h|\leq H\),
we have \(c\to\infty\) and \(rc=z+rh\).  By uniformity of
\eqref{eq:one-null-sequential} in Lemma~\ref{lem:one-null-sequential},
\[
 \sup_{z\in K}\sup_{|h|\leq H}
 \left|
 \bar R_{0,r,z}^+(z/r+h)-qe^{z^2/2}
 \right|\longrightarrow0.
\]}
Combining this with \eqref{eq:one-middle-secondary-local} and the absence
of the primary non-null contribution for \(z\in K\) and
\(|h|\le H\) gives
\begin{equation}
 \bar R_{r,z}^+(z/r+h)
 \longrightarrow
 qe^{z^2/2}+\{1-qe^{z^2/2}\}e^{zh},
 \qquad
 \bar R_{0,r,z}^+(z/r+h)\longrightarrow qe^{z^2/2}.
 \label{eq:one-middle-local}
\end{equation}
Because \(qe^{z^2/2}<1\) uniformly away from one on \(K\), the first limit
crosses one strictly at \(h=0\).

We now verify \eqref{eq:one-localized-crossing-left} in
Lemma~\ref{lem:one-localized-crossing}.  For every fixed \(\eta>0\), we show
that
\begin{equation}
 \limsup_{r\downarrow0}\sup_{z\in K}
 \sup_{u_q^+\leq c\leq z/r-\eta}\bar R_{r,z}^+(c)<1.
 \label{eq:one-middle-left}
\end{equation}
Suppose instead that this assertion failed.  Then there would be sequences
satisfying
\[
 r_n\downarrow0,
 \qquad z_n\in K,
 \qquad u_q^+\leq c_n\leq\frac {z_n}{r_n}-\eta,
 \qquad \bar R_{r_n,z_n}^+(c_n)\geq1-o(1).
\]
After passing to subsequences,
\[
 z_n\longrightarrow z_0,
 \qquad r_nc_n\longrightarrow x\in[0,z_0].
\]
We consider three cases to get a contradiction.

~\\
\noindent\textbf{Case (a). } If \(c_n=O(1)\), then, since \(z_n\in K\),
\[
 \Delta_n
 :=\frac{c_n-r_nz_n}{\sqrt{1-r_n^2}}-c_n
 =c_n\left\{\frac{1}{\sqrt{1-r_n^2}}-1\right\}
 -\frac{r_nz_n}{\sqrt{1-r_n^2}}
 =o(1).
\]
Moreover, \(\inf_n p_+(c_n)>0\).  Hence continuity of \(p_+\),
\(\pi_{0,r_n}^+\to1\), and \eqref{eq:one-null-curve} give
\[
 \frac{p_+(c_n+\Delta_n)}{p_+(c_n)}=1+o(1),
 \qquad
 \bar R_{0,r_n,z_n}^+(c_n)
 =q\pi_{0,r_n}^+
 \frac{p_+(c_n+\Delta_n)}{p_+(c_n)}
 =q+o(1).
\]
By \eqref{eq:one-population-curve},
\[
\begin{aligned}
 0
 &\leq \bar R_{r_n,z_n}^+(c_n)-\bar R_{0,r_n,z_n}^+(c_n)\\
 &=\frac{qr_n}{p_+(c_n)}
 \1\{c_{r_n}^++b_{r_n}^+-z_n\geq c_n\}
 +\frac{q\tau_{r_n}^+}{p_+(c_n)}
 \mu_{r_n}^+\!\left(
 \left[\frac{r_n(c_n+z_n)}{1+r_n},y_q^+\right]
 \right)\\
 &\leq\frac{q(r_n+\tau_{r_n}^+)}{p_+(c_n)}
 =O(r_n)=o(1),
\end{aligned}
\]
The equality
\(q(r_n+\tau_{r_n}^+)/p_+(c_n)=O(r_n)\) uses
\eqref{eq:one-tau-asymptotic}.  Thus
\(\bar R_{r_n,z_n}^+(c_n)=q+o(1)<1\), a contradiction.

~\\
\noindent\textbf{Case (b). } \(c_n\to\infty\) and \(x<z_0\).  Put
\(d_n=(c_n+z_n)/(1+r_n)\).  Then
\[
 d_n-c_n=\frac{z_n-r_nc_n}{1+r_n}\longrightarrow z_0-x>0,
 \qquad c_n(d_n-c_n)\longrightarrow\infty.
\]
By \eqref{eq:one-secondary-bound} in
Lemma~\ref{lem:one-secondary-bound} and \eqref{eq:mills-hazard} in
Lemma~\ref{lem:mills}, the secondary non-null contribution satisfies
\[
 0\leq
 \frac{q\tau_{r_n}^+}{p_+(c_n)}
 \mu_{r_n}^+\!\left(\left[\frac{r_n(c_n+z_n)}{1+r_n},y_q^+\right]\right)
 \leq\frac{p_+(d_n)}{p_+(c_n)}
 =\exp\!\left\{-\int_{c_n}^{d_n}\lambda(t)\,dt\right\}
 \leq e^{-c_n(d_n-c_n)}=o(1).
\]
Equations \eqref{eq:one-primary-bound} and
\eqref{eq:one-null-sequential} in Lemmas~\ref{lem:one-primary-bound}
and~\ref{lem:one-null-sequential} therefore yield
\[
 \bar R_{r_n,z_n}^+(c_n)
 =q\exp\left(xz_0-\frac{x^2}{2}\right)+o(1)
 \leq qe^{z_0^2/2}+o(1)<1,
\]
again a contradiction.

~\\
\noindent\textbf{Case (c). } \(c_n\to\infty\) and \(x=z_0\).  Write
\(h_n=c_n-z_n/r_n\leq-\eta\).  If \(h_n\) is bounded,
\eqref{eq:one-middle-local} gives a limit strictly below one.
If \(h_n\to-\infty\), then
\[
 d_n-c_n=-\frac{r_nh_n}{1+r_n}>0,
 \qquad c_n(d_n-c_n)\sim-z_0h_n\longrightarrow\infty.
\]
The hazard bound \eqref{eq:mills-hazard} in Lemma~\ref{lem:mills} gives
\[
 \frac{p_+(d_n)}{p_+(c_n)}
 =\exp\!\left\{-\int_{c_n}^{d_n}\lambda(t)\,dt\right\}
 \leq\exp\{-c_n(d_n-c_n)\}
 \longrightarrow0.
\]
Hence \eqref{eq:one-secondary-bound} makes the secondary non-null contribution
\(o(1)\), while \eqref{eq:one-primary-bound} makes the primary
non-null contribution \(o(1)\).  Since \(r_nc_n\to x=z_0\) and \(z_n\to z_0\),
\eqref{eq:one-null-sequential} gives
\[
 \bar R_{0,r_n,z_n}^+(c_n)
 \longrightarrow
 q\exp\left(z_0^2-\frac{z_0^2}{2}\right)
 =qe^{z_0^2/2}<1.
\]
Therefore
\[
 \bar R_{r_n,z_n}^+(c_n)
 =\bar R_{0,r_n,z_n}^+(c_n)+o(1)
 \longrightarrow qe^{z_0^2/2}<1,
\]
again a contradiction.

Since all three cases lead to a contradiction, \eqref{eq:one-middle-left}
holds.  Apply Lemma~\ref{lem:one-localized-crossing} with
\[
 m_r(z)=\frac zr,\qquad w_r(z)=1,\qquad
 d(z)=qe^{z^2/2},\qquad \xi(z)=z.
\]
Equations~\eqref{eq:one-middle-local} and
\eqref{eq:one-middle-left} verify
\eqref{eq:one-localized-crossing-full} and
\eqref{eq:one-localized-crossing-left}, respectively.  Therefore
\eqref{eq:one-localized-crossing-cutoff} yields
\eqref{eq:one-middle-cutoff}.

\par\medskip\noindent
\textbf{Proof of \eqref{eq:one-middle-fdp}.}
We apply
Lemma~\ref{lem:one-localized-crossing-regularity}.  Fix \(H>0\), put
\[
 c=\frac zr+h,
 \qquad
 \beta_{r,z}(c)=z+\frac{rh}{1+r},
 \qquad |h|\leq H.
\]
Equation~\eqref{eq:one-middle-primary-absent} shows that the primary
non-null contribution is absent uniformly on this set.  Put
\(u=(c-rz)/\sqrt{1-r^2}\).  The hazard expansion
\eqref{eq:mills-hazard-expansion} in Lemma~\ref{lem:mills} gives
\[
 \lambda(c)-\frac{\lambda(u)}{\sqrt{1-r^2}}=o(1)
\]
uniformly for \(z\in K\) and \(|h|\leq H\).  In the secondary log
derivative \eqref{eq:one-secondary-block-log-derivative}, the first term is
\(O(r)\), while the same hazard expansion gives
\[
 \lambda(c)-\frac{\lambda\{\beta_{r,z}(c)/r\}}{1+r}
 =z+o(1)
\]
uniformly.  Combining these two estimates with
\eqref{eq:one-middle-secondary-local} yields
\[
 \sup_{z\in K}\sup_{|h|\leq H}
 \left|
 \frac{\partial}{\partial h}\bar R_{r,z}^+(z/r+h)
 -z\{1-qe^{z^2/2}\}e^{zh}
 \right|\longrightarrow0.
\]
Since
\[
 \inf_{z\in K}\inf_{|h|\leq H}
 z\{1-qe^{z^2/2}\}e^{zh}>0,
\]
the derivative is positive on every fixed local window for all sufficiently
small \(r\).  Equation~\eqref{eq:one-middle-cutoff} verifies
\eqref{eq:one-localized-crossing-cutoff}, and the second limit in
\eqref{eq:one-middle-local} verifies
\eqref{eq:one-localized-crossing-null}.  Continuity follows from
\eqref{eq:one-null-curve}.
Lemma~\ref{lem:one-localized-crossing-regularity} therefore gives
regularity, and \eqref{eq:one-localized-crossing-fdp} gives
\eqref{eq:one-middle-fdp}, uniformly on \(K\).
\end{proof}

\subsubsection{\texorpdfstring{The regime \(z>y_q^+\)}{The regime z > yq+}}

For \(z>y_q^+\), define
\[
\alpha_q^+(z)=z-\sqrt{z^2-(y_q^+)^2}\in(0,z).
\]
\begin{lemma}
\label{lem:one-positive-regime}
For every compact \(K\subset(y_q^+,\infty)\), uniformly for \(z\in K\),
{\begin{equation}
 rC_r^+(z)\longrightarrow\alpha_q^+(z)
 \label{eq:one-positive-cutoff}
\end{equation}
and
\begin{equation}
 D_r^+(z)\longrightarrow1.
 \label{eq:one-positive-fdp}
\end{equation}}
Moreover, for all sufficiently small \(r\), \(C_r^+(z)\) is regular for
every \(z\in K\).
\end{lemma}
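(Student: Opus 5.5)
The plan is to mirror the two-sided null-only regime (Lemma~\ref{lem:null-only}), with Lemma~\ref{lem:one-null-sequential} playing the role of Lemma~\ref{lem:null-tail}.

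\textbf{Identifying the limit.} The limiting null curve along \(rc\to x\) is \(\psi_z(x)=q\exp(xz-x^2/2)\). It is strictly increasing on \([0,z]\), since \(\partial_x\log\psi_z=z-x>0\). It equals \(1\) exactly when \(x^2-2zx+(y_q^+)^2=0\), and the smaller root is \(\alpha_q^+(z)\). Moreover \(\alpha_q^+(z)=(y_q^+)^2/\{z+\sqrt{z^2-(y_q^+)^2}\}<y_q^+<z\). On a compact \(K\subset(y_q^+,\infty)\) the map \(\alpha_q^+\) is continuous and takes values in a compact subset of \((0,y_q^+)\). Fix a small \(\delta>0\) with \(\alpha_q^+(z)+\delta<\min\{y_q^+,z\}-\delta\) uniformly on \(K\). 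Compactness then gives \(\gamma>0\) with
\[
\psi_z\{\alpha_q^+(z)-\delta\}\le1-2\gamma,\qquad \psi_z\{\alpha_q^+(z)+\delta\}\ge1+2\gamma .
\]

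\textbf{Non-null contributions are negligible for \(rc\le\alpha_q^+(z)+\delta\).} By Lemma~\ref{lem:one-primary-bound}, the primary term is \(o(1)\) uniformly in \(c\). For the secondary term, Lemma~\ref{lem:one-secondary-bound} bounds it by \(p_+(d)/p_+(c)\) with \(d=(c+z)/(1+r)\). Since \(d-c=(z-rc)/(1+r)\ge\delta/2\), the hazard bound \eqref{eq:mills-hazard} gives \(p_+(d)/p_+(c)\le e^{-c\delta/2}\). This is \(o(1)\) once \(c\to\infty\), and in fact exponentially small in \(1/r\) when \(rc\) is bounded below.

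\textbf{Left margin and bracketing.} To show \(\bar R^+_{r,z}\le1-\gamma\) on \(u_q^+\le c\le(\alpha_q^+(z)-\delta)/r\), I argue by contradiction along sequences, exactly as in the proof of \eqref{eq:one-middle-left}. If \(c_n=O(1)\), Case (a) there gives the limit \(q<1\). If \(c_n\to\infty\) with \(r_nc_n\to x\le\alpha_q^+(z_0)-\delta\), Lemma~\ref{lem:one-null-sequential}, the negligibility above, and monotonicity of \(\psi_{z_0}\) give a limit of at most \(1-2\gamma\). At \(c=(\alpha_q^+(z)+\delta)/r\) the non-null terms are nonnegative, so \(\bar R^+_{r,z}\ge\bar R^+_{0,r,z}\to\psi_z\{\alpha_q^+(z)+\delta\}\ge1+2\gamma\). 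Together these give \(\alpha_q^+(z)-\delta<rC_r^+(z)\le\alpha_q^+(z)+\delta\), and letting \(\delta\downarrow0\) proves \eqref{eq:one-positive-cutoff}.

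\textbf{Regularity and the FDP.} On the window \(|rc-\alpha_q^+(z)|\le\delta\), the primary non-null is not rejected, because \(rc\ge\alpha_q^+-\delta\) is bounded below while \(r(c_r^++b_r^+-z)\to0\). The secondary threshold \(\beta_{r,z}(c)\to rc\) lies in a compact subset of \((b_r^+,y_q^+)\), so Lemma~\ref{lem:one-block-derivatives} applies.

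For the null term, \eqref{eq:one-null-block-derivative} and the hazard expansion \eqref{eq:mills-hazard-expansion} give
\[
\lambda(c)-\lambda(u)/\sqrt{1-r^2}=r(z-rc)+o(r).
\]
Hence \(\partial_c\bar R^+_{0,r,z}=r\,\psi_z(rc)(z-rc)+o(r)\), which is of order \(r\) and bounded below by a positive multiple of \(r\).

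For the secondary term, its log-derivative \eqref{eq:one-secondary-block-log-derivative} is \(O(1)\) uniformly, while its size is \(O(e^{-\delta/(2r)\cdot\text{const}})=o(r)\). So its derivative is \(o(r)\).

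Therefore \(\partial_c\bar R^+_{r,z}>0\) on the window, and Proposition~\ref{prop:local-regularity} gives regularity. Finally, \eqref{eq:Dr} together with Lemma~\ref{lem:one-null-sequential} gives \(D_r^+(z)=\psi_z\{rC_r^+(z)\}+o(1)\to\psi_z\{\alpha_q^+(z)\}=1\).

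\textbf{Main obstacle.} I expect the hardest step to be the derivative comparison, where an order-\(r\) positive null slope must dominate the secondary term. This works only because the secondary block is exponentially small in \(1/r\). That in turn rests on keeping \(z-rc\) bounded away from zero, which is guaranteed by \(\alpha_q^+(z)<z\).
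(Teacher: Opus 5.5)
Your proposal is correct and follows essentially the same route as the paper. You identify \(\alpha_q^+(z)\) as the smaller root of \(q\exp(xz-x^2/2)=1\), remove the primary and secondary terms via Lemmas~\ref{lem:one-primary-bound} and~\ref{lem:one-secondary-bound} together with the hazard bound, exclude earlier crossings by the bounded/unbounded subsequence argument, and obtain regularity from an order-\(r\) positive null slope that dominates an exponentially small secondary block. The only cosmetic difference is the last step: you pass to \(D_r^+(z)=\psi_z\{rC_r^+(z)\}+o(1)\to1\) by continuity, whereas the paper uses \(\bar R^+_{r,z}\{C_r^+(z)\}=1\) and the vanishing of the non-null terms at the cutoff.
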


\begin{proof}
\textbf{Proof of \eqref{eq:one-positive-cutoff}.}
Fix a compact \(K\subset(y_q^+,\infty)\).  Fix
\(z\in K\) and \(x\in(0,z)\).  The limits below also hold uniformly when
\((x,z)\) ranges over a compact subset of
\(\{(x,z):z\in K,\ 0<x<z\}\).  Then
\eqref{eq:one-scale-properties} in
Lemma~\ref{lem:one-scale-properties} implies that the primary
non-null is not rejected at \(c=x/r\).  {For the
secondary non-null contribution, put \(d=(x/r+z)/(1+r)\).  Then
\[
 d-\frac xr=\frac{z-x}{1+r}>0,
\]
and \eqref{eq:one-secondary-bound} together with
\eqref{eq:mills-hazard} in Lemma~\ref{lem:mills} gives
\[
 0\leq\bar R_{2,r,z}^+(x/r)
 \leq\frac{p_+(d)}{p_+(x/r)}
 \leq\exp\{- (x/r)(d-x/r)\}\longrightarrow0.
\]}
Equation
\eqref{eq:one-null-sequential} in Lemma~\ref{lem:one-null-sequential},
applied with \(c=x/r\), gives
\begin{equation}
 \bar R_{r,z}^+(x/r)\longrightarrow
 q\exp\left(xz-\frac{x^2}{2}\right),
 \label{eq:one-large-local}
\end{equation}
with the same limit for the null contribution.  The equation
\[
 q\exp\left(xz-\frac{x^2}{2}\right)=1
\]
has smaller positive solution \(\alpha_q^+(z)\).  The limit in
\eqref{eq:one-large-local} is below one for \(x<\alpha_q^+(z)\) and above
one for \(\alpha_q^+(z)<x<z\).

We now establish the uniform gap needed to exclude earlier
crossings.  Fix \(\eta>0\) uniformly small enough that
\[
 0<\alpha_q^+(z)-\eta<\alpha_q^+(z)+\eta<z,
 \qquad z\in K.
\]
We claim that
\begin{equation}
 \limsup_{r\downarrow0}\sup_{z\in K}
 \sup_{u_q^+\leq c\leq\{\alpha_q^+(z)-\eta\}/r}
 \bar R_{r,z}^+(c)<1.
 \label{eq:one-positive-left}
\end{equation}
If \eqref{eq:one-positive-left} failed, there would be sequences
\[
 r_n\downarrow0,\qquad z_n\in K,\qquad
 u_q^+\leq c_n\leq\frac{\alpha_q^+(z_n)-\eta}{r_n},
 \qquad
 \bar R_{r_n,z_n}^+(c_n)\geq1-o(1).
\]
After taking a subsequence,
\[
 z_n\longrightarrow z_0\in K,
 \qquad
 r_nc_n\longrightarrow x\in[0,\alpha_q^+(z_0)-\eta].
\]
If \((c_n)\) is bounded, then \(p_+(c_n)\) is bounded away from zero,
\eqref{eq:one-null-curve} gives
\(\bar R_{0,r_n,z_n}^+(c_n)=q+o(1)\), and
\eqref{eq:one-population-curve} and \eqref{eq:one-tau-asymptotic} give
\[
 0\leq
 \bar R_{r_n,z_n}^+(c_n)-\bar R_{0,r_n,z_n}^+(c_n)
 \leq\frac{q(r_n+\tau_{r_n}^+)}{p_+(c_n)}=o(1).
\]
Thus \(\bar R_{r_n,z_n}^+(c_n)\to q<1\), a contradiction.

Suppose instead that \(c_n\to\infty\), and define
\(d_n=(c_n+z_n)/(1+r_n)\).  Since
\[
 d_n-c_n=\frac{z_n-r_nc_n}{1+r_n}
 \longrightarrow z_0-x
 \geq z_0-\alpha_q^+(z_0)+\eta>0,
\]
\eqref{eq:one-secondary-bound} in
Lemma~\ref{lem:one-secondary-bound} and \eqref{eq:mills-hazard} in
Lemma~\ref{lem:mills} give
\[
 0\leq\bar R_{2,r_n,z_n}^+(c_n)
 \leq\exp\{-c_n(d_n-c_n)\}=o(1).
\]
Equation~\eqref{eq:one-primary-bound} in
Lemma~\ref{lem:one-primary-bound} gives
\(\bar R_{1,r_n,z_n}^+(c_n)=o(1)\), and
\eqref{eq:one-null-sequential} in
Lemma~\ref{lem:one-null-sequential} gives
\[
 \bar R_{r_n,z_n}^+(c_n)
 \longrightarrow q\exp\left(xz_0-\frac{x^2}{2}\right)
 \leq q\exp\left(
   \{\alpha_q^+(z_0)-\eta\}z_0
   -\frac{\{\alpha_q^+(z_0)-\eta\}^2}{2}
 \right)<1,
\]
where the inequality uses
\(x\leq\alpha_q^+(z_0)-\eta<z_0\).  This is again a contradiction, so
\eqref{eq:one-positive-left} holds.  In particular, no crossing occurs at
or before \(\{\alpha_q^+(z)-\eta\}/r\).  At
\(c=\{\alpha_q^+(z)+\eta\}/r\), the null contribution alone is larger
than one for all sufficiently small \(r\), uniformly on \(K\).  Therefore
\[
 \alpha_q^+(z)-\eta<rC_r^+(z)<\alpha_q^+(z)+\eta.
\]
Letting \(\eta\downarrow0\) proves
\[
 rC_r^+(z)\longrightarrow\alpha_q^+(z)
\]
uniformly on \(K\).

\par\medskip\noindent
\textbf{Proof of \eqref{eq:one-positive-fdp}.}
It remains to prove regularity.  Choose \(\eta>0\) such that, for every
\(z\in K\),
\[
 0<\alpha_q^+(z)-2\eta<\alpha_q^+(z)+2\eta
 <\min\{z,y_q^+\}.
\]
For any
\(x\in[\alpha_q^+(z)-\eta,\alpha_q^+(z)+\eta]\), uniformly over \(z\in K\),
\[
 r\{x/r-(c_r^++b_r^+-z)\}
 =x-rc_r^+-rb_r^++rz>0
\]
by \eqref{eq:one-scale-properties} in
Lemma~\ref{lem:one-scale-properties}, so the primary non-null
contribution is zero.
{
Applying \eqref{eq:mills-hazard-expansion} in Lemma~\ref{lem:mills}
directly to \eqref{eq:one-null-block-log-derivative} in
Lemma~\ref{lem:one-block-derivatives} gives, uniformly for \(z\in K\) and
\(x\in[\alpha_q^+(z)-\eta,\alpha_q^+(z)+\eta]\),
\begin{equation}
 \left.\frac{\dd}{\dd c}\log\bar R_{0,r,z}^+(c)\right|_{c=x/r}
 =r(z-x)+o(r).
 \label{eq:one-positive-null-log-derivative}
\end{equation}
Set
\[
 c_1=\inf_{z\in K}\alpha_q^+(z)-\eta>0,
 \qquad
 \delta=\frac12\left\{\inf_{z\in K}
 [z-\alpha_q^+(z)]-\eta\right\}>0.
\]
The null limit \eqref{eq:one-null-sequential} in
Lemma~\ref{lem:one-null-sequential} and
\eqref{eq:one-positive-null-log-derivative} imply that, for
some \(c_0>0\),
\begin{equation}
\begin{aligned}
 \inf_{z\in K}\inf_{x\in[\alpha_q^+(z)-\eta,\alpha_q^+(z)+\eta]}
 \bar R_{0,r,z}^+(x/r)&\geq c_0,\\
 \inf_{z\in K}\inf_{x\in[\alpha_q^+(z)-\eta,\alpha_q^+(z)+\eta]}
 \left.\frac{\dd}{\dd c}\bar R_{0,r,z}^+(c)\right|_{c=x/r}
 &\geq c_0\delta r.
\end{aligned}
 \label{eq:one-positive-null-derivative}
\end{equation}
for all sufficiently small \(r\).
{
For the secondary non-null contribution, put
\(d=(x/r+z)/(1+r)\).  Here \(rd\in(b_r^+,y_q^+)\),
\(x/r\geq c_1/r\), and \(d-x/r\geq\delta\).
Thus \eqref{eq:one-secondary-block-log-derivative} in
Lemma~\ref{lem:one-block-derivatives} and \eqref{eq:mills-hazard} in
Lemma~\ref{lem:mills} yield
\[
\begin{aligned}
 0\leq\bar R_{2,r,z}^+(x/r)
 &\leq\frac{p_+(d)}{p_+(x/r)}
 \leq e^{-(x/r)(d-x/r)}\leq e^{-c_1\delta/r},\\
 \left|\left.\frac{\dd}{\dd c}\bar R_{2,r,z}^+(c)\right|_{c=x/r}\right|
 &=O(r^{-1}e^{-c_1\delta/r})=o(r).
\end{aligned}
\]}
Combining this bound with \eqref{eq:one-positive-null-derivative},
\[
 \inf_{z\in K}\inf_{x\in[\alpha_q^+(z)-\eta,\alpha_q^+(z)+\eta]}
 \left.\frac{\dd}{\dd c}\bar R_{r,z}^+(c)\right|_{c=x/r}>0.
\]
{
Equation~\eqref{eq:one-positive-cutoff} gives
\[
 \frac{\alpha_q^+(z)-\eta}{r}
 <C_r^+(z)<
 \frac{\alpha_q^+(z)+\eta}{r}
\]
for all sufficiently small \(r\), uniformly for \(z\in K\).}  Hence
Proposition~\ref{prop:local-regularity} proves regularity uniformly on
\(K\).

At the population cutoff, regularity gives
\(\bar R_{r,z}^+\{C_r^+(z)\}=1\).  By
\eqref{eq:one-positive-cutoff}, uniformly for \(z\in K\),
\[
 rC_r^+(z)\longrightarrow\alpha_q^+(z),
 \qquad
 \inf_{z\in K}\alpha_q^+(z)>0.
\]
In contrast, \eqref{eq:one-scale-properties} in
Lemma~\ref{lem:one-scale-properties} gives
\[
 r(c_r^++b_r^+-z)=rc_r^++rb_r^+-rz\longrightarrow0
\]
uniformly on \(K\).  Hence
\(C_r^+(z)>c_r^++b_r^+-z\) for all sufficiently small \(r\), and the
indicator in \eqref{eq:one-primary-curve} is zero.  Thus the primary
non-null contribution vanishes at \(C_r^+(z)\).  With
\(d=\{C_r^+(z)+z\}/(1+r)\),
\eqref{eq:one-positive-cutoff} gives
\[
 C_r^+(z)\longrightarrow\infty,
 \qquad
 d-C_r^+(z)=\frac{z-rC_r^+(z)}{1+r}
 \longrightarrow z-\alpha_q^+(z)>0
\]
uniformly on \(K\).  Hence Lemma~\ref{lem:one-secondary-bound} and
\eqref{eq:mills-hazard} in Lemma~\ref{lem:mills} give
\[
 0\leq\bar R_{2,r,z}^+\{C_r^+(z)\}
 \leq \exp[-C_r^+(z)\{d-C_r^+(z)\}]\longrightarrow0.
\]
It follows that
\(\bar R_{0,r,z}^+\{C_r^+(z)\}\to1\).  By the definition of
\(D_r^+(z)\), uniformly on \(K\),
\[
 D_r^+(z)=\bar R_{0,r,z}^+\{C_r^+(z)\}\longrightarrow1.
\]
}
\end{proof}

\subsection{Proofs of Theorem~\ref{thm:one-lower} and
Propositions~\ref{prop:one-expansion} and~\ref{prop:one-strict}}

\begin{proof}[Proof of Theorem~\ref{thm:one-lower}]
Choose a compact set
\[
 E\subset\mathbb R\setminus\{0,y_q^+\}
\]
such that
\[
 \mathbb E\{D_+^*(Z)\1\{Z\in E\}\}>\ell_{\le}(q)-\frac\delta4.
\]
The proofs of Lemmas~\ref{lem:one-negative-regime},
\ref{lem:one-middle-regime}, and \ref{lem:one-positive-regime} give, for all
sufficiently small \(r\), brackets
\[
 A_r(z)<B_r(z),\qquad z\in E,
\]
and a constant \(\gamma>0\) such that
\[
 \sup_{z\in E}\sup_{u_q^+\leq c\leq A_r(z)}
 \bar R_{r,z}^+(c)\leq1-\gamma,
 \qquad
 \inf_{z\in E}\bar R_{r,z}^+\{B_r(z)\}\geq1+\gamma,
\]
\[
 \sup_{z\in E}\sup_{A_r(z)\leq c\leq B_r(z)}
 \left|\bar R_{0,r,z}^+(c)-D_+^*(z)\right|
 \leq\frac{\delta}{8}.
\]
Fix such an \(r\), and put \(C_*=\sup_{z\in E}B_r(z)<\infty\).  For
\(z\in E\), define
\[
 \mathcal E_N(z)=
 \max\left\{
 \|\widehat R_N-R_{r,z}^+\|_{\infty,[0,1]},\,
 \|\widehat R_{0,N}-\pi_{0,r}^+R_{0,r,z}^+
 \|_{\infty,[0,1]}
 \right\}
\]
and
\[
 t_*=\frac{p_+(C_*)}{2q}
 \min\left\{\gamma,\frac{\delta}{8}\right\}.
\]
Following the same argument in the proof of
Theorem~\ref{thm:population-transfer}, Lemma~\ref{lem:DKW} gives
\[
 \sup_{z\in E}
 \mathbb P\{\mathcal E_N(z)>t_*\mid Z=z\}
 \leq4e^{-2Nt_*^2}.
\]
On the complementary event, the empirical and population curves differ
by at most
\(q\mathcal E_N(z)/p_+(C_*)\leq
\frac12\min\{\gamma,\delta/8\}\) throughout
\([u_q^+,C_*]\).  The two population brackets and
Lemma~\ref{lem:exact-BH} therefore imply
\[
 A_r(z)<\widehat C_N\leq B_r(z),
 \qquad
 \FDP_N\geq D_+^*(z)-\frac{\delta}{4}.
\]
Consequently,
\begin{align*}
 \mathbb E_{\Omega_N}\{\FDR_{\Omega_N}(\BH_q)\}
 &=\mathbb E_{\Omega_N,Z,\varepsilon}(\FDP_N)\\
 &\geq
 \int_E\left\{D_+^*(z)-\frac{\delta}{4}\right\}\phi(z)\,\dd z
 -4e^{-2Nt_*^2}\\
 &>\ell_{\le}(q)-\frac{\delta}{2}-4e^{-2Nt_*^2}.
\end{align*}
For all sufficiently large \(N\), this gives
\[
 \mathbb E_{\Omega_N}\{\FDR_{\Omega_N}(\BH_q)\}
 >\ell_{\le}(q)-\frac{3\delta}{4}.
\]
Therefore some deterministic realization \(\omega\) satisfies the same
inequality.  The one-sided analogue of Lemma~\ref{lem:eta} permits an
\(\eta>0\) for which the perturbed model has a positive definite correlation
matrix and
\[
 \FDR_\omega^{(\eta)}(\BH_q)>\ell_{\le}(q)-\delta.
\]
All means and null labels are unchanged by the perturbation.
Table~\ref{tab:one-construction} gives the pre-perturbation sign
pattern, and \eqref{eq:eta-cov} shows that every off-diagonal covariance is
multiplied by the positive factor \(1-\eta^2\).  Thus the perturbation
preserves that sign pattern.
\end{proof}

\begin{proof}[Proof of Proposition~\ref{prop:one-expansion}]
Since \(e^{-(y_q^+)^2/2}=q\), the Mills expansion in
Lemma~\ref{lem:mills} gives
\[
 \Phib(y_q^+)\sim\frac{q}{\sqrt{2\pi}\,y_q^+}=o(q).
\]
Substitute \(y_q^+=\sqrt{2\log(1/q)}\) into \eqref{eq:one-ell}.
\end{proof}

\begin{proof}[Proof of Proposition~\ref{prop:one-strict}]
By \eqref{eq:one-Dstar}, \(D_+^*(z)=q\) for \(z\leq0\) and
\(D_+^*(z)>q\) for every \(z>0\).  Hence
\(\ell_{\le}(q)=\mathbb E\{D_+^*(Z)\}>q\).
\end{proof}
\endgroup

\section{\texorpdfstring{Upper bounds}{Upper bounds}}
\label{sec:upper-bound}

\subsection{Preliminaries}

The upper bounds concern the common-factor Gaussian model
\begin{equation}
 T_i=\theta_i+a_iZ+\sqrt{1-a_i^2}\,\varepsilon_i,
 \quad 1\leq i\leq m,
 \label{eq:common-factor-model}
\end{equation}
where \(Z,\varepsilon_1,\ldots,\varepsilon_m\) are i.i.d. standard
normal random variables and \(|a_i|\leq1\).  Its covariance matrix is
diagonal plus rank one.  Conditional on \(Z\), the coordinates and their
corresponding \(p\)-values are independent.

The following conditional leave-one-out bound will be used for
both types of tests.  Let \(P_1,\ldots,P_m\) be conditionally independent
given \(Z\), let \(\mathcal H_0\) denote the true-null indices, and apply BH
at level \(q\).  For \(i\in\mathcal H_0\), write
\(F_{i,z}(t)=\mathbb P(P_i\leq t\mid Z=z)\).
\begin{lemma}
\label{lem:conditional-fdp-bound}
For almost every \(z\),
\begin{equation}
 \mathbb E(\FDP\mid Z=z)
 \leq\frac qm\sum_{i\in\mathcal H_0}
       \sup_{0<t\leq q}\frac{F_{i,z}(t)}t
 \label{eq:conditional-fdp-bound}
\end{equation}
\end{lemma}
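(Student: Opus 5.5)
Conditional on \(Z=z\), the \(p\)-values are independent. We therefore run the standard leave-one-out argument of Benjamini--Yekutieli for independent \(p\)-values, keeping track of the fact that null \(p\)-values need not be uniform. Write
\[
\FDP=\sum_{i\in\mathcal H_0}\sum_{k=1}^m\frac{\1\{P_i\le qk/m,\ R=k\}}{k}.
\]
For each \(i\), let \(R^{(i)}\) be the number of rejections when BH at level \(q\) is applied to the \(m\) \(p\)-values with \(P_i\) replaced by \(0\). By the step-up structure, on the event \(\{P_i\le qR/m\}\) we have \(R=R^{(i)}\). Indeed, setting \(P_i\) to \(0\) does not change whether \(P_{(k)}\le qk/m\) holds for any \(k\ge R\) once \(P_i\) is already among the first \(R\) order statistics. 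It also cannot create a larger crossing, since the count of \(p\)-values at or below any threshold \(qk/m\) with \(k\ge R\) is unchanged. So
\[
\1\{P_i\le qk/m,\ R=k\}=\1\{P_i\le qk/m\}\1\{R^{(i)}=k\}.
\]
The variable \(R^{(i)}\) is a function of \((P_j)_{j\ne i}\) alone.

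Given \(Z=z\), \(P_i\) is independent of \(R^{(i)}\). Hence
\[
\mathbb E(\FDP\mid Z=z)=\sum_{i\in\mathcal H_0}\sum_{k=1}^m
\frac{F_{i,z}(qk/m)}{k}\,\mathbb P(R^{(i)}=k\mid Z=z).
\]
Now bound each ratio:
\[
\frac{F_{i,z}(qk/m)}{k}=\frac qm\cdot\frac{F_{i,z}(qk/m)}{qk/m}\le\frac qm\sup_{0<t\le q}\frac{F_{i,z}(t)}t,
\]
using \(0<qk/m\le q\). Summing \(\mathbb P(R^{(i)}=k\mid Z=z)\) over \(k\) gives at most \(1\), which yields \eqref{eq:conditional-fdp-bound}.

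The only delicate point is the identity \(R=R^{(i)}\) on \(\{P_i\le qR/m\}\). To verify it, I will use the characterization in Lemma~\ref{lem:exact-BH}: \(R\) is the largest \(k\) with \(\#\{j:P_j\le qk/m\}\ge k\). The map \(P_i\mapsto R\) is nonincreasing, so \(R^{(i)}\ge R\). Conversely, when \(P_i\le qR/m\), the counts \(\#\{j:P_j\le qk/m\}\) for all \(k\ge R\) coincide under the two configurations, so no \(k>R\) qualifies for \(R^{(i)}\) either.

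The phrase ``almost every \(z\)'' only reflects that we work with a regular conditional distribution given \(Z\). Conditional independence and the formula for \(F_{i,z}\) hold for a.e. \(z\), which is exactly what the model \eqref{eq:common-factor-model} provides.
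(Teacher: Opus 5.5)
Your proof is correct and is essentially the paper's argument: the same Benjamini--Yekutieli leave-one-out replacement of \(P_i\) by zero, conditional independence of \(P_i\) and \(R^{(i)}\) given \(Z=z\), and the bound \(F_{i,z}(qk/m)/(qk/m)\le\sup_{0<t\le q}F_{i,z}(t)/t\); your explicit sum over \(k\) plays the role of the paper's conditioning on \(R_i\). Strictly, you only justified the inclusion \(\{P_i\le qk/m,\ R=k\}\subseteq\{P_i\le qk/m,\ R^{(i)}=k\}\), which already suffices for the inequality, and the reverse inclusion behind your displayed equality follows from the same count comparison at level \(qk/m\) together with \(R\le R^{(i)}\).
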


\begin{proof}
Let \(R\) be the BH rejection count.  For
\(i\in\mathcal H_0\), let \(R_i\) be the rejection count after replacing
\(P_i\) by zero; in particular, \(R_i\geq1\).  The standard leave-one-out
argument \citep{BenjaminiYekutieli2001} starts from the pointwise identity
\[
 \frac{\1\{i\text{ is rejected}\}}{R\vee1}
 =\frac{\1\{P_i\leq qR_i/m\}}{R_i}.
\]
Indeed, if \(i\) is rejected, replacing \(P_i\) by zero does not change the
rejection count.  Conversely, if \(P_i\leq qR_i/m\), restoring \(P_i\)
leaves exactly \(R_i\) rejections by monotonicity and maximality of the BH
step-up rule.  Taking conditional expectations given \(Z=z\)
and summing gives, for almost every \(z\),

\begin{equation}
 \mathbb E(\FDP\mid Z=z)
 =\sum_{i\in\mathcal H_0}
   \mathbb E\left[
   \left.
   \frac{\1\{P_i\leq qR_i/m\}}{R_i}
   \right|Z=z\right].
 \label{eq:loo-identity}
\end{equation}

Conditional independence implies that \(P_i\) and \(R_i\) are independent
given \(Z=z\).  Since \(0<qR_i/m\leq q\), conditioning gives, for every
\(i\in\mathcal H_0\) and almost every \(z\),
\begin{equation}
\begin{aligned}
 \mathbb E\left[
   \left.
   \frac{\1\{P_i\leq qR_i/m\}}{R_i}
   \right| Z=z\right]
 &=\frac qm\,
   \mathbb E\left[
   \left.
   \frac{F_{i,z}(qR_i/m)}{qR_i/m}
   \right| Z=z\right]\\
 &\leq\frac qm\sup_{0<t\leq q}\frac{F_{i,z}(t)}t .
\end{aligned}
 \label{eq:conditional-loo-term-bound}
\end{equation}
Summing \eqref{eq:conditional-loo-term-bound} over
\(i\in\mathcal H_0\) proves
\eqref{eq:conditional-fdp-bound}.
\end{proof}

\subsection{Upper bound for two-sided tests}

In this subsection, we examine the common-factor class
\eqref{eq:common-factor-model}, whose covariance matrices are diagonal plus
rank one and which contains the construction in
Subsection~\ref{sec:continuum-design}.  We show that the upper bound is
\(O(q\sqrt{\log(1/q)})\), which gives matching orders for small \(q\).

\begin{theorem}\label{thm:common-factor-upper}
Consider the common-factor model \eqref{eq:common-factor-model}.  Form the
two-sided Gaussian \(p\)-values \(P_i=p(|T_i|)\) and apply BH at level
\(0<q\leq2\Phib(1)\approx0.3173\).  Let
\(\mathcal H_0=\{i:\theta_i=0\}\), \(m_0=|\mathcal H_0|\), and
\(\pi_0=m_0/m\).  If \(u_q\) is defined by \(2\Phib(u_q)=q\), then
\(\FDR(\BH_q)=0\) when \(m_0=0\), and otherwise
\[
 \FDR(\BH_q)
 \leq q\left\{1+\pi_0\left(\frac{1-q}{2}
                    +\frac{u_q}{\sqrt{2\pi}}\right)\right\}.
\]
Consequently, the worst-case FDR in this common-factor class is
\(O\{q\sqrt{\log(1/q)}\}\), uniformly in \(m\), the means
\((\theta_1,\ldots,\theta_m)\), and the loadings \((a_1,\ldots,a_m)\),
as \(q\downarrow0\).
\end{theorem}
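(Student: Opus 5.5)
The plan is to apply Lemma~\ref{lem:conditional-fdp-bound} conditionally on the common factor and then integrate over \(Z\). If \(m_0=0\) then \(V=0\) identically, so \(\FDR=0\). Otherwise, for a null \(i\) with loading \(a=a_i\), put \(s=\sqrt{1-a^2}\) and \(w=az\). Conditional on \(Z=z\), \(T_i\sim N(w,s^2)\). Writing \(t=p(c)\) with \(c\ge u_q\), we have
\[
 \frac{F_{i,z}(t)}{t}
 =G_a(c,z):=\frac{\Phib\{(c-w)/s\}+\Phib\{(c+w)/s\}}{2\Phib(c)} .
\]
Lemma~\ref{lem:conditional-fdp-bound} and the tower property give
\[
 \FDR\le\frac qm\sum_{i\in\mathcal H_0}\mathbb E\Bigl[\sup_{c\ge u_q}G_{a_i}(c,Z)\Bigr].
\]
It therefore suffices to prove the following single-hypothesis bound, uniformly in \(|a|\le1\):
\begin{equation}
 \mathbb E\Bigl[\sup_{c\ge u_q}G_a(c,Z)\Bigr]\le \frac1{\pi_0}+\frac{1-q}{2}+\frac{u_q}{\sqrt{2\pi}} .
 \label{eq:plan-single}
\end{equation}
I would actually aim for the stronger bound with \(1/\pi_0\) replaced by \(1\); since \(\pi_0\le1\), that implies \eqref{eq:plan-single}. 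The degenerate cases \(a=0\) (where \(G\equiv1\)) and \(|a|=1\) (an indicator divided by \(p(c)\)) are handled directly.

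There are two steps to prove \eqref{eq:plan-single}. \emph{First}, a pointwise envelope. I would bound the ratio of tails on \([c,\infty)\) by the supremum of the ratio of densities there. Let \(\ell_{a,z}(x)\) be the density of \(|T_i|\) given \(Z=z\) divided by \(2\phi(x)\). Because \(\ell_{a,z}\) is a ratio of Gaussian densities, completing the square in \(x\) expresses it, after the change of variables \(\alpha\leftrightarrow x\), as a likelihood ratio of the form \(e^{-\alpha^2/2}\cosh(\alpha\,\cdot)\) evaluated at a rescaled \(|z|\). The goal of this step is
\[
 \sup_{c\ge u_q}G_a(c,z)\le\min\bigl\{M(|z|)\vee 1,\;1/q\bigr\}.
\]
The cap \(1/q\) comes from using \(F\le1\) at the endpoint \(t=q\), once one knows the supremum over \(t\le q\) is approached from the side where the trivial bound applies. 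The hypothesis \(q\le2\Phib(1)\), i.e.\ \(u_q\ge1\), is used exactly here, through \(M=1\) on \([0,1]\) (Appendix~\ref{app:envelope}). It keeps the region \(|z|\le1\) from contributing more than \(1\).

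\emph{Second}, integrate this envelope against \(\phi\). The identity \eqref{eq:Mphi-completed-squares} gives
\[
 M(y)\phi(y)\le\frac{1}{2\sqrt{2\pi}}\bigl[1+e^{-(y+a(y))^2/2}\bigr].
\]
So the part of the integral over \(1\le|z|\le y_q\) is at most about \(2\cdot(y_q-1)/(2\sqrt{2\pi})\), plus the mass \(\Phi(1)-\Phib(1)\) from \(|z|\le1\). The tail \(|z|>y_q\) contributes at most \((1/q)\,2\Phib(y_q)\). Replacing \(y_q\) by \(u_q\) should produce the stated constants \(\{1+(1-q)/2\}+u_q/\sqrt{2\pi}\). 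I expect this replacement to require comparing \(qM(y)\) with \(p(y)^{-1}\)-type quantities and absorbing the leftover slack into the \((1-q)/2\) term. The final rate then follows because \(u_q\sim\sqrt{2\log(1/q)}\) by Lemma~\ref{lem:mills}.

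The main obstacle is the pointwise envelope in the first step, uniformly in the loading \(a\). When \(s<1\), the conditional law of \(T_i\) is narrower than \(N(0,1)\), so the density ratio is not monotone. Its supremum over \(x\ge c\) must be controlled at the right location, which is exactly the \(\sup_\alpha\) defining \(M\). I would first try to prove that \(x\mapsto\log\ell_{a,z}(x)\) is concave, which holds because the quadratic coefficient is \(-(1/s^2-1)/2\le0\) and \(\log\cosh\) is convex only mildly. From concavity, the supremum over \([c,\infty)\) is attained at \(c\) or at the interior maximizer. The interior maximum value can then be matched to \(M\) evaluated at a rescaled argument of \(|z|\) that is dominated by \(|z|\). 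If this matching fails to give \(M(|z|)\) exactly, I would settle for \(M(|z|/s')\) with an explicit \(s'\) and redo the integration with that argument, which only changes constants.
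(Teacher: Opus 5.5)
There is a genuine gap, and it is decisive. The single-hypothesis bound you reduce to is false: \(\mathbb E[\sup_{c\ge u_q}G_a(c,Z)]=+\infty\) for every loading \(a\neq0\).

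To see this, take \(0<a<1\) (other signs are symmetric) and \(s=\sqrt{1-a^2}\). For large \(z>0\), evaluate at \(c=z/a\ge u_q\). Then \(F_{i,z}\{p(c)\}\ge\Phib\{(z/a-az)/s\}=\Phib(zs/a)\), and Mills' ratio gives
\[
 \phi(z)\sup_{c\ge u_q}G_a(c,z)\ge\phi(z)\,\frac{\Phib(zs/a)}{2\Phib(z/a)}=\frac{1+o(1)}{2s\sqrt{2\pi}}\qquad(z\to\infty),
\]
which is not integrable on \(\R\). For \(|a|=1\) it is worse: the supremum equals \(1/p(|z|)\) on \(|z|\ge u_q\), and \(\phi(z)/p(|z|)=\lambda(|z|)/2\to\infty\).

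Consequently the envelope \(\min\{M(|z|)\vee1,1/q\}\) cannot hold, for two separate reasons.
\begin{itemize}
\item The cap \(1/q\) does not follow from \(F\le1\). That only gives \(F(t)/t\le1/t\), which is unbounded on \((0,q]\).
\item \(M\) is the \(r\downarrow0\) limit in Lemma~\ref{lem:null-tail}, not a bound valid for arbitrary loadings. At \(|a|=1\), \(1/p(|z|)\approx|z|\sqrt{\pi/2}\,e^{z^2/2}\) exceeds \(M(|z|)\approx\tfrac12e^{z^2/2}\).
\end{itemize}
Your fallback \(M(|z|/s')\) cannot rescue the plan either. The obstruction is integrating the leave-one-out bound over all \(z\), not the shape of the envelope. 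Separately, the concavity claim is false: \((\log\ell_{a,z})''(0)=a^2(z^2-s^2)/s^4>0\) once \(|z|>s\).

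The missing idea is to integrate Lemma~\ref{lem:conditional-fdp-bound} only over \(A=\{|Z|\le u_q\}\). On \(A^c\), use \(0\le\FDP\le1\), which costs \(\mathbb P(|Z|>u_q)=q\). That is the source of the additive \(q\) in \(q\{1+\pi_0(\cdots)\}\); it does not come from a per-null \(1/\pi_0\) term or from the region \(|z|\le1\).

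On \(A\), use the bivariate-normal symmetry \(\phi(z)s^{-1}\phi\{(x-az)/s\}=\phi(x)s^{-1}\phi\{(z-ax)/s\}\). It writes \(\phi(z)F_{i,z}\{p(u)\}/p(u)\) as a \(\phi\)-weighted average over \(x\ge u\) of \(\frac1{2s}[\phi\{(z-ax)/s\}+\phi\{(z+ax)/s\}]\). For \(u\ge u_q\) this is therefore at most \(\frac{1}{2s\sqrt{2\pi}}[e^{-(|a|u_q-|z|)_+^2/(2s^2)}+e^{-(|a|u_q+|z|)^2/(2s^2)}]\).

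Integrating over \(|z|\le u_q\) gives \(\Phi(u_q/\gamma)-\frac12+u_q\gamma/\sqrt{2\pi}\) with \(\gamma=\sqrt{(1-|a|)/(1+|a|)}\). The hypothesis \(u_q\ge1\) (equivalently \(q\le2\Phib(1)\)) enters here, by making this expression increasing in \(\gamma\in(0,1]\). Hence it is at most \(\frac{1-q}{2}+\frac{u_q}{\sqrt{2\pi}}\). The hypothesis is not used through \(M=1\) on \([0,1]\), as you proposed.
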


\begin{proof}
If \(m_0=0\), then \(V=0\) identically and hence \(\FDR=0\).  Assume
henceforth that \(m_0\geq1\).
Conditional on \(Z=z\), each \(P_i\) is a function of \(\varepsilon_i\)
alone, or is deterministic when \(|a_i|=1\).  Thus the \(p\)-values are
conditionally independent, and Lemma~\ref{lem:conditional-fdp-bound} gives
\eqref{eq:conditional-fdp-bound}.

To average \eqref{eq:conditional-fdp-bound} over \(Z\), it remains to control
{\[
 \int_{-u_q}^{u_q}
 \phi(z)\sup_{0<t\leq q}\frac{F_{i,z}(t)}t\dd z
\]}
for each true null \(i\).  We do this on \(|z|\leq u_q\); the event
\(|Z|>u_q\) is handled at the end by \(\FDP\leq1\).
Fix a true null index \(i\).  Put \(r=|a_i|\) and
\(s=\sqrt{1-r^2}\), and first suppose \(s>0\).  Conditional on \(Z=z\),
\(T_i\sim N(a_i z,s^2)\).  The inequality
\(t=p(u)=2\Phib(u)\leq q\) holds if and only if \(u\geq u_q\), and in this
case
{\[
 F_{i,z}(p(u))
 =\int_u^\infty s^{-1}\phi\{(x-a_i z)/s\}\dd x
  +\int_u^\infty s^{-1}\phi\{(x+a_i z)/s\}\dd x .
\]}
Since \(s^2=1-a_i^2\),
{\[
\begin{aligned}
 \frac{s^{-1}\phi\{(x-a_i z)/s\}}{\phi(x)}
 &=\frac1s
   \exp\left\{-\frac{(x-a_i z)^2}{2s^2}+\frac{x^2}{2}\right\}\\
 &=\frac1s
   \exp\left\{\frac{z^2}{2}
        -\frac{(a_i x-z)^2}{2s^2}\right\}.
\end{aligned}
\]
The second tail is the same calculation with \(z\) replaced by \(-z\).  Choose
\(\eta_+,\eta_-\in\{-1,1\}\) with \(\{\eta_+,\eta_-\}=\{-1,1\}\)
so that \(\eta_+a_i z\geq0\) and
\(\eta_-a_i z\leq0\).  For \(x\geq u\),
\[
 |a_i x-\eta_+z|\geq (ru-|z|)_+,\qquad
 |a_i x-\eta_-z|\geq ru+|z|.
\]
Both right-hand sides are nondecreasing in \(u\), so for \(u\geq u_q\),
\[
\begin{aligned}
 \phi(z)\frac{F_{i,z}(p(u))}{p(u)}
 &\leq
 \frac{\phi(z)}{p(u)}\frac{e^{z^2/2}}s
 \left[
 e^{-(r u_q-|z|)_+^2/(2s^2)}
 +e^{-(r u_q+|z|)^2/(2s^2)}
 \right]\int_u^\infty\phi(x)\dd x\\
 &=\frac1{2s\sqrt{2\pi}}
 \left[
 e^{-(r u_q-|z|)_+^2/(2s^2)}
 +e^{-(r u_q+|z|)^2/(2s^2)}
 \right],
\end{aligned}
\]}
because \(\int_u^\infty\phi(x)\dd x=p(u)/2\).  Taking the supremum gives
\begin{equation}
 \phi(z)\sup_{0<t\leq q}\frac{F_{i,z}(t)}t
 \leq\frac1{2s\sqrt{2\pi}}
 \left[
 e^{-(r u_q-|z|)_+^2/(2s^2)}+e^{-(r u_q+|z|)^2/(2s^2)}
 \right].                                                    \label{eq:conditional-tail-ratio-bound}
\end{equation}
Integrating \eqref{eq:conditional-tail-ratio-bound} over \(|z|\leq u_q\) and using symmetry gives
{
\begin{equation}
\begin{aligned}
 \int_{-u_q}^{u_q}\phi(z)
     \sup_{0<t\leq q}\frac{F_{i,z}(t)}t\dd z
 &\leq\frac1{s\sqrt{2\pi}}
 \left\{\int_0^{u_q}e^{-(r u_q-z)_+^2/(2s^2)}\dd z
       +\int_0^{u_q}e^{-(r u_q+z)^2/(2s^2)}\dd z\right\}.
\end{aligned}
\label{eq:integrated-tail-ratio-decomposition}
\end{equation}}
{The two integrals on the right-hand side of
\eqref{eq:integrated-tail-ratio-decomposition} are
\[
\begin{aligned}
 \int_0^{u_q}e^{-(r u_q-z)_+^2/(2s^2)}\dd z
 &=\int_0^{r u_q}e^{-y^2/(2s^2)}\dd y+(1-r)u_q,\\
 \int_0^{u_q}e^{-(r u_q+z)^2/(2s^2)}\dd z
 &=\int_{r u_q}^{(1+r)u_q}e^{-y^2/(2s^2)}\dd y.
\end{aligned}
\]}
Consequently, writing
\(\gamma_r=\sqrt{(1-r)/(1+r)}\in(0,1]\),
{\begin{equation}
\begin{aligned}
 \int_{-u_q}^{u_q}\phi(z)
     \sup_{0<t\leq q}\frac{F_{i,z}(t)}t\dd z
 &\leq\frac1{s\sqrt{2\pi}}
 \left\{\int_0^{(1+r)u_q}e^{-y^2/(2s^2)}\dd y+(1-r)u_q\right\}\\
 &=\left\{\Phi\left(\frac{(1+r)u_q}{s}\right)-\frac12\right\}
   +\frac{(1-r)u_q}{s\sqrt{2\pi}}\\
 &=\Phi(u_q/\gamma_r)-\frac12+\frac{u_q\gamma_r}{\sqrt{2\pi}},
\end{aligned}
\label{eq:integrated-tail-ratio-bound}
\end{equation}}
where the final equality in
\eqref{eq:integrated-tail-ratio-bound} uses
\(s^2=1-r^2=(1-r)(1+r)\), so
\((1+r)/s=1/\gamma_r\) and \((1-r)/s=\gamma_r\).
The right-hand side of
\eqref{eq:integrated-tail-ratio-bound} is increasing in
\(\gamma_r\in(0,1]\).  Its derivative with respect to \(\gamma_r\) is
\[
 \frac{u_q}{\sqrt{2\pi}}
 \left[1-\gamma_r^{-2}\exp\{-u_q^2/(2\gamma_r^2)\}\right]\geq0,
\]
because \(u_q\geq1\) and, on putting \(w=\gamma_r^{-2}\geq1\),
\[
 w e^{-u_q^2w/2}\leq w e^{-w/2}\leq 2/e<1.
\]
Hence \eqref{eq:integrated-tail-ratio-bound} is at most
\begin{equation}
 \Phi(u_q)-\frac12+\frac{u_q}{\sqrt{2\pi}}
 =\frac{1-q}{2}+\frac{u_q}{\sqrt{2\pi}}.                       \label{eq:integrated-tail-ratio-uniform}
\end{equation}
When \(r=1\), the conditional \(p\)-value is deterministic.  For
\(|z|<u_q\), it exceeds \(q\), and the boundary \(|z|=u_q\) has Lebesgue
measure zero; hence \eqref{eq:integrated-tail-ratio-uniform} remains valid.
When \(r=0\), the conditional null \(p\)-value is uniform and independent of
\(Z\); \eqref{eq:integrated-tail-ratio-uniform} remains valid, although it is
not sharp.

Let \(A=\{|Z|\leq u_q\}\).  Decomposing over \(A\) and \(A^c\),
{\[
 \FDR=\mathbb E(\FDP\1_A)
      +\mathbb E(\FDP\1_{A^c}).
\]}
For the first term, integrate \eqref{eq:conditional-fdp-bound} over \(z\in[-u_q,u_q]\) and
use \eqref{eq:integrated-tail-ratio-uniform}.  For the second term, no conditional tail bound is
needed: since \(0\leq\FDP\leq1\),
{
\begin{equation}
 \mathbb E(\FDP\1_{A^c})
 \leq\mathbb P(A^c)
 =\mathbb P(|Z|>u_q)
 =2\Phib(u_q)
 =q.
\label{eq:outer-factor-fdp-bound}
\end{equation}}
Combining \eqref{eq:conditional-fdp-bound},
\eqref{eq:integrated-tail-ratio-uniform}, and
\eqref{eq:outer-factor-fdp-bound} yields
\[
 \FDR
 \leq q+\frac qm\sum_{i\in\mathcal H_0}
       \left(\frac{1-q}{2}+\frac{u_q}{\sqrt{2\pi}}\right)
 =q\left\{1+\pi_0\left(\frac{1-q}{2}
       +\frac{u_q}{\sqrt{2\pi}}\right)\right\},
\]
as claimed.
\end{proof}

\begin{remark}[PRDN for the two-sided null \(p\)-values]\label{rem:prdn-pvalues}
The theorem imposes no sign restriction on the loadings \(a_i\), so the null
correlations \(a_ia_j\) may be negative.  Nevertheless, the two-sided null
\(p\)-values satisfy PRDN.  Let
\[
 T_0=(T_i:i\in\mathcal H_0),\qquad
 a_0=(a_i:i\in\mathcal H_0),\qquad
 \varepsilon_0=(\varepsilon_i:i\in\mathcal H_0).
\]
Thus \(T_0\) is the \(m_0\)-dimensional subvector containing precisely the
test statistics for the true nulls; the subscript \(0\) denotes restriction
to \(\mathcal H_0\), not an additional coordinate.  Put
\[
 D_0=\operatorname{diag}(1-a_i^2:i\in\mathcal H_0).
\]
Since \(\theta_i=0\) for \(i\in\mathcal H_0\),
\[
 T_0=a_0Z+D_0^{1/2}\varepsilon_0.
\]
Hence \(T_0\) is centered Gaussian with covariance matrix
\[
 \Sigma_0=D_0+a_0a_0^\top.
\]
Assume first that \(|a_i|<1\) for every null coordinate.  Let
\[
 B=\operatorname{diag}\{\operatorname{sign}(a_i):i\in\mathcal H_0\},
 \qquad
 r_0=(|a_i|:i\in\mathcal H_0)^\top,
\]
with \(\operatorname{sign}(0)=1\).  Then
\[
 B\Sigma_0B=D_0+r_0r_0^\top.
\]
By the Sherman--Morrison formula,
\[
 (D_0+r_0r_0^\top)^{-1}
 =D_0^{-1}
  -\frac{D_0^{-1}r_0r_0^\top D_0^{-1}}
  {1+r_0^\top D_0^{-1}r_0}.
\]
Thus every off-diagonal entry of \((D_0+r_0r_0^\top)^{-1}\) is nonpositive.  The
Karlin--Rinott absolute-value multinormal criterion for
multivariate total positivity of order two (MTP\(_2\))
\citep{KarlinRinott1981} therefore applies to \(BT_0\), and
\((|(BT_0)_i|:i\in\mathcal H_0)\) is MTP\(_2\).  Since
\(|(BT_0)_i|=|T_i|\), coordinatewise sign changes do not alter the
absolute null statistics.  Applying the same decreasing map
\(g(x)=p(x)=2\Phib(x)\) to every coordinate preserves MTP\(_2\).
Indeed, for \(u,v\in(0,1)^m\), coordinatewise monotonicity of \(g^{-1}\)
gives
\[
 g^{-1}(u\wedge v)=g^{-1}(u)\vee g^{-1}(v),
 \qquad
 g^{-1}(u\vee v)=g^{-1}(u)\wedge g^{-1}(v).
\]
If \(f\) is the density of \((|(BT_0)_i|:i\in\mathcal H_0)\), the density
after the transformation is
\[
 \widetilde f(u)
 =f\{g^{-1}(u)\}\prod_{i\in\mathcal H_0}|(g^{-1})'(u_i)|.
\]
For each coordinate,
\(\{u_i\wedge v_i,u_i\vee v_i\}=\{u_i,v_i\}\); hence the two products of
Jacobian factors are equal.  Therefore the MTP\(_2\) inequality for \(f\)
implies
\[
\begin{aligned}
 \widetilde f(u\wedge v)\widetilde f(u\vee v)
 &=f\{g^{-1}(u)\vee g^{-1}(v)\}
   f\{g^{-1}(u)\wedge g^{-1}(v)\}
   \prod_{i\in\mathcal H_0}|(g^{-1})'(u_i)(g^{-1})'(v_i)|\\
 &\geq f\{g^{-1}(u)\}f\{g^{-1}(v)\}
   \prod_{i\in\mathcal H_0}|(g^{-1})'(u_i)(g^{-1})'(v_i)|\\
 &=\widetilde f(u)\widetilde f(v).
\end{aligned}
\]
Thus the vector of two-sided null \(p\)-values is MTP\(_2\), which implies
PRDN.

If some \(|a_i|=1\), define \(a_i^{(\eta)}=(1-\eta)a_i\) for all
\(i\in\mathcal H_0\), and let \(\eta\downarrow0\).  The corresponding
absolute null vectors converge weakly to
\((|T_i|:i\in\mathcal H_0)\).  Under the measure-theoretic definition
of MTP\(_2\), the class of MTP\(_2\) probability measures is closed under
weak convergence \citep{ColangeloMullerScarsini2006}.  Hence the
limiting absolute-value law remains MTP\(_2\).  Appendix A.1 of
\citet{Su2018} then implies that the induced vector of two-sided null
\(p\)-values satisfies PRDN.
\end{remark}

\begingroup

\subsection{Upper bounds for one-sided tests}

We derive an FDR upper bound for one-sided tests under the
common-factor model and show that its leading constant cannot be improved within
this class.

\begin{theorem}
\label{thm:one-upper}
Consider the common-factor model
\eqref{eq:common-factor-model}.  Test
\(H_i:\theta_i\leq0\) with \(P_i=\Phib(T_i)\).  Let
\(\mathcal H_0=\{i:\theta_i\leq0\}\), \(m_0=|\mathcal H_0|\), and
\(\pi_0=m_0/m\).  For \(0<q<1/2\), put
\(u_q^+={\Phib}^{-1}(q)>0\).  The FDR is zero when \(m_0=0\), and otherwise
\begin{equation}
 \FDR(\BH_q)
 \leq q\left\{2+\pi_0\left(\frac12+\frac{u_q^+}{\sqrt{2\pi}}\right)\right\}.
 \label{eq:one-upper-bound}
\end{equation}
Consequently, the worst-case FDR in this common-factor class is
\(O\{q\sqrt{\log(1/q)}\}\), uniformly in \(m\), the means
\((\theta_1,\ldots,\theta_m)\), and the loadings
\((a_1,\ldots,a_m)\), as \(q\downarrow0\).
\end{theorem}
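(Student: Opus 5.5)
The plan is to follow the proof of Theorem~\ref{thm:common-factor-upper} almost line by line, keeping only one tail. If \(m_0=0\) then \(V\equiv0\) and the FDR is zero, so assume \(m_0\ge1\). Conditional on \(Z=z\) the \(p\)-values are independent, since each \(P_i\) depends on \(\varepsilon_i\) alone or is deterministic when \(|a_i|=1\). Hence Lemma~\ref{lem:conditional-fdp-bound} gives \eqref{eq:conditional-fdp-bound} with \(F_{i,z}(t)=\mathbb P(\Phib(T_i)\le t\mid Z=z)\). I would split \(\FDR=\mathbb E(\FDP\1_A)+\mathbb E(\FDP\1_{A^c})\) with \(A=\{|Z|\le u_q^+\}\). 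On \(A^c\) use \(\FDP\le1\), which gives \(\mathbb P(|Z|>u_q^+)=2\Phib(u_q^+)=2q\). This is where the constant \(2\) in \eqref{eq:one-upper-bound} comes from.

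For a fixed null \(i\), put \(r=|a_i|\) and \(s=\sqrt{1-r^2}>0\). First remove the mean: since \(\theta_i\le0\), the law \(N(\theta_i+a_iz,s^2)\) is stochastically smaller than \(N(a_iz,s^2)\). So for \(t=\Phib(u)\le q\), that is \(u\ge u_q^+\), we get \(F_{i,z}(t)\le\int_u^\infty s^{-1}\phi\{(x-a_iz)/s\}\dd x\). Next apply the same likelihood-ratio identity as in the two-sided proof:
\[
 \frac{s^{-1}\phi\{(x-a_iz)/s\}}{\phi(x)}=\frac1s\exp\left\{\frac{z^2}{2}-\frac{(a_ix-z)^2}{2s^2}\right\}.
\]
For \(x\ge u\ge u_q^+\), bound \(|a_ix-z|\) below by \((ru_q^+-|z|)_+\) when \(a_iz\ge0\), and by \(ru_q^++|z|\) when \(a_iz<0\). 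Then use \(\int_u^\infty\phi=\Phib(u)=t\). This yields, pointwise in \(z\),
\[
 \phi(z)\sup_{0<t\le q}\frac{F_{i,z}(t)}t\le\frac{1}{s\sqrt{2\pi}}e^{-(ru_q^+-|z|)_+^2/(2s^2)}
\]
on one half-line of \(z\), and the same with \((ru_q^++|z|)^2\) on the other half-line. This is exactly half of \eqref{eq:conditional-tail-ratio-bound}, without the factor \(1/2\).

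Integrating over \(|z|\le u_q^+\) then reproduces the two integrals in \eqref{eq:integrated-tail-ratio-decomposition}, one from each half-line. By the same change of variables as in \eqref{eq:integrated-tail-ratio-bound}, the integral is at most \(\Phi(u_q^+/\gamma_r)-\tfrac12+u_q^+\gamma_r/\sqrt{2\pi}\), where \(\gamma_r=\sqrt{(1-r)/(1+r)}\).

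The main obstacle is making this bound uniform in the loading \(r\). The two-sided monotonicity-in-\(\gamma_r\) argument needed \(u_q\ge1\), but here only \(q<1/2\), so \(u_q^+>0\), is assumed. I would avoid that argument entirely. Bound \(\Phi(\cdot)-\tfrac12\le\tfrac12\) and \(\gamma_r\le1\), which gives the uniform bound \(\tfrac12+u_q^+/\sqrt{2\pi}\); this is exactly the constant appearing in \eqref{eq:one-upper-bound}.

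Two degenerate loadings need separate care. If \(r=1\), the null statistic is deterministic given \(Z\), equal to \(\theta_i+a_iz\le|z|<u_q^+\) on \(A\), so it is never rejected. If \(r=0\), the null \(p\)-value is super-uniform and the ratio is at most \(1\), whose integral over \(A\) is below \(\tfrac12+u_q^+/\sqrt{2\pi}\) once one checks \(1-2q\le\tfrac12+u_q^+/\sqrt{2\pi}\); alternatively, take the limit \(r\downarrow0\). Summing over nulls gives \(\mathbb E(\FDP\1_A)\le q\pi_0(\tfrac12+u_q^+/\sqrt{2\pi})\), and adding \(2q\) proves \eqref{eq:one-upper-bound}. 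Finally, \(u_q^+\sim\sqrt{2\log(1/q)}\) by \eqref{eq:mills-quantile} in Lemma~\ref{lem:mills}, which gives the \(O\{q\sqrt{\log(1/q)}\}\) consequence.
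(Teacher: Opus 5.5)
Your proof is correct and gives exactly the stated constant, but it is organized differently from the paper's. You carry over the two-sided template. There is one symmetric exceptional event \(\{|Z|>u_q^+\}\) costing \(2q\). On \(\{|Z|\le u_q^+\}\) you split each null by the sign of \(a_iz\), which after reflecting one half-line reproduces the two integrals of the two-sided proof and yields \(\Phi(u_q^+/\gamma_r)-\tfrac12+u_q^+\gamma_r/\sqrt{2\pi}\le\tfrac12+u_q^+/\sqrt{2\pi}\).

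The paper instead partitions the true nulls by the sign of their loading and reflects the factor within each group via \(Y_\sigma=\sigma Z\), so every null is nonnegatively loaded. A single one-tail bound \(\exp\{-(ru_q^+-y)_+^2/(2s^2)\}\) then holds for all \(y\). Integrating it over the whole half-line \(y\le u_q^+\) gives \(\tfrac12+(1-r)u_q^+/(s\sqrt{2\pi})\), with no case split in \(z\) and no special treatment of \(a_i=0\). Each group's exceptional event \(\{Y_\sigma>u_q^+\}\) costs only \(q\).

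The paper's route therefore actually proves an additive term \(|\mathcal S|q\). This is \(q\) rather than \(2q\) when all null loadings share a sign, as in the lower-bound construction. Your symmetric event always pays \(2q\). Your per-null integral is slightly smaller, being truncated at \(-u_q^+\), but that gain is discarded in the final bound. In exchange you get a verbatim reuse of the two-sided computation.

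In your version the separate \(r=0\) case is genuinely needed, since the literal split \(a_iz\ge0\) then covers the whole line. The inequality you leave to check does hold: \(1-2q=2\{\Phi(u_q^+)-\tfrac12\}\) and \(\Phi(u_q^+)-\tfrac12\le\min\{\tfrac12,u_q^+/\sqrt{2\pi}\}\).
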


The following corollary identifies the exact leading constant for
the common-factor class.
\begin{corollary}
\label{cor:one-sharp}
For the common-factor model
\eqref{eq:common-factor-model}, where \(\FDR_{\theta,a}\) denotes the FDR
under that model, as \(q\downarrow0\),

\[
 \sup_{\substack{m\in\N,\ \theta\in\R^m,\\
                    a\in[-1,1]^m}}
 \FDR_{\theta,a}(\BH_q)
 =\frac{q\sqrt{\log(1/q)}}{\sqrt\pi}+O(q).
\]

Thus the constant \(1/\sqrt\pi\) in
Proposition~\ref{prop:one-expansion} is optimal in the common-factor class.
\end{corollary}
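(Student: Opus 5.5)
The corollary asserts matching lower and upper bounds for the supremum over the one-sided common-factor class. I will obtain it by sandwiching that supremum between a lower bound from Theorem~\ref{thm:one-lower} and the upper bound of Theorem~\ref{thm:one-upper}, then expanding both ends as \(q\downarrow0\).

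\textbf{Lower bound.} The construction behind Theorem~\ref{thm:one-lower} lies in the common-factor class \eqref{eq:common-factor-model}. The model \eqref{eq:one-population-model} with fixed design \(\omega\) has \(T_i=\theta_i+b_iZ+\sqrt{1-b_i^2}\,\varepsilon_i\) and \(|b_i|\le1\). I will work with the unperturbed realization \(\omega\), before the \(\eta\)-perturbation of Lemma~\ref{lem:eta}. That perturbation adds an independent \(\eta\xi_i\) and leaves the class, but positive definiteness is not required here.

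The proof of Theorem~\ref{thm:one-lower} produces, for each \(\delta>0\), a deterministic realization \(\omega\) with \(\FDR_\omega(\BH_q)>\ell_{\le}(q)-3\delta/4\). This is a one-factor model with loadings \(a_i=b_i\in\{r,-1\}\) and means \(\theta_i\in\{0\}\cup(0,\infty)\). Its true nulls \(\theta_i=0\) coincide with the nulls of \(H_i:\theta_i\le0\), and its non-nulls have \(\theta_i>0\), so they are false nulls.

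Hence the supremum is at least \(\ell_{\le}(q)\). Proposition~\ref{prop:one-expansion} then gives
\[
\sup\FDR_{\theta,a}(\BH_q)\ \ge\ \frac{q\sqrt{\log(1/q)}}{\sqrt\pi}+\frac q2+o(q).
\]

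\textbf{Upper bound.} For \(q<1/2\), Theorem~\ref{thm:one-upper} with \(\pi_0\le1\) gives
\[
\sup\FDR_{\theta,a}(\BH_q)\ \le\ \frac{5q}{2}+\frac{q\,u_q^+}{\sqrt{2\pi}}.
\]
It remains to show \(u_q^+=\sqrt{2\log(1/q)}+O(1)\); in fact it is \(\sqrt{2\log(1/q)}-o(1)\). I will derive this from the Mills bound \(\Phib(x)\le\phi(x)/x\) of Lemma~\ref{lem:mills}. Applied at \(x=u_q^+\), it gives
\[
q\le\frac{e^{-(u_q^+)^2/2}}{\sqrt{2\pi}\,u_q^+},
\]
so for small \(q\),
\[
(u_q^+)^2\le 2\log(1/q)-2\log\bigl(\sqrt{2\pi}\,u_q^+\bigr)\le 2\log(1/q).
\]
Therefore \(u_q^+\le\sqrt{2\log(1/q)}\), and
\[
\frac{q\,u_q^+}{\sqrt{2\pi}}\le \frac{q\sqrt{2\log(1/q)}}{\sqrt{2\pi}}=\frac{q\sqrt{\log(1/q)}}{\sqrt\pi}.
\]
The upper bound is thus \(q\sqrt{\log(1/q)}/\sqrt\pi+O(q)\).

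Combining the two bounds proves the displayed asymptotic equality with an \(O(q)\) remainder, and hence the optimality of the constant \(1/\sqrt\pi\).

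\textbf{Main obstacle.} The only delicate point is the class-membership check in the lower bound. I must confirm that the finite model achieving \(\ell_{\le}(q)-\delta\) really fits \eqref{eq:common-factor-model} with no leftover structure. This includes the degenerate loadings \(-1\), which \eqref{eq:common-factor-model} allows since \(|a_i|\le1\). It also requires that positive-definiteness was used nowhere else in the argument, so that dropping the \(\eta\)-perturbation is harmless. Alternatively, one could keep the perturbation and pass \(\eta\downarrow0\) via Lemma~\ref{lem:eta}, but the unperturbed model already suffices.
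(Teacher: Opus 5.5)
Your proposal is correct and follows the paper's proof. The lower bound comes from Theorem~\ref{thm:one-lower} with Proposition~\ref{prop:one-expansion}, the upper bound from Theorem~\ref{thm:one-upper} with \(\pi_0\le1\), and your direct Mills bound \(u_q^+\le\sqrt{2\log(1/q)}\) is a slightly cleaner substitute for the paper's asymptotic expansion of \(u_q^+\).

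One small correction: the \(\eta\)-perturbation does not leave the common-factor class. The paper rewrites the perturbed model with loadings \(\sqrt{1-\eta^2}\,b_i\) and fresh independent standard normal noise, so it stays in the class. That remark is unused in your argument, since you work with the unperturbed realization, which lies in the class trivially.
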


\begin{proof}[Proof of Theorem~\ref{thm:one-upper}]
If there are no true nulls, the FDR is zero.  Partition the true nulls into
the two sign groups
\[
 \mathcal H_{0,1}=\{i\in\mathcal H_0:a_i\geq0\},\qquad
 \mathcal H_{0,-1}=\{i\in\mathcal H_0:a_i<0\}.
\]
{Let \(\mathcal S\subseteq\{-1,1\}\) index the nonempty groups.
The leave-one-out identity \eqref{eq:loo-identity} and conditional reduction
\eqref{eq:conditional-loo-term-bound} are independent of the particular
form of the \(p\)-values.

Suppose \(i\in\mathcal H_{0,\sigma}\), where
\(\sigma\in\mathcal S\), and write
\(r=|a_i|\), \(s=\sqrt{1-r^2}\), and
\(Y_\sigma=\sigma Z\sim N(0,1)\).  Define the actual conditional CDF by
\[
 F^{\theta_i}_{i,y}(t)
 =\mathbb P_{\theta_i}(P_i\leq t\mid Y_\sigma=y)
\]
and let \(F^0_{r,y}\) denote the corresponding conditional CDF at the
boundary mean \(\theta_i=0\).  Conditional on \(Y_\sigma=y\),
\(T_i\sim N(\theta_i+ry,s^2)\).  Since \(\theta_i\leq0\) and rejection is
increasing in \(T_i\),
\[
 F^{\theta_i}_{i,y}(t)\leq F^0_{r,y}(t),\qquad 0<t<1.
\]
Thus it suffices in the conditional leave-one-out bound to control the
boundary CDF \(F^0_{r,y}\).

For \(s>0\), the one-tail Gaussian density-ratio calculation gives
\begin{equation}
 \phi(y)\sup_{0<t\leq q}\frac{F^0_{r,y}(t)}t
 \leq\frac1{s\sqrt{2\pi}}
 \exp\left\{-\frac{(ru_q^+-y)_+^2}{2s^2}\right\}.
 \label{eq:one-conditional-ratio-bound}
\end{equation}
Indeed, write \(t=p_+(u)\), where \(u\geq u_q^+\).  The bivariate normal
density identity
\[
 \phi(y)\frac1s\phi\!\left(\frac{x-ry}{s}\right)
 =\phi(x)\frac1s\phi\!\left(\frac{y-rx}{s}\right)
\]
and \(p_+(u)=\int_u^\infty\phi(x)\dd x\) give

\begin{align}
 \phi(y)\frac{F^0_{r,y}\{p_+(u)\}}{p_+(u)}
 &=\frac{\displaystyle\int_u^\infty
 \phi(x)\frac1s\phi\!\left(\frac{y-rx}{s}\right)\dd x}
 {\displaystyle\int_u^\infty\phi(x)\dd x}\notag\\
 &\leq\sup_{x\geq u}\frac1s
 \phi\!\left(\frac{y-rx}{s}\right)\notag\\
 &\leq\frac1{s\sqrt{2\pi}}
 \exp\left\{-\frac{(ru-y)_+^2}{2s^2}\right\}.
 \label{eq:one-ratio-at-u}
\end{align}

The final inequality in \eqref{eq:one-ratio-at-u} follows
because, for every \(x\geq u\),
\[
 (rx-y)^2\geq(ru-y)_+^2.
\]
Since \(r\geq0\), the map \(u\mapsto(ru-y)_+\) is nondecreasing.  Thus
the right-hand side of \eqref{eq:one-ratio-at-u} is
nonincreasing in \(u\) and is maximized over
\(u\geq u_q^+\) at \(u=u_q^+\), proving
\eqref{eq:one-conditional-ratio-bound}.  Integrating over \(y\leq u_q^+\)
yields

\begin{align}
 I(r)
 &:=\int_{-\infty}^{u_q^+}\phi(y)
 \sup_{0<t\leq q}\frac{F^0_{r,y}(t)}t\dd y\notag\\
 &\leq\frac1{s\sqrt{2\pi}}
 \left\{
 \int_{-\infty}^{ru_q^+}
 e^{-(ru_q^+-y)^2/(2s^2)}\dd y
 +\int_{ru_q^+}^{u_q^+}1\dd y
 \right\}\notag\\
 &=\frac1{s\sqrt{2\pi}}
 \left\{
 \int_0^\infty e^{-v^2/(2s^2)}\dd v
 +(1-r)u_q^+
 \right\}\notag\\
 &=\frac12+\frac{(1-r)u_q^+}{s\sqrt{2\pi}}\notag\\
 &\leq\frac12+\frac{u_q^+}{\sqrt{2\pi}}.
 \label{eq:one-integrated-ratio}
\end{align}

The inequality
\((1-r)u_q^+/(s\sqrt{2\pi})\leq u_q^+/\sqrt{2\pi}\) in
\eqref{eq:one-integrated-ratio} follows from
\((1-r)/s=\sqrt{(1-r)/(1+r)}\leq1\).  If \(r=1\), then on
\(Y_\sigma<u_q^+\) the boundary-null p-value is larger than \(q\), so
\eqref{eq:one-integrated-ratio} remains valid.

Let \(V_\sigma\) be the false rejections in sign group \(\sigma\) and
\(\FDP_\sigma=V_\sigma/(R\vee1)\).  On
\(A_\sigma=\{Y_\sigma\leq u_q^+\}\), sum the per-null conditional bound over
\(\mathcal H_{0,\sigma}\) and apply \eqref{eq:one-integrated-ratio}.  This gives
\[
 \mathbb E(\FDP_\sigma\1_{A_\sigma})
 \leq q\frac{|\mathcal H_{0,\sigma}|}{m}
 \left(\frac12+\frac{u_q^+}{\sqrt{2\pi}}\right).
\]
On \(A_\sigma^c\), use \(0\leq\FDP_\sigma\leq1\) and
\(\mathbb P(A_\sigma^c)=q\).  Summing over \(\sigma\in\mathcal S\), using
\(|\mathcal S|\leq2\) and
\(\FDP=\sum_{\sigma\in\mathcal S}\FDP_\sigma\), proves
\eqref{eq:one-upper-bound}.
}
\end{proof}

\begin{proof}[Proof of Corollary~\ref{cor:one-sharp}]
Theorem~\ref{thm:one-lower} gives the lower bound within the
common-factor class.  Indeed, for the perturbed model \eqref{eq:eta-model}, set
\[
 a_i^{(\eta)}=\sqrt{1-\eta^2}\,b_i,
 \qquad
 \widetilde\varepsilon_i^{(\eta)}
 =\frac{\sqrt{(1-\eta^2)(1-b_i^2)}\,\varepsilon_i+\eta\xi_i}
 {\sqrt{1-\{a_i^{(\eta)}\}^2}}.
\]
The variables \(\widetilde\varepsilon_i^{(\eta)}\) are independent standard
normal variables and are independent of \(Z\), and
\[
 T_i^{(\eta)}
 =\theta_i+a_i^{(\eta)}Z
  +\sqrt{1-\{a_i^{(\eta)}\}^2}\,
   \widetilde\varepsilon_i^{(\eta)}.
\]
Thus the perturbation is of the form \eqref{eq:common-factor-model}.
Together with Proposition~\ref{prop:one-expansion}, this gives

\[
 \sup_{\substack{m\in\N,\ \theta\in\R^m,\\
                    a\in[-1,1]^m}}
 \FDR_{\theta,a}(\BH_q)
 \geq \ell_{\le}(q)
 =\frac{q\sqrt{\log(1/q)}}{\sqrt\pi}+\frac q2+o(q).
\]

For the matching upper bound, note first that
\(u_q^+=p^{-1}(2q)\), so \eqref{eq:mills-quantile} in
Lemma~\ref{lem:mills} gives
\(u_q^+\sim\sqrt{2\log(1/q)}\).  More precisely,
\eqref{eq:mills-asymp} in Lemma~\ref{lem:mills} and
\(q=\Phib(u_q^+)\) imply
\[
 (u_q^+)^2
 =2\log(1/q)-2\log u_q^+-\log(2\pi)+o(1)
 =2\log(1/q)+O\{\log\log(1/q)\}.
\]
Consequently,
\[
 u_q^+=\sqrt{2\log(1/q)}
 +O\left\{\frac{\log\log(1/q)}{\sqrt{\log(1/q)}}\right\}
 =\sqrt{2\log(1/q)}+o(1).
\]
Theorem~\ref{thm:one-upper}, with \(\pi_0\leq1\), now gives the reverse
inequality up to an \(O(q)\) term, proving the claim.
\end{proof}
\endgroup

\section{Discussion}

For two-sided problems, Theorem~\ref{thm:main} and
Proposition~\ref{prop:ell-expansion} show that no
bound of the form \(Cq\), with \(C\) independent of \(q\), can hold uniformly
over all Gaussian correlation matrices.  Because the lower-bound construction
belongs to the common-factor class, Theorem~\ref{thm:common-factor-upper} shows
that the worst-case FDR within this class is of exact order
\(q\sqrt{\log(1/q)}\) as \(q\downarrow0\).  The leading constants in the
current two-sided upper and lower bounds do not match: they are respectively
\(1/\sqrt\pi\) and \(1/(2\sqrt\pi)\), leaving the exact two-sided constant
open.  For one-sided problems, by contrast,
Theorem~\ref{thm:one-lower}, Proposition~\ref{prop:one-expansion}, and
Corollary~\ref{cor:one-sharp} give the matching constant \(1/\sqrt\pi\).
Thus this constant is optimal in the common-factor class, with the upper bound
allowing arbitrary means and scalar loadings.

There are also procedures with finite-sample FDR guarantees for dependent
two-sided Gaussian tests.  The conditional-calibration framework of
\citet{FithianLei2022} decomposes FDR into hypothesis-wise contributions and,
for each null \(i\), conditions on a statistic under which the \(i\)th
\(p\)-value remains conditionally valid and the relevant conditional law is
tractable, often eliminating nuisance parameters.  The resulting guarantees
depend on the procedure and the dependence assumptions: under
PRDS,
\(\mathrm{dBH}_1\) controls FDR and contains BH, whereas under arbitrary
dependence, \(\mathrm{dBY}\) controls FDR and contains the
Benjamini--Yekutieli procedure.  Conditional calibration can therefore exploit
known dependence without automatically paying the full harmonic-factor
correction of \citet{BenjaminiYekutieli2001}.

Another route is the fixed-\(X\) knockoff filter of
\citet{BarberCandes2015}.  In a Gaussian linear model, knockoffs augment the
design with synthetic variables that mimic the correlation structure of the
original variables, and the resulting original-versus-knockoff comparisons
create null statistics with the symmetry needed for FDR control.
\citet{LiFithian2021} recast fixed-\(X\) knockoffs as adding user-generated
Gaussian noise to a Gaussian estimator to obtain a whitened estimator,
followed by conditional inference.  This whitening can substantially reduce
power when the covariance has large leading eigenvalues associated with dense
leading eigenvectors, because it then requires enough added noise to erase much
of the signal.  Knockoffs also face a threshold phenomenon when the rejection
set is small.  The calibrated knockoff method of \citet{LuoFithianLei2025}
uses conditional calibration to uniformly improve knockoff procedures, with
especially large gains in small-rejection regimes.

These findings point to two complementary directions: identifying structured
dependence under which ordinary BH remains reliable, and developing procedures
that use dependence information when the two-sided transformation destroys the
monotonicity needed by standard positive-dependence arguments.

\section*{Acknowledgments and disclosure}

The author thanks Will Fithian for encouraging him to work on the two-sided
problems, Ruodu Wang for encouraging him to work on the one-sided problems,
and Edgar Dobriban for the surprising initial counterexample that reignited
interest in this project.  The author used OpenAI GPT-5.6 Sol, a text-to-text generative AI
tool, for assistance with proof exploration, exposition, and editing.

\bibliographystyle{plainnat}
\bibliography{FDR_BH}

@misc{Dobriban2026,
  author        = {Dobriban, Edgar},
  title         = {The {Benjamini--Hochberg} procedure can fail to control the {FDR} for correlated two-sided {Gaussian} tests},
  year          = {2026},
  eprint        = {2607.12208},
  archivePrefix = {arXiv},
  url           = {https://arxiv.org/abs/2607.12208}
}

@article{ReinerBenaim2007,
  author  = {Reiner-Benaim, Anat},
  title   = {{FDR} control by the {BH} procedure for two-sided correlated tests with implications to gene expression data analysis},
  journal = {Biometrical Journal},
  year    = {2007},
  volume  = {49},
  number  = {1},
  pages   = {107--126},
  doi     = {10.1002/bimj.200510313}
}

@article{BenjaminiHochberg1995,
  author  = {Benjamini, Yoav and Hochberg, Yosef},
  title   = {Controlling the false discovery rate: A practical and powerful approach to multiple testing},
  journal = {Journal of the Royal Statistical Society: Series B},
  year    = {1995},
  volume  = {57},
  number  = {1},
  pages   = {289--300},
  doi     = {10.1111/j.2517-6161.1995.tb02031.x}
}

@article{BenjaminiYekutieli2001,
  author  = {Benjamini, Yoav and Yekutieli, Daniel},
  title   = {The control of the false discovery rate in multiple testing under dependency},
  journal = {The Annals of Statistics},
  year    = {2001},
  volume  = {29},
  number  = {4},
  pages   = {1165--1188},
  doi     = {10.1214/aos/1013699998}
}

@article{KarlinRinott1981,
  author  = {Karlin, Samuel and Rinott, Yosef},
  title   = {Total positivity properties of absolute value multinormal variables with applications to confidence interval estimates and related probabilistic inequalities},
  journal = {The Annals of Statistics},
  year    = {1981},
  volume  = {9},
  number  = {5},
  pages   = {1035--1049},
  doi     = {10.1214/aos/1176345583}
}

@article{ColangeloMullerScarsini2006,
  author  = {Colangelo, Antonio and M{\"u}ller, Alfred and Scarsini, Marco},
  title   = {Positive dependence and weak convergence},
  journal = {Journal of Applied Probability},
  year    = {2006},
  volume  = {43},
  number  = {1},
  pages   = {48--59},
  doi     = {10.1239/jap/1143936242}
}

@misc{Sarkar2023,
  author        = {Sarkar, Sanat K.},
  title         = {On controlling the false discovery rate in multiple testing of the means of correlated normals against two-sided alternatives},
  year          = {2023},
  eprint        = {2304.05261},
  archivePrefix = {arXiv},
  url           = {https://arxiv.org/abs/2304.05261}
}

@article{SarkarZhang2025,
  author  = {Sarkar, Sanat K. and Zhang, Shiyu},
  title   = {Shifted {BH} methods for controlling false discovery rate in multiple testing of the means of correlated normals against two-sided alternatives},
  journal = {Journal of Statistical Planning and Inference},
  year    = {2025},
  volume  = {236},
  pages   = {106238},
  doi     = {10.1016/j.jspi.2024.106238}
}

@misc{Su2018,
  author        = {Su, Weijie J.},
  title         = {The {FDR}-linking theorem},
  year          = {2018},
  eprint        = {1812.08965},
  archivePrefix = {arXiv},
  url           = {https://arxiv.org/abs/1812.08965}
}

@article{FithianLei2022,
  author  = {Fithian, William and Lei, Lihua},
  title   = {Conditional calibration for false discovery rate control under dependence},
  journal = {The Annals of Statistics},
  year    = {2022},
  volume  = {50},
  number  = {6},
  pages   = {3091--3118},
  doi     = {10.1214/21-AOS2137}
}

@article{BarberCandes2015,
  author  = {Barber, Rina Foygel and Cand{\`e}s, Emmanuel J.},
  title   = {Controlling the false discovery rate via knockoffs},
  journal = {The Annals of Statistics},
  year    = {2015},
  volume  = {43},
  number  = {5},
  pages   = {2055--2085},
  doi     = {10.1214/15-AOS1337}
}

@misc{LiFithian2021,
  author        = {Li, Xiao and Fithian, William},
  title         = {Whiteout: When Do Fixed-{X} Knockoffs Fail?},
  year          = {2021},
  eprint        = {2107.06388},
  archivePrefix = {arXiv},
  url           = {https://arxiv.org/abs/2107.06388}
}

@article{LuoFithianLei2025,
  author  = {Luo, Yixiang and Fithian, William and Lei, Lihua},
  title   = {Improving knockoffs with conditional calibration},
  journal = {The Annals of Statistics},
  year    = {2025},
  volume  = {53},
  number  = {5},
  pages   = {2283--2302},
  doi     = {10.1214/25-AOS2543}
}

@article{chi2025multiple,
  title =	 {Multiple testing under negative dependence},
  author =	 {Chi, Ziyu and Ramdas, Aaditya and Wang, Ruodu},
  journal =	 {Bernoulli},
  volume =	 {31},
  number =	 {2},
  pages =	 {1230--1255},
  year =	 {2025},
  doi =          {10.3150/24-BEJ1768},
  publisher =	 {Bernoulli Society for Mathematical Statistics and
                  Probability}
}

@article{benjamini2010discovering,
  author  = {Benjamini, Yoav},
  title   = {Discovering the false discovery rate},
  journal = {Journal of the Royal Statistical Society: Series B (Statistical Methodology)},
  year    = {2010},
  volume  = {72},
  number  = {4},
  pages   = {405--416},
  doi     = {10.1111/j.1467-9868.2010.00746.x},
  url     = {https://doi.org/10.1111/j.1467-9868.2010.00746.x}
}

@article{hochberg1995extensions,
  title={Extensions of multiple testing procedures based on Simes' test},
  author={Hochberg, Yosef and Rom, Dror},
  journal={Journal of Statistical Planning and Inference},
  volume={48},
  number={2},
  pages={141--152},
  year={1995},
  publisher={Elsevier}
}

@article{samuel1996simes,
  title={Is the Simes improved Bonferroni procedure conservative?},
  author={Samuel-Cahn, Ester},
  journal={Biometrika},
  volume={83},
  number={4},
  pages={928--933},
  year={1996},
  publisher={Oxford University Press}
}

@phdthesis{roux2018inference,
  author = {Roux, Marine},
  title  = {Inf{\'e}rence de graphes par une proc{\'e}dure de test multiple avec application en Neuroimagerie},
  school = {Universit{\'e} Grenoble Alpes (ComUE)},
  year   = {2018},
  month  = sep,
  note   = {Thesis no. 2018GREAT058},
  url    = {https://theses.fr/2018GREAT058}
}

\appendix

\section{\texorpdfstring{Auxiliary lemmas}{Auxiliary lemmas}}
\label{app:envelope}

This appendix collects the properties of the envelope and Gaussian tails used to define and analyze \(\nu_{q,r}\).

The following lemma characterizes the likelihood-ratio envelope.
\begin{lemma}
\label{lem:envelope}
For \(0\le y\le1\), \(M(y)=1\).  For \(y>1\), \(a\mapsto L_a(y)\) has a unique maximizer \(a(y)>0\), characterized by
\[
a(y)=y\tanh\{a(y)y\}.
\]
The maps \(a:(1,\infty)\to(0,\infty)\) and \(M:(1,\infty)\to(1,\infty)\) are continuous and strictly increasing, with
\begin{equation}
\lim_{y\downarrow1}a(y)=0,
\quad
\lim_{y\to\infty}M(y)=\infty.
\label{eq:envelope-limits}
\end{equation}
The inverse \(y=y(a)\) is continuously differentiable on \((0,\infty)\), and
\begin{equation}
y'(a)
=
\frac{1-y(a)^2\sech^2\{ay(a)\}}
{\tanh\{ay(a)\}+ay(a)\sech^2\{ay(a)\}}
>0.
\label{eq:yprime}
\end{equation}
Moreover, with \(a_q=a(y_q)\), the deficit in the secondary survival mass satisfies
\begin{equation}
1-qM\{y(a)\}>0\quad(0<a<a_q),
\quad
1-qM\{y(a_q)\}=0,
\quad
\frac{\mathrm d}{\mathrm da}\bigl[1-qM\{y(a)\}\bigr]<0.
\label{eq:deficit-properties}
\end{equation}
\end{lemma}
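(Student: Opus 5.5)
The plan is to work with \(\psi_y(a)=\log L_a(y)=-a^2/2+\log\cosh(ay)\) for fixed \(y\ge0\). Then
\[
\psi_y'(a)=-a+y\tanh(ay),
\qquad
\psi_y''(a)=-1+y^2\sech^2(ay).
\]
Write \(f(a)=y\tanh(ay)\). This function is concave on \(a\ge0\), with \(f(0)=0\), \(f'(0)=y^2\), and \(f(a)\to y\) as \(a\to\infty\).

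\textbf{Case \(y\le1\).} Here \(f(a)<a\) for \(a>0\), since \(\tanh(ay)<ay\le a\). So \(\psi_y'<0\) on \((0,\infty)\), \(\psi_y\) is decreasing, and \(M(y)=L_0(y)=1\).

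\textbf{Case \(y>1\).} Now \(f'(0)=y^2>1\), so \(f(a)>a\) near \(0\), while \(f(a)<a\) for large \(a\). Because \(f(a)-a\) is strictly concave and vanishes at \(0\), it has exactly one positive zero \(a(y)\). It is positive before that zero and negative after. Hence \(a(y)\) is the unique maximizer, characterized by \(a(y)=y\tanh\{a(y)y\}\). In particular \(M(y)=L_{a(y)}(y)>L_0(y)=1\).

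\textbf{Strict concavity at the root and the inverse map.} At \(a=a(y)\) we have \(f'(a)<1\) (a concave function crossing the diagonal from above), so \(\psi_y''(a(y))<0\). Solving \(a=y\tanh(ay)\) for \(y\) as a function of \(a\) is the cleanest route: define
\[
G(a,y)=y\tanh(ay)-a.
\]
Then \(\partial_yG=\tanh(ay)+ay\sech^2(ay)>0\) and \(\partial_aG=y^2\sech^2(ay)-1<0\) at the root. The implicit function theorem gives a \(C^1\) inverse \(y(a)\) with
\[
y'(a)=-\frac{\partial_aG}{\partial_yG},
\]
which is exactly \eqref{eq:yprime}, and it is positive. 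Consequently \(a(\cdot)\) is continuous and strictly increasing.

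\textbf{Range of \(a(\cdot)\).} For each \(a>0\), the equation \(y\tanh(ay)=a\) has a solution \(y>1\): the left side is increasing in \(y\), equals less than \(a\) at \(y=1\) by the previous case, and tends to \(\infty\). So \(a(\cdot)\) maps \((1,\infty)\) onto \((0,\infty)\). As \(y\downarrow1\), any limit point \(a_*>0\) would satisfy \(a_*=\tanh a_*\), which is impossible; therefore \(a(y)\to0\).

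\textbf{Properties of \(M\).} By the envelope theorem, \(M\) has derivative
\[
M'(y)=e^{-a^2/2}\,a\sinh(ay)\big|_{a=a(y)}>0,
\]
so \(M\) is continuous and strictly increasing. It is unbounded because \(M(y)\ge L_1(y)=e^{-1/2}\cosh y\to\infty\).

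\textbf{Deficit properties.} The map \(a\mapsto M\{y(a)\}\) is a composition of strictly increasing \(C^1\) functions with positive derivative \(M'\{y(a)\}y'(a)\). Together with \(qM(y_q)=1\) and \(y(a_q)=y_q\), this gives \eqref{eq:deficit-properties}.

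\textbf{Main obstacle.} The only delicate point is uniqueness of the positive critical point, which is handled by the concavity of \(\tanh\). Once that is settled, the sign \(\partial_aG<0\) at the root follows from the same concavity, and everything else is routine implicit differentiation.
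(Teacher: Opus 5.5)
Your proof is correct, but it obtains the two key facts by a different route from the paper. Those facts are the uniqueness of the positive critical point and the strict sign $y^2\sech^2\{a(y)y\}<1$ at it.

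The paper substitutes $t=ay$ and rewrites $g_y'(a)=0$ as $y^2=t/\tanh t$. It shows $t\mapsto t/\tanh t$ is strictly increasing (via $H(t)=\tanh t-t\sech^2 t$), which yields the explicit monotone parametrization $a=\sqrt{t\tanh t}$, $y=\sqrt{t/\tanh t}$ ($t>0$) of the whole critical curve. From that single parametrization it reads off existence, uniqueness, continuity and monotonicity of $a(\cdot)$, its range, and $a(y)\to0$ as $y\downarrow1$. The implicit-function sign then comes from the identity $y^2\sech^2(ay)=2t/\sinh(2t)<1$.

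You instead use strict concavity of $a\mapsto y\tanh(ay)$. This gives at most one positive zero of $f(a)-a$. Together with the mean value theorem on $[0,a(y)]$ and strict monotonicity of $f'$, it also gives the strict slope inequality at the root, with no change of variables.

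The price is that you must assemble the global inverse yourself:
\begin{itemize}
\item uniqueness of the $y>1$ solving $y\tanh(ay)=a$ for fixed $a$, from monotonicity in $y$;
\item identification of the local implicit-function solutions with that global $y(\cdot)$;
\item a separate limit-point argument for $a(y)\to0$.
\end{itemize}

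One small point to tighten: the phrase ``a concave function crossing the diagonal from above'' by itself only yields $f'\{a(y)\}\le1$. Strictness needs the mean-value point $\xi\in(0,a(y))$ with $f'(\xi)=1$ together with strict concavity, as your closing remark indicates.

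The envelope-theorem formula for $M'$ and the deficit properties coincide with the paper's. For the unboundedness of $M$ you use $L_1$ where the paper uses $L_y$, which is an inessential difference.
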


\begin{proof}
Let
\[
g_y(a)=\log L_a(y)
=-\frac{a^2}{2}+\log\cosh(ay).
\]
Then
\begin{equation}
g_y'(a)=-a+y\tanh(ay).
\label{eq:gprime}
\end{equation}
If \(y\le1\), then \(y\tanh(ay)\le ay^2\le a\), so \(M(y)=L_0(y)=1\).

For \(y>1\), put \(t=ay\).  Equation \(g_y'(a)=0\) is equivalent to
\begin{equation}
y^2=\frac{t}{\tanh t}.
\label{eq:t-param}
\end{equation}
{
To prove that the map \(t\mapsto t/\tanh t\) in
\eqref{eq:t-param} is strictly increasing, let
\(H(t)=\tanh t-t\sech^2t\).  Then \(H(0)=0\) and
\(H'(t)=2t\sech^2t\tanh t>0\) for \(t>0\).  Hence
\[
\frac{\dd}{\dd t}
\left(\frac{t}{\tanh t}\right)
=
\frac{\tanh t-t\sech^2t}{\tanh^2t}>0.
\]
Moreover,
\(t/\tanh t\to1\) as \(t\downarrow0\) and
\(t/\tanh t\to\infty\) as \(t\to\infty\).  Because \(y^2>1\),
\eqref{eq:t-param} in Lemma~\ref{lem:envelope} therefore has a unique
positive solution.  The sign of
\(g_y'\) changes from positive to negative there, so the solution is the
unique maximizer.  Moreover,}
\begin{equation}
a(y)=\sqrt{t\tanh t},
\quad
y=\sqrt{\frac{t}{\tanh t}},
\label{eq:a-param}
\end{equation}
{
Both parameter functions in \eqref{eq:a-param} in
Lemma~\ref{lem:envelope} are smooth and strictly
increasing in \(t>0\).  They tend respectively to zero and one as
\(t\downarrow0\), and both tend to infinity as \(t\to\infty\).  Thus
\(a(y)\) is continuous and strictly increasing on \((1,\infty)\), with
\(a(y)\to0\) as \(y\downarrow1\); the inverse function theorem applied to
the second parameter function also shows that \(a(y)\) is continuously
differentiable.}
Since \(M(y)=L_{a(y)}(y)\), the chain rule and \(\partial_aL_{a(y)}(y)=0\) give
\[
M'(y)
=
M(y)a(y)\tanh\{a(y)y\}>0.
\]
Thus \(M\) is continuous and strictly increasing.  Also
\[
M(y)\ge L_y(y)\ge\frac12e^{y^2/2}\to\infty.
\]
{
Finally, apply the implicit function theorem to
\[
F(a,y)=a-y\tanh(ay)=0.
\]
At the maximizer, with \(t=ay>0\), equation~\eqref{eq:t-param} in
Lemma~\ref{lem:envelope} gives
\[
 y^2\sech^2(ay)
 =\frac{t\sech^2t}{\tanh t}
 =\frac{2t}{\sinh(2t)}<1.
\]
Consequently,
\[
F_a=1-y^2\sech^2(ay)>0,
\qquad
F_y=-\tanh(ay)-ay\sech^2(ay)<0.
\]
Thus \eqref{eq:yprime} in Lemma~\ref{lem:envelope} follows from
\(y'=-F_a/F_y\).}

It remains to prove \eqref{eq:deficit-properties} in
Lemma~\ref{lem:envelope}.  Since \(a_q=a(y_q)\), we have
\(y(a_q)=y_q\), and therefore \(1-qM\{y(a_q)\}=0\) by \eqref{eq:yq}.  If
\(0<a<a_q\), then \(y(a)<y_q\); since \(M\) is strictly increasing on
\((1,\infty)\), \(qM\{y(a)\}<qM(y_q)=1\).  Finally,
\[
\frac{\mathrm d}{\mathrm da}\bigl[1-qM\{y(a)\}\bigr]
=-qM'\{y(a)\}y'(a)<0,
\]
because \(M'>0\) on \((1,\infty)\) and \(y'>0\) on \((0,\infty)\).
\end{proof}

The final lemma collects the Mills bounds and quantile asymptotics used to control Gaussian tails.
\begin{lemma}
\label{lem:mills}
{
The normal hazard \(\lambda\) is increasing on \(\R\) and satisfies
\begin{equation}
x<\lambda(x)<x+x^{-1}\quad(x>0).
\label{eq:mills-hazard}
\end{equation}
Moreover, as \(x\to\infty\),
\begin{equation}
\lambda(x)=x+x^{-1}+O(x^{-3}).
\label{eq:mills-hazard-expansion}
\end{equation}
}
As \(x\to\infty\),
\begin{equation}
\Phib(x)=\frac{\phi(x)}x\{1+o(1)\}.
\label{eq:mills-asymp}
\end{equation}
For every fixed \(0<\sigma<1\), as \(x\to\infty\),
\begin{equation}
\frac{\Phib(\sigma x)}{\Phib(x)}
\sim\frac1\sigma
\exp\left\{\frac{(1-\sigma^2)x^2}{2}\right\}.
\label{eq:mills-tail-ratio-asymp}
\end{equation}
For every \(0<H<\infty\), uniformly over
\(|u|\vee|v|\le H\), as \(x\to\infty\),
\begin{equation}
\frac{p(x+u)}{p(x+v)}
=
\frac{x+v}{x+u}
\exp\left\{-x(u-v)-\frac{u^2-v^2}{2}\right\}
\{1+o(1)\}.
\label{eq:mills-local-ratio}
\end{equation}
If \(c>0\) is fixed, then, as \(q\downarrow0\),
\begin{equation}
p^{-1}(cq)\sim\sqrt{2\log(1/q)}.
\label{eq:mills-quantile}
\end{equation}
In particular, for every fixed \(c_1,c_2>0\),
\[
\frac{p^{-1}(c_1q)}{p^{-1}(c_2q)}\longrightarrow1
\quad(q\downarrow0).
\]
Finally, for \(x\ge0\),
\[
2\Phib(x)\le e^{-x^2/2}.
\]
For \(x>0\) and \(\delta\ge0\),
\begin{equation}
\frac{p(x+\delta)}{p(x)}
\le
\left(1+x^{-2}\right)
\exp\left\{-x\delta-\frac{\delta^2}{2}\right\}.
\label{eq:mills-shift-upper}
\end{equation}
Consequently, if \(x\to\infty\), then the following tail-ratio limits hold.
\begin{enumerate}[label=\textup{(\roman*)},leftmargin=2.2em]
\item For every \(0<H<\infty\), uniformly over
\(|u|\vee|v|\le H\),
\begin{equation}
\log\frac{p(x+u)}{p(x+v)}
=-x(u-v)-\frac{u^2-v^2}{2}+o(1).
\label{eq:general-ratio}
\end{equation}
\item For every \(0<H<\infty\), uniformly over \(|h|\le H\),
\begin{equation}
\frac{p(x)}{p(x+h/x)}\longrightarrow e^h.
\label{eq:h-over-x}
\end{equation}
\item If \(0<\delta_0\le\delta\le\delta_1<\infty\), then
\begin{equation}
\frac{p(x+\delta)}{p(x)}\longrightarrow0
\label{eq:fixed-shift}
\end{equation}
uniformly in \(\delta\).
\item If \(\delta_x\ge0\) and \(x\delta_x\to\infty\), then
\begin{equation}
\frac{p(x+\delta_x)}{p(x)}\longrightarrow0.
\label{eq:variable-shift}
\end{equation}
\end{enumerate}
\end{lemma}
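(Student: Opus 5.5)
We will derive every item from two classical facts: the Gordon--Mills sandwich
\[
 \frac{x}{1+x^2}\phi(x)<\Phib(x)<\frac{\phi(x)}{x}\qquad(x>0),
\]
and the asymptotic series \(\Phib(x)=\frac{\phi(x)}{x}\{1-x^{-2}+3x^{-4}+O(x^{-6})\}\), which comes from repeated integration by parts of \(\int_x^\infty\phi\). We will then treat the \(p\)-statements through \(p=2\Phib\). The factor \(2\) cancels in every ratio, and it only rescales the constant in the quantile statement.

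\textbf{Hazard statements.} Dividing \(\phi\) by the sandwich gives \eqref{eq:mills-hazard} directly, since \(\lambda=\phi/\Phib\) lies strictly between \(x\) and \((1+x^2)/x=x+x^{-1}\). For monotonicity we use \(\lambda'=\lambda(\lambda-x)\). We need \(\lambda>x\) on all of \(\R\), which is immediate for \(x\le0\); then \(\lambda'>0\). Inverting the asymptotic series gives \(\lambda(x)=x\{1+x^{-2}-2x^{-4}+O(x^{-6})\}=x+x^{-1}+O(x^{-3})\), which is \eqref{eq:mills-hazard-expansion}. The series to order \(x^{-2}\) already gives \eqref{eq:mills-asymp}.

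\textbf{Ratio and quantile statements.} We apply \eqref{eq:mills-asymp} at \(\sigma x\) and at \(x\) and take the quotient; the Gaussian exponents combine into \(e^{(1-\sigma^2)x^2/2}\), which gives \eqref{eq:mills-tail-ratio-asymp}. Likewise, \eqref{eq:mills-asymp} at \(x+u\) and \(x+v\) gives \eqref{eq:mills-local-ratio}. Uniformity over \(|u|\vee|v|\le H\) holds because the \(o(1)\) in \eqref{eq:mills-asymp} depends only on \(x+u\ge x-H\to\infty\). Taking logarithms, and using \(\log\{(x+v)/(x+u)\}=O(H/x)\), yields \eqref{eq:general-ratio}. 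Setting \(u=h/x\) and \(v=0\) turns the exponent into \(-h+O(x^{-2})\), which is \eqref{eq:h-over-x}. For \eqref{eq:mills-quantile}, set \(x=p^{-1}(cq)\), so that \(x\to\infty\). Then \eqref{eq:mills-asymp} gives
\[
 x^2/2+\log x+O(1)=\log(1/q),
\]
hence \(x^2\sim2\log(1/q)\). The ratio statement for two constants \(c_1,c_2\) follows immediately.

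\textbf{Exponential bounds.} Put \(f(x)=e^{-x^2/2}-2\Phib(x)\). Then \(f(0)=0\) and \(f(\infty)=0\), and \(f'(x)=e^{-x^2/2}\{-x+2/\sqrt{2\pi}\}\) is positive and then negative. Hence \(f\ge0\) on \([0,\infty)\). For \eqref{eq:mills-shift-upper}, bound the numerator by \(\phi(x+\delta)/(x+\delta)\) and the denominator from below by \(x\phi(x)/(1+x^2)\). The quotient is
\[
 \frac{1+x^2}{x(x+\delta)}\,e^{-x\delta-\delta^2/2}\le(1+x^{-2})\,e^{-x\delta-\delta^2/2}.
\]
Items \eqref{eq:fixed-shift} and \eqref{eq:variable-shift} then follow at once, since \(x\delta\ge x\delta_0\to\infty\) and \(x\delta_x\to\infty\), respectively.

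\textbf{Main obstacle.} The only delicate point is bookkeeping: establishing the hazard expansion with a genuine \(O(x^{-3})\) remainder, and keeping the \(o(1)\) terms in \eqref{eq:mills-local-ratio} and \eqref{eq:general-ratio} uniform. We will handle the former by carrying the asymptotic series with an explicit remainder bound from one further integration by parts. We will handle the latter by noting that all remainders depend on \(x\) only through \(x-H\).
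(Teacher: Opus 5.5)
Your proposal is correct and follows essentially the same route as the paper: the Mills sandwich plus an integration-by-parts expansion for the hazard statements, $\lambda'=\lambda(\lambda-x)$ for monotonicity, quotients of \eqref{eq:mills-asymp} for the ratio and quantile statements, the auxiliary function $e^{-x^2/2}-2\Phib(x)$, and the sandwich again for \eqref{eq:mills-shift-upper}. The differences are cosmetic: you derive \eqref{eq:fixed-shift} from \eqref{eq:mills-shift-upper} rather than from \eqref{eq:general-ratio}, and you carry one more term of the asymptotic series than the $O(x^{-3})$ hazard remainder requires.
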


\begin{proof}
For \(x>0\), the upper Mills bound follows from
\begin{equation}
\Phib(x)<\frac1x\int_x^\infty t\phi(t)\,\dd t
=\frac{\phi(x)}x.
\label{eq:mills-upper}
\end{equation}
Integration by parts gives
\[
\Phib(x)=\frac{\phi(x)}x-
\int_x^\infty\frac{\phi(t)}{t^2}\,\dd t
>\frac{\phi(x)}x-\frac{\Phib(x)}{x^2},
\]
which proves the lower Mills bound
\begin{equation}
\frac{x}{1+x^2}\phi(x)<\Phib(x).
\label{eq:mills-lower}
\end{equation}
{
A further integration by parts gives
\[
\Phib(x)
=\frac{\phi(x)}x-\frac{\phi(x)}{x^3}
+3\int_x^\infty\frac{\phi(t)}{t^4}\dd t,
\]
where
\[
0<\int_x^\infty\frac{\phi(t)}{t^4}\dd t
<\frac{\phi(x)}{x^5}.
\]
Hence
\[
\Phib(x)
=\frac{\phi(x)}x\{1-x^{-2}+O(x^{-4})\},
\]
and inversion proves \eqref{eq:mills-hazard-expansion} in
Lemma~\ref{lem:mills}.}
Equations~\eqref{eq:mills-upper} and~\eqref{eq:mills-lower} imply
\eqref{eq:mills-hazard} in Lemma~\ref{lem:mills} for \(x>0\).
{
Since \(\lambda(x)-x>0\) also holds trivially for \(x\le0\), the identity
\[
\lambda'(x)=\lambda(x)\{\lambda(x)-x\}
\]
shows that \(\lambda\) is increasing on \(\R\).}

Equations~\eqref{eq:mills-upper} and~\eqref{eq:mills-lower} give \eqref{eq:mills-asymp} in Lemma~\ref{lem:mills}.  Applying this at \(x\) and \(\sigma x\) proves \eqref{eq:mills-tail-ratio-asymp} in Lemma~\ref{lem:mills}.  Applying \eqref{eq:mills-asymp} in Lemma~\ref{lem:mills} at \(x+u\) and \(x+v\) gives \eqref{eq:mills-local-ratio} in Lemma~\ref{lem:mills}, for every \(0<H<\infty\), uniformly over \(|u|\vee|v|\le H\).  Moreover, as \(x\to\infty\),
\[
-\log p(x)=\frac{x^2}{2}+O(\log x),
\]
which yields \eqref{eq:mills-quantile} in Lemma~\ref{lem:mills} by inversion.  To prove \(2\Phib(x)\le e^{-x^2/2}\), set
\[
d(x)=e^{-x^2/2}-2\Phib(x).
\]
Then \(d(0)=0\), \(\lim_{x\to\infty}d(x)=0\), and
\[
d'(x)=e^{-x^2/2}\left(\sqrt{\frac2\pi}-x\right).
\]
Thus \(d\) first increases and then decreases to zero, so \(d(x)\ge0\) for all \(x\ge0\).
Finally, \eqref{eq:mills-upper} and~\eqref{eq:mills-lower} imply
\[
\frac{p(x+\delta)}{p(x)}
\le
\frac{1+x^2}{x(x+\delta)}
\exp\left\{-x\delta-\frac{\delta^2}{2}\right\}
\le
\left(1+x^{-2}\right)
\exp\left\{-x\delta-\frac{\delta^2}{2}\right\},
\]
which proves \eqref{eq:mills-shift-upper} in Lemma~\ref{lem:mills}.

It remains to prove the four stated tail-ratio consequences.  Taking logarithms in \eqref{eq:mills-local-ratio} in Lemma~\ref{lem:mills} gives \eqref{eq:general-ratio} in Lemma~\ref{lem:mills}, because
\[
\log\frac{x+v}{x+u}=o(1)
\]
for every \(0<H<\infty\), uniformly over
\(|u|\vee|v|\le H\).  For \eqref{eq:h-over-x} in
Lemma~\ref{lem:mills}, apply \eqref{eq:mills-local-ratio} in
Lemma~\ref{lem:mills} with \(u=0\) and \(v=h/x\).
For every \(0<H<\infty\), uniformly over \(|h|\le H\),
\[
\frac{p(x)}{p(x+h/x)}
=
\frac{x+h/x}{x}
\exp\left\{h+\frac{h^2}{2x^2}\right\}
\{1+o(1)\}
\longrightarrow e^h.
\]
For \eqref{eq:fixed-shift} in Lemma~\ref{lem:mills}, apply \eqref{eq:general-ratio} in Lemma~\ref{lem:mills} with \(u=\delta\) and \(v=0\).  If \(0<\delta_0\le\delta\le\delta_1<\infty\), then
\[
\log\frac{p(x+\delta)}{p(x)}
\le
-x\delta_0+O(1)
\longrightarrow-\infty
\]
uniformly in \(\delta\).  Finally, \eqref{eq:variable-shift} in Lemma~\ref{lem:mills} follows from \eqref{eq:mills-shift-upper} in Lemma~\ref{lem:mills} with \(\delta=\delta_x\), because \(x\delta_x\to\infty\).
\end{proof}

\end{document}